\tikzstyle{box} = [rectangle,text centered, draw=black]
\tikzstyle{arrow} = [thick, ->, >=stealth]
\definecolor{dgreen}{RGB}{0,150,0}
\definecolor{grey}{RGB}{50,50,50}
\newtheorem{thm}{\bf Theorem}[section]
\newtheorem{theorem}{Theorem}
\newtheorem{cor}[thm]{\bf Corollary}
\newtheorem{lem}[thm]{\bf Lemma}
\newtheorem{prop}[thm]{\bf Proposition}
\numberwithin{equation}{section}
\providecommand{\keywords}[1]
{
  \small	
  \textbf{\textit{Keywords---}} #1
}
\begin{document}

\title{Distances in sparse sets of large Hausdorff dimension}

\author{Malabika Pramanik and K S Senthil Raani}  
\date{\today}
\subjclass[2020]{42B99, 28A78, 28A80}
\keywords{Distance sets, intervals, Hausdorff dimension and content, Fourier dimension} 
\maketitle
\newcommand{\Addresses}{{
  \bigskip
  \footnotesize

  M.~Pramanik, {\textsc{Department of Mathematics, 1984 Mathematics Road, University of British Columbia, Vancouver, Canada V6T 1Z2}} \par\nopagebreak
  \textit{E-mail address}: \texttt{malabika@math.ubc.ca}

\medskip

K S Senthil Raani, {\textsc{IISER Berhampur Transit campus (Govt.~ITI Building),
Engg.~School Junction, Berhampur, Odisha, India 760010. }} \par\nopagebreak
  \textit{E-mail address}: \texttt{raani@iiserbpr.ac.in}

}}

\begin{abstract}
The distance set $\Delta(E)$ of a set $E \subseteq \mathbb R^d$ consists of all pairwise distances between points in $E$. This paper investigates distance sets of Borel subsets of $\mathbb R^d$ that are Lebesgue-null, but have  Hausdorff dimension close to $d$. Our results describe both the existence and distribution of intervals in $\Delta(E)$ for bounded $E$, and the appearance of all sufficiently large distances in unbounded sparse sets. Our contributions are fourfold. 
\vskip0.1in
\noindent First, we prove quantitative Steinhaus-type theorems for sets of large Hausdorff content. If $E \subseteq [0,1]^d$ has $s$-dimensional dyadic Hausdorff content at least $(1 - \rho)$, then $\Delta(E)$ contains a uniform interval $[a,b]\subseteq (0,1]$ whose endpoints depend only on $\rho$ and $d$. This gives the first uniform analogue of the classical Steinhaus–Piccard theory in the Lebesgue-null setting.
\vskip0.1in
\noindent Second, we obtain a quantitative refinement of the Mattila–Sjölin theorem. For any Borel set $E$ of Hausdorff dimension close to $d$, the set $\Delta(E)$ contains a union of intervals whose scales are determined by dyadic cubes on which $E$ has high $s$-density. This yields a flexible structure theorem for distances near the origin.
\vskip0.1in
\noindent Third, we derive a sufficient size condition ensuring that an unbounded sparse set contains all sufficiently large distances, extending a theorem of Bourgain (1986). We also provide examples of totally disconnected sets of near-full dimension satisfying this condition.
\vskip0.1in 
\noindent Finally, when $E$ enjoys additional geometric regularity, such as being locally uniformly $s$-dimensional or quasi-regular, we show that its distance set exhibits new analytic features. Using spectral gap methods and $L^2$ Fourier asymptotics, we obtain refined information on the distribution of distances in such sets.
\vskip0.1in
\noindent Several new examples, counterexamples, and open problems are presented.
\end{abstract}

 \tableofcontents

\section{Introduction} \label{intro-section}
\noindent Let $E, F \subseteq \mathbb R^d$ be two Borel sets, $d \geq 1$. Their distance set $\Delta(E, F)$ is by definition the collection of all distances generated by connecting points in $E$ to points in $F$: 
\[ \Delta(E, F) := \Bigl\{|x-y| : x \in E, \; y \in F\Bigr\} = \Bigl\{|z| : z \in E - F \Bigr\} \subseteq [0, \infty). \]  
Here $E - F$ is the algebraic difference set, and $| \cdot |$ denotes the Euclidean norm: 
\[|x| := (x_1^2 + \cdots + x_d^2)^{\frac{1}{2}} \quad \text{ for } \quad x = (x_1, \ldots, x_d) \in \mathbb  R^d. \] When $E = F$, the set $\Delta(E, E)$ is called the distance set of $E$, denoted $\Delta(E)$ for short. 
\vskip0.1in 
\noindent A rich branch of geometric measure theory is devoted to the study of $\Delta(E, F)$ given certain largeness assumptions on $E$ and $F$. A fundamental and representative example of this is the classical theorem of Steinhaus \cite{Steinhaus} on $\mathbb R$, and its generalization to $\mathbb R^d$ due to Piccard \cite{Piccard-1942}. The generalized version states: if $E$ and $F$ have positive Lebesgue measure, then the difference set $E - F$ contains a ball centred at the origin. As a result, the distance set $\Delta(E, F)$ also has non-empty interior, and in particular contains an interval of the form $[0, a)$ for some $a > 0$. Steinhaus's theorem has led to a vast array of subsequent work concerning $\Delta(E, F)$ and other configuration sets of a similar nature involving generalized distance functions. We refer the reader to \cite{{Falconer-1986}, {Mattila-Sjolin-99}, {Erdogan-2006}, {IMT-2012}, {GGIP-2015}, {BIT-2016}, {GIP-2017}, {Orponen-AD}, {JFraser-2018}, {Iosevich-Liu-2019}, {Keleti-Shmerkin-2019}, {Liu-2019}, {BDN-2021}, {DIOWZ-2021}, {GIOW-2020}, {Liu-2020},  {GIT-2021}, {Shmerkin-2021}, {FKY-2021}, {DOZ-2022}} and their extensive bibliography for significant milestones in the evolution of the subject. 
\vskip0.1in
\noindent Four distinct but intertwined lines of study concerning distances are central to this article:
\begin{itemize}  \label{list-of-themes} 
\item Quantitative Steinhaus-like theorems for sparse sets,
\item Intervals of distances generated by sets of large Hausdorff dimension, 
\item Sparse sets that admit all, or all sufficiently large, distances,
\item Finer structure of distance sets $\Delta(E)$ if $E$ has additional regularity.   
\end{itemize} 
The word ``sparse'' is used here in a qualitative sense. Depending on the context, it could refer to a set that is Lebesgue-null, or of strictly positive co-dimension, or of zero asymptotic density. Let us describe these four types of problems, the accompanying notions of sparsity and our results in more detail. 
\subsection{Quantitative Steinhaus-like theorems for sparse sets} \label{section: quantitative Steinhaus} 
The results of Steinhaus \cite{Steinhaus} and Piccard \cite{Piccard-1942} lead to a natural query:  if $E, F$ have positive Lebesgue measure, how large can an open set contained in $E-F$ or $\Delta(E, F)$ be? In the special case $d=1$ building on earlier ideas in \cite{{Steinhaus}, {Piccard-1942}}, Boardman \cite{Boardman-1970} obtains quantitative bounds on the length of an interval contained in $\Delta(E, F)$, depending on the Lebesgue measures of $E, F$. More precisely,  given $\alpha, \beta \in (0,1]$ the article \cite{Boardman-1970} provides estimates for    
\begin{align} 
& \mathtt a_{\alpha, \beta} = \inf \{\mathtt a(E, F) : E, F \subseteq [0,1], \; \lambda_1(E) = \alpha, \; \lambda_1(F) = \beta \},  \; \text{ where } \label{a-alpha-beta-def} \\
& \mathtt a(E, F) := \sup \bigl\{a > 0 : \Delta(E, F) \supseteq [0, a) \bigr\}, \nonumber
\end{align} 
and $\lambda_1$ denotes the Lebesgue measure on $\mathbb R$. The same argument extends to higher dimensions, and gives rise to the following statement. A proof of it is given in the appendix Section \ref{Appendix-1-large-distance-sets}.  
\begin{theorem}{\cite[Lemma II]{Boardman-1970}} \label{Boardman-Lemma} 
 Let $d \geq 1$. Given any $\rho < \frac{1}{2}$, there exists $c_{\rho} > 0$, depending only on $\rho$ and $d$, such that for all Borel sets $E, F \subseteq [0,1]^d$ with $\min\{\lambda_d(E), \lambda_d(F) \} \geq 1 -\rho$, the following inclusion holds: 
\begin{equation}  E - F \supseteq [0,c_\rho]^d; \quad \text{ as a result } \quad \Delta(E, F) \supseteq [0, c_{\rho}\sqrt{d}]. \label{Steinhaus-Boardman}  \end{equation} 
The constant $c_{\rho} \searrow 0$ as $\rho \nearrow \frac{1}{2}$. Here $\lambda_d$ denotes the $d$-dimensional Lebesgue measure.  
\end{theorem} 
\noindent The condition $\min\{\lambda_d(E), \lambda_d(F) \} \geq 1 -\rho$ implies that both $E$ and $F$ are large, since their complements have Lebesgue measure at most $\rho.$. 
\vskip0.1in
\noindent The assumption on the smallness of $\rho$ in the statement of Theorem \ref{Boardman-Lemma} cannot be removed. Indeed, for $d = 1$, it has been shown in \cite{Boardman-1970} that 
\[ \mathtt a_{\alpha, \beta} = 0 \text{ if } \alpha + \beta \leq 1, \quad \text{where $\mathtt a_{\alpha, \beta}$ is as in \eqref{a-alpha-beta-def}}. \] 
In particular for $\rho \geq \frac{1}{2}$, examples in \cite{Boardman-1970} show that one {\em{cannot}} find a positive $c_{\rho}$ with $\Delta(E) \supseteq [0, c_{\rho})$ for {\em{all}} sets $E \subseteq [0,1]$ with $\lambda_1(E) = 1 - \rho$. In other words, the interval in $E-F$ may become infinitesimally small, so that the infimum value of $c_{\rho}$ is zero. A similar statement holds in all dimensions if $\lambda_d(E) + \lambda_d(F)$ is small; an example in support of this is provided in Section \ref{Appendix-3}. \label{small-rho-needed} Unlike $d=1$, the threshold value of $\rho$ appears to be unknown for $d \geq 2$. 
\vskip0.1in
\noindent An important feature of Theorem \ref{Boardman-Lemma} is the universality of the constant $c_{\rho}$, which works for all sets $E, F \subseteq [0,1]^d$ whose Lebesgue measures exceed $1 - \rho > \frac12$. There does not appear to be a statement in the literature analogous to Theorem \ref{Boardman-Lemma} for Lebesgue-null sets. Indeed, statements of the form \eqref{Steinhaus-Boardman} are known to be false in this setting; see the discussion following Theorem \ref{mainthm-1} in Section \ref{section: MattilaSjolin}, also Corollary \ref{Steinhaus-sparse-corollary}. A contribution of this article is to provide a version of such a statement, first to the best of our knowledge, for sets in $\mathbb R^d$, $d \geq 2$, that have Hausdorff dimension less than but sufficiently close to $d$. Our main result in this direction is the following, which shows that an analogue of Theorem \ref{Boardman-Lemma} does hold under strong enough Hausdorff-content assumptions.
\begin{thm} \label{mainthm-2} 
\begin{enumerate}[(a)]
\item For $d \geq 2$, there exist dimensional constants $\bar{a}_d, \bar{b}_{d}, \rho_d, \bar{\varepsilon}_d \in (0,1)$ as follows. For Borel sets $E, F \subseteq [0,1]^d$ with $\min \bigl\{\mathcal H^s_{\infty}(E), \mathcal H^s_{\infty}(F) \bigr\} \geq 1 - \rho_d$, one has 
\begin{equation} \label{mainthm-2-conclusion} 
\Delta(E, F) := \Bigl\{|x-y| : x \in E, y \in F \Bigr\} \supseteq [\bar{a}_{d}, \bar{b}_{d}], \quad \text{ provided } s \geq d - \bar{\varepsilon}_d. 
\end{equation} \label{mainthm2-parta}
\vskip0.1in 
\item For $d \geq 2$, there exists a constant $\rho_d \in (0, 1)$ with the following property. For every $\rho \in (0, \rho_d]$, one can find $a_{\rho}, b_{\rho}, \varepsilon_{\rho}$ depending on $\rho$ such that for all Borel sets $E, F \subseteq [0,1]^d$,  
\begin{equation} \label{EF-large}
\Delta(E, F) \supseteq [a_{\rho}, b_{\rho}], \; \text{ provided } \; \min \bigl\{\mathcal H^s_{\infty}(E), \mathcal H^s_{\infty}(F) \bigr\} \geq 1 - \rho \text{ and } s \geq d - \varepsilon. 
\end{equation}  \label{mainthm2-partb} 
\noindent The parameters $a_{\rho}, b_{\rho}$ and $\varepsilon_{\rho}$ tend to 0 as $\rho \rightarrow 0$. 
\vskip0.1in
\noindent More precisely, there exists a dimensional constant $c_0 > 0$ such that for any $\kappa \in (0,1)$, the parameters $a_{\rho}, b_{\rho}, \varepsilon_{\rho}$ can be chosen to obey 
\[ a_{\rho} = \rho^{c_0}, \quad b_{\rho} = a_{\rho}^{\kappa}= \rho^{c_0 \kappa}, \quad \varepsilon_{\rho} = \frac{c_0 \rho}{\log(1/\rho)}, \]
provided $\rho$ is sufficiently small depending on $\kappa$.     
\end{enumerate} 
\end{thm}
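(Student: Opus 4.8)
The plan is to convert the content hypothesis into a pair of Frostman measures and then extract an interval of distances from the strict positivity of the density of the associated distance measure, with the spectral gap condition of Kuca--Orponen--Sahlsten \cite{Kuca-Orponen-Sahlsten} providing the control of intermediate frequencies that is otherwise unavailable for sparse sets. One may first reduce to compact $E,F$: Hausdorff content is inner regular on Borel sets up to an arbitrarily small loss, and $\Delta(K_E,K_F)\subseteq\Delta(E,F)$ whenever $K_E\subseteq E$, $K_F\subseteq F$. By Frostman's lemma, $\mathcal H^s_\infty(E)\ge 1-\rho$ yields a nonzero Borel measure $\mu$ carried by $E$ with $\mu(\mathbb R^d)\ge c_d(1-\rho)$ and $\mu(B(x,r))\le r^s$ for all $x\in\mathbb R^d$, $r>0$; similarly one obtains $\nu$ on $F$. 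Since $s\ge d-\varepsilon$ with $\varepsilon$ small, in particular $s>(d+1)/2$, so $\mu$ and $\nu$ have finite $t$-energy $I_t(\mu),I_t(\nu)\le C_d$ for a fixed $t\in\bigl((d+1)/2,d\bigr)$, with $C_d$ independent of $E,F$. Following \cite{Kuca-Orponen-Sahlsten}, one then arranges --- after replacing $\mu,\nu$ by comparable measures still carried by $E,F$ if necessary --- that $\widehat\mu$ and $\widehat\nu$ are uniformly small on a dyadic band $|\xi|\sim R$, with $R=R(\rho)$ and the gain tending to $0$ as $\varepsilon\to 0$. This ``Salem at a single scale'' mechanism is what forces $s$ to be taken close to $d$ and, quantitatively, produces the admissible range $s\ge d-\varepsilon_\rho$ with $\varepsilon_\rho\asymp\rho/\log(1/\rho)$.

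Recall that for $s>(d+1)/2$ the distance measure $\lambda_{\mu,\nu}$ on $[0,\infty)$, the push-forward of $\mu\times\nu$ under $(x,y)\mapsto|x-y|$, has a continuous density --- a fact that goes back to Mattila and Sj\"olin \cite{Mattila-Sjolin-99} --- namely
\[ g(t)\;=\;c_d\,t^{d-1}\int_{\mathbb R^d}\widehat\mu(\xi)\,\overline{\widehat\nu(\xi)}\,\widehat\sigma(t|\xi|)\,d\xi, \]
$\sigma$ being surface measure on $S^{d-1}$; the integral converges absolutely because $|\widehat\sigma(r)|\lesssim_d r^{-(d-1)/2}$ and $I_t(\mu),I_t(\nu)<\infty$. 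If $g(t)>0$ then $\lambda_{\mu,\nu}$ charges every neighbourhood of $t$, so $E$ and $F$ contain points at distance arbitrarily close to $t$; as $E,F$ are compact, $t\in\Delta(E,F)$. It therefore suffices to produce an interval $[\bar a_d,\bar b_d]\Subset(0,\sqrt d\,)$ on which $g\ge\bar c_d>0$. Split the $\xi$-integral into high frequencies $|\xi|\gtrsim R$, the spectral-gap band $|\xi|\sim R$ (with the range of scales it dominates), and low frequencies $|\xi|\lesssim R$. The high range contributes $\lesssim_d R^{-\eta_d}$ by the decay of $\widehat\sigma$ and the uniform energy bound, hence is negligible once $R$ is large; the band range is $o_\varepsilon(1)$ by the spectral-gap gain, via Cauchy--Schwarz together with the local estimate $\int_{|\xi|\le R}|\widehat\mu|^2\lesssim_d R^{\,d-s}$ (which stays $1+O(\rho)$ for $R=\rho^{-O(1)}$ and $d-s=\varepsilon_\rho$, precisely because $\varepsilon_\rho\log(1/\rho)\asymp\rho$).

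The decisive step is the low-frequency range, which encodes the macroscopic distribution of $\mu$ and $\nu$. Here one must convert the content hypothesis --- with $s$ close to $d$ and $\rho$ small --- into a Steinhaus-type spreading statement, of the type that every cube of a fixed dimensional size carries a definite portion of the content of $E$ and of $F$, so that $E-F$ fills, in content, a fixed spherical shell $\{\bar a_d\le|z|\le\bar b_d\}$; fed through the formula for $g$, this keeps the low-frequency contribution bounded below on $[\bar a_d,\bar b_d]$, and assembling the three ranges gives $g\ge\bar c_d>0$ there. This is part~(a), with $\rho_d$ a fixed small dimensional constant. For part~(b) one tracks the dependence on $\rho$ through every step --- in the spectral-gap scale $R=R(\rho)$, in the dimensional scale of the spreading argument, and in the spectral-gap gain --- and then optimizes over the free parameter $\kappa$. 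The endpoints that emerge, $a_\rho=\rho^{c_0}$ and $b_\rho=a_\rho^\kappa$, are respectively the scale below which the spreading argument is empty (note that the lower endpoint cannot be pushed to $0$ in general --- cf.\ the discussion following \thmref{Boardman-Lemma} --- which is exactly why one obtains a shell bounded away from the origin, not a ball) and the range up to which the accumulated low-frequency errors remain controlled; the constraint $\varepsilon_\rho=c_0\rho/\log(1/\rho)$ is forced by keeping $R^{\,d-s}=1+O(\rho)$.

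I expect the main obstacle to be this low-frequency analysis: ruling out destructive interference between the macroscopic and moderate-scale contributions to $g$. A soft argument --- thickening $E$ and $F$ at a small scale $\delta$ --- yields at best that $\Delta(E,F)$ is $O(\delta)$-dense in an interval; promoting this to an honest interval is exactly what requires the passage through Frostman measures and the spectral gap, and makes the quantitative spreading estimate, uniform over all admissible $\mu$ and $\nu$, the heart of the proof.
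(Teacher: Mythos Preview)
Your plan is essentially the paper's approach: the same configuration integral $g(t)=\int\widehat\mu\,\overline{\widehat\nu}\,\widehat\sigma(t\xi)\,d\xi$, the same three-range frequency split, energy for the high range, spectral gap \`a la \cite{Kuca-Orponen-Sahlsten} for the middle range, and the same quantitative bookkeeping leading to $a_\rho=\rho^{c_0}$, $b_\rho=a_\rho^{\kappa}$, $\varepsilon_\rho\asymp\rho/\log(1/\rho)$.

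Where your sketch diverges from the paper is in \emph{where} the difficulty sits and how it is resolved. You treat the spectral gap as something one ``arranges'' after the fact on a generic Frostman measure, and then flag the low-frequency positivity as the main obstacle, to be handled by a separate Steinhaus-type spreading argument on $E-F$. In the paper these are not two separate problems: a single explicit construction of $\mu$ (and $\nu$) handles both simultaneously. One first uses the large-content hypothesis to show that \emph{every} dyadic cube $Q$ of a fixed generation $T$ satisfies $\mathcal H^s_\infty(E\cap Q)\ge\tfrac12\ell(Q)^s$ (this is where ``spreading'' actually enters, and where $\varepsilon_\rho\asymp\rho/\log(1/\rho)$ is forced), then applies Frostman on each such $Q$ to get $\overline\mu_Q$, and sets
\[
\mu=\sum_{Q\in\mathscr D_T} w(Q)\,\frac{\overline\mu_Q}{\|\overline\mu_Q\|},\qquad w(Q)=\int_Q\varphi,
\]
with $\varphi$ a fixed smooth bump of integral $1$. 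This measure inherits the Frostman ball condition (giving uniform energy), and satisfies $|\widehat\mu(\xi)-\widehat\varphi(\xi)|\lesssim 2^{-T}|\xi|$, which is precisely the spectral gap on the annulus $\delta/b\le|\xi|\le a^{-N}$. The low-frequency term is then \emph{not} an obstacle: for a single set one simply uses $\mathbb F_\mu(\delta/b)\ge c_d$ (any probability measure in $[0,1]^d$ has $\widehat\mu(0)=1$), and for two sets the extra positivity condition \eqref{third-condition-mu-nu} holds because the mollifications $\mu_{\mathtt e},\nu_{\mathtt e}$ both look like $\varphi$ at scale $2^{-T}$ and hence overlap. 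So the ``spreading'' you anticipate does occur, but it is absorbed into the measure construction rather than run as a separate low-frequency argument; once the right $\mu,\nu$ are in hand, the low-frequency part is the easiest of the three.
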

\noindent Here $\mathcal H^s_{\infty}$ denotes the $s$-dimensional dyadic Hausdorff content; its definition has been recalled in Section \ref{preliminaries-section} for the reader's convenience. While technically not a measure (though an outer measure), it mimics the role of $\lambda_d$ in Theorem \ref{Boardman-Lemma}.  
\vskip0.1in 
\noindent {\em{Remarks:}} 
\begin{enumerate}[1.]
\item The proof of Theorem \ref{mainthm-2} appears in Section \ref{mainthm-2-proof-section}, conditional on a few important propositions stated earlier in Section \ref{Energy-and-Spectral-Gap-section}. These propositions are proved in Sections \ref{mainprop-1-section} and \ref{mainprop-2-section}. 
\vskip0.1in 
\item The size condition \eqref{EF-large} on $\mathcal H_{\infty}^s(E), \mathcal H_{\infty}^{s}(F)$ implies in particular that the Hausdorff dimensions of both $E$ and $F$ are at least $s$. The converse is, of course, not true. 
\vskip0.1in
\item Since $\mathcal H_{\infty}^d = \lambda_d$, part \eqref{mainthm2-parta} of Theorem \ref{mainthm-2}
recovers a weaker form of Theorem \ref{Boardman-Lemma} for $s=d$, i.e., it produces a uniform interval in $\Delta(E, F)$ away from zero, not containing it.  
\vskip0.1in 
\item Explicit examples of $\Delta(E)$ where $\mathcal H_{\infty}^{s}(E)$ is large have been worked out in Section \ref{examples-applications-section}. 
\vskip0.1in
\item An extensive literature exists on classes of fractal sets whose Hausdorff content and Hausdorff measure match at the critical dimension \cite{{Bandt-Graf-1992},{Farkas-Fraser-2015}, {Ma-Wu-2021}}; Theorem \ref{mainthm-2} applies to all such sets for which this quantity is large. In particular, these include self-similar sets in $[0,1]^d$ of Hausdorff dimension $s$ arbitrarily close to $d$ and Hausdorff measure (hence Hausdorff content) arbitrarily close to 1; see \cite[Corollary 2.3]{Farkas-Fraser-2015} \cite[Theorem 1.2]{Ma-Wu-2021}. Random Cantor sets in $\mathbb R^d$ \cite{{2009LabaMalabika},{2011LabaMalabika},{DSS-2011}, {2020SureshMalabika}, {Shmerkin-Suomala-2020}} provide another rich class of examples where the largeness of Hausdorff content can be tested. 
\vskip0.1in
\item Apart from its application in concrete examples, Theorem \ref{mainthm-2} is also instrumental in deducing finer properties of $\Delta(E)$ for more general sets $E$ that have large Hausdorff dimension but not necessarily large Hausdorff content; see Theorems \ref{mainthm-cor} and \ref{mainthm-3} below. 
\vskip0.1in
\item The proof yields an explicit dimension-dependent threshold value of $\rho_d$, below which Theorem \ref{mainthm-2} holds. Unlike the analysis in \cite{Boardman-1970} which provides sharp constants for $d = 1$, the value of $\rho_d$ obtained in our proof is not optimal.  
\vskip0.1in
\item \label{Remark-intervals} Superficially, Theorem \ref{mainthm-2} appears to provide a single interval contained in $\Delta(E, F)$, of the form $[a_{\rho}, b_{\rho}]$ for a given $\rho$; a moment's reflection however reveals that it establishes, in general, a union of such intervals in $\Delta(E, F)$. Indeed, \eqref{mainthm-2-conclusion} implies that $\Delta(E, F)$ contains an interval of the form $[a_{\rho}, b_{\rho}]$ for every $\rho$ obeying $\mathtt h_{s}(E, F) \geq 1 -\rho$, where
\[ \mathtt h_s(E, F) := \min \bigl\{\mathcal H^s_{\infty}(E), \mathcal H^s_{\infty}(F) \bigr\}  \]  is the minimal $s$-dimensional Hausdorff content of $E$ and $F$. Thus
\[ \Delta(E, F) \supseteq \bigcup_{\rho} \Bigl\{[a_{\rho}, b_{\rho}] : \rho \in (0, \rho_d) \text{ such that } d -s \leq \varepsilon_{\rho}, \; \rho \geq 1 - \mathtt h_s(E, F) \Bigr\}. \]
\end{enumerate}
\subsection{A quantitative Mattila-Sj$\ddot{\text{o}}$lin theorem} \label{section: MattilaSjolin}
Moving from the dyadic content perspective of Section \ref{section: quantitative Steinhaus} to classical Hausdorff dimension methods, we recall the foundational context for our quantitative results. Much of the interest and activity surrounding distance sets stems from Falconer's distance set conjecture. Originating in \cite{Falconer-1986}, the conjecture postulates that if $\dim_{\mathbb H}(E) > d/2$ for some Borel set $E \subseteq \mathbb R^d$, $d \geq 2$, then $\Delta(E)$ must have positive Lebesgue measure. The definitions of $s$-dimensional Hausdorff measure $\mathbb H^s$ and Hausdorff dimension $\dim_{\mathbb H}$ have been briefly recalled in Section \ref{preliminaries-section}.  An early result of Mattila and Sj$\ddot{\text{o}}$lin \cite{Mattila-Sjolin-99} following the work of Falconer \cite{Falconer-1986} shows that for sets of large enough Hausdorff dimension, the conjecture  is true in a stronger sense.  
\begin{theorem}{\cite{Mattila-Sjolin-99}}\label{mainthm-1} 
For $d \geq 2$, any Borel set in $\mathbb R^d$ of Hausdorff dimension strictly larger than $(d+1)/2$ has a distance set that not only has positive Lebesgue measure but in fact has non-empty interior. 
\end{theorem}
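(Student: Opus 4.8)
The plan is to realize a nonempty open subset of $\Delta(E)$ as the positivity set of the density of a \emph{distance measure} attached to a Frostman measure on $E$. First I would fix $s$ with $(d+1)/2 < s < \dim_{\mathbb H}(E)$; then $\mathcal H^s(E) = \infty$, so by Frostman's lemma there is a compact $K \subseteq E$ carrying a Borel probability measure $\mu$ with $\mu(B(x,r)) \le C r^s$ for all $x \in \mathbb R^d$, $r > 0$. This ball condition makes the Riesz energy $I_{s'}(\mu) = \iint |x-y|^{-s'}\,d\mu(x)\,d\mu(y)$ finite for every $s' < s$, and by the classical identity $I_{s'}(\mu) = c(d,s')\int_{\mathbb R^d}|\widehat\mu(\xi)|^2 |\xi|^{s'-d}\,d\xi$. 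I fix once and for all an exponent $s' \in \bigl((d+1)/2,\, s\bigr)$, so that $\int_{\mathbb R^d}|\widehat\mu(\xi)|^2|\xi|^{s'-d}\,d\xi < \infty$ (note $0 < s' < d$). Since $\Delta(K) \subseteq \Delta(E)$, it suffices to find an interval inside $\Delta(K)$.

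Next, let $\widetilde\mu$ be the reflection of $\mu$ through the origin, put $\nu := \mu * \widetilde\mu$ — a compactly supported probability measure with $\widehat\nu = |\widehat\mu|^2 \ge 0$ — and let $\delta(\mu)$ be the push-forward of $\nu$ under $z \mapsto |z|$, i.e.\ the positive measure on $[0,\infty)$ with $\int g\,d\delta(\mu) = \iint g(|x-y|)\,d\mu(x)\,d\mu(y)$. By construction $\operatorname{supp}\delta(\mu) \subseteq \Delta(K)$, which is closed since $K$ is compact. The claim is that $\delta(\mu)$ is absolutely continuous on $(0,\infty)$ with a \emph{continuous} density $f$. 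Testing $\delta(\mu)$ against normalized indicators of thin spherical shells $\{t-\varepsilon < |z| < t+\varepsilon\}$, letting $\varepsilon \to 0$, and applying Parseval produces the candidate
\[ f(t) = c_d \int_{\mathbb R^d} |\widehat\mu(\xi)|^2\, \widehat{\sigma_t}(\xi)\, d\xi, \]
where $\sigma_t$ is the surface measure on the sphere $\{|z| = t\}$; making this rigorous needs only a mollification of $\sigma_t$ and Fubini, both legitimized by the absolute convergence proved next.

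The heart of the matter — and the step I expect to be the main obstacle — is the stationary-phase estimate $|\widehat{\sigma_t}(\xi)| \le C\, t^{d-1}\min\{1,\,(t|\xi|)^{-(d-1)/2}\}$, which for $t$ in any fixed compact subinterval of $(0,\infty)$ reduces to $|\widehat{\sigma_t}(\xi)| \lesssim \min\{1,\,|\xi|^{-(d-1)/2}\}$. Because $s' > (d+1)/2$ we have $-(d-1)/2 < s' - d$, hence $|\xi|^{-(d-1)/2} \lesssim |\xi|^{s'-d}$ for $|\xi| \ge 1$, and therefore
\[ \int_{\mathbb R^d} |\widehat\mu(\xi)|^2\, \bigl|\widehat{\sigma_t}(\xi)\bigr|\, d\xi \;\lesssim\; \int_{|\xi| \le 1} |\widehat\mu(\xi)|^2\, d\xi \;+\; \int_{|\xi| \ge 1} |\widehat\mu(\xi)|^2\, |\xi|^{s'-d}\, d\xi \;<\; \infty, \]
uniformly for $t$ in that compact subinterval, using $|\widehat\mu| \le 1$ in the first integral and $I_{s'}(\mu) < \infty$ in the second. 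Thus $f$ is well defined and locally bounded on $(0,\infty)$, and its continuity follows from the smooth dependence of $\widehat{\sigma_t}(\xi)$ on $t$ and dominated convergence against the $t$-uniform integrable majorant just produced. This is exactly where the exponent $(d+1)/2$ is forced — it is the smallest $s$ for which the $(d-1)/2$ decay of the spherical transform is absorbed by the energy integral — and also why the argument lives in $d \ge 2$ (for $d = 1$ the transform of $\sigma_t$ does not decay, and the hypothesis $\dim_{\mathbb H}(E) > 1$ is vacuous).

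Finally, $f \ge 0$ almost everywhere since it is the density of a positive measure, hence $f \ge 0$ everywhere by continuity, while $\int_0^\infty f(t)\,dt = \delta(\mu)\bigl((0,\infty)\bigr) = 1 > 0$ because $\mu$ has no atoms. Therefore $U := \{t > 0 : f(t) > 0\}$ is a nonempty open set. Every point of $U$ lies in $\operatorname{supp}\delta(\mu) \subseteq \Delta(K) \subseteq \Delta(E)$, so $\operatorname{int}(\Delta(E)) \supseteq U \ne \emptyset$, which is the assertion.
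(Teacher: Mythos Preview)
Your proof is correct and follows exactly the Mattila--Sj\"olin strategy that the paper sketches (this theorem is a cited prior result, not proved in the paper): take a Frostman measure $\mu$ on $E$, push $\mu*\widetilde\mu$ forward to a distance measure on $[0,\infty)$, and use the $(d-1)/2$ decay of $\widehat{\sigma_t}$ together with the finiteness of $I_{s'}(\mu)$ for some $s' > (d+1)/2$ to show the density is continuous, hence has nonempty open positivity set contained in $\Delta(E)$. The paper's sketch mentions that the density is in fact H\"older continuous or smooth depending on $s$, but your continuity argument already suffices for the conclusion.
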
 
\noindent In light of Steinhaus's result and Theorem \ref{mainthm-1}, it is tempting to wonder whether $\Delta(E)$ might contain intervals specifically of the form $[0, a)$ if the Hausdorff dimension $s = \dim_{\mathbb H}(E) > (d+1)/2$. This, however, is not true in general even for $s$ arbitrarily close to $d$, as has been  pointed out in \cite{Mattila-Sjolin-99}. This observation is based on an example given in \cite[Theorem 2.4]{Falconer-1986}; see Section \ref{examples-applications-section} for a discussion of this example. Thus an interval in $\Delta(E)$ provided by Theorem \ref{mainthm-1} could in general be of the form $(a, b)$ for some $0 < a < b$. 
\vskip0.1in
\noindent Further, if $E$ has large Hausdorff dimension but not necessarily large Hausdorff content (so that Theorem \ref{mainthm-2} no longer applies), the length $(b-a)$ could be arbitrarily small. It is therefore of interest to study  the structure of $\Delta(E)$ near the origin for general Lebesgue-null sets $E$ with large Hausdorff dimension.  A surprising consequence of Theorem \ref{mainthm-2} is that it provides such a structure theorem for general Borel sets, which in turn leads to numerous other consequences, as detailed in the sequel. 
\vskip0.1in
\noindent While Theorem \ref{mainthm-1} ensures the existence of an interval in $\Delta(E)$, its proof is qualitative and non-constructive. The proof strategy in \cite{Mattila-Sjolin-99} is as follows: if $\mathbb H^s(E) > 0$ for some $s > (d+1)/2$, then a certain non-trivial measure supported on $\Delta(E)$ admits a density function that is either smooth or H$\ddot{\text{o}}$lder continuous (depending on $s$). As a result, the support $\Delta(E)$ must contain an interval. Theorem \ref{mainthm-1} does not identify which intervals lie in $\Delta(E)$, nor does it describe the number, size, or spatial distribution of connected components of $\Delta(E)$. Finding a version of the Mattila-Sj$\ddot{\text{o}}$lin theorem that makes this quantitative phenomena explicit is a second objective of this article. Theorem \ref{mainthm-cor} is a structure theorem that specifies the behaviour of distances near zero, for sets of large, but less than full, Hausdorff dimension. A noteworthy feature is that the intervals of $\Delta(E)$ accumulating near the origin depend solely on the sizes of cubes where $E$ has large Hausdorff content. 
\begin{thm} \label{mainthm-cor} 
Given $d \geq 2$ and $\rho \in (0,1)$ sufficiently small, there exist constants $0 < a_{\rho} < b_{\rho}<1$ and $\varepsilon_{\rho} > 0$ depending only on $d$ and $\rho$ with the following property.  
\vskip0.1in
\noindent Let $E \subseteq [0,1]^d$ be any Borel set whose $s$-dimensional Hausdorff measure $\mathbb H^s(E)$ is positive for some $s > d - \varepsilon_{\rho}$. Let $\mathcal D_{\rho}(E; s)$ denote the collection of all dyadic cubes $Q$ contained in $[0,1]^d$ such that  
\begin{equation} \mathcal H_{\infty}^{s}(E \cap Q) > (1 - \rho) \ell(Q)^s,
 \label{high-density-on-cube}
\end{equation} 
where $\ell(Q)$ denotes the side-length of the cube $Q$. Then, 
 \begin{equation}
 \label{Delta-intervals} \Delta(E) \supseteq 
 \bigcup \Bigl\{\ell(Q)[a_{\rho},b_{\rho}]: \; Q \in \mathcal D_{\rho}(E;s) \Bigr\}.   
 \end{equation}  
\end{thm}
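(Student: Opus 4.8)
The plan is to deduce Theorem~\ref{mainthm-cor} from part (b) of Theorem~\ref{mainthm-2} by a cube-by-cube rescaling argument, using that both the distance set and the dyadic Hausdorff content transform cleanly under dilations by powers of $2$ and translations by dyadic rationals.

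First I would fix a cube $Q \in \mathcal D_\rho(E;s)$ with side-length $\ell(Q) = 2^{-k}$, and let $x_Q$ be the vertex of $Q$ closest to the origin, so that every coordinate of $x_Q$ is an integer multiple of $2^{-k}$. Consider the affine map $T_Q(x) := 2^k(x - x_Q)$, which sends $Q$ bijectively onto $[0,1]^d$ and carries the dyadic grid of scale $2^{-j}$ (for $j \ge k$) onto the dyadic grid of scale $2^{-(j-k)}$. Set $E_Q := T_Q(E \cap Q) \subseteq [0,1]^d$. Since a dyadic cube either contains $Q$, lies inside $Q$, or is disjoint from $Q$, any dyadic cover of $E \cap Q$ may be taken to consist of dyadic cubes contained in $Q$; applying $T_Q$ to such covers, and to dyadic covers of $E_Q$, sets up a bijection that multiplies the $s$-dimensional cost by $2^{ks} = \ell(Q)^{-s}$. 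Hence $\mathcal H^s_\infty(E_Q) = \ell(Q)^{-s}\,\mathcal H^s_\infty(E \cap Q)$, which is $> 1 - \rho$ by \eqref{high-density-on-cube}.

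Next I would apply part (b) of Theorem~\ref{mainthm-2}, i.e. the inclusion \eqref{EF-large}, with $E = F = E_Q$: since $\rho \in (0,\rho_d]$, $\mathcal H^s_\infty(E_Q) \ge 1 - \rho$, and $s > d - \varepsilon_\rho$, we obtain $\Delta(E_Q) \supseteq [a_\rho, b_\rho]$. Because translations preserve distances and the dilation scales them by $2^k$, one has $\Delta(E_Q) = \ell(Q)^{-1}\,\Delta(E \cap Q)$, so $\Delta(E \cap Q) = \ell(Q)\,\Delta(E_Q) \supseteq \ell(Q)[a_\rho, b_\rho]$. Since $E \cap Q \subseteq E$ forces $\Delta(E \cap Q) \subseteq \Delta(E)$, taking the union over all $Q \in \mathcal D_\rho(E;s)$ yields \eqref{Delta-intervals}.

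There is no serious obstacle here: the argument is a self-similarity reduction to the already-established Theorem~\ref{mainthm-2}. The one point that must be handled with a little care is the transformation law for the \emph{dyadic} Hausdorff content under $T_Q$ --- one needs the dilation factor to be a power of $2$ and the translation to respect the dyadic grid, which is precisely why the statement is framed in terms of dyadic cubes $Q$ and dyadic content. Finally, the hypothesis $\mathbb H^s(E) > 0$ is not used in the inclusion itself; its role is to guarantee (via a density property of Hausdorff content) that $\mathcal D_\rho(E;s) \ne \emptyset$, so that Theorem~\ref{mainthm-cor} is non-vacuous and in particular recovers, in quantitative form, the Mattila--Sj\"olin conclusion that $\text{int}(\Delta(E)) \ne \emptyset$.
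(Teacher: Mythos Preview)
Your proposal is correct and matches the paper's own proof essentially line for line: the paper also fixes $Q\in\mathcal D_\rho(E;s)$, applies the affine map $\mathbb T_Q(y)=(y-c(Q))/\ell(Q)$ sending $Q$ onto $[0,1]^d$, uses that $\mathbb T_Q$ preserves the dyadic structure to deduce $\mathcal H^s_\infty(\mathbb T_Q(E\cap Q))\geq 1-\rho$, and then invokes Theorem~\ref{mainthm-2}(b) with $E=F=\mathbb T_Q(E\cap Q)$. Your observation that the hypothesis $\mathbb H^s(E)>0$ serves only to make $\mathcal D_\rho(E;s)$ non-empty is also exactly what the paper records separately (as Lemma~\ref{density-lemma}).
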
 
\noindent {\em{Remarks:}} 
\begin{enumerate}[1.]
\item Theorem \ref{mainthm-cor} is proved in Section \ref{mainthm-cor-proof-section}, assuming Theorem \ref{mainthm-2}. 
\vskip0.1in
\item The constants $a_{\rho}, b_{\rho}$ and $\varepsilon_{\rho}$ in Theorem \ref{mainthm-cor} are identical to those in \eqref{EF-large} in Theorem \ref{mainthm-2}. 
\vskip0.1in 
\item The defining condition \eqref{high-density-on-cube} says that $\mathcal D_{\rho}(E;s)$ consists of dyadic cubes on which $E$ has nearly full $s$-dimensional density. This collection is non-empty for any Borel set $E \subseteq [0,1]^d$ with $\mathbb H^s(E) > 0$ and any $\rho \in (0,1)$. This has been stated and proved in Lemma \ref{density-lemma}. Thus the theorem applies non-vacuously to all sets of positive $\mathbb H^s$-measure.
\vskip0.1in 
\item Depending on the relative sizes of $\ell(Q)$, $a_{\rho}$ and $b_{\rho}$, the intervals $\ell(Q)[a_{\rho},b_{\rho}]$ given in \eqref{Delta-intervals} may overlap or remain disjoint. Explicit examples 
analyzing the contributions of $\Delta(E \cap Q)$ to $\Delta(E)$ have been worked out in Section \ref{examples-applications-section}.
\vskip0.1in
\item Theorem \ref{mainthm-cor} can be used as a tool to study the fundamental and classical ``Steinhaus property''. A set $E$ is said to possess this property if $\Delta(E)$ contains an interval of the form $[0, a)$ for some $a > 0$. It is well-known that this property is not limited exclusively to sets of positive measure, or to easy examples of sets that have non-trivial path-connected components, such as curves or surfaces. Many product Cantor sets, including the product of the standard middle-third Cantor set with itself, have this property. There are stronger results in this direction; for instance, \cite[Corollary 10]{Yavicoli-2022} establishes the existence of a set in $\mathbb R^d$ whose directional distance set contains a uniform interval containing zero in every direction. The following corollary, proved in Lemmas \ref{Quasiregular-NoInterval-Lemma} and \ref{Quasi-regular-Interval-Lemma} of Section \ref{Steinhaus-property-section}, provides new examples of this phenomenon where the Steinhaus property is attained by sets comprising Cantor blocks, where the constituent blocks lack the Steinhaus feature. This shows that the Steinhaus property can emerge from combining smaller sets that individually lack it. 
\end{enumerate}
\begin{cor} \label{Steinhaus-sparse-corollary}
Given $d \geq 2$ and $\rho \in (0,1)$, let $\varepsilon_{\rho}, b_{\rho} \in (0,1)$ be as in Theorem \ref{mainthm-cor}.  Then for every $s \in (d-\varepsilon_{\rho}, d]$, there exists an infinite collection $\mathscr{E}$ of self-similar Cantor sets in $[0,1]^d$ with the following properties. Every $\mathtt E^{\ast} \in \mathscr{E}$ has Hausdorff dimension at most $s$, 
\[ \Delta(\mathtt E^{\ast}) \not\supseteq [0, a) \text{ for any } a > 0, \text{i.e. } \mathtt E^{\ast} \text{ does not have the Steinhaus property,}  \]  
but there exists a finite union $\mathtt K^{\ast}$ of affine copies of $\mathtt E^{\ast}$ which does, namely, 
\[ \Delta(\mathtt K^{\ast}) \supseteq [0, b_{\rho}]. \]  
\end{cor} 
\subsection{Sparse unbounded sets containing all sufficiently large distances} \label{all-suff-large-dis-intro-section} 
Apart from identifying size dependencies in the structure of the distance sets and providing concrete information for specific examples,  quantitative statements like Theorems \ref{mainthm-2} and \ref{mainthm-cor} have other applications. One such application brings us to the third theme listed on page \pageref{list-of-themes}, namely, when sparse unbounded sets contain all sufficiently large distances. Local structure theorems for configuration sets can often be utilized to deduce abundance of these configurations in certain large unbounded sets. A simplified example of this phenomenon can be observed in the following easy consequence of Theorem \ref{Boardman-Lemma}; it illustrates how density at large scales forces abundance of distances in unbounded sets.
\begin{theorem}{\cite{Boardman-1970}}\label{Boardman-Corollary}
For any $d \geq 1$, let $A \subseteq \mathbb R^d$ be a set with the property
\begin{equation}  \limsup_{R \rightarrow \infty} \sup_{x \in \mathbb R^d} \frac{\lambda_d(A \cap Q(x;R))}{R^d} > \frac{1}{2}, \quad Q(x;R) := x + [0, R]^d. \label{Boardman-condition} \end{equation} 
Then $\Delta(A)$ contains all distances, i.e. $\Delta(A) = [0, \infty)$.  
\end{theorem}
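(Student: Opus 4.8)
The plan is to deduce the statement from Theorem~\ref{Boardman-Lemma} by a rescaling argument: a cube of enormous side-length $R$ on which $A$ occupies more than half the volume looks, after dilation by $1/R$, like a subset of $[0,1]^d$ of Lebesgue measure exceeding $\tfrac12$, and the fixed target distance $t$ shrinks to $t/R \to 0$, which the (small) universal Steinhaus interval supplied by Theorem~\ref{Boardman-Lemma} will eventually cover.

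First I would unpack the hypothesis. Since the $\limsup$ exceeds $\tfrac12$, there is $\eta > 0$ and a sequence $R_n \uparrow \infty$ together with points $x_n \in \mathbb R^d$ such that
\[ \lambda_d\bigl(A \cap Q(x_n;R_n)\bigr) \geq \left(\tfrac12 + \eta\right) R_n^d \qquad \text{for all } n. \]
(One picks $R_n$ realizing the outer $\limsup$ up to a small error, then $x_n$ realizing the inner supremum up to a further error, and relabels $\eta$ accordingly.) Set $\rho := \tfrac12 - \eta < \tfrac12$ and let $c_\rho > 0$ be the constant from Theorem~\ref{Boardman-Lemma}; note that $\rho$, hence $c_\rho$, is fixed once and for all and works for every $n$.

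Next, fix an arbitrary $t > 0$; the goal is $t \in \Delta(A)$. Choose $n$ so large that $t / R_n < c_\rho \sqrt d$. By inner regularity of Lebesgue measure, pick a Borel (indeed compact) set $K_n \subseteq A \cap Q(x_n;R_n)$ with $\lambda_d(K_n) \geq (\tfrac12 + \eta) R_n^d$; here one may assume $A$ is Lebesgue measurable, or replace $\lambda_d$ by inner Lebesgue measure throughout, since the argument only ever uses a Borel subset of $A$. Let $\phi(y) := (y - x_n)/R_n$ be the affine bijection carrying $Q(x_n;R_n)$ onto $[0,1]^d$, and put $E := \phi(K_n) \subseteq [0,1]^d$, a Borel set with $\lambda_d(E) = R_n^{-d}\lambda_d(K_n) \geq \tfrac12 + \eta = 1 - \rho$. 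Applying Theorem~\ref{Boardman-Lemma} with $E = F$ yields $\Delta(E) \supseteq [0, c_\rho \sqrt d] \ni t/R_n$, so there exist $u, v \in E$ with $|u - v| = t/R_n$. Then $p := \phi^{-1}(u)$ and $q := \phi^{-1}(v)$ lie in $K_n \subseteq A$ and satisfy $|p - q| = R_n|u-v| = t$, whence $t \in \Delta(A)$. Since $t > 0$ was arbitrary and $0 \in \Delta(A)$ (the hypothesis forces $A \neq \emptyset$), we conclude $\Delta(A) = [0,\infty)$.

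The argument is essentially bookkeeping once Theorem~\ref{Boardman-Lemma} is available; the only points requiring a little care are the extraction of the sequence $(R_n, x_n)$ from the $\limsup$ in a way that produces a \emph{single} $\rho < \tfrac12$ valid for all $n$ (so that one $c_\rho$ suffices), and the passage to a Borel subset $K_n$ so that Theorem~\ref{Boardman-Lemma}, stated for Borel sets, applies. There is no substantial analytic obstacle: the mechanism is simply that a fixed distance $t$ becomes arbitrarily small relative to the scale $R_n$ of the near-full cube, hence is absorbed by the universal Steinhaus interval $[0, c_\rho\sqrt d]$ of the rescaled set.
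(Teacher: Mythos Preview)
Your proposal is correct and follows essentially the same route as the paper: rescale $A \cap Q(x_n;R_n)$ by $1/R_n$ to land in $[0,1]^d$ with Lebesgue measure $\geq 1-\rho > \tfrac12$, apply Theorem~\ref{Boardman-Lemma}, and rescale back. The paper phrases the conclusion as $\Delta(A) \supseteq \bigcup_n R_n[0, c_\rho\sqrt d] = [0,\infty)$ rather than fixing $t$ first, but this is the same argument; your extra care in extracting a single $\rho$ valid for all $n$ and in passing to a Borel subset is appropriate.
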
 
\noindent To clarify, Theorem \ref{Boardman-Lemma} provides a bound on the size of the distance set associated to a bounded set of positive Lebesgue measure; this leads to a criterion, outlined in Theorem \ref{Boardman-Corollary}, for an unbounded set to contain all distances. The proof of Theorem \ref{Boardman-Corollary} follows from Theorem \ref{Boardman-Lemma} by scaling; see Section \ref{Appendix-1-large-distance-sets}. 
\vskip0.1in 
\noindent In view of Theorem \ref{Boardman-Corollary}, it is natural to ask about properties of $\Delta(A)$ when the $\limsup$ in \eqref{Boardman-condition} is less than or equal to $\frac12$. An important landmark in this direction is the following result, obtained independently and roughly concurrently by three sets of authors, with three distinct proofs:  Bourgain \cite{1986Bourgain} using harmonic analysis; Falconer and Marstrand \cite{Falconer-Marstrand-1986} using geometric measure theory; Furstenberg, Katznelson and Weiss \cite{FKW-1990} using ergodic theory.             
\begin{theorem}[\cite{{1986Bourgain}, {Falconer-Marstrand-1986},{FKW-1990}}] \label{all-large-dist-thm}
Suppose $d \geq 2$, and that $A \subseteq \mathbb R^d$ has positive upper density, i.e., 
\begin{equation}  \limsup_{R \rightarrow \infty} \frac{\lambda_d(A \cap B(0;R))}{\lambda_d(B(0;R))} > 0, \label{Bourgain-condition} \end{equation} 
where $B(x;R)$ denotes a ball of radius $R$ centred at $x$. Then $A$ contains all sufficiently large distances; namely, there exists $R_0 = R_0(A) > 0$ with the property that for every $R \geq R_0$, one can find $x, y \in A$ with $|x-y| = R$. 
\end{theorem}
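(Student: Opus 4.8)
The plan is to prove Theorem~\ref{all-large-dist-thm} along the ergodic-theoretic route of Furstenberg, Katznelson and Weiss \cite{FKW-1990}; a purely harmonic-analytic argument in the spirit of \cite{1986Bourgain} is also available, and I indicate it at the end. First I would note that the density in \eqref{Bourgain-condition} is dominated by the upper Banach density of $A$, so that the hypothesis gives $\delta := \limsup_{R \to \infty}\sup_{x \in \mathbb R^d}\lambda_d(A \cap B(x;R))/\lambda_d(B(x;R)) > 0$, and one may work with Banach density throughout. I would then invoke a Furstenberg-type correspondence principle, adapted to $\mathbb R^d$-actions as in \cite{FKW-1990}: there exist a probability space $(X, \mathcal B, m)$, a jointly measurable measure-preserving action $(T^v)_{v \in \mathbb R^d}$ of $\mathbb R^d$ on $X$, and $B \in \mathcal B$ with $m(B) = \delta$, such that $\,\bar d(A \cap (A - v)) \geq m(B \cap T^{-v}B)\,$ for every $v \in \mathbb R^d$ (here $\bar d$ denotes upper Banach density). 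It therefore suffices to show that for every sufficiently large $t$ there is a vector $v$ with $|v| = t$ and $m(B \cap T^{-v}B) > 0$: this forces $A \cap (A - v) \neq \emptyset$, i.e.\ the existence of $x \in A$ with $x + v \in A$, two points of $A$ at distance exactly $t$.

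The analytic core is a spherical mean ergodic theorem. Let $\sigma_t$ denote the rotation-invariant probability measure on $\{|v| = t\}$ and define the spherical averaging operators $A_t g(x) := \int g(T^v x)\, d\sigma_t(v)$ on $L^2(m)$; since $\sigma_t$ is symmetric these are self-adjoint contractions. By Stone's theorem the unitary representation $v \mapsto (g \mapsto g \circ T^v)$ admits a projection-valued spectral measure $E$ on $\mathbb R^d$, and $A_t = \int_{\mathbb R^d} \widehat{\sigma_1}(t\xi)\, dE(\xi)$ because $\widehat{\sigma_t}(\xi) = \widehat{\sigma_1}(t\xi)$ is real and even. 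Since $d \geq 2$, the method of stationary phase gives $\widehat{\sigma_1}(\eta) \to 0$ as $|\eta| \to \infty$, whereas $\widehat{\sigma_1}(0) = 1$; hence $\widehat{\sigma_1}(t\xi) \to \mathbf 1_{\{0\}}(\xi)$ boundedly as $t \to \infty$, and dominated convergence against the finite measure $\langle E(\cdot)g, g\rangle$ yields $A_t g \to E(\{0\})g = Pg$ in $L^2(m)$, where $P$ is the orthogonal projection onto the $(T^v)$-invariant functions. Consequently, as $t \to \infty$,
\[
\int_{|v| = t} m\bigl(B \cap T^{-v}B\bigr)\, d\sigma_t(v) \;=\; \bigl\langle \mathbf 1_B,\, A_t \mathbf 1_B\bigr\rangle \;\longrightarrow\; \bigl\langle \mathbf 1_B,\, P\mathbf 1_B\bigr\rangle \;=\; \|P\mathbf 1_B\|_{L^2(m)}^2 \;\geq\; m(B)^2 \;=\; \delta^2 \;>\; 0,
\]
the last inequality being Cauchy--Schwarz together with $P\mathbf 1 = \mathbf 1$ and $m(X) = 1$. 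Hence there is $R_0 = R_0(d, \delta)$ so that the left-hand side is strictly positive for every $t \geq R_0$; then $m(B \cap T^{-v}B) > 0$ on a set of $v$ of positive $\sigma_t$-measure, in particular for some $v$ with $|v| = t$, and by the reduction above this produces two points of $A$ at distance $t$.

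I expect the main difficulty to lie not in the soft convergence $A_t \to P$, which is short once the spectral theorem is available, but in two places: first, in setting up the correspondence principle carefully enough that a dynamical intersection of positive measure is transferred to points of $A$ at the \emph{exact} prescribed distance rather than an approximate one; and second, in the fact that the convergence $A_t \to P$ carries no rate uniform over all systems and all $L^2$ functions, which is precisely why the conclusion is all \emph{sufficiently large} distances rather than all distances, in line with the obstruction recorded by Rice \cite{2020Rice}. As an alternative, purely harmonic-analytic approach, I would fix a ball $B(0;R)$ in which $A$ has density at least $\delta/2$, set $f = \mathbf 1_{A \cap B(0;R)}$, and exploit that $A$ missing the distance $t$ forces $0 = \int f\,(f * \sigma_t) = \int_{\mathbb R^d} |\widehat f(\xi)|^2\, \widehat{\sigma_1}(t\xi)\, d\xi$. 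Splitting this integral into the range $|\xi| \lesssim 1/R$, where $\widehat{\sigma_1}(t\xi)$ is close to $1$ and (by the near-constancy of $\widehat f$ at scale $1/R$) the contribution is $\gtrsim \delta^2 R^d$ provided $R \gtrsim t$, and the complementary range, one would aim at a net positive lower bound. The genuine obstacle here is the band of intermediate frequencies $|\xi| \sim 1/t$, on which $\widehat{\sigma_1}(t\xi)$ has neither a definite sign nor any useful smallness; Bourgain's argument in \cite{1986Bourgain} handles this by extracting a large Fourier coefficient of $f$ at such a frequency and running a density-increment iteration, and reproducing that step is what I would expect to be the crux of the harmonic-analytic route.
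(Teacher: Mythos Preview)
Your ergodic argument via the Furstenberg--Katznelson--Weiss correspondence principle is correct in outline and constitutes a valid proof. One small slip: you write $R_0 = R_0(d,\delta)$, but the strong $L^2$ convergence $A_t \to P$ carries no rate uniform over systems, so $R_0$ depends on the spectral measure of $\mathbf 1_B$ and hence on $A$; you acknowledge this yourself two sentences later, so this is notational rather than substantive.

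The paper takes a different route. It does not prove Theorem~\ref{all-large-dist-thm} from scratch but cites it, and in Section~\ref{Appendix-1-Bourgain} proves the slight strengthening (with \eqref{Bourgain-liminf-condition} in place of \eqref{Bourgain-condition}) by reducing to Bourgain's local result, stated as Proposition~\ref{Bourgain-prop}: for every $\kappa > 0$ there is an integer $J = J(\kappa,d)$ such that any set $E \subseteq [0,1]^d$ with $\lambda_d(E) > \kappa$ meets every $2$-lacunary sequence $\{t_1, \ldots, t_J\} \subset (0,1)$ in at least one point of $\Delta(E)$. The contrapositive argument is then: if $A$ missed infinitely many distances $\mathtt d_j \nearrow \infty$, rescale a high-density ball to $[0,1]^d$ and apply the proposition to $J$ of the rescaled $\mathtt d_j$'s to get a contradiction. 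Your soft ergodic approach is conceptually cleaner but non-quantitative; Bourgain's gives the uniform $J(\kappa,d)$, which is precisely the kind of quantitative local statement the paper generalizes to the sparse setting in Theorem~\ref{F-asymptotics-thm} and Proposition~\ref{F-asymptotics-prop}.

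On your harmonic-analytic sketch: the mechanism you describe for the intermediate band $|\xi| \sim 1/t$---extracting a large Fourier coefficient and running a density increment---is not how Bourgain's argument for Proposition~\ref{Bourgain-prop} actually works. The idea, visible in the paper's proof of Proposition~\ref{F-asymptotics-prop}, is a pigeonhole over scales: for a $2$-lacunary sequence $t_1 > \cdots > t_J$ the intermediate annuli $\mathfrak G_2(t_j) = \{\delta/t_j \leq |\xi| \leq 1/(\delta t_j)\}$ are pairwise disjoint, so $\sum_j \int_{\mathfrak G_2(t_j)} |\widehat\mu|^2 \leq \|\widehat\mu\|_2^2 \leq C_d \kappa^{-1}$, while each low-frequency term $\mathfrak J_1(t_j)$ is bounded below by a fixed positive constant. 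Choosing $J$ large in terms of $\kappa$ forces some $\Lambda_\mu(t_j)$ to be positive. No density increment is involved.
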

\noindent The proof in \cite{1986Bourgain}, with minor modifications, shows that the same conclusion as Theorem \ref{all-large-dist-thm} holds if \eqref{Bourgain-condition} is replaced by the weaker assumption
\begin{equation} \label{Bourgain-liminf-condition} 
\limsup_{R \rightarrow \infty} \sup_x \frac{\lambda_d(A \cap B(x;R))}{\lambda_d(B(x;R))} > 0,
\end{equation} 
thus providing a generalization of Theorem \ref{Boardman-Corollary}. A sketch of this modified proof is given in Section \ref{Appendix-1-Bourgain}. The article \cite{{Bukh-2008}} gives quantitative bounds for the density of $A$ in terms of the distances that it avoids. Even though our primary focus in this article is on distances (which are two-point configurations), Bourgain's work \cite{1986Bourgain} covers more general $k$-point non-degenerate configurations in $\mathbb R^d$ for $k \leq d$; see \cite{Lyall-Magyar} for a multi-point generalization of \cite{1986Bourgain}.  
\vskip0.1in 
\noindent Conditions \eqref{Boardman-condition}, \eqref{Bourgain-condition} and \eqref{Bourgain-liminf-condition} all involve the $d$-dimensional Lebesgue measure on large sets to ensure either all, or all sufficiently large, distances. One naturally wonders whether such conclusions might continue to hold for sets where conditions like \eqref{Bourgain-condition} or \eqref{Bourgain-liminf-condition} fail, but just barely. A possible example of marginal failure could be extremely slow decay in $R$ of the expressions occurring in \eqref{Bourgain-condition} or \eqref{Bourgain-liminf-condition}, as $R \rightarrow \infty$. A strong negative answer to this intuitive query has been obtained by Rice \cite{2020Rice}. For $d \geq 2$ and any $f:(0, \infty) \rightarrow [0,1]$ with $\lim_{R \rightarrow \infty}f(R) = 0$, the article \cite{2020Rice} gives a construction of a set $A \subseteq \mathbb R^d$ and a sequence $R_n \nearrow \infty$ such that 
\[ \frac{\lambda_d(A \cap B(0;R_n))}{ \lambda_d(B(0;R_n))} \geq f(R_n)\text{ for all $n$, }    \quad \text{ and yet } \quad \Delta(A) \cap \{R_n: n \geq 1\} = \emptyset. \]   
The same set $A$ continues to serve as a counterexample if the slow decay of the quantity in \eqref{Bourgain-condition} is replaced by that in \eqref{Bourgain-liminf-condition}. 
\vskip0.1in
\noindent The example in \cite{2020Rice} also precludes the possibility of a result like Theorem \ref{all-large-dist-thm} if the measure $\lambda_d$ in \eqref{Bourgain-condition} is replaced by $s$-dimensional dyadic Hausdorff content for $s < d$; indeed
\[ \limsup_{R \rightarrow \infty} \frac{\mathcal H_{\infty}^s(A \cap B(0;R))}{\mathcal H_{\infty}^{s}(B(0;R))} > 0 \quad \text{ for the set $A$ constructed in \cite{2020Rice}. } \] 
In other words, Rice's example is sharp in the measure-theoretic sense; any unconditional analogue of \eqref{Bourgain-condition} or \eqref{Bourgain-liminf-condition} using dyadic content $\mathcal H_{\infty}^s$ with $s < d$ is impossible. 
\vskip0.1in
\noindent It is therefore of interest to formulate a size condition that ensures all sufficiently large distances for sets $A$ where \eqref{Bourgain-condition} and \eqref{Bourgain-liminf-condition} fail, and in particular for sets $A$ of less than full dimension. At the same time, such a condition, in order to be applicable, should admit a rich class of non-trivial examples. As a consequence of Theorem \ref{mainthm-cor}, we obtain a condition that acts as a substitute for \eqref{Bourgain-condition} and \eqref{Bourgain-liminf-condition}, and is applicable to sparse sets. The corresponding result is stated in Theorem  \ref{mainthm-3} below.  
\vskip0.1in
\noindent  Let us set up the required terminology. Given any unbounded Borel set $A \subseteq \mathbb R^d$, the {\em{normalized truncations of $A$}} are defined as follows: for $x \in \mathbb R^d$, $R \geq 1$, 
\begin{align} A_R(x) &: = \mathbb T_{x,R} \bigl[A \cap Q(x;R) \bigr] \subseteq [0,1]^d \quad \text{ where } \label{def-ARx} \\
Q(x;R) &:= x + [0, R]^d \text{ and } \mathbb T_{x,R}(y) := \frac{y-x}{R},  \nonumber 
\end{align} 
so that $\mathbb T_{x,R}$ maps $Q(x;R)$ onto $[0,1]^d$. In other words, $A_R(x)$ captures the large-scale structure of 
$A$ inside a unit cube. For $\rho \in (0,1)$, let us also define a collection of {\em{high-density scales}}
\begin{equation} 
\mathscr{V} = \mathscr{V}(\rho, s; A) := \left\{R \ell \Bigl| \; \begin{aligned} &\exists R \geq 1, x \in \mathbb R^d \text{ such that } \mathbb H^s(A_R(x)) > 0 \\  &\text{and } \ell = \ell(Q) \text{ for some } Q \in \mathcal D_{\rho}(A_R(x)) \end{aligned} \right\} \subseteq (0, \infty). \label{def-V} 
\end{equation} 
Here $\mathcal D_{\rho}(E)$ refers to the collection of high-density cubes defined in \eqref{high-density-on-cube}, on which the $s$-dimensional density $\mathcal H_{\infty}^{s}(E \cap Q)/\ell(Q)^s$ of $E$ is at least $(1 - \rho)$. We say that $A$ is {\em{well-distributed with $s$-density at least $(1 - \rho)$ and growth rate at most $C_0 > 1$}} if  $\mathscr V(\rho,s;A)$ contains an infinite sequence $\{v_n: n \geq 1\}$ such that 
\begin{equation} \label{slower-than-lacunary} v_n \nearrow \infty \quad \text{ and } \quad \frac{v_{n+1}}{v_n} \leq C_0 \quad \text{ for all sufficiently large $n$.} \end{equation}   
To paraphrase, $A$ is well-distributed if its high-density scales are not too sparsely located on $\mathbb R$; the condition $v_{n+1}/v_n \leq C_0$ ensures this. Note that the collection $\mathscr V$ captures the size as well as the distribution of intervals in $\Delta(A)$ since, in view of Theorem \ref{mainthm-cor}, each scale $v \in \mathscr{V}$ corresponds to an interval of distances of size comparable to $v$. Specifically, the condition \eqref{slower-than-lacunary} implies the existence of an integer $J \geq 1$ (depending only on $C_0$; in fact choosing $2^J > C_0$ will do) such that for any lacunary sequence $\{\mathtt d_j : 1 \leq j \leq J \} \subseteq (v_0, \infty)$ with  $2\mathtt d_{j} < \mathtt d_{j+1}$, we have  
\[ \mathscr{V} \cap [\mathtt d_j, \mathtt d_{j+1}] \ne \emptyset \quad \text{ for some $j \leq J$.} \]    
The condition \eqref{slower-than-lacunary} combined with Theorem \ref{mainthm-cor} is sufficient to ensure that $A$ contains all sufficiently large distances. This is the content of the next theorem. 
\begin{thm} \label{mainthm-3}
Let $d \geq 2$, $\rho \in (0, 1)$, $0 < a_{\rho} < b_{\rho}< 1$, $\varepsilon_{\rho} > 0$ be as in Theorem \ref{mainthm-cor}. Then for $s > d - \varepsilon_{\rho}$, any well-distributed set $A \subseteq \mathbb R^d$ with $s$-density at least $(1 - \rho)$ and growth rate at most $b_{\rho}/a_{\rho}$ contains all sufficiently large distances, i.e., 
\[ \text{$ \exists M = M(A) > 0$ with $\Delta(A) \supseteq [M, \infty)$.} \]
\end{thm}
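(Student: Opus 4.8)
Proof plan for Theorem \ref{mainthm-3}. The plan is to reduce the statement to Theorem \ref{mainthm-cor} by a scaling argument, in the same spirit in which Theorem \ref{Boardman-Corollary} is deduced from Theorem \ref{Boardman-Lemma}. The point is that Theorem \ref{mainthm-cor} produces, for each high-density scale $v \in \mathscr V(\rho, s; A)$, an interval $v\,[a_\rho, b_\rho] \subseteq \Delta(A)$; if these scales are not too sparsely distributed, the corresponding intervals chain together to cover a half-line.

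First I would unpack the hypothesis. Since $A$ is well-distributed with $s$-density at least $(1-\rho)$ and growth rate at most $C_0 := b_\rho/a_\rho$, the set $\mathscr V(\rho, s; A)$ of \eqref{def-V} contains a sequence $v_n \nearrow \infty$ with $v_{n+1}/v_n \le C_0$ for all $n \ge n_0$. Fix such an $n$. By the definition \eqref{def-V} there are $R = R_n \ge 1$, $x = x_n \in \mathbb R^d$, and a dyadic cube $Q = Q_n \subseteq [0,1]^d$ with $\mathbb H^s(A_R(x)) > 0$, $Q \in \mathcal D_\rho(A_R(x))$, and $v_n = R\,\ell(Q)$. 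The normalized truncation $A_R(x) = \mathbb T_{x,R}[A \cap Q(x;R)]$ of \eqref{def-ARx} is a Borel subset of $[0,1]^d$ (an affine image of the Borel set $A \cap Q(x;R)$) with $\mathbb H^s(A_R(x)) > 0$ and $s > d - \varepsilon_\rho$, so Theorem \ref{mainthm-cor} applies with $E = A_R(x)$; since $Q \in \mathcal D_\rho(A_R(x))$, the inclusion \eqref{Delta-intervals} gives
\[ \Delta\bigl(A_R(x)\bigr) \supseteq \ell(Q)\,[a_\rho, b_\rho]. \]

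Next I would transfer this back to $A$. The map $\mathbb T_{x,R}$ is the affine bijection $y \mapsto (y-x)/R$, so $A \cap Q(x;R) = x + R\,A_R(x)$, and distance sets are covariant under translations and dilations: $\Delta(x + R\,S) = R\,\Delta(S)$ for every $S \subseteq \mathbb R^d$. Since $A \cap Q(x;R) \subseteq A$, this yields
\[ \Delta(A) \;\supseteq\; \Delta\bigl(A \cap Q(x;R)\bigr) \;=\; R\,\Delta\bigl(A_R(x)\bigr) \;\supseteq\; R\,\ell(Q)\,[a_\rho, b_\rho] \;=\; \bigl[a_\rho v_n,\; b_\rho v_n\bigr]. \]
Hence $\Delta(A) \supseteq \bigcup_{n \ge 1}\bigl[a_\rho v_n, b_\rho v_n\bigr]$. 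Finally, for $n \ge n_0$ the growth bound $v_{n+1}/v_n \le b_\rho/a_\rho$ reads $a_\rho v_{n+1} \le b_\rho v_n$, while $v_n < v_{n+1}$; thus consecutive intervals $[a_\rho v_n, b_\rho v_n]$ and $[a_\rho v_{n+1}, b_\rho v_{n+1}]$ overlap or abut, so $\bigcup_{n \ge n_0}[a_\rho v_n, b_\rho v_n]$ is a single interval with left endpoint $a_\rho v_{n_0}$, and since $b_\rho v_n \to \infty$ it equals $[a_\rho v_{n_0}, \infty)$. Taking $M := a_\rho v_{n_0}$ finishes the argument.

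The heavy lifting is entirely contained in Theorem \ref{mainthm-cor}; what remains is essentially routine. The only steps requiring mild care are the covariance of $\Delta(\cdot)$ under the affine maps $\mathbb T_{x,R}$ and the elementary bookkeeping that converts the growth estimate $v_{n+1}/v_n \le b_\rho/a_\rho$ into the overlap of consecutive intervals — which is precisely why the admissible growth-rate threshold in the statement is $b_\rho/a_\rho$. I do not anticipate any genuine obstacle beyond these.
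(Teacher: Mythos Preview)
Your proposal is correct and follows essentially the same approach as the paper: apply Theorem \ref{mainthm-cor} to each normalized truncation $A_R(x)$ to produce the intervals $v_n[a_\rho,b_\rho]\subseteq\Delta(A)$, then use the growth bound $v_{n+1}/v_n\le b_\rho/a_\rho$ to chain consecutive intervals into a half-line. The paper's proof in Section \ref{Suff-large-distances-section} is slightly terser on the scaling step and the overlap verification, but the argument is the same.
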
 
\noindent The reader will recognize the lacunarity threshold $b_{\rho}/a_{\rho}$ arising directly from Theorem \ref{mainthm-cor}, related to the intervals of distances $\ell(Q)[a_{\rho}, b_{\rho}]$.
\vskip0.1in
\noindent Theorem \ref{mainthm-3} admits an interesting corollary summarized in Theorem \ref{mainthm-4} below. Theorem \ref{mainthm-3} assumes a certain growth rate of the set $\mathscr{V}$, which is defined in \eqref{def-V} using normalized truncations $A_R(x)$ with positive $\mathbb H^s$ measure. In contrast, Theorem \ref{mainthm-4} uses a sequence of truncations $A_R(x)$ with large dyadic Hausdorff content and controlled growth of the truncation scales $R$. This yields a more accessible size condition based on dyadic Hausdorff content alone, analogous in spirit to Theorem \ref{Boardman-Corollary}.
\begin{thm}\label{mainthm-4} 
Let $d$, $\rho$ and $\varepsilon$ be as in Theorem \ref{mainthm-2}. 
Let $A \subseteq \mathbb R^d$ be any Borel set that obeys  
  \begin{align} 
&  \lim_{n \rightarrow \infty} \sup_{x \in \mathbb R^d} \mathcal H^s_{\infty}\bigl(A_{R_n}(x)\bigr) > 1 - \rho \text{ for some exponent $s > d - \varepsilon$, and } \label{ARn-large} \\ 
&  \text{some sequence $R_n \nearrow \infty$ with } \frac{R_{n+1}}{R_n} \leq \frac{b_{\rho}}{a_{\rho}}.   \label{Rn-growth} 
  \end{align}  
Then $A$ admits all sufficiently large distances.   
\end{thm}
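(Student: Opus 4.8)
\emph{Proof plan.} The strategy is to deduce Theorem~\ref{mainthm-4} from part~\eqref{mainthm2-partb} of Theorem~\ref{mainthm-2} by a scaling-and-chaining argument, exactly parallel to the way Theorem~\ref{Boardman-Corollary} follows from Theorem~\ref{Boardman-Lemma}. The point is that hypothesis \eqref{ARn-large} hands us, for each large $n$, a subset of the unit cube whose $s$-dimensional dyadic Hausdorff content exceeds $1-\rho$, which is precisely the input required by Theorem~\ref{mainthm-2}; the growth condition \eqref{Rn-growth} is then exactly what is needed to make the resulting intervals of distances overlap and fill a half-line.

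In detail, first use \eqref{ARn-large} to pick $N_0$ so that for every $n \ge N_0$ there exists $x_n \in \mathbb R^d$ with $\mathcal H^s_\infty\bigl(A_{R_n}(x_n)\bigr) > 1-\rho$; since $A_{R_n}(x_n) \subseteq [0,1]^d$ and $s > d - \varepsilon$, Theorem~\ref{mainthm-2}\eqref{mainthm2-partb} applied with $E = F = A_{R_n}(x_n)$ yields $\Delta\bigl(A_{R_n}(x_n)\bigr) \supseteq [a_\rho, b_\rho]$. Next, because $\mathbb T_{x_n, R_n}$ is a similarity of ratio $1/R_n$ carrying $Q(x_n;R_n)$ onto $[0,1]^d$, it rescales every pairwise distance by the exact factor $1/R_n$, so $\Delta\bigl(A \cap Q(x_n;R_n)\bigr) = R_n\,\Delta\bigl(A_{R_n}(x_n)\bigr) \supseteq [R_n a_\rho, R_n b_\rho]$; since $A \cap Q(x_n;R_n) \subseteq A$ this gives $\Delta(A) \supseteq [R_n a_\rho, R_n b_\rho]$ for all $n \ge N_0$. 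Finally, enlarge $N_0$ to an index $N$ past which \eqref{Rn-growth} also holds; then $R_{n+1} a_\rho \le R_n b_\rho$ for $n \ge N$, so the intervals $[R_n a_\rho, R_n b_\rho]$, $n \ge N$, form an overlapping chain whose right endpoints $R_n b_\rho$ increase to infinity. Their union is therefore $[R_N a_\rho, \infty)$, and we conclude $\Delta(A) \supseteq [M, \infty)$ with $M := R_N a_\rho$.

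The substantive work is entirely contained in Theorem~\ref{mainthm-2}; what remains here is essentially bookkeeping, so I do not expect a genuine obstacle. The only two points deserving care are both routine: the observation that the affine normalization $\mathbb T_{x,R}$ rescales distances by precisely $1/R$ (so the passage from $\Delta(A_{R_n}(x_n))$ back to $\Delta(A)$ loses nothing), and the chaining step, where one must fix a single index $N$ beyond which both \eqref{ARn-large} (in the form ``$\sup_x \mathcal H^s_\infty(A_{R_n}(x)) > 1-\rho$'') and \eqref{Rn-growth} are in force. If the inequality in \eqref{Rn-growth} happens to be non-strict, consecutive intervals merely abut rather than overlap, but their union is still the half-line $[R_N a_\rho,\infty)$, so the conclusion is unaffected. (One could instead route the argument through Theorem~\ref{mainthm-cor} or Theorem~\ref{mainthm-3}, noting that $[0,1]^d$ itself lies in $\mathcal D_\rho(A_{R_n}(x_n);s)$, but the direct reduction above is shorter.)
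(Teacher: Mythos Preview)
Your proposal is correct and follows essentially the same approach as the paper: the paper proves Theorem~\ref{mainthm-3} by applying Theorem~\ref{mainthm-cor} to each $A_R(x)$ and chaining the resulting intervals via the growth bound, and then observes that Theorem~\ref{mainthm-4} is the special case where the large-content hypothesis allows Theorem~\ref{mainthm-2} to be applied directly to $A_{R_n}(x_n)$ with $\ell=1$. Your direct reduction to Theorem~\ref{mainthm-2} and subsequent chaining via \eqref{Rn-growth} is exactly this argument spelled out.
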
 
\noindent {\em{Remarks:}} 
\begin{enumerate}[1.]
\item Theorems \ref{mainthm-3} and \ref{mainthm-4} have been proved in Section \ref{Suff-large-distances-section}. 
\vskip0.1in
\item Well-distributed sets abound in practice, so Theorem \ref{mainthm-4} encompasses a wide variety of sets. In Section \ref{Rice-example-counterpoint}, we give an example of a well-distributed set $A$ where the conditions \eqref{Bourgain-condition} and \eqref{Bourgain-liminf-condition} fail in a very strong sense, yet $A$ admits all sufficiently large distances by Theorem \ref{mainthm-4}; in fact we will see that $\Delta(A)$ in this example contains all distances!  
\vskip0.1in
\item In Section \ref{Sparse-set-all-distances-proof-section}, we use the Cantor sets specified in Corollary \ref{Steinhaus-sparse-corollary} to construct unbounded sets in $\mathbb R^d$ of strictly positive co-dimension with the following property: the distance set of every compact subset of $A$ has large gaps, whereas every co-compact subset of $A$ contains all distances.   
\begin{cor}\label{Sparse-all-distances-corollary} 
For $d \geq 2$ and $s$ sufficiently close to $d$, one can find an unbounded set $A \subseteq \mathbb R^d$ with $\dim_{\mathbb H}(A) \leq s$ such that for every $R > 0$ and $B(0;R) := \{ x \in \mathbb R^d : |x| \leq R\}$, 
\begin{align*}
& \Delta\bigl(A \setminus B(0;R)\bigr) = [0, \infty), \; {\text{ i.e., $A$ contains all distances near infinity,}}  \\
& \mathbb R \setminus \Delta(A \cap B(0;R)) \supseteq [a_{R}, b_{R}], \; {\text{ i.e., distances of $A$ on $B(0;R)$ miss intervals}},   \\ 
& (b_{R} - a_{R}) \nearrow \infty \text{ as } R \rightarrow \infty \; {\text{ i.e., the missed intervals are arbitrarily large.}} 
\end{align*} 
\end{cor}  
\vskip0.1in
\item The growth conditions \eqref{slower-than-lacunary} and \eqref{Rn-growth} cannot be removed. In Section \ref{uniformly-welldistributed-section} and inspired by \cite{2020Rice}, we provide an example of a set $A$ of positive Lebesgue measure but of density zero, avoiding infinitely many large distances, for which \eqref{ARn-large} holds only for rapidly growing sequences $R_n$ not obeying \eqref{Rn-growth}.  
\end{enumerate} 
\subsection{The role of Fourier asymptotics in distance sets} \label{Fourier-asymptotics-section}
While the previous sections focused on identifying intervals in $\Delta(E)$, we now turn to finer questions about the distribution of distances, especially the behaviour of isolated points or clusters within $\Delta(E)$. 
\vskip0.1in
\noindent The information provided by Theorems \ref{mainthm-cor} and \ref{mainthm-2} is universal, in the sense that the intervals in $\Delta(E)$ ensured by these results apply to any set $E$ with high enough Hausdorff dimension (Theorem \ref{mainthm-cor}) or Hausdorff content (Theorem \ref{mainthm-2}). The identifiers of these intervals, namely the sizes of high-density cubes of $E$, are in general arbitrary. However, if additional properties of $E$ are available, one may extract further information about the structure of $\Delta(E)$, addressing finer questions. For example, what is the distribution or separation of points in $\Delta(E)$?  Does the existence of a point in $\Delta(E)$ ensure that there are other points in $\Delta(E)$ nearby? It turns out that such problems can be explored via $L^2$-Fourier asymptotics of measures supported on $E$. The following theorem is a first result in this direction.
\vskip0.1in
\noindent Given a probability measure $\mu$, let us denote by 
\begin{equation} \label{def-F}
\mathbb F_{\mu}(T) := \int_{|\xi| \leq T} \bigl| \widehat{\mu}(\xi) \bigr|^2 \, d\xi
\end{equation} 
the partial $L^2$-Fourier integral of $\mu$ over a ball of radius $T$. If $E$ is Lebesgue-null, then $\mathbb F_{\mu}(T) \nearrow \infty$ as $T \rightarrow \infty$ for any probability measure supported on $E$. Conversely, if $E \subseteq [0,1]^d$ has positive Lebesgue measure, then $\{ \mathbb F_{\mu}(T) : T \geq 1\}$ is bounded for $\mu = \mathbf 1_E/\lambda_d(E)$. 
\begin{thm} \label{F-asymptotics-thm}
Let $d \geq 2$. For every choice of constants $c \in (0, \frac{d-1}{4})$, $T_0 \geq 1$ and $M \geq 4$, there exists a positive constant $\delta_M$ (depending only on these quantities) as follows. 
\vskip0.1in 
\noindent Suppose that $\mu$ is a probability measure supported on a set $E \subseteq [0,1]^d$, whose $L^2$-Fourier integral obeys the growth condition  
\begin{equation}\label{controlled-growth}  
\mathbb F_{\mu}(T_1T_2) \leq T_1^c \mathbb F_{\mu}(T_2) \quad \text{ for all } T_1, T_2 \geq T_0.  
\end{equation} 
If there exists $\delta \in (0, \delta_M)$ and a finite sequence $\{R_j: 1 \leq j \leq J\} \subseteq (T_0, \infty)$ satisfying  
\begin{equation} \label{Rj-condition} 
R_{j+1} \geq \delta^{-2} R_j \quad \text{ and } \quad \sum_{j=1}^{J} \mathbb F_{\mu}(R_j) > \Bigl(1 - \frac2M \Bigr)  \mathbb F_{\mu}\bigl(\delta^{-2} R_J \bigr),
\end{equation}  
then at least one of the points $\{\delta R_j^{-1} : 1 \leq j \leq J \}$ must lie in $\Delta(E)$. 
\end{thm}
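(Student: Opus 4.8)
The plan is to argue by contradiction, converting the assumption that \emph{all} of the points $t_j:=\delta R_j^{-1}$ are missing from $\Delta(E)$ into a family of Fourier identities, one at each associated frequency scale, and then to show that these identities are incompatible with \eqref{Rj-condition}. First I would pass to the support of $\mu$: since $\mathrm{supp}\,\mu$ is a compact subset of $[0,1]^d$ and $\Delta(\mathrm{supp}\,\mu)\subseteq\Delta(E)$, it suffices to put one $t_j$ in $\Delta(\mathrm{supp}\,\mu)$. Let $\nu:=\mu\ast\widetilde\mu$ be the autocorrelation measure, a positive symmetric measure with $\widehat\nu=|\widehat\mu|^2$ and $\mathrm{supp}\,\nu=\mathrm{supp}\,\mu-\mathrm{supp}\,\mu$; then $t\notin\Delta(\mathrm{supp}\,\mu)$ is equivalent to $\nu$ vanishing on a spherical shell about $\{|z|=t\}$. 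Applying the co-area formula to a mollification $\nu\ast\phi_\epsilon$ and letting $\epsilon\downarrow0$ turns this vanishing into
\[ \int_{\mathbb R^d}|\widehat\mu(\xi)|^2\,\widehat\sigma(t\xi)\,d\xi=0, \]
where $\widehat\sigma$ is the Fourier transform of the surface measure on $S^{d-1}$, so that $\widehat\sigma(0)>0$ and $|\widehat\sigma(s)|\lesssim(1+|s|)^{-(d-1)/2}$. The absolute convergence needed to justify the limit is the first place the hypothesis \eqref{controlled-growth} is used: a dyadic decomposition gives $\int_{|\xi|>T}|\xi|^{-(d-1)/2}|\widehat\mu(\xi)|^2\,d\xi\lesssim T^{-(d-1)/2}\mathbb F_\mu(T)$ for $T\ge T_0$, the geometric series converging because $c<(d-1)/4<(d-1)/2$.

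Assuming every $t_j$ avoids $\Delta(\mathrm{supp}\,\mu)$ and adding the identities yields $\int|\widehat\mu(\xi)|^2\bigl(\sum_{j=1}^J\widehat\sigma(t_j\xi)\bigr)d\xi=0$. The core is a pointwise lower bound for the kernel. Fix $\xi$ and split $\{1,\dots,J\}$ into the indices with $t_j|\xi|\le c_2$ — for which $\widehat\sigma(t_j\xi)\ge\tfrac12\widehat\sigma(0)$, with $c_2$ a dimensional constant — and the remaining ``oscillatory'' indices. Since $t_{j+1}/t_j=R_j/R_{j+1}\le\delta^2$ by the first inequality in \eqref{Rj-condition}, the arguments $t_j|\xi|$ of the oscillatory terms form a geometric progression of ratio at least $\delta^{-2}$, so by $|\widehat\sigma(s)|\lesssim s^{-(d-1)/2}$ their total contribution is at most a constant multiple of $(t_{j_0(\xi)}|\xi|)^{-(d-1)/2}$, where $j_0(\xi)$ is the largest oscillatory index. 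Hence, with $N(\xi):=\#\{j:\,t_j|\xi|\le c_2\}$,
\[ \sum_{j=1}^J\widehat\sigma(t_j\xi)\ \ge\ \tfrac12\widehat\sigma(0)\,N(\xi)\ -\ C_d\,(t_{j_0(\xi)}|\xi|)^{-(d-1)/2}. \]

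Integrating against $|\widehat\mu|^2$, the main term is $\tfrac12\widehat\sigma(0)\int N(\xi)\,|\widehat\mu|^2=\tfrac12\widehat\sigma(0)\sum_{j=1}^J\mathbb F_\mu(c_2\delta^{-1}R_j)\ge\tfrac12\widehat\sigma(0)\sum_{j=1}^J\mathbb F_\mu(R_j)$, which by the second inequality in \eqref{Rj-condition} exceeds $\tfrac12\widehat\sigma(0)(1-\tfrac2M)\mathbb F_\mu(\delta^{-2}R_J)$. The error $C_d\int(t_{j_0(\xi)}|\xi|)^{-(d-1)/2}|\widehat\mu|^2$ I would estimate shell by shell: on $S_k=\{c_2\delta^{-1}R_k<|\xi|\le c_2\delta^{-1}R_{k+1}\}$ one has $j_0(\xi)=k$ and $t_k|\xi|\in(c_2,c_2R_{k+1}/R_k]$, and a dyadic decomposition of $S_k$ together with \eqref{controlled-growth} controls the shell's contribution, the point being that (i) the lacunarity $R_{k+1}/R_k\ge\delta^{-2}$ yields a $\delta$-power gain off the bulk of each shell, the residual double geometric series being summable precisely because $c<(d-1)/4$; and (ii) the weight $J-k$ with which shell $S_k$ enters the main term absorbs that shell's $O(1)$ error for all but the top $O_M(1)$ shells, while the top shells plus the $|\xi|>c_2\delta^{-1}R_J$ tail contribute at most $\tfrac2M\mathbb F_\mu(\delta^{-2}R_J)$ once $\delta<\delta_M$. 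As $M\ge4$, the main term then strictly dominates, contradicting the vanishing of the integral, so some $t_j$ lies in $\Delta(\mathrm{supp}\,\mu)\subseteq\Delta(E)$.

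The main obstacle is exactly this last estimate: the error must be bounded by a fixed fraction of $\mathbb F_\mu(\delta^{-2}R_J)$ uniformly over all admissible $\mu$, and $\mathbb F_\mu$ may grow so slowly that $\mathbb F_\mu$ at the intermediate scale $\delta^{-1}R_J$ is not appreciably smaller than $\mathbb F_\mu(\delta^{-2}R_J)$. The saving has to come entirely from the lacunarity of the $R_j$, which localizes each shell's error near a single scale and supplies the $\delta$-power decay, and from the combinatorial gain carried by the weights $J-k$; it is in making these two mechanisms outrun the error that the precise form of \eqref{Rj-condition}, the choice of threshold $\delta_M$, and the restriction $c<(d-1)/4$ must all be used in concert.
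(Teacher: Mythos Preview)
Your overall setup matches the paper: the identity $\int|\widehat\mu|^2\,\widehat\sigma(t_j\cdot)\,d\xi=0$ whenever $t_j\notin\Delta(\mathrm{supp}\,\mu)$ is exactly the content of Proposition~\ref{Lambda-prop}, and controlled growth does yield the finite $(d+1)/2$-energy needed there. Where you diverge from the paper is in how the integral is decomposed, and this is where your point~(ii) has a genuine gap.

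The paper works with each $t_j$ separately and uses a \emph{three}-zone split at $t_j|\xi|=\delta$ and $t_j|\xi|=\delta^{-1}$. The low zone gives $\ge(1-\delta)\,\mathbb F_\mu(R_j)$; the high zone, via $|\widehat\sigma(s)|\lesssim|s|^{-(d-1)/2}$ and \eqref{controlled-growth}, contributes $\le 4^d\delta^{(d-1)/2-2c}\,\mathbb F_\mu(R_j)$ (the positivity of that exponent is precisely the role of $c<(d-1)/4$). On the \emph{middle} annulus $\mathfrak G_2(t_j)=\{R_j\le|\xi|\le\delta^{-2}R_j\}$ one uses only the crude bound $|\widehat\sigma|\le1$. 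The punchline is that the first condition in \eqref{Rj-condition} forces the annuli $\mathfrak G_2(t_1),\dots,\mathfrak G_2(t_J)$ to be \emph{pairwise disjoint}, so after summing in $j$,
\[
\sum_{j=1}^J\bigl|\mathfrak J_2(t_j)\bigr|\;\le\;\int_{\bigsqcup_j\mathfrak G_2(t_j)}|\widehat\mu|^2\;\le\;\mathbb F_\mu\bigl(\delta^{-2}R_J\bigr),
\]
with constant exactly $1$. That constant is what produces the contradiction with the second half of \eqref{Rj-condition}.

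Your two-zone split at a fixed dimensional threshold $c_2$ cannot reproduce this constant. On your shell $S_k$ the dominant oscillatory term has $t_k|\xi|$ just above $c_2$, so the pointwise error $C_d(t_k|\xi|)^{-(d-1)/2}$ is a fixed dimensional constant $C_d'=C_dc_2^{-(d-1)/2}$, not something that decreases with $\delta$; the lacunarity only makes the \emph{subsequent} oscillatory terms small, not the leading one. Your point~(ii) is correct as far as it goes: for $J-k>2C_d'$ the weight $(J-k)/2$ dominates the $O(1)$ error pointwise, so those shells contribute non-negatively. But on the remaining $O_d(1)$ top shells together with the tail $\{|\xi|>c_2R_J/\delta\}$ the kernel is still $O_d(1)$, and integrating gives an error of order $C_d'\,\mathbb F_\mu(c_2R_J/\delta)\le C_d'\,\mathbb F_\mu(\delta^{-2}R_J)$ that does \emph{not} shrink as $\delta\to0$. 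Hence you cannot force it below $\tfrac{2}{M}\mathbb F_\mu(\delta^{-2}R_J)$ by choosing $\delta<\delta_M$; your argument would instead prove the theorem only under the stronger hypothesis $\sum_j\mathbb F_\mu(R_j)>C_d'\,\mathbb F_\mu(\delta^{-2}R_J)$ with a dimensional constant in place of $(1-\tfrac{2}{M})$. The fix is to insert the middle zone $\{\delta\le t_j|\xi|\le\delta^{-1}\}$, use $|\widehat\sigma|\le1$ there rather than its decay, and exploit the disjointness directly.
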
  
\noindent{\em{Remarks:}}
\begin{enumerate}[1.]
\item In essence, Theorem \ref{F-asymptotics-thm} says that if $\widehat{\mu}$ exhibits controlled growth and its $L^2$-mass is sufficiently spread across a sequence of scales, then $\Delta(E)$ must contain a point at one of the corresponding reciprocal scales.
\vskip0.1in
\item Theorem \ref{F-asymptotics-thm} is proved in Section \ref{F-asymptotics-section}. A more precise quantitative version of the theorem tracking the dependence of $c, T_0, M, \delta$ appears in Proposition \ref{F-asymptotics-prop} of that section. 
\vskip0.1in
\item Condition \eqref{controlled-growth} is a regularized sub-multiplicative condition that controls the growth of $\mathbb F_{\mu}(T)$ for large $T$. It limits how much new Fourier mass can be accumulated when the frequency window is enlarged by a multiplicative factor. It is well-known \cite{{Agmon-Hormander},{Strichartz-Asymptotics}, {Lau-Wang-1993}, {Lau-Wang-1995-correction}, {Raani-2014},{Raani-2017}} that such growth properties are related to the size and spread of $E = \text{supp}(\mu)$. Roughly speaking, slow growth of $\mathbb F_{\mu}(T)$ reflects faster decay of $\widehat{\mu}$, which in turn indicates that $E$ is more spatially spread out. From this perspective, a connection of $\mathbb F_{\mu}(T)$ with the structure of $\Delta(E)$ seems natural. 
\vskip0.1in
\item The two conditions in \eqref{Rj-condition} are in apparent tension. The first one requires the sequence $\{R_j\}$ to grow at least geometrically. The second one asserts that the partial Fourier mass at scales $\{R_j : 1 \leq j \leq J\}$ nearly saturates the corresponding mass at the enlarged scale $\delta^{-2}R_J$, forcing many of the scales $R_j$ to lie in regions where $\widehat{\mu}$ is large. 
\vskip0.1in
\noindent Sequences $\{ \mathbb F_{\mu}(R_j): 1 \leq j \leq J\}$ and therefore $\{R_j: 1 \leq j \leq J \}$ satisfying the second condition in \eqref{Rj-condition} cannot grow too fast. The increments $\mathbb F_{\mu}(R_{j+1}) - \mathbb F_{\mu}(R_j)$ must be small enough that the repeated summing of the $L^2$ mass of $\widehat{\mu}$ across these scales dominates its value at the largest scale $\delta^{-2}R_{J}$. Choosing sequences of $\{R_j: j \geq 1\}$ that are as slow-growing as possible subject to the geometric growth requirement, one can find denser clusters of points in $\Delta(E)$. A specific application is given in Section \ref{quasiregular-section}.
\vskip0.1in
\item Theorem \ref{F-asymptotics-thm} and Proposition \ref{F-asymptotics-prop} are fractal analogues of Bourgain's local Szemer\'edi-type theorem for sets of positive density \cite[Proposition 3]{1986Bourgain}, in the special case $k=2$. The latter result states that if $\lambda_d(E) \geq \varepsilon_0$, then there exists an integer $J$ depending only on $\varepsilon_0$ such that $\Delta(E)$ intersects {\em{any}} sequence $\{t_j : 1 \leq j \leq J \} \subseteq (0, 1]$ for which $t_{j+1} \leq t_j/2$. The underlying measure here is 
\begin{equation*}  \mu = \mathbf 1_E/\lambda_d(E) \quad  \text{where $E \subseteq [0,1]^d$ obeys $\lambda_d(E) \geq \varepsilon_0 > 0$.} \label{normalized-Lebesgue} 
\end{equation*} In this special case, there is an absolute dimensional constant $c_d > 0$
\[ \sup \bigl\{\mathbb F_{\mu}(T) : T \geq 1 \bigr\} \leq ||\widehat{\mu}||_2^2  \leq c_d^{-1} \varepsilon_0^{-1}, \] 
so \eqref{controlled-growth} is trivially satisfied. On the other hand, 
\[ \inf \bigl\{ \mathbb F_{\mu}(T) : T \geq 1 \bigr\} \geq c_d \quad \text{ for any probability measure $\mu$ supported on $[0,1]^d$.} \] A proof of this  last statement, based on the continuity of $\widehat{\mu}$, may be found on page \pageref{F-mu-lower-bound}, in the derivation of \eqref{F-mu-lower-bound}. Combining the upper and lower bounds on $\mathbb F_{\mu}(T)$ for the normalized Lebesgue measure $\mu$ of $E$ yields for any increasing sequence $R_j \nearrow \infty$:   
\[  \bigl[\mathbb F_{\mu}\bigl(\delta^{-2} R_J \bigr) \bigr]^{-1} \sum_{j=1}^{J} \mathbb F_{\mu}(R_j) \geq c_d \varepsilon_0^{-1} J \rightarrow \infty \quad \text{ as } J \rightarrow \infty. \] 
This establishes \eqref{Rj-condition} for all $\delta \in (0,1)$, $M \geq 4$, and an integer $J$ that depends only on $\varepsilon_0$. 
\vskip0.1in
\item Bourgain's local Szemer\'edi-type theorem \cite{1986Bourgain} encompasses not just distances (which are determined by two points), but $k$-point non-degenerate configurations in $\mathbb R^d$ for $k < d$.  It would be of interest to identify appropriate analogues of \eqref{controlled-growth} and \eqref{Rj-condition} that shed similar light on $k$-point patterns. A specific question in this direction has been recorded in Section \ref{section: Bourgain local Szemeredi}.  
\end{enumerate} 
\subsection{Distribution of distances in quasi-regular sets} \label{quasiregular-section}
In \cite{Strichartz-Asymptotics}, Strichartz connected the notions of size and distribution of a set with Fourier asymptotics of measures supported on that set. The concepts of {\em{locally uniformly $s$-dimensional measures}} and {\em{quasi-regular}} sets introduced there have proved extremely impactful in subsequent work. We recall these definitions in Section \ref{quasiregularity-section}. The relevance of these concepts in relation with Fourier asymptotics is captured in the following results from \cite{Strichartz-Asymptotics}, re-stated here in the special case $f \equiv 1$. 
\begin{theorem}\cite[Theorems 5.3 and 5.5]{Strichartz-Asymptotics} \label{Strichartz-theorem}
Let $E \subseteq [0,1]^d$ be a Borel set supporting a positive, finite measure $\vartheta$ that is locally uniformly $s$-dimensional of the form 
\begin{equation} \vartheta = \mathbb H^{s} \bigl|_E + \nu, \label{Strichartz-measure} \end{equation}  where $\mathbb H^{s}|_E$ denotes the $s$-dimensional Hausdorff measure $\mathbb H^s$ restricted to $E$, and  the measure $\nu$ is null with respect to $\mathbb H^{s}$, i.e. $\mathbb H^s(\cdot) < \infty$ implies $\nu(\cdot) = 0$. \footnote{This is slightly different from the Radon-Nikodym theorem; see \cite[Theorem 3.1]{Strichartz-Asymptotics}}
\begin{enumerate}[(a)]
\item Then there exists a constant $C > 0$ such that
\begin{equation} \label{Strichartz-upper} \limsup_{T \rightarrow \infty} T^{s-d} \mathbb F_{\vartheta}(T) \leq C \mathbb H^s(E). \end{equation} 
\vskip0.1in 
\item  If $E$ is also quasi-regular, then there exists a constant $C > 0$ such that 
\begin{equation}  \liminf_{T \rightarrow \infty} T^{s-d} \mathbb F_{\vartheta}(T) \geq C^{-1}\mathbb H^s(E). \label{Strichartz-lower}  \end{equation} 
\vskip0.1in
\item Combining \eqref{Strichartz-upper} and \eqref{Strichartz-lower} leads to the following assertion. Suppose that $E$ is a quasi-regular set supporting a locally uniformly $s$-dimensional probability measure $\mu = \vartheta/||\vartheta||$, with $\vartheta$ given by \eqref{Strichartz-measure}. Then there exists a constant $C_0 \geq 1$ depending on $E$ such that  
\begin{equation} 
C_0^{-1} T^{d-s}\leq \mathbb F_{\mu}(T) \leq C_0 T^{d-s}\quad \text{ for all $T \geq 1$.} \label{Strichartz-upper-lower} 
\end{equation} 
\end{enumerate}
\end{theorem}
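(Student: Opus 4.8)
\noindent\emph{Proof sketch.} The plan is to pass from the Fourier-side quantity $\mathbb F_\vartheta(T)$ to a double integral over $\operatorname{supp}\vartheta\subseteq E$ and to track how the mass of $\vartheta$ concentrates near the diagonal at scale $T^{-1}$. By Fubini's theorem,
\[
\mathbb F_\vartheta(T)=\int_{|\xi|\le T}|\widehat\vartheta(\xi)|^2\,d\xi=\iint K_T(x-y)\,d\vartheta(x)\,d\vartheta(y),\qquad K_T:=\widehat{\mathbf 1_{B(0,T)}},
\]
where $K_T(z)=T^d K_1(Tz)$ with $K_1$ bounded near the origin and oscillating with amplitude $\sim|z|^{-(d+1)/2}$. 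Since the oscillation of $K_T$ obstructs a direct estimate when $s$ is close to $d$, I would sandwich $\mathbf 1_{B(0,T)}$ between dilates of fixed profiles whose Fourier transforms are nonnegative: for the upper bound, $\mathbf 1_{B(0,T)}(\xi)\le e^{\pi}e^{-\pi|\xi|^2/T^2}$, whose transform $T^d e^{-\pi T^2|z|^2}$ is a nonnegative Gaussian spike at scale $T^{-1}$; for the lower bound, $\mathbf 1_{B(0,T)}(\xi)\ge\phi(\xi/T)$ with $\phi=\mathbf 1_{B(0,1/2)}*\mathbf 1_{B(0,1/2)}$, so that $\operatorname{supp}\phi\subseteq B(0,1)$, $\|\phi\|_\infty\le 1$, and $\check\phi=|\widehat{\mathbf 1_{B(0,1/2)}}|^2\ge 0$ is continuous and bounded below on a ball $B(0,\delta_0)$.

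For part (a), the Gaussian majorant gives $\mathbb F_\vartheta(T)\le e^\pi T^d\iint e^{-\pi T^2|x-y|^2}\,d\vartheta(x)\,d\vartheta(y)$. A dyadic decomposition of the inner integral in $|x-y|$, together with the local uniform bound $\vartheta(B(x,r))\le C_\vartheta r^s$, yields $T^d\int e^{-\pi T^2|x-y|^2}\,d\vartheta(y)\le C_\vartheta' T^{d-s}$ uniformly in $x$. To replace the crude factor $\|\vartheta\|$ that this integrates to by the sharp $\mathbb H^s(E)$, I would split $\vartheta=\mathbb H^s|_E+\nu$ (each summand still locally uniformly $s$-dimensional) and expand the positive-semidefinite bilinear form $B_T(\mu_1,\mu_2):=T^d\iint e^{-\pi T^2|x-y|^2}\,d\mu_1\,d\mu_2$. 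The diagonal term satisfies $B_T(\mathbb H^s|_E,\mathbb H^s|_E)\le C_\vartheta T^{d-s}\mathbb H^s(E)$ by integrating the pointwise bound against $d\mathbb H^s|_E$; the term $B_T(\nu,\nu)$ is $o(T^{d-s})$ because a measure that is locally uniformly $s$-dimensional and null with respect to $\mathbb H^s$ must satisfy $\nu(B(x,r))=o(r^s)$ for $\nu$-a.e.\ $x$ (otherwise a standard upper-density comparison forces $\nu$ to charge a set of finite $\mathbb H^s$-measure, contradicting nullity), after which dominated convergence applies; and the cross term is $o(T^{d-s})$ by Cauchy--Schwarz for $B_T$. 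Summing gives $\limsup_{T\to\infty}T^{s-d}\mathbb F_\vartheta(T)\le C\,\mathbb H^s(E)$.

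For part (b), the minorant gives $\mathbb F_\vartheta(T)\ge T^d\iint\check\phi(T(x-y))\,d\vartheta(x)\,d\vartheta(y)\ge c\,T^d\int\vartheta\big(B(x,\delta_0/T)\big)\,d\vartheta(x)$, using $\check\phi\ge 0$ everywhere and $\check\phi\ge c$ on $B(0,\delta_0)$. Bounding $\vartheta\ge\mathbb H^s|_E$ and then invoking quasi-regularity — which furnishes a Borel subset $E_0\subseteq E$ with $\mathbb H^s(E_0)$ comparable to $\mathbb H^s(E)$ on which $\mathbb H^s(E\cap B(x,r))\gtrsim r^s$ uniformly for small $r$ — lets me bound the integrand below on $E_0$ by $c_0(\delta_0/T)^s$, whence $\mathbb F_\vartheta(T)\gtrsim T^{d-s}\mathbb H^s(E)$ and $\liminf_{T\to\infty}T^{s-d}\mathbb F_\vartheta(T)\ge C^{-1}\mathbb H^s(E)$. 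Part (c) is then immediate: for $\mu=\vartheta/\|\vartheta\|$ one has $\mathbb F_\mu=\|\vartheta\|^{-2}\mathbb F_\vartheta$, and since a quasi-regular set has $0<\mathbb H^s(E)<\infty$, parts (a) and (b) combine to the two-sided bound $C_0^{-1}T^{d-s}\le\mathbb F_\mu(T)\le C_0 T^{d-s}$.

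I expect the delicate points to be twofold. First, extracting the correct constant $\mathbb H^s(E)$, rather than a multiple of $\|\vartheta\|^2$ or a Hausdorff content, forces the decomposition of $\vartheta$ into its $\mathbb H^s$-regular and $\mathbb H^s$-singular parts and the quantitative density dichotomy $\nu(B(x,r))=o(r^s)$ a.e.\ for the singular part; this is the heart of the sharpness. Second, the lower bound in (b) genuinely requires quasi-regularity in the form of a uniform lower $s$-density on a subset of comparable measure — without such uniformity one can only control a $\limsup$ along a subsequence of scales, not the full $\liminf$ — so the main care lies in verifying that Strichartz's definition of quasi-regularity delivers exactly this consequence.
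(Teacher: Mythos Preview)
The paper does not prove this statement; it is quoted verbatim from Strichartz \cite{Strichartz-Asymptotics} (Theorems 5.3 and 5.5 there) and used as a black box in the proof of Theorem~\ref{quasiregular-distance-thm}. There is therefore no ``paper's own proof'' to compare your sketch against.

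That said, your outline is a faithful reconstruction of the ideas in Strichartz's original argument: sandwiching $\mathbf 1_{B(0,T)}$ between dilates of profiles with nonnegative inverse Fourier transform, reducing $\mathbb F_\vartheta(T)$ to an averaged ball-mass $\vartheta(B(x,r))$ at scale $r\sim T^{-1}$, and then reading off the upper bound from the locally uniform $s$-dimensional condition and the lower bound from quasi-regularity. The decomposition $\vartheta=\mathbb H^s|_E+\nu$ together with the density dichotomy ``$\nu(B(x,r))=o(r^s)$ for $\nu$-a.e.\ $x$'' is exactly what is needed to sharpen the constant in part~(a) from $\|\vartheta\|$ to $\mathbb H^s(E)$, and your justification via upper-density comparison is correct.

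One genuine (if minor) gap: in part~(c) you pass directly from the $\limsup$/$\liminf$ statements of (a) and (b) to the two-sided bound ``for all $T\geq 1$''. Parts (a) and (b) only control the tail behaviour; to obtain \eqref{Strichartz-upper-lower} uniformly on $[1,\infty)$ you must also observe that $T\mapsto\mathbb F_\mu(T)$ is continuous, strictly positive (since $\widehat\mu(0)=1$), and finite on every compact interval $[1,T_0]$, so that the asymptotic bounds extend to all $T\geq 1$ after enlarging $C_0$. This is routine but should be said.
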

\noindent Given two parameters $0 < \tau_1 \leq \tau_2 < 1$, a (not necessarily infinite) sequence $\{t_j : j \geq 1\} \subseteq (0, 1]$ will be called {\em{$(\tau_1, \tau_2)$-lacunary}} if  
\begin{equation} \label{lacunarity-defn} 
\tau_1 \leq \frac{t_{j+1}}{t_j} \leq \tau_2 \quad \text{ for all $j \geq 1$.}
\end{equation} 
If we were allowed to choose $\tau_1 = 0$, this definition would correspond to a traditional lacunary sequence, with lacunarity constant at most $\tau_2 < 1$. In general, such sequences can decay arbitrarily fast. The parameter $\tau_2$ in the definition \eqref{lacunarity-defn} imposes a minimal rate of decay on the sequence $\{t_j\}$, whereas the positive parameter $\tau_1$ prevents it from decaying too fast. In other words, $\tau_1$ controls the sparsity of the sequence. It turns out that the notion of $(\tau_1, \tau_2)$-lacunarity quantifies the accumulation of $\Delta(E)$ near the origin if $E$ is uniformly $s$-dimensional and quasi-regular. Loosely speaking, if $\tau_1 = \tau_2 = \tau < 1$ and $s$ is close to $d$, then $\Delta(E)$ intersects {\em{every}} geometric sequence $\{a\tau^j: j \geq 1\} \subseteq (0,1)$ at infinitely many points, with the intersection indices $j$ spaced roughly uniformly.  The precise statement is below.   
\begin{thm} \label{quasiregular-distance-thm}
Given any $d \geq 2$, $C_0 \geq 1$ and parameters $0 < \tau_1 \leq \tau_2 < 1$, there exist constants $\varepsilon_0 \in (0, 1)$ and $J_0 \in \mathbb N$  depending on these quantities with the following property. 
\vskip0.1in 
\noindent For $s \in (d - \varepsilon_0, d)$, let $E \subseteq [0,1]^d$ be any quasi-regular Borel set, equipped with a locally uniformly $s$-dimensional probability measure $\mu$  for which \eqref{Strichartz-upper-lower} holds with the specified constant $C_0$. Then $\Delta(E)$ intersects every $J_0$-long sequence that is $(\tau_1, \tau_2)$-lacunary.  Specifically, any sequence $\{t_j : 1 \leq j \leq J_0\} \subseteq (0,1]$ with \eqref{lacunarity-defn} must satisfy 
\begin{equation}  \# \Bigl[ \Delta(E) \cap \bigl\{t_{1}, \ldots, t_{J_0}\bigr\} \Bigr] \geq 1. \label{J-consecutive} \end{equation}  
In particular, if $\{t_j : j \geq 1\}$ is any infinite sequence that is $(\tau_1, \tau_2)$-lacunary, then every consecutive block $\{k+1, \ldots, k+J_0\}$ of $J_0$ integers contains an index $j$ with $t_j \in \Delta(E)$.   
\end{thm}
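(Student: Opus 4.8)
The plan is to derive Theorem~\ref{quasiregular-distance-thm} from Theorem~\ref{F-asymptotics-thm}, the only information about $E$ and $\mu$ that is used being the two-sided estimate \eqref{Strichartz-upper-lower} (which is exactly what Theorem~\ref{Strichartz-theorem} supplies for a locally uniformly $s$-dimensional, quasi-regular $E$). Write $\beta:=d-s$, so that $C_0^{-1}T^{\beta}\le\mathbb F_\mu(T)\le C_0T^{\beta}$ for all $T\ge1$. First I would fix the data feeding Theorem~\ref{F-asymptotics-thm}: take $M=4$, $c=(d-1)/8\in(0,\tfrac{d-1}{4})$, and $T_0:=\max\{1,C_0^{32/(d-1)}\}$. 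As long as $\varepsilon_0\le(d-1)/16$ we have $\beta<c$ with $c-\beta\ge(d-1)/16$, and then, for all $T_1,T_2\ge T_0$,
\[
\mathbb F_\mu(T_1T_2)\le C_0T_1^{\beta}T_2^{\beta}\le C_0^{2}T_1^{\beta}\,\mathbb F_\mu(T_2)\le T_1^{c}\,\mathbb F_\mu(T_2),
\]
because $C_0^{2}\le T_0^{\,c-\beta}\le T_1^{\,c-\beta}$; thus \eqref{controlled-growth} holds. Let $\delta_*:=\delta_M$ be the constant returned by Theorem~\ref{F-asymptotics-thm} for these $c,T_0,M$, and fix $\delta:=\min\{\delta_*/2,1/2\}\in(0,\delta_*)$.

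The obstruction to invoking Theorem~\ref{F-asymptotics-thm} directly is that the spacing $R_{j+1}\ge\delta^{-2}R_j$ in \eqref{Rj-condition} is far stronger than $(\tau_1,\tau_2)$-lacunarity supplies, so I would feed it a sparse sub-block of the given sequence. Set $m_0:=\lceil2\log(1/\delta)/\log(1/\tau_2)\rceil$ (so $\tau_2^{m_0}\le\delta^2$), $j_*:=\lceil2+\log(T_0/\delta)/\log(1/\tau_2)\rceil$ (so $\tau_2^{\,j_*-1}<\delta/T_0$), $L:=\lceil4C_0^{2}\rceil\ (\ge4)$, and declare $J_0:=j_*+(L-1)m_0$. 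Given any $(\tau_1,\tau_2)$-lacunary $\{t_j\}_{j=1}^{J_0}\subseteq(0,1]$, put $j_k:=j_*+(k-1)m_0$ and $R_k:=\delta/t_{j_k}$ for $1\le k\le L$ (so $j_L=J_0$). Since $\{t_j\}$ is decreasing and $t_1\le1$, $t_{j_k}\le\tau_2^{\,j_*-1}<\delta/T_0$, hence $R_k>T_0$; and $t_{j_{k+1}}\le\tau_2^{m_0}t_{j_k}\le\delta^{2}t_{j_k}$, hence $R_{k+1}\ge\delta^{-2}R_k$. Crucially $\delta R_k^{-1}=t_{j_k}$, so the conclusion of Theorem~\ref{F-asymptotics-thm} that some $\delta R_k^{-1}$ lies in $\Delta(E)$ is precisely that some $t_{j_k}\in\Delta(E)$.

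It remains to check the summation half of \eqref{Rj-condition}, and this is the heart of the matter: with $\mathbb F_\mu(R_k)\asymp R_k^{\beta}$ and the $R_k$ growing geometrically, the last term would dominate the sum if $\beta$ were bounded away from $0$, whereas for $\beta=d-s$ small all of $\mathbb F_\mu(R_1),\dots,\mathbb F_\mu(R_L)$ are comparable. Using $t_{j+1}\ge\tau_1t_j$ one gets $R_k/R_L=t_{j_L}/t_{j_k}\ge\tau_1^{m_0(L-k)}$, so
\[
\frac{\sum_{k=1}^{L}\mathbb F_\mu(R_k)}{\mathbb F_\mu(\delta^{-2}R_L)}\ \ge\ C_0^{-2}\delta^{2\beta}\sum_{k=1}^{L}\Bigl(\frac{R_k}{R_L}\Bigr)^{\beta}\ \ge\ C_0^{-2}\delta^{2\beta}\sum_{i=0}^{L-1}\bigl(\tau_1^{m_0\beta}\bigr)^{i}.
\]
Now take $\varepsilon_0:=\min\bigl\{\tfrac12,\ \tfrac{d-1}{16},\ \tfrac{\log2}{2\log(1/\delta)},\ \tfrac{\log2}{m_0L\log(1/\tau_1)}\bigr\}$. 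For $\beta\in(0,\varepsilon_0)$ this forces $\delta^{2\beta}\ge\tfrac12$ and $\tau_1^{m_0L\beta}\ge\tfrac12$; since $\tau_1^{m_0\beta}<1$, the latter makes each of the $L$ terms $(\tau_1^{m_0\beta})^i$ at least $\tfrac12$, so the last sum is $\ge L/2$ and the displayed ratio is $\ge L/(4C_0^{2})\ge1>1-\tfrac2M$. Hence \eqref{Rj-condition} holds with $J=L$, Theorem~\ref{F-asymptotics-thm} gives some $t_{j_k}\in\Delta(E)$, and \eqref{J-consecutive} follows; applying this to each length-$J_0$ block $\{t_{k+j}\}_{j=1}^{J_0}$ of an infinite $(\tau_1,\tau_2)$-lacunary sequence — still $(\tau_1,\tau_2)$-lacunary and in $(0,1]$ — gives the consecutive-block statement. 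The one point requiring care is the non-circular order of the choices, $M,c\Rightarrow T_0\Rightarrow\delta_*\Rightarrow\delta\Rightarrow m_0,j_*\Rightarrow L\Rightarrow\varepsilon_0\Rightarrow J_0$, ensuring $\varepsilon_0$ and $J_0$ depend only on $d,C_0,\tau_1,\tau_2$.
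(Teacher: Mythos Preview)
Your proof is correct and follows essentially the same approach as the paper: set $M=4$, $c=(d-1)/8$, verify the controlled-growth condition \eqref{controlled-growth} from the Strichartz bounds \eqref{Strichartz-upper-lower}, subsample the $(\tau_1,\tau_2)$-lacunary sequence at spacing $m_0$ (the paper's $m$) to achieve the $\delta^{-2}$-gap, and then check the summation half of \eqref{Rj-condition} by taking $d-s$ small enough. Your version is in fact a bit tidier than the paper's in two places: you fix an explicitly $s$-independent $T_0=\max\{1,C_0^{32/(d-1)}\}$ (the paper writes $T_0=C_0^{2/(c-d+s)}$ and then asserts $\delta$ depends only on $d,C_0$), and you introduce the offset $j_*$ to guarantee $R_k>T_0$, a point the paper leaves implicit; your lower bound $\sum (\tau_1^{m_0\beta})^i\ge L/2$ is also more direct than the paper's geometric-series computation followed by a limit as $s\nearrow d$.
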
 
\noindent {\em{Remarks:}} 
\begin{enumerate}[1.]
\item Theorem \ref{quasiregular-distance-thm} is proved in Section \ref{F-asymptotics-section}, where it is derived as a corollary of Proposition \ref{F-asymptotics-prop}, a quantitative version of Theorem \ref{F-asymptotics-thm}. 
\vskip0.1in 
\item If $\mu = \mathbf 1_E/\lambda_d(E)$ where $E \subseteq [0,1]^d$ is a set of positive Lebesgue measure with $\lambda_d(E) \geq \varepsilon_0$, \cite[Proposition 3]{1986Bourgain} shows that $J_0$ can be chosen to depend only on $\varepsilon_0$ and $\tau_2$, and uniform in $\tau_1$ as $\tau_1 \searrow 0$. Thus, in this case, $\Delta(E)$ intersects every lacunary sequence. 
\vskip0.1in 
\item  Theorem \ref{quasiregular-distance-thm} can be applied to a large class of self-similar fractals; see \cite[Theorem 5.8]{Strichartz-Asymptotics}. 
\vskip0.1in
\item Fourier dimension is an alternative notion of size often compared with Hausdorff dimension. Sets for which these two dimensions coincide are called Salem sets \cite{{Hambrook-1}, {Hambrook-2}, {Jarnik-1}, {Jarnik-2}, {Kahane-1}, {Kahane-2}, {Kaufman}}.  Such sets are of special interest in geometric measure theory; they often enjoy special properties not owned by their non-Salem counterparts of the same Hausdorff dimension \cite{{Korner-1},{Korner-2},{Liang-Pramanik}}. Statements involving the Hausdorff dimension and Lebesgue measure of $\Delta(E, F)$, where at least one of the sets $E$ or $F$ is Salem, may be found in \cite[Theorems 5.3, 5.6]{Mattila-1987}. Distinctive structural features of $\Delta(E, F)$ when $E$ is a Salem set are currently unknown; identification of such features remains a natural avenue of inquiry.   
\end{enumerate} 
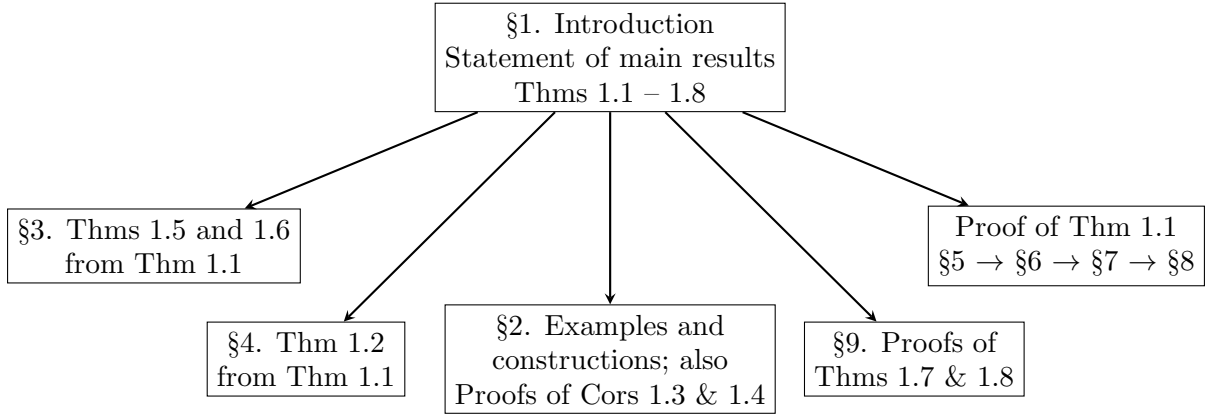
\begin{figure}
\begin{tikzpicture}[node distance=2cm]
\node (thm) [box, align=center] {\S \ref{intro-section}. Introduction \\ Statement of main
results \\ Thms \ref{mainthm-2} -- \ref{quasiregular-distance-thm}};
\node (S3) [box, below of=thm, xshift=-6cm, yshift=-0.5cm, align=center]
{\S \ref{Suff-large-distances-section}. Thms \ref{mainthm-3} and \ref{mainthm-4} \\ from Thm \ref{mainthm-2}};
\node (S4) [box, below of=thm, xshift=-4cm, yshift=-2cm, align=center] {\S \ref{Mainthm1-Proof-Section}. Thm \ref{mainthm-cor} \\ from Thm \ref{mainthm-2}};
\node (S2) [box, below of=thm, xshift=0cm, yshift=-2cm,align=center] {\S \ref{examples-applications-section}.
Examples and \\ constructions; also \\ Proofs of Cors \ref{Steinhaus-sparse-corollary} \& \ref{Sparse-all-distances-corollary}};
\node (S9) [box, below of=thm, xshift=4cm, yshift=-2cm, align=center] {\S \ref{F-asymptotics-section}. Proofs of \\ Thms \ref{F-asymptotics-thm} \& \ref{quasiregular-distance-thm} };
\node (proof) [box, below of=thm, xshift=6cm, yshift=-0.5cm,  align=center]
{Proof of Thm \ref{mainthm-2} \\ \S\ref{configuration-integral-section} $\rightarrow$ \S \ref{Energy-and-Spectral-Gap-section} $\rightarrow$ \S \ref{mainprop-1-section}
$\rightarrow$ \S \ref{mainprop-2-section}};

\draw [arrow] (thm) -- (S3);
\draw [arrow] (thm) -- (S4) ;
\draw [arrow] (thm) -- (S2) ;
\draw [arrow] (thm) -- (S9) ;
\draw [arrow] (thm) -- (proof);
\end{tikzpicture}
\caption{Article layout} \label{layout-fig}
\end{figure} 
\subsection{Proof overview} 
We summarize the main conceptual and methodological ingredients of the paper.
The conceptual novelty of the article lies in: 
\begin{itemize}
\vskip0.05in
\item Identifying Hausdorff content and Fourier asymptotics as quantitative indicators of distances; these tools act in complementary roles, content for identifying macroscopic intervals in $\Delta(E)$ (Theorems \ref{mainthm-2}, \ref{mainthm-cor}), Fourier asymptotics for microscopic distribution of individual points in $\Delta(E)$ (Theorems \ref{F-asymptotics-thm}, \ref{quasiregular-distance-thm});   
\vskip0.05in
\item Using local structure theorems for distance sets to make statements about abundance of distances in unbounded sets (Theorems \ref{mainthm-3}, \ref{mainthm-4}, Corollary \ref{Sparse-all-distances-corollary}). This connection between local dyadic structure and global abundance of distances is central to the paper’s theme.
  \end{itemize} 
The methodological novelty involves: 
\vskip0.05in 
\begin{itemize} 
\item Use of a spectral gap condition in identifying distances; specific details appear in Propositions \ref{mainprop-1}-\ref{mainprop-2'}, and their proofs in Sections \ref{mainprop-1-section} and \ref{mainprop-2-section}. Such a condition first appeared in the work of Kuca, Orponen and Sahlsten \cite{Kuca-Orponen-Sahlsten} in connection with a continuous S\'ark$\ddot{\text{o}}$zy type problem, unrelated to the distance set problem considered in this paper; see also an extension of \cite{Kuca-Orponen-Sahlsten} by Bruce and the first author \cite{Bruce-Pramanik-2023}. Although these settings involve additive patterns rather than distances, the underlying mechanism is similar. The essential and robust role played by the spectral gap condition in these widely disparate settings indicates potential for further usage.    
\vskip0.05in
\item Hands-on analysis of specific sets, including thickened Cantor-like sets and those appearing in the proof of Corollary \ref{Steinhaus-sparse-corollary}. Combined with the main theorems, they provide new examples and counter-examples of distance phenomena in the sparse setting:  All these examples appear in Section \ref{examples-applications-section}, and may be of independent interest. 
\end{itemize} 
Theorems \ref{mainthm-cor}, \ref{mainthm-3} and \ref{mainthm-4} are direct consequences of Theorem \ref{mainthm-2}; Theorem \ref{quasiregular-distance-thm} follows from Theorem \ref{F-asymptotics-thm}. The inter-dependencies of various sections are presented in Figure \ref{layout-fig}.   
\vskip0.1in
\noindent The proofs of Theorems \ref{mainthm-2} and \ref{F-asymptotics-thm} exploit a common strategy, widely used in the literature. They all involve estimation of a configuration integral, defined as $\Lambda_{\mu, \nu}(t)$ in Section \ref{configuration-integral-section}. This integral, if nonzero, signals the existence of points $x \in \text{supp}(\mu), y \in \text{supp}(\nu)$ with $|x - y| = t$. Versions of this integral have appeared in previous work \cite{{1986Bourgain}, {Mattila-Sjolin-99}}. It admits a Fourier representation, given in \eqref{mass-Lambda-mu-nu}, that is useful for estimation. The non-vanishing of the integral is determined by analysing its size on three mutually exclusive and exhaustive domains of integration - low, medium and high frequencies. The dominant contribution to the integral in \eqref{mass-Lambda-mu-nu} comes from low frequencies, where all the factors in the integrand take large values close to 1. The contribution from high frequencies is small, by virtue of convergence of the integral, and from well-known Fourier decay estimates of the spherical measure $\sigma$. 
\vskip0.1in
\noindent The main challenge is to control the contribution from intermediate frequencies, which are too far away from the origin to benefit from the size of $\widehat{\mu}(0)=1$, but too small for Fourier decay to be effective. Indeed, we do not know how to control this term for a general measure. The distinctive feature of our proof is the construction of a special measure that is supported on the set of interest and enjoys a spectral gap property. The spectral gap ensures that the intermediate-frequency contribution of the configuration integral can be suppressed, a key step in proving existence of distances. 
\subsection{Acknowledgements} This project germinated at the 16$^{\text{th}}$ Discussion Meeting in Harmonic Analysis that was held in the Indian Institute of Science, Education and Research (IISER), Bhopal in December, 2019. We gratefully acknowledge the warmth and hospitality of the meeting organizers and the host institute. Feedback from Pertti Mattila on an earlier version of the manuscript led to a more precise formulation of Corollaries \ref{Steinhaus-sparse-corollary} and \ref{Sparse-all-distances-corollary}. Suggestions from an anonymous referee improved the exposition and led to the addition of Section \ref{Open Problems Section}. Karthik Ramaseshan helped with creating the diagrams. We thank them all for their input. MP was supported in part by NSERC Discovery grant GR010263.

 \section{Examples, applications and constructions} \label{examples-applications-section} 
 \noindent  In this section we present a few examples that highlight new phenomena concerning distance sets, provide some insight into the results stated in Section \ref{intro-section} and help place them in context with other results and examples extant in the literature. We collect some necessary items along the way. A few ubiquitous terms and notations, some of which have already appeared in the statements of the theorems, are reviewed very briefly in Section \ref{preliminaries-section}; a more in-depth treatment is available in the textbooks \cite{{Mattila-Book1}, {Mattila-Book2}}. 
\subsection{Preliminaries} \label{preliminaries-section}
\subsubsection{Hausdorff dimension and content} For any Borel set $E \subseteq [0,1]^d$ and $s \in (0, d]$, the standard {\em{$s$-dimensional Hausdorff measure}} $\mathbb H^s(E)$\cite[Chapter 2]{Mattila-Book2} is defined as: 
\begin{equation}  
\mathbb H^s(E) := \lim_{r \rightarrow 0+} \inf \Bigl\{\sum_{i=1}^{\infty} \bigl[\text{diam}(E_i) \bigr]^s : E \subseteq \bigcup_{i=1}^{\infty} E_i, \; \text{diam}(E_i) \leq r \Bigr\}, 
\end{equation}
where the infimum is taken over all countable covers of $E$ consisting of sets $E_i$ with diameter at most $r$. It is well-known that the {\em{Hausdorff dimension}} of $E \subseteq \mathbb R^d$ is given by 
\begin{equation} \label{Hdim-def} \dim_{\mathbb H}(E) = \inf \bigl\{s : \mathbb H^s(E) = 0 \bigr\} = \sup \bigl\{s : \mathbb H^s(E) = \infty \bigr\}. \end{equation} 
The notions of $s$-dimensional {\em{dyadic Hausdorff measure and dyadic Hausdorff content of $E$}} are closely related to the standard Hausdorff measures: 
\begin{align}  
\mathcal H^s(E) &:=  \lim_{r \rightarrow 0+} \inf \Bigl\{\sum_{i=1}^{\infty} \ell(Q_i)^s : E \subseteq \bigcup_{i=1}^{\infty} Q_i, \; Q_i \in \mathcal Q_d, \ell(Q_i) \leq r \Bigr\}, \noindent \\ 
 \mathcal H_{\infty}^{s}(E) &:= \inf \Bigl\{ \sum_{i=1}^{\infty} \ell(Q_i)^s : E \subseteq \bigcup_{i=1}^{\infty} Q_i, \; Q_i \in \mathcal Q_d \Bigr\}. \label{dyadic-H-content-def}
 \end{align}
Here $\mathcal Q_d$ denotes the collection of all closed dyadic cubes in $\mathbb R^d$. For $Q \in \mathcal Q_d$, we write $Q = c(Q) + \ell(Q)[0,1]^d$, so that $c(Q)$ is the vertex of $Q$ which is smallest in every coordinate, and $\ell(Q)$ denotes its side-length. It follows from the definitions that for any set $E \subseteq [0,1]^d$,
\vskip0.1in
\begin{itemize}  
\item the quantities $\mathbb H^s(E), \mathcal H^s(E) \in [0, \infty]$, while $\mathcal H^s_{\infty}(E) \in [0,1]$;   
\vskip0.1in
\item these three quantities are either all zero or all positive.
\end{itemize}  
\vskip0.1in 
In particular, the definition \eqref{Hdim-def} of Hausdorff dimension remains the same if $\mathbb H^s$ is replaced by $\mathcal H^s$.  These facts will be used throughout the paper without further reference. 
\vskip0.1in
\subsubsection{Definitions associated with Fourier asymptotics} \label{quasiregularity-section} Let us also recall some relevant terminology from \cite{Strichartz-Asymptotics} that appeared in Section \ref{Fourier-asymptotics-section} and \ref{quasiregular-section}. A probability measure $\mu$ on $[0,1]^d$ is {\em{locally uniformly $s$-dimensional}} if there is a constant $C > 0$ such that
\begin{equation} \label{loc-u-dim-def} \mu \bigl(Q\bigr) \leq C \ell(Q)^s \quad \text{ for all cubes } Q \subseteq [0,1]^d.  \end{equation}  
The definition implies that $\mu$ is absolutely continuous with respect to $s$-dimensional Hausdorff measure.  We will say that an $s$-dimensional set $E \subseteq [0,1]^d$ is {\em{quasi-regular}} if there exists $\kappa > 0$ such that
\begin{equation} 
\label{Quasiregular-def} 
\liminf_{r \rightarrow 0} \frac{1}{(2r)^s} \mathbb H^{s}\bigl(E\cap B(x;r)\bigr) \geq \kappa \text{ for $\mathbb H^s$-almost every $x \in E$}. 
\end{equation}  
 \subsection{Notation}  
In the sequel, we will occasionally use $\ll, \gg, \sim$ to denote size relationships between two quantities. These are to be interpreted as follows: $A \ll B$, which is equivalent to $B \gg A$, implies the existence of a positive constant $C_d$, depending only on dimension, such that $A \leq C_d B$. We write $A \sim B$ if there exists $C_d > 1$ such that $C_d^{-1} \leq B/A \leq C_d$. 

\subsection{A building block example} \label{building-block-example-section}
Let us fix an exponent $s \in (0, d)$, and a large integer $q$. Setting $\sigma := d/s > 1$ and $\mathbb Z_{q} := \{0, 1, \ldots,  q - 1\}$, we define $\mathtt F = \mathtt F[q] \subseteq [0,1]^d$ as follows, 
\begin{equation} \label{def-building-block-F} 
\mathtt F = \mathtt F[q; \sigma] := \bigcup \left\{\frac{p}{q} + {q}^{-\sigma}[0,1]^d \; \Bigl| \; p \in \mathbb Z_{q}^d \right\}. 
\end{equation}  
For ease of exposition, we will assume that $q$ is an integer power of 2 and that $s$ is a rational number close to but less than $d$, so that $q^{\sigma}$ is an integer power of 2 as well. Let us note that $\mathcal H_{\infty}^{s}(\mathtt F) = 1$. This is likely well-known, but we include a proof in the appendix Section \ref{Building-block-Hcontent-section} for completeness. Thus the hypotheses of Theorem \ref{mainthm-2} hold with $E = F = \mathtt F$ and for all small $\rho > 0$. The goal is to explicitly work out the structure of $\Delta(\mathtt F)$, and compare with the conclusion of the theorem. As we will see later in this section, $\mathtt F$ is a fundamental building block for several examples and counterexamples concerning distance sets. 
\vskip0.1in 
\noindent To start off, let us record that 
\begin{align*} 
\mathtt F - \mathtt F &= \bigcup \left\{\frac{p-p'}{q} + q^{-\sigma}[-1,1]^d \; \Bigl| \; p,p' \in \mathbb Z_{q}^d \right\} \\
&= \bigsqcup \mathtt Q^{\ast}[p; q^{-\sigma}], \text{ where } \mathtt Q^{\ast}[p; q^{-\sigma}] := \left\{\frac{p}{q} +  q^{-\sigma}[-1,1]^d : p \in \mathbb Z_{q}^d - \mathbb Z_{q}^d \right\}.  
\end{align*}
Since the cube $\mathtt Q^{\ast}[p; q^{-\sigma}]$ is connected for every $p \in  \mathbb Z_{q}^d - \mathbb Z_{q}^d$, its image $|\mathtt Q^{\ast}[p; q^{-\sigma}]|$ under the absolute value map $x \mapsto |x|$ is also connected, hence an interval.  Points of the form $p/q + q^{-\sigma} x$ with $x = 0, \pm p/|p|$ lie in $\mathtt Q^{\ast}[p; q^{-\sigma}]$; hence their lengths, which are $|p|/q$, and $|p|/q \pm q^{-\sigma}$ respectively, are elements of $|\mathtt Q^{\ast}[p; q^{-\sigma}]|$. Connectedness of $|\mathtt Q^{\ast}[p; q^{-\sigma}]|$ implies that the interval given by the convex hull of these points is contained in $|\mathtt Q^{\ast}[p; q^{-\sigma}]|$ as well. A similar argument involving the points $p/q \pm \sqrt{2d} p/|p| \notin \mathtt Q^{\ast}[p;q^{-\sigma}]$ shows that $|p|/q \pm \sqrt{2d} q^{-\sigma} \notin |\mathtt Q^{\ast}[p; q^{-\sigma}]|$. Combining these observations, we conclude the existence of a small dimensional constant $0 < \gamma_d < 1$ such that 
\begin{align} 
&\mathtt J_r \subseteq |\mathtt Q^{\ast}[p; q^{-\sigma}]| \subseteq \mathtt I_r, \text{ where } r = |p|, \text{ and } \nonumber \\  
&\mathtt I_r := \frac{r}{q} + \gamma_d^{-1} q^{-\sigma} \bigl[-1, 1\bigr], \quad \mathtt J_r := \frac{r}{q} + \gamma_d q^{-\sigma} \bigl[-1, 1\bigr]. \label{IrJr}
\end{align}
As a result, $\Delta(\mathtt F)$ can be written in terms of these constituent intervals (not necessarily disjoint), as follows:
\begin{equation} 
\bigcup \bigl\{\mathtt J_r : r \in \mathcal R  \Bigr\} \subseteq \Delta(\mathtt F) \subseteq \bigcup \bigl\{\mathtt I_r : r \in \mathcal R \Bigr\}, \text{ with } \mathcal R = \mathcal R[q] := \bigl\{|p| : p \in \mathbb Z_{q}^d - \mathbb Z_q^d  \bigr\}.   
\label{Falconer-E-upper-lower} 
\end{equation} 
Since $|p|^2 = p_1^2 + \cdots + p_d^2$ can assume at most $dq^2$ distinct values for $p \in \mathbb Z_q^d - \mathbb Z_q^d$, we deduce that $\#(\mathcal R) \leq dq^2$, i.e., the number of distinct intervals $\{\mathtt I_r : r \in \mathcal R\}$ is at most $dq^2$. Also $\max(\mathcal R) \leq \sqrt{d}q$. The main observation in this section is that  the intervals $\mathtt I_r$ are disjoint for small values of $r$. Away from the origin, where $r$ is large, they overlap significantly, so that their subintervals $\mathtt J_r$ coalesce to generate a much larger interval, whose length is bounded below by a fixed dimensional constant. The interval $[a_{\rho}, b_{\rho}]$, whose existence is ensured by Theorem \ref{mainthm-2}, lies in this larger interval.  
 \begin{lem} \label{building-block-example-lemma} 
 There exists an absolute constant $c_0 \in (0, \frac{\gamma_d}{100})$, depending only on the dimension $d$, with the following property.
 \vskip0.1in
\noindent Suppose that $s$ is close to $d$, so that $\sigma := d/s \in (1, \frac{3}{2})$. Then for integers $q$ such that $c_0 q^{\sigma - 1} \gg 1$ and $\mathtt F[q]$ defined as in \eqref{def-building-block-F}, the following conclusions hold :
\vskip0.1in 
 \begin{enumerate}[(a)]
 \item The intervals $\mathtt I_r$ are disjoint for $r \in \mathcal R, \, r \leq 2c_0q^{\sigma-1}$, where $\mathcal R$ is defined in \eqref{Falconer-E-upper-lower}.  Any two distinct intervals in this range are separated by a distance of at least $q^{-\sigma}/(8c_0)$.\label{disjointness}
 \vskip0.1in
 \item 
 Near the origin, $\Delta(\mathtt F)$ is a disjoint union of intervals of length $\sim q^{-\sigma}$. Specifically, 
 \begin{equation} \bigsqcup \bigl\{\mathtt J_r : r \in \mathcal R, \, r \leq \frac{c_0}{2}q^{\sigma-1} \bigr\} \subseteq \Delta(\mathtt F) \cap \bigl[0, c_0q^{\sigma-2} \bigr) \subseteq \bigsqcup \bigl\{\mathtt I_r : r \in \mathcal R, \,  r \leq 2c_0 q^{\sigma-1} \bigr\}. \label{F-E-small-r} \end{equation} \label{small-disjoint-intervals}
 \item For $r \in \mathcal R$ with ${q^{2(\sigma-1)}}/{c_0^2} - 1\leq r < q$, the union of the intervals $\mathtt J_r$ consists of a single interval whose length is bounded from below by an absolute constant. As a result, 
 \begin{equation} \label{large-interval} 
 \Delta(\mathtt F) \supseteq   \Bigl[c_0^{-2}q^{2\sigma -3}, 1 \Bigr].
 \end{equation} 
 \label{large-single-interval}
 \end{enumerate} 
 \end{lem}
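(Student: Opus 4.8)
The plan is to read off all three claims from the two-sided bound \eqref{Falconer-E-upper-lower}, namely $\bigcup\{\mathtt J_r:r\in\mathcal R\}\subseteq\Delta(\mathtt F)\subseteq\bigcup\{\mathtt I_r:r\in\mathcal R\}$, in which $\mathtt I_r$ and $\mathtt J_r$ (from \eqref{IrJr}) are the intervals centred at $r/q$ with radii $\gamma_d^{-1}q^{-\sigma}$ and $\gamma_dq^{-\sigma}$, by controlling how the centres $\{r/q:r\in\mathcal R\}$ are spaced relative to these radii. Two elementary arithmetic inputs suffice. First, if $r\ne r'$ with $r^2,(r')^2\in\mathbb Z_{\ge 0}$, then $|r-r'|=|r^2-(r')^2|/(r+r')\ge 1/(r+r')$, which keeps distinct centres apart. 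Second, for $d\ge 2$ one has $\{m^2+k^2:0\le m,k\le q-1\}\subseteq\{|p|^2:p\in\mathbb Z_q^d-\mathbb Z_q^d\}$; taking $m=\lfloor\sqrt N\rfloor$ and letting $k$ run from $0$ to $\lfloor\sqrt{2m+1}\rfloor$, the values $m^2+k^2$ stay in $[m^2,(m+1)^2]$ with consecutive gaps $2k+1\le 2\sqrt{2m+1}+1\le 5N^{1/4}$, so consecutive elements of $\mathcal R$ near $\sqrt N$ differ by at most $\tfrac52 N^{-1/4}$. We fix $c_0$ to be any dimensional constant with $0<c_0<\gamma_d/100$; this margin makes every comparison below go the right way, and ``$q$ large'' will always mean large enough in terms of $d$ and $\sigma$ for finitely many inequalities between fixed powers of $q$ to hold, as guaranteed by the hypothesis $c_0q^{\sigma-1}\gg 1$.

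For part \eqref{disjointness}, take distinct $r,r'\in\mathcal R$ with $r,r'\le 2c_0q^{\sigma-1}$. The first input gives $|r/q-r'/q|\ge 1/(q(r+r'))\ge q^{-\sigma}/(4c_0)$, which exceeds the combined radius $2\gamma_d^{-1}q^{-\sigma}$ of $\mathtt I_r$ and $\mathtt I_{r'}$ since $c_0<\gamma_d/8$; subtracting, these intervals lie at distance at least $q^{-\sigma}/(4c_0)-2\gamma_d^{-1}q^{-\sigma}\ge q^{-\sigma}/(8c_0)$ apart, using $c_0<\gamma_d/16$. Part \eqref{small-disjoint-intervals} is then bookkeeping with \eqref{disjointness}: every $\mathtt J_r\subseteq\mathtt I_r$ lies in $\Delta(\mathtt F)$, and for $r\le\tfrac{c_0}{2}q^{\sigma-1}$ the right endpoint $r/q+\gamma_dq^{-\sigma}$ of $\mathtt J_r$ is below $c_0q^{\sigma-2}$ once $q$ is large (here $\sigma>1$ forces $q^{-\sigma}\ll q^{\sigma-2}$), yielding the left inclusion in \eqref{F-E-small-r}; conversely any $t\in\Delta(\mathtt F)\cap[0,c_0q^{\sigma-2})$ lies in some $\mathtt I_r$, and if $r>2c_0q^{\sigma-1}$ the left endpoint $r/q-\gamma_d^{-1}q^{-\sigma}$ of $\mathtt I_r$ would exceed $c_0q^{\sigma-2}$ for $q$ large, a contradiction, so $r\le 2c_0q^{\sigma-1}$; disjointness of the unions in both inclusions is \eqref{disjointness}, and the lengths $2\gamma_dq^{-\sigma}$ and $2\gamma_d^{-1}q^{-\sigma}$ are both $\sim q^{-\sigma}$.

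Part \eqref{large-single-interval} rests on the opposite behaviour: for large $r$ the intervals $\mathtt J_r$ overlap. Consecutive elements $r=\sqrt n<\sqrt{n'}=r'$ of $\mathcal R$ give intersecting $\mathtt J_r,\mathtt J_{r'}$ exactly when $r'-r\le 2\gamma_dq^{1-\sigma}$; by the second input, $r'-r=(n'-n)/(\sqrt n+\sqrt{n'})\le\tfrac52 n^{-1/4}=\tfrac52 r^{-1/2}$, which is $\le 2\gamma_dq^{1-\sigma}$ as soon as $r\ge(\tfrac{5}{4\gamma_d})^2q^{2(\sigma-1)}$, and this holds for all $r\ge c_0^{-2}q^{2(\sigma-1)}-1$ because $c_0<\gamma_d/100$. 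Hence $\bigcup\{\mathtt J_r:r\in\mathcal R,\ c_0^{-2}q^{2(\sigma-1)}-1\le r<q\}$ is a single interval. Its left endpoint is at most $r_0/q-\gamma_dq^{-\sigma}$, where $r_0$ is the least element of $\mathcal R$ above $c_0^{-2}q^{2(\sigma-1)}-1$; since the $\mathcal R$-gap at that scale is $\ll q^{1-\sigma}$, this is $<c_0^{-2}q^{2\sigma-3}$ for $q$ large. For the right endpoint, the second input places an element of $\{m^2+k^2\}$ in the interval $[q^2-\gamma_dq^{2-\sigma},q^2)$ — whose length $\gamma_dq^{2-\sigma}$ dominates $q^{1/2}$ because $\sigma<\tfrac32$ — hence some $r\in\mathcal R$ with $q-\gamma_dq^{1-\sigma}\le r<q$, whose interval $\mathtt J_r$ has right endpoint $r/q+\gamma_dq^{-\sigma}\ge 1$. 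Therefore this single interval contains $[c_0^{-2}q^{2\sigma-3},1]$, which then lies in $\Delta(\mathtt F)$; since $2\sigma-3<0$ its length is at least $1-c_0^{-2}q^{2\sigma-3}\ge\tfrac12$ for $q$ large.

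There is no deep obstacle here, only careful bookkeeping: one must verify that the single constant $c_0<\gamma_d/100$ simultaneously forces the threshold comparisons between $q^{\sigma-1}$, $q^{2(\sigma-1)}$, $q^{\sigma-2}$, $q^{1-\sigma}$, $q^{2-\sigma}$ and $q^{2\sigma-3}$ to fall the correct way once $q$ is large. The one genuinely quantitative ingredient is the density estimate for $\{m^2+k^2\}$, and it is what dictates the cutoff in \eqref{large-interval}: for $d=2$ the gaps of $\{|p|^2\}$ near $N$ are of size $\sim N^{1/4}$ rather than $O(1)$, so the overlap condition forces $\tfrac52 r^{-1/2}\lesssim q^{1-\sigma}$, i.e.\ $r\gtrsim q^{2(\sigma-1)}$ — precisely the lower bound appearing in \eqref{large-single-interval} and, after dividing by $q$, in \eqref{large-interval}.
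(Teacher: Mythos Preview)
Your proof is correct and, for parts \eqref{disjointness} and \eqref{small-disjoint-intervals}, matches the paper's argument essentially verbatim: both use the identity $|r-r'|=|r^2-(r')^2|/(r+r')\ge 1/(r+r')$ to separate the centres, and then do the same bookkeeping against the radii $\gamma_d^{\pm 1}q^{-\sigma}$.

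For part \eqref{large-single-interval} your overall strategy is again the paper's --- show that consecutive $\mathtt J_r$ overlap once $r\gtrsim q^{2(\sigma-1)}$, then chain them into a single interval --- but your number-theoretic input is different and more elementary. The paper splits into two cases: for $d\ge 4$ it invokes Lagrange's four-squares theorem to get that every integer lies in $\{|p|^2\}$, hence consecutive gaps in $\mathcal R$ near $r$ are $O(r^{-1})$; for $d=2,3$ it cites the Bambah--Chowla theorem that $(m,m+3m^{1/4})$ always contains a sum of two squares. You instead give a direct, self-contained argument valid for all $d\ge 2$: fixing $m=\lfloor\sqrt N\rfloor$ and sweeping $k$ from $0$ to $\lfloor\sqrt{2m+1}\rfloor$ produces values $m^2+k^2$ covering $[m^2,(m+1)^2]$ with gaps $2k+1\le 5N^{1/4}$, which recovers exactly the $O(N^{1/4})$ bound without any external citation. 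The trade-off is that the paper's $d\ge 4$ case yields a sharper threshold (and hence a longer interval in $\Delta(\mathtt F)$, as noted there), whereas your argument is uniform in $d$ and avoids both references.
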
 
\begin{proof} 
For part (\ref{disjointness}), we choose $r, r' \in \mathcal R$ such that $r' < r \leq 2c_0 q^{\sigma-1}$.  Since $r^2$ and $r'^2$ are distinct integers, 
\[ 1 \leq  | r^2 - r'^2| = ( r + r' )  ( r - r' ) \leq 4c_0 q^{\sigma-1} ( r - r'),   \]
and hence $r - r' \geq q^{1-\sigma}/(4c_0)$. This implies that 
\[ \text{dist}\bigl(\mathtt I_r, \mathtt I_{r'} \bigr)  \geq \frac{(r - r')}{q} - 2 \gamma_d^{-1} q^{-\sigma} \geq \Bigl[\frac{1}{4c_0} - 2 \gamma_d^{-1} \Bigr] q^{-\sigma} \geq \frac{q^{-\sigma}}{8c_0}> 0, \] 
establishing the disjointness of $\{\mathtt I_r: r \leq c_0 q^{\sigma-1}\}$. The relation \eqref{F-E-small-r} in part (\ref{small-disjoint-intervals}) now follows from \eqref{Falconer-E-upper-lower} after intersecting all the sets in the chain of inclusion with the interval $[0, c_0 q^{\sigma-2})$.
\vskip0.1in
\noindent Let us turn to part (\ref{large-single-interval}).  It follows from \eqref{Falconer-E-upper-lower} that
\begin{equation} 
\Delta(\mathtt F) \supseteq \bigcup \bigl\{\mathtt J_r : r \in \mathcal R, \, r > c_0^{-2}q^{2(\sigma-1)}, \; r \in \mathcal R \bigr\}.  \label{claim-1} \end{equation}  
In view of the definition of the intervals $\mathtt J_r$ from \eqref{IrJr}, two intervals $\mathtt J_{r}$ and $\mathtt J_{r'}$ overlap if and only if $|r - r'| \leq 2 \gamma_d q^{1 - \sigma}$.  
We will prove in Lemma \ref{number-theoretic-lemma} below, that 
\begin{equation}  \text{ for every $r \in \mathcal R \cap \bigl[c_0^{-2}q^{2(\sigma -1)}, q \bigr]$, $\exists r' \in \mathcal R$ with $r' > r$ and } |r - r'| \leq 2 \gamma_d q^{1 - \sigma}. \label{claim0} \end{equation} 
The claim in \eqref{claim0} says that for every $r$ in the range $[c_0^{-2}q^{2(\sigma-1)}, q)\cap \mathcal R$, the interval $\mathtt J_r$ intersects an interval $\mathtt J_{r'}$ lying to its right, with $r' \in \mathcal R$. If $r' \leq q$,  the same argument can be repeated with $r$ replaced by $r'$, leading to a chain of intervals each overlapping with the one to its right. Since the number of distinct values of $r$ is at most $dq^2$, iterating the assertion in \eqref{claim0} at most $dq^2$ times leads to the conclusion that the union of $\{\mathtt J_r : r \in \mathcal R, \, c_0^{-2} q^{2(\sigma-1)}-1 \leq r  \leq q \}$ is a single interval containing
\[ \Bigl[\frac{c_0^{-2}q^{2(\sigma-1)}}{q}, \frac{q}{q} + q^{-\sigma} \Bigr) \supseteq \bigl[c_0^{-2}q^{2\sigma-3},  1 \bigr].\] 
This is the desired inclusion, claimed in \eqref{large-single-interval}. 
\end{proof} 
\begin{lem} \label{number-theoretic-lemma}
For $d \geq 2$, the claim in \eqref{claim0} holds.  
\end{lem}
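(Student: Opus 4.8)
The plan is to establish \eqref{claim0} by a direct, elementary construction, reducing immediately to the planar case $d=2$. Observe that $\mathcal R = \{|p| : p \in \mathbb Z_q^d - \mathbb Z_q^d\}$ only grows as $d$ increases, since one may append zero coordinates to any vector; consequently it suffices to exhibit the desired $r'$ in the special form $r' = \sqrt{(a')^2 + (b')^2}$ with $a', b' \in \{0,1,\dots,q-1\}$, as such an $r'$ lies in $\mathcal R$ for every $d \ge 2$. In other words, \eqref{claim0} reduces to a statement about values of sums of two squares with bounded entries.

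The conceptual point is that although the set of integers expressible as a sum of two squares can have large gaps, one is free to move a lattice point \emph{tangentially} along the circle $\{x_1^2 + x_2^2 = r^2\}$: a unit-order displacement in a near-tangential direction changes $\sqrt{x_1^2+x_2^2}$ by only $O(r^{-1/2})$. Concretely, given $r \in \mathcal R \cap [c_0^{-2} q^{2(\sigma-1)}, q]$, I would first take $a'$ to be the largest integer not exceeding $\min(r, q-1)$; then $0 \le a' \le q-1$, $a' \le r$, and — crucially using $r \le q$ to control the top of the range — $r - a' \le 1$. Next let $b' \ge 1$ be the smallest positive integer with $(b')^2 > r^2 - (a')^2$, and set $r' := \sqrt{(a')^2 + (b')^2}$. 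By construction $(r')^2 > r^2$, hence $r' > r$.

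It then remains only to check two quantitative estimates. Since $r^2 - (a')^2 = (r-a')(r+a') \le 1 \cdot 2r = 2r$, minimality of $b'$ gives $(b')^2 \le \bigl(r^2 - (a')^2\bigr) + 2\sqrt{r^2-(a')^2} + 1 \le 2r + 2\sqrt{2r} + 1$. The first consequence is $(r')^2 - r^2 = (b')^2 - \bigl(r^2 - (a')^2\bigr) \le 2\sqrt{2r}+1$, so dividing by $r' + r \ge 2r$ yields $r' - r \le 3r^{-1/2} \le 3 c_0\, q^{1-\sigma}$, where the last step uses $r \ge c_0^{-2}q^{2(\sigma-1)}$; this is at most $2\gamma_d q^{1-\sigma}$ provided $c_0$ is chosen small enough relative to $\gamma_d$, which is permissible since $c_0$ may be taken to be any sufficiently small dimensional constant. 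The second consequence is $(b')^2 \le 2q + 2\sqrt{2q}+1$, which forces $b' \le q-1$ as soon as $q$ exceeds an absolute constant — automatic under the hypothesis $c_0 q^{\sigma-1} \gg 1$. Hence $(a',b') \in \mathbb Z_q^2$, $r' \in \mathcal R$, and $0 < r' - r \le 2\gamma_d q^{1-\sigma}$, which is exactly \eqref{claim0}.

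The step I would be most careful about is the simultaneous bookkeeping near the upper edge $r \approx q$: one must overshoot $r^2$ by taking $b'$ large enough while still keeping $b' \le q-1$, and at the same time keep $a'$ inside $\{0,\dots,q-1\}$ — all governed by the single inequality $r \le q$, but it is worth verifying the degenerate cases $r = q$ and $r \in \mathbb Z$ (where $r^2 - (a')^2 = 0$ and $b' = 1$) to confirm the estimates do not collapse. Notably, no external input such as a gap theorem for sums of two squares is needed; the tangential-perturbation construction above is self-contained, and in fact proves the slightly stronger statement in which $r$ is an arbitrary real number in $[c_0^{-2}q^{2(\sigma-1)}, q]$.
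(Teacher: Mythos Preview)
Your proof is correct and takes a genuinely different, more elementary route than the paper. The paper splits into two cases: for $d \geq 4$ it invokes Lagrange's four-square theorem (every positive integer is a sum of four squares), which in fact yields \eqref{claim0} on the larger range $r \geq 2c_0^{-1}q^{\sigma-1}$; for $d = 2, 3$ it appeals to the Bambah--Chowla gap result that every interval $(m, m + 3m^{1/4})$ with $m \geq 1154$ contains a sum of two squares. Your construction instead handles all $d \geq 2$ uniformly and with no external number-theoretic input: choosing $a' \approx r$ forces $r^2 - (a')^2 = O(r)$, and rounding $\sqrt{r^2 - (a')^2}$ up to the nearest positive integer $b'$ overshoots $r^2$ by only $O(\sqrt{r})$ in squared norm, giving $r' - r = O(r^{-1/2})$ --- exactly the bound the paper obtains via Bambah--Chowla. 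What your argument buys is self-containedness and a cleaner proof that even extends to real $r$; what the paper's case split buys is the slightly stronger range for $d \geq 4$ noted in the proof of Lemma~\ref{building-block-example-lemma}, leading to the larger interval $[2c_0^{-1}q^{\sigma-2},1] \subseteq \Delta(\mathtt F)$ in high dimensions. For the claim \eqref{claim0} as stated, your approach is sufficient and arguably preferable.
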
 
\begin{proof} 
The proof is of a number-theoretic flavour. The argument is especially simple for $d \geq 4$ and yields slightly stronger statements than those claimed in \eqref{claim0} and \eqref{large-interval}. We address this scenario separately. According to Lagrange's theorem \cite[\S 20.5, p 302, Theorem 369]{Hardy-Wright}, every positive integer $k$ is the sum of four squares, i.e., the sum of square of four non-negative integers. Thus, 
\begin{align*} 
&\text{for $d \geq 4$ and any integer $k$ with $\sqrt{k} \in \bigl(\frac{2}{c_0}q^{\sigma-1}, q \bigr)$,} \\  
&\text{there exist $p, p' \in \mathbb Z_q^d - \mathbb Z_q^d \;$ such that $k = |p|^2$ and $k + 1= |p'|^2$.}
\end{align*}
In fact, $p, p'$ can be chosen to have no more than four non-zero coordinates, each of which must be strictly smaller than $q$ in absolute value. Consequently, $r = \sqrt{k} = |p| \in \mathcal R$, $r' = \sqrt{k+1} = |p'| \in \mathcal R$, and    
\[ r' - r = \sqrt{k+1} - \sqrt{k} = \frac{1}{\sqrt{k+1} + \sqrt{k}} \leq \bigl[ \frac{2}{c_0} q^{\sigma-1} \bigr]^{-1} \leq 2 \gamma_d q^{1 - \sigma}. \]
The last inequality follows from the assumption $c_0 \in (0, \gamma_d/100)$. This proves \eqref{claim0} for $r$ in the bigger range $|r| > 2q^{\sigma -1}/c_0$, which in turn leads to $\Delta(\mathtt F) \supseteq [2q^{\sigma-2}/c_0, 1]$, a bigger interval than the one claimed in \eqref{large-interval}.  
\vskip0.1in
\noindent The above argument involving Lagrange's theorem does not apply for $d = 2,3$. However, there is a wealth of number-theoretic literature addressing the asymptotic behaviour of integers that can be represented as a sum of two squares \cite{{Landau-1908}, {Berndt-Rankin-1995}, {Hardy-Ramanujan}, {Moree-Cazaran}}. In particular, the following result of Bambah and Chowla \cite{Bambah-Chowla-1947}, reproduced in \cite[p.137]{Halberstam-1983}, offers a viable substitute: For all $m \geq 1154$, the interval $(m, m+3m^{1/4})$ contains an integer that is expressible as the sum of two squares.  Thus, for $q$ sufficiently large, $r \in \mathcal R$, and  
\begin{align*}
&\text{$r = \sqrt{k} \in \Bigl[\frac{q^{2(\sigma-1)}}{c_0^2}-1, q \Bigr]$, one can find an integer $u\in \bigl[1, 3k^{\frac{1}{4}} \bigr]$ such that } \\ 
&\text{$k' = k+u$ can be expressed as $k' = |p'|^2 = r'^2$ for some $p' \in \mathbb Z_q^{d}$.} 
\end{align*} 
In  fact, \cite{Bambah-Chowla-1947} ensures that $p'$ can be chosen to have at most two nonzero coordinates, each smaller than $q$. Substituting this into $||p| - |p'|| = |r' - r|$ leads to the expression
\[ |r' - r| = \sqrt{k+u} - \sqrt{k} = \frac{u}{\sqrt{k+u} + \sqrt{k}}
\leq \frac{3k^{\frac{1}{4}}}{\sqrt{k}} \leq 3 k^{-\frac{1}{4}} \leq 6c_0 q^{1 - \sigma} \leq 2\gamma_d q^{1 - \sigma}. \] 
This completes the proof of \eqref{claim0} in all dimensions $d \geq 2$. 
\end{proof}
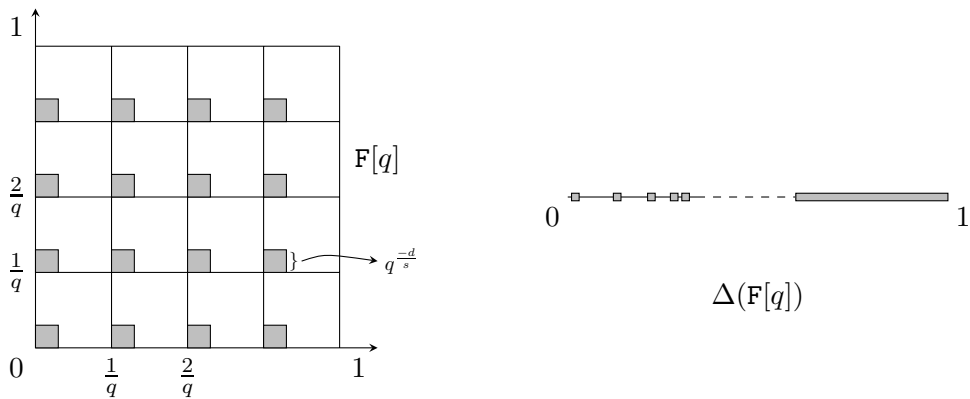
\begin{figure}
        \begin{tikzpicture}[>=stealth]

                \draw[step=1cm] (0,0) grid (4,4);

                \foreach \x in {0, 1, 2, 3}
                \foreach \y in {0, 1, 2, 3}
                \draw[fill=gray!50] (\x,\y) rectangle (\x+0.3, \y+0.3);

                \draw (-0.25,0) node[below] {$0$};
                \draw (4.25,0) node[below] {$1$};
                \draw (-0.25,4) node[above] {$1$};

                \foreach \x in {1,2}
                \draw (\x,0) node[below] {$\frac{\x}{q}$};
                \foreach \y in {1,2}
                \draw (0,\y) node[left] {$\frac{\y}{q}$};

                \draw [->](4,0) -- (4.5,0);
                \draw [->](0,4) -- (0,4.5);

                \draw (4.5,2.5) node {$\mathtt{F}[q]$};

                \draw (3.25,1.15) node [scale=0.65, right] {$\}$};
                \draw [->] (3.5,1.15) .. controls (3.8,1.25) .. (4.5,1.15) node
[scale=0.7, right] {$q^{\frac{-d}{s}}$};

                \def\eps{0.05}
                \def\x0{7}
                \def\y0{2}

                \draw (\x0,\y0) -- (\x0+1.7,\y0+0);

                \draw[fill=gray!50] (\x0+0.05,\y0-\eps) rectangle (\x0+0.15,
\y0+\eps);
                \draw[fill=gray!50] (\x0+0.60,\y0-\eps) rectangle (\x0+0.70,
\y0+\eps);
                \draw[fill=gray!50] (\x0+1.05,\y0-\eps) rectangle (\x0+1.15,
\y0+\eps);
                \draw[fill=gray!50] (\x0+1.35,\y0-\eps) rectangle (\x0+1.45,
\y0+\eps);
                \draw[fill=gray!50] (\x0+1.5,\y0-\eps) rectangle (\x0+1.6,
\y0+\eps);

                \draw[dashed] (\x0+1.7,\y0) -- (\x0+3.5,\y0);
                \draw (\x0+3.5,\y0) -- (\x0+5,\y0);

                \draw[fill=gray!50] (\x0+3,\y0-\eps) rectangle
(\x0+5,\y0+\eps);

                \draw (\x0-0.2,\y0) node [below] {$0$};
                \draw (\x0+5.2,\y0) node [below] {$1$};

                \draw (\x0+2.5,\y0-1) node[below] {$\Delta(\mathtt{F}[q])$};

        \end{tikzpicture}
        \caption{The set $\mathtt F[q]$ given by \eqref{def-building-block-F} and its distance set $\Delta(\mathtt F[q])$} \label{SingleBox}
\end{figure}
\subsection{A counterpoint to Rice's example} \label{Rice-example-counterpoint}
In Section \ref{all-suff-large-dis-intro-section}, we noted an example due to Rice \cite{2020Rice}, which shows that given any $d \geq 2$, there exists a set $A \subseteq \mathbb R^d$ of infinite Lebesgue measure that avoids infinitely many large distances,  for which the positivity criteria \eqref{Bourgain-condition} and \eqref{Bourgain-liminf-condition} barely fail. Using the example constructed in Section \ref{building-block-example-section}, we establish a complementary statement. There exists a set of infinitesimally small Lebesgue measure whose connected components can be  made to shrink arbitrarily fast at infinity, yet which manages to capture all distances outside every compact set. The requirement of small connected components is to rule out trivial examples, such as lines or curves; the set $A$ we construct is totally disconnected at infinity. In particular, the criteria \eqref{Bourgain-condition} and \eqref{Bourgain-liminf-condition} fail in a very strong sense for such a set. Not only is its upper density zero, its density on any ball of radius $R$ is smaller than a pre-assigned constant multiple of $R^{-d}$.       
\begin{prop} \label{Rice-counterpoint-lemma} 
Fix any $d \geq 2$ and $\ell_0 > 0$. Let $\eta: [0, \infty) \rightarrow (0, 1)$ be a decreasing function such that $\eta(R) \searrow 0$ as $R \rightarrow \infty$. Then there exists a set $A \subseteq \mathbb R^d$ with $\lambda_d (A) \leq \ell_0$ such that 
\begin{equation} \label{density+distance} 
\Delta (A \setminus B(0;R)) = [0, \infty) \quad \text{ and } \quad \text{diam}(\mathscr{C}_R) \leq \eta(R) \quad \text{ for all $R \geq 1$},
\end{equation}  
where  $\mathscr{C}_R$ is any connected component of $A \setminus B(0;R)$. 
\end{prop}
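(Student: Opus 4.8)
\textbf{Proof plan for Proposition \ref{Rice-counterpoint-lemma}.} The plan is to build $A$ as a disjoint union of \emph{annular shells}, where the $n$-th shell sits inside a dyadic annulus $B(0; 2^{n+1}) \setminus B(0; 2^n)$ and is a scaled, translated copy of the building block set $\mathtt F[q_n]$ from \eqref{def-building-block-F}, with the parameters $q_n$ chosen so that (i) the scaled copies are fine enough to make all connected components tiny beyond radius $R$, and (ii) each scaled copy still captures a full interval of distances of the form $[\ell(Q)a_\rho, \ell(Q)b_\rho]$ via Theorem \ref{mainthm-cor} (or more directly via Corollary \ref{Steinhaus-sparse-corollary} / Corollary \ref{Sparse-all-distances-corollary}, which already produce a totally disconnected set admitting all distances). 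The key point is that Corollary \ref{Sparse-all-distances-corollary} gives us a totally disconnected Borel set $A_0$ with $\dim_{\mathbb H}(A_0) = s < d$ and $\Delta(A_0 \setminus B(0;R)) = [0,\infty)$ for every $R$; what remains is to \emph{additionally} arrange that $\lambda_d(A) \le \ell_0$ (automatic once $s < d$, since such a set is Lebesgue-null, but we want a quantitative handle) and, crucially, that the connected components of $A \setminus B(0;R)$ shrink faster than the prescribed function $\eta(R)$.

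First I would recall, from the building-block analysis in Section \ref{building-block-example-section}, that $\mathtt F[q;\sigma]$ is a union of $q^d$ cubes of side $q^{-\sigma}$ (so its connected components have diameter $\sqrt d\, q^{-\sigma}$), that $\mathcal H^s_\infty(\mathtt F[q]) = 1$, and that by Lemma \ref{building-block-example-lemma}(c), $\Delta(\mathtt F[q]) \supseteq [c_0^{-2} q^{2\sigma-3}, 1]$ — an interval of fixed length reaching up to $1$, provided $q^{2\sigma-3}$ is small, i.e. $\sigma$ close to $1$, i.e. $s$ close to $d$. Fix such an $s$. Then for a scaled copy $\lambda \cdot \mathtt F[q]$ (a subset of a cube of side $\lambda$), the distance set contains $[\lambda c_0^{-2} q^{2\sigma-3}, \lambda]$ and the connected components have diameter $\sqrt d\, \lambda q^{-\sigma}$.

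Next I would set up the shells. Place the $n$-th piece $A_n$ inside the annulus $\{2^n \le |x| < 2^{n+1}\}$; concretely, let $A_n$ be a translate of $\lambda_n \cdot \mathtt F[q_n]$ with $\lambda_n \sim 2^n$ (so that $A_n$ fits in the annulus, with room to position it so that for any target distance $t$ up to roughly $2^{n}$, two points of $A_n$ realize it — this uses that $\Delta(\lambda_n \mathtt F[q_n]) \supseteq [\text{small}, \lambda_n]$). By taking $q_n \to \infty$ fast enough we get three things simultaneously: the lower endpoint $\lambda_n c_0^{-2} q_n^{2\sigma-3}$ of the captured distance interval $\to 0$ (so that, as $n \to \infty$, every fixed distance $t \in (0,\infty)$ is eventually captured inside a single shell $A_n$, giving $\Delta(A \setminus B(0;R)) = [0,\infty)$ for every $R$, after discarding the finitely many shells meeting $B(0;R)$); the component diameters $\sqrt d\, \lambda_n q_n^{-\sigma} \le \eta(2^{n+1}) \le \eta(R)$ for all relevant $R$ in that annular range (this pins down how fast $q_n$ must grow in terms of $\eta$); and $\lambda_d(A_n) = (\lambda_n q_n^{-\sigma})^d \cdot q_n^d = \lambda_n^d q_n^{d(1-\sigma)} = \lambda_n^d q_n^{-d(\sigma-1)}$, which we make summable with $\sum_n \lambda_d(A_n) \le \ell_0$ by choosing $q_n$ large (here $\sigma > 1$ is essential — the exponent $q_n^{-d(\sigma-1)}$ is a genuine negative power). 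Set $A := \bigcup_{n} A_n$.

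Finally I would verify the two claims in \eqref{density+distance}. For the distance claim: fix $R \ge 1$ and $t \in (0,\infty)$; choose $n$ large enough that $2^n > R$, that $t \le \lambda_n$, and that $t \ge \lambda_n c_0^{-2} q_n^{2\sigma-3}$ (possible since this lower bound tends to $0$); then $t \in \Delta(\lambda_n \mathtt F[q_n]) \subseteq \Delta(A_n) \subseteq \Delta(A \setminus B(0;R))$, so $\Delta(A \setminus B(0;R)) = [0,\infty)$. For the component claim: a connected component of $A \setminus B(0;R)$ is contained in some single $A_n$ with $2^{n+1} > R$ (since the $A_n$ are separated and $A_n \cap B(0;R) = \emptyset$ forces $2^n \ge R$ after discarding lower shells), hence has diameter $\le \sqrt d\, \lambda_n q_n^{-\sigma} \le \eta(2^{n+1}) \le \eta(R)$ by monotonicity of $\eta$ and our choice of $q_n$; total disconnectedness of each $A_n$ (inherited from $\mathtt F[q_n]$ being a finite union of cubes of arbitrarily small size — one may further subdivide, or pass to a Cantor-type refinement of each $\mathtt F[q_n]$, to get genuinely totally disconnected pieces) then gives that every component is a point or at least arbitrarily small. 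For $\lambda_d(A)$: $\lambda_d(A) \le \sum_n \lambda_d(A_n) \le \ell_0$ by construction.

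\textbf{Main obstacle.} The delicate point is the simultaneous bookkeeping of the three competing requirements on $q_n$ — fast enough growth to make components shrink past $\eta$ and to make volumes summable to $\ell_0$, yet controlled enough (together with $s$ close to $d$, i.e. $\sigma$ close to $1$) that the lower endpoint $\lambda_n c_0^{-2}q_n^{2\sigma-3}$ of the captured distance interval still tends to $0$; one must check that $\lambda_n \sim 2^n$ grows slowly enough relative to the decay in $q_n^{2\sigma-3}$, which forces $\sigma - 3/2 < 0$ (true since $\sigma < 3/2$) but also a quantitative relation between $q_n$ and $2^n$. A secondary technical nuisance is positioning $A_n$ inside its annulus so that \emph{differences} (not just absolute values of single translated cubes) realize all distances up to $\sim 2^n$ — here one uses that $\mathtt F[q_n]$ spans a full side-$\lambda_n$ cube and invokes the $\Delta(\mathtt F[q]) \supseteq [c_0^{-2}q^{2\sigma-3}, 1]$ conclusion at the scale $\lambda_n$, possibly combined with a rotated placement so that the diameter direction of the shell aligns with realizing large distances. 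None of these steps is deep; the work is in choosing the sequence $q_n$ explicitly (e.g. $q_n = 2^{K n}$ for $K$ large depending on $\ell_0$, $\eta$, $d$, $s$) and verifying the inequalities.
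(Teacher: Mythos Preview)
Your proposal is correct and follows essentially the same approach as the paper: both constructions take $A$ to be a disjoint union of scaled translates $\lambda_n \mathtt F[q_n]$ of the building-block set \eqref{def-building-block-F}, with the scales $\lambda_n \nearrow \infty$ (the paper uses $\mathtt r_n$ placed consecutively along the $\mathtt e_1$-axis, you use $\lambda_n \sim 2^n$ in dyadic annuli), and with $q_n$ chosen large enough to simultaneously force the component diameters below $\eta$, make the volumes summable to $\ell_0$, and drive the lower endpoint $\lambda_n c_0^{-2} q_n^{2\sigma-3}$ of the distance interval from Lemma~\ref{building-block-example-lemma}(\ref{large-single-interval}) to zero. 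Your ``secondary technical nuisance'' about positioning is a non-issue, since $\Delta(A_n) = \lambda_n \Delta(\mathtt F[q_n])$ is translation-invariant; placement only matters for disjointness of the blocks.
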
 
\subsubsection{Construction of the set $A$}
Let us first choose any increasing sequence of positive integers $\{\mathtt r_n : n \geq 1\}$; for example, $\mathtt r_n = n$ will do. Set $ R_j := \mathtt r_1 + \cdots + \mathtt r_j$. 
Let $c_0 > 0$ be the small absolute constant appearing in Lemma \ref{building-block-example-lemma}, and let $\sigma > 1$ be a rational parameter close to 1 so that $3 - 2 \sigma > 0$.  Given $\ell_0 > 0$ and a function $\eta$ as in the statement of the proposition, we next specify an algorithm for obtaining strictly increasing sequences of positive integers $\{q_n : n \geq 1\}$ with $q_n, q_n^{\sigma} \in \{2^{j} :j \in \mathbb N\}$ for all $n$:
 \begin{align}  
 &q_1 \gg c_{0}^{-1}, \quad \mathtt r_1q_1^{d(1 - \sigma)} < \frac{\ell_0}{2}; \text{ having selected $q_1, \ldots q_{n-1}$, we choose $q_n$ with } \label{r1q1}\\  
 &\mathtt r_{n}^d q_{n}^{d(1 - \sigma)} \leq \frac{1}{2}\mathtt r_{n-1}^d q_{n-1}^{d(1 - \sigma)}, \quad \mathtt r_n q_n^{-\sigma} < \min \Bigl[\mathtt r_{n-1} q_{n-1}^{-\sigma}, \frac{1}{\sqrt{d}}\eta(R_n) \Bigr], \quad \mathtt r_n q_n^{2 \sigma -3} < \frac{1}{n}. \label{rnqn-choice}
 \end{align}  
 We are now in a position to describe the set $A$. Let us denote $\mathtt e_1 = (1, 0, \ldots, 0)$ and $\mathtt F_n := \mathtt F[q_n; \sigma] \subseteq [0,1]^d$, where $\mathtt F[q; \sigma]$ has been defined as in \eqref{def-building-block-F}. Set 
\begin{equation} \label{def-A-all-distances}
A := \bigsqcup_{n=1}^{\infty} A_n \text{ where } A_n := R_{n-1} \mathtt e_1 + \mathtt r_n \mathtt F_n, \; \text{ with the convention that } R_0 = 0.  
\end{equation} 
Here $\mathtt r \mathtt F$ denotes the dilate $\mathtt r \mathtt F := \{\mathtt r x : x \in \mathtt F \}$. Thus $A$ is the disjoint union of scaled copies of $\mathtt F_n$ inserted in increasingly large cubes that move away from the origin in the direction of $\mathtt e_1$ as $n$ gets larger; see diagram \ref{Multibox}. 
\tikzset{
        pics/basicset/.style args={#1/#2/#3}{
                code={
                \foreach \x in {0,...,\numexpr#1-1}
                        \foreach \y in {0,...,\numexpr#1-1}
                                \draw[fill=gray!50] (\x * #3/#1,\y * #3/#1)
rectangle (\x * #3/#1 +#2, \y * #3/#1+#2);

                \draw (0,0) rectangle (#3,#3);

        }
}
}

\begin{figure}
        \begin{tikzpicture}[>=stealth]

                \pic at (0,0) {basicset=2/0.2/1};
                \pic at (1,0) {basicset=5/0.15/2};
                \pic at (3,0) {basicset=12/0.08/3};

                \draw (1,0) node[scale=0.8] [below] {$R_1$};
                \draw (3,0) node[scale=0.8] [below] {$R_2$};
                \draw (6,0) node[scale=0.8] [below] {$R_3$};
                
                \draw (0,1) node[scale=0.8] [left] {$\mathtt r_1$};
                \draw (0,2) node[scale=0.8] [left] {$\mathtt r_2$};
                \draw (0,2.8) node[scale=0.8] [left] {$\mathtt r_3$}; 

                \draw (0.5,1) node[scale=0.8] [above] {$\mathtt r_1 \mathbb{F}[q_1]$};
                \draw (2,2) node[scale=0.8] [above] {$\mathtt r_2 \mathbb{F}[q_2]$};

                \draw [->](0,0) -- (6.5,0);
                \draw [->](0,0) -- (0,3.5);

                \draw (6.5,2) node {$\cdots$};
        \end{tikzpicture}
\caption{A zero-density set with all distances} \label{Multibox}
\end{figure}
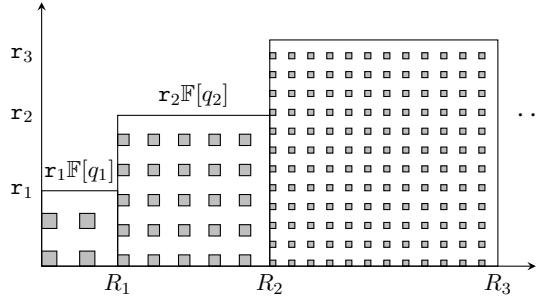

\subsubsection{Proof of Proposition \ref{Rice-counterpoint-lemma}}
As a consequence of \eqref{r1q1} and \eqref{rnqn-choice}, we deduce that
\begin{equation}  \label{choosing-qnRn-3}
\lambda_d(A) = \sum_{n=1}^{\infty} \lambda_d(A_n) = \sum_{n=1}^{\infty} \mathtt r_n^d q_n^{d(1 - \sigma)} \leq \mathtt r_1^d q_1^{d(1 - \sigma)} \sum_{n=0}^{\infty} 2^{-n} \leq \frac{\ell_0}{2} \sum_{n=0}^{\infty} 2^{-n} \leq \ell_0,   
\end{equation}  
which establishes the volume estimate for $A$. 
\vskip0.1in
\noindent Next let us examine the behaviour of $A$ outside a large ball $B(0;R)$. Given $R \geq 1$, suppose that $n$ is the unique index such that $R_{n-1} \leq R < R_n$. Then $A \setminus B(0;R)$ contains the disjoint union of the blocks $A_j$ for $j \geq n$. 
Since each connected component $\mathscr{C}_R$ of $A_j$ is a cube of side-length $\mathtt r_j q_j^{-\sigma}$, the second condition in \eqref{rnqn-choice} gives 
\[ \text{diam}(\mathscr{C}_R) \leq \sqrt{d} \sup_{j \geq n}\mathtt r_j q_j^{-\sigma} \leq \sqrt{d} \mathtt r_n q_n^{-\sigma} \leq \eta(R_n) \leq \eta(R), \]
which is one of the conclusions claimed in \eqref{density+distance}.
\vskip0.1in     
\noindent It remains to examine $\Delta(A \setminus B(0;R))$. For $R \in [R_{n-1}, R_n)$ and by Lemma \ref{building-block-example-lemma}, we obtain 
\begin{align*} \Delta\bigl(A \setminus B(0;R) \bigr) \supseteq \bigcup_{j=n}^{\infty} \Delta(A_j) &\supseteq \bigcup_{j=n}^{\infty} \mathtt r_j \Delta(\mathtt F_j) \\  &\supseteq \bigcup_{j=n}^{\infty} \mathtt r_j \bigl[c_0^{-2} q_j^{2 \sigma -3}, 1 \bigr] \supseteq \bigcup_{j=n}^{\infty} \bigl[c_0^{-2}/j, \mathtt r_j\bigr] = [0, \infty). \end{align*} 
The last step uses the third condition in \eqref{rnqn-choice}, which ensures that intervals in the union are nested and increasing for consecutive indices $n$, thereby creating the single, unbounded interval $[0, \infty)$.  
\qed
\vskip0.1in
\noindent {\em{Remark: }} For any $\rho \in (0,1)$ and $A$ as in \eqref{def-A-all-distances},
\[ \mathscr{V}(\rho, s;A) \supseteq \{\mathtt r_n : n \geq 1\}  \] 
where $\mathscr{V}$ is the collection of high-density scales of $A$ defined in \eqref{def-V}.  This is because 
\[ A_{\mathtt r_n}(R_{n-1}\mathtt e_1) = \mathtt F_n = \mathtt F_n \cap Q_0, \; \text{ with  } Q_0 = [0,1]^d, \] 
and we have observed in Section \ref{building-block-example-section} that $\mathcal H_{\infty}^s(\mathtt F_n \cap Q_0) = 1 = \ell(Q_0)^d$.    
If $\{\mathtt r_n: n \geq 1\}$ is a slow-growing sequence like $\mathtt r_n = n$ for which $\mathtt r_{n+1}/\mathtt r_n \rightarrow 1$, then $A$ is well-distributed in the sense of definition \eqref{slower-than-lacunary} with growth rate controlled by any constant $C_0 > 1$. It therefore obeys the hypotheses, and hence the conclusions of Theorems \ref{mainthm-4} and \ref{mainthm-3}. One is able to deduce from these theorems that $A$ contains all sufficiently large distances, a slightly weaker version of Proposition \ref{Rice-counterpoint-lemma}. However, if the sequence $\{\mathtt r_n: n \geq 1\}$ is rapidly increasing, Theorem \ref{mainthm-3} is no longer applicable. The conclusion of Proposition \ref{Rice-counterpoint-lemma} then relies on special properties of $\Delta(\mathtt F_n)$ derived in Section \ref{building-block-example-section}. 
\subsection{The role of growth rate in well-distributed sets} \label{uniformly-welldistributed-section} 
\begin{lem} 
For every $d \geq 2$,  and $s < d$ there exist a set $A \subseteq \mathbb R^d$ and a rapidly growing sequence $\{R_n: n \geq 1\}$ such that  
\begin{equation} \label{well-distributed-def-2}
\lim_{n \rightarrow \infty} \sup_{x \in \mathbb R^d} \mathcal H_{\infty}^s \bigl(A_{R_n}(x)\bigr) = 1  
\end{equation}   
and yet $A$ avoids infinitely many large distances. Here $A_R(x)$ refers to the normalized truncations of $A$ defined in \eqref{def-ARx}.   
\vskip0.1in
\noindent Thus the growth conditions \eqref{slower-than-lacunary} and \eqref{Rn-growth} in Theorems \ref{mainthm-3} and \ref{mainthm-4} cannot be dropped.  
\end{lem}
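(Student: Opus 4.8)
I would build $A$ as a ``tower'' of pairwise far-separated, rescaled copies of the building block $\mathtt F[q;\sigma]$ of Section~\ref{building-block-example-section}, with $\sigma:=d/s>1$, choosing the scales and positions so that each copy, after renormalisation, fills out a cube exactly in the sense of \eqref{well-distributed-def-2}, while the global distance set keeps arbitrarily large gaps. The engine is one structural fact about $\Delta(\mathtt F[q;\sigma])$ near the origin, valid for \emph{every} $\sigma>1$ (hence every $s\in(0,d)$) and every sufficiently large $q$: writing $\alpha_q:=\gamma_d^{-1}q^{-\sigma}$ and $\beta_q:=q^{-1}-\gamma_d^{-1}q^{-\sigma}$, one has $0<\alpha_q<\beta_q<q^{-1}$, the open interval $(\alpha_q,\beta_q)$ is disjoint from $\Delta(\mathtt F[q;\sigma])$, and $\Delta(\mathtt F[q;\sigma])\cap[0,\beta_q]\subseteq[0,\alpha_q]$, while trivially $\Delta(\mathtt F[q;\sigma])\subseteq[0,\sqrt d]$. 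This is immediate from the inclusion $\Delta(\mathtt F)\subseteq\bigcup\{\mathtt I_r:r\in\mathcal R\}$ of \eqref{Falconer-E-upper-lower} together with the explicit form of $\mathtt I_r$ in \eqref{IrJr}: among all $\mathtt I_r$, only $\mathtt I_0$ and $\mathtt I_1$ can meet $(\alpha_q,\beta_q)$, and these abut $\alpha_q$ and $\beta_q$ from the left and from the right respectively. Consequently, for a translated dilate $B:=v+\mathtt r\,\mathtt F[q;\sigma]$, the interval $(\mathtt r\alpha_q,\mathtt r\beta_q)$ is disjoint from $\Delta(B)=\mathtt r\,\Delta(\mathtt F[q;\sigma])$.

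\textbf{The construction.} Assuming as in Section~\ref{building-block-example-section} that $s$ is rational, choose rapidly increasing integers $q_1<q_2<\cdots$ with $q_n,q_n^{\sigma}$ powers of $2$ and $q_n\ge C_{d,\sigma}\,q_{n-1}^{\,\sigma/(\sigma-1)}$, set $\rho_n:=2\sqrt d\,q_n+1$, $\mathtt r_1:=1$, $\mathtt r_n:=\rho_n\mathtt r_{n-1}$ (so $\mathtt r_n\asymp q_2q_3\cdots q_n$), and pick positions $P_1:=0$ and, inductively, $P_n$ in the interval $\bigl(P_{n-1}+5\sqrt d\,\mathtt r_n,\ \sqrt d\,\mathtt r_{n+1}-2\sqrt d\,\mathtt r_n\bigr)$, which is nonempty because $\rho_{n+1}>n+7$ for $q_{n+1}$ large. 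Put $A:=\bigsqcup_{n\ge1}B_n$ with $B_n:=P_n\mathtt e_1+\mathtt r_n\,\mathtt F[q_n;\sigma]$, so $B_n$ sits in $Q(P_n\mathtt e_1;\mathtt r_n)=P_n\mathtt e_1+[0,\mathtt r_n]^d$, and by the spacing of the $P_n$ these cubes are pairwise disjoint with $B_n$ the only block meeting $Q(P_n\mathtt e_1;\mathtt r_n)$. Taking $R_n:=\mathtt r_n$ — a rapidly growing sequence, since $R_{n+1}/R_n=\rho_{n+1}\to\infty$ — and $x=P_n\mathtt e_1$ gives $A_{R_n}(P_n\mathtt e_1)=\mathbb T_{P_n\mathtt e_1,\mathtt r_n}[B_n]=\mathtt F[q_n;\sigma]$, so $\sup_{x}\mathcal H^s_\infty(A_{R_n}(x))=\mathcal H^s_\infty(\mathtt F[q_n;\sigma])=1$ by the content computation recalled in Section~\ref{building-block-example-section}; this is \eqref{well-distributed-def-2}.

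\textbf{Avoidance of large distances.} For each $n$ I claim the nonempty interval $G^{(n)}:=\bigl(\sqrt d\,\mathtt r_{n-1},\ \mathtt r_n\beta_{q_n}\bigr)$ — nonempty because $\rho_n>2\sqrt d\,q_n$ forces $\mathtt r_n\beta_{q_n}>\sqrt d\,\mathtt r_{n-1}$, and with $\inf G^{(n)}=\sqrt d\,\mathtt r_{n-1}\to\infty$ — is disjoint from $\Delta(A)$. There are four cases. (i) Distances within $B_m$, $m\le n-1$, lie in $[0,\sqrt d\,\mathtt r_m]\subseteq[0,\sqrt d\,\mathtt r_{n-1}]$, disjoint from $G^{(n)}$. (ii) Distances within $B_n$ miss $G^{(n)}\subseteq(\mathtt r_n\alpha_{q_n},\mathtt r_n\beta_{q_n})$ by the structural fact, using $\mathtt r_n\alpha_{q_n}\le\sqrt d\,\mathtt r_{n-1}$ (i.e.\ $\rho_n\le\sqrt d\,\gamma_d\,q_n^{\sigma}$, true for large $q_n$). (iii) Distances within $B_m$, $m\ge n+1$: the growth relations make $\{\mathtt r_k\alpha_{q_k}\}_k$ non-increasing and $\{\mathtt r_k\beta_{q_k}\}_k$ non-decreasing, and $\mathtt r_{n+1}\alpha_{q_{n+1}}\le\sqrt d\,\mathtt r_{n-1}$ (i.e.\ $\rho_n\rho_{n+1}\le\sqrt d\,\gamma_d\,q_{n+1}^{\sigma}$), so $G^{(n)}\subseteq(\mathtt r_m\alpha_{q_m},\mathtt r_m\beta_{q_m})$, a gap of $\Delta(B_m)$. (iv) Cross distances: for $k<j$, a point of $B_k$ and a point of $B_j$ are at a distance in $\bigl[P_j-P_k-\mathtt r_k,\ P_j-P_k+2\sqrt d\,\mathtt r_j\bigr]$, and the placement $P_j-P_k\in(5\sqrt d\,\mathtt r_j,\ \sqrt d\,\mathtt r_{j+1}-2\sqrt d\,\mathtt r_j)$ forces this distance strictly between $3\sqrt d\,\mathtt r_j$ and $\sqrt d\,\mathtt r_{j+1}$ — a range that is disjoint from $G^{(n)}$ for every $n$, since $G^{(n)}\subseteq(\sqrt d\,\mathtt r_{n-1},\,3\sqrt d\,\mathtt r_{n-1})$ and these windows are interlaced with the cross-distance windows but meet none of them. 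Combining (i)--(iv), $G^{(n)}\cap\Delta(A)=\emptyset$, so $A$ avoids infinitely many arbitrarily large distances.

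\textbf{Main obstacle.} The analysis is soft; the real work is keeping the parameter cascade consistent. One must simultaneously (a) make each block large enough to overshoot all earlier blocks, $\rho_n\gtrsim q_n$; (b) keep it small enough not to drown the gap windows of later blocks, $\rho_n\lesssim(q_n/q_{n-1})^{\sigma}$ and $\rho_n\rho_{n+1}\lesssim q_{n+1}^{\sigma}$; and (c) slot in the positions $P_n$ so that \emph{every} difference $P_j-P_k$ lands in the ``quiet zone'' between two consecutive gap windows. These are mutually compatible precisely because $\sigma=d/s>1$ is strictly bigger than $1$, which makes the window $2\sqrt d\,q_n\le\rho_n\le\sqrt d\,\gamma_d\,q_n^{\sigma}$ (and the induced window for $P_n$) nonempty once $q_{n-1}$ is large; verifying this, and checking that the fuzzy cross-distance intervals $[P_j-P_k-\mathtt r_k,\ P_j-P_k+2\sqrt d\,\mathtt r_j]$ — whose widths $\gg$ the lengths of the gaps $G^{(n)}$ — nonetheless avoid all $G^{(n)}$, is the delicate bookkeeping. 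A secondary point is to not invoke the finer description of $\Delta(\mathtt F[q;\sigma])$ in Lemma~\ref{building-block-example-lemma}, which is stated only for $\sigma\in(1,\tfrac32)$; the single gap $(\alpha_q,\beta_q)$ above is available for all $\sigma>1$ and suffices, so the argument covers every $s\in(0,d)$.
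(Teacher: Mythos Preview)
Your argument is correct and, in fact, yields a slightly stronger conclusion than the paper's (you exclude whole intervals $G^{(n)}$ from $\Delta(A)$, not merely a sequence of points). The route, however, is genuinely different from the paper's. The paper follows Rice's template: the blocks are unions of small cubes centred on the integer lattice $\mathbb Z_{\mathtt q_n}^d$, and the forbidden distances $\mathtt d_j$ are chosen to stay uniformly away from $\{|x|:x\in\mathbb Z^d\}$; within a block every distance is then close to an integer-vector length and hence misses every $\mathtt d_j$, while cross-block distances are ruled out by the rapid spacing $\mathtt d_n\ge 100d^2\mathtt d_{n-1}$. Your construction instead exploits a single explicit gap $(\alpha_q,\beta_q)$ in $\Delta(\mathtt F[q;\sigma])$, valid for every $\sigma>1$, and arranges the scales $\mathtt r_n$ and positions $P_n$ so that the rescaled gaps $(\mathtt r_m\alpha_{q_m},\mathtt r_m\beta_{q_m})$ are nested and the cross-block distances land in the complementary windows $(3\sqrt d\,\mathtt r_j,\sqrt d\,\mathtt r_{j+1})$. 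The paper's approach is arithmetic and ties directly to Rice's example; yours is geometric, avoids any appeal to the discreteness of $\{|x|:x\in\mathbb Z^d\}$, and does not need the restriction $\sigma<\tfrac32$ in Lemma~\ref{building-block-example-lemma}. One small remark: your cascade (and the paper's) presupposes $q_n,q_n^{\sigma}\in 2^{\mathbb N}$, hence rational $s$; for arbitrary $s<d$ one passes to a rational $s'\in(s,d)$ and uses $\mathcal H_\infty^{s}\ge\mathcal H_\infty^{s'}$ on subsets of $[0,1]^d$.
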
 
\begin{proof} 
The proof is a small variation on the example in Section \ref{Rice-example-counterpoint} and the one in \cite{2020Rice}. Let us pick parameters $\mathtt d_j \nearrow \infty$ and $\kappa_j \searrow 0$ in the order $\mathtt d_1, \kappa_1, \mathtt d_2, \kappa_2, \ldots$ and obeying the properties
\begin{align} 
 \bigl| |x| - \mathtt d_j \bigr| > 4\kappa_n \sqrt{d} \text{ for all $x \in \mathbb Z^d$ and } 1 \leq j \leq n,  \label{Rice-conditions-1} \\ \mathtt d_n \geq 100d^2 \mathtt d_{n-1} \quad \text{ and } \quad \mathtt d_{n}^{1 - \sigma} \leq \mathtt \kappa_{n-1} \quad \text{ where } \sigma = \frac{d}{s}.  \label{Rice-conditions-2} \end{align}  
As has been pointed out in \cite{2020Rice}, the condition \eqref{Rice-conditions-1} is achievable. The set $\{|x| : x \in \mathbb Z^d\}$ of lengths of integer vectors is discrete with no limit points, since its intersection with any compact subinterval of $[0, \infty)$  is finite. Let $\mathtt q_n$ denote the unique integer power of 2 such that $\mathtt d_n \in (\mathtt q_n/2, \mathtt q_n]$. Set $\mathbb Z_{\mathtt q} := \{0, 1, \ldots, \mathtt q -1\}$, and define 
\[ A := \bigsqcup_{n=1}^{\infty} A_n \; \text{ where } \; A_n := 10d \mathtt d_n \mathtt e_1 + \bigsqcup \bigl\{ Q(p; \kappa_{n-1}) : p \in \mathbb Z_{\mathtt q_n}^d \bigr\} \text{ and } \mathtt e_1= (1, 0, \ldots, 0). \]  
It follows from the first relation in \eqref{Rice-conditions-2} that 
\begin{align*} \sup\{x_1 : x = (x_1, \ldots, x_n) \in A_n \} &\leq 10d \mathtt d_n + \mathtt q_n \leq 10d \mathtt d_n + 2 \mathtt d_n \\ &< 20 d \mathtt d_n < 10 d \mathtt d_{n+1} = \inf\{x_1 : x \in A_{n+1} \} , 
\end{align*}  
proving that the blocks $A_n$ are disjoint, and therefore $A \cap Q(10d\mathtt d_n; \mathtt q_n) = A_n$.  Let us choose the truncation parameters $R_n = \mathtt q_n$, $x_n = 10d\mathtt d_n\mathtt e_1$. The second relation in \eqref{Rice-conditions-2} ensures that $\kappa_{n-1} \geq \mathtt d_n^{1 - \sigma} \geq \mathtt q_n^{1-\sigma}$, which in turn leads to the inclusion 
\begin{equation*} 
[0,1]^d \supseteq \frac{1}{\mathtt q_n} \big[A_{n} - 10d\mathtt d_n \mathtt e_1 \bigr] = A_{R_n}(x_n) 
= \bigsqcup \Bigl\{Q \bigl(\frac{p}{\mathtt q_n}, \frac{\kappa_{n-1}}{\mathtt q_n} \bigr) : p \in \mathbb Z_{\mathtt q_n}^d \Bigr\} \supseteq \mathtt F[\mathtt q_n].
\end{equation*}
Here $\mathtt F[q]$ is the building block example given in \eqref{def-building-block-F}, and $A_R(x)$ is the normalized truncation of $A$ defined in \eqref{def-ARx}. Recalling from Section \ref{building-block-example-section} that $\mathcal H_{\infty}^{s}(\mathtt F[q]) = 1$, we conclude:
\[ \mathtt F[\mathtt q_n] \subseteq A_{R_n}(x_n)  \subseteq [0,1]^d \quad \text{ and hence } \quad  \mathcal H_{\infty}^s \bigl(A_{R_n}(x_n) \bigr) =1 \text{ for every $n \geq 1$},  \]
which leads to the claim \eqref{well-distributed-def-2}. Let us pause to observe from the selection of $\mathtt q_n$ that $\{R_n : n \geq 1\}$ is a rapidly increasing sequence  with $R_{n+1}/R_n \nearrow \infty$, and hence does not meet the controlled growth condition \eqref{Rn-growth}. 
\vskip0.1in
\noindent It remains to show that $\mathtt d_j \notin \Delta(A)$ for every $j \geq 1$. Towards a contradiction, suppose if possible that there exist $x, y \in A$ such that $|x - y| = \mathtt d_j$. Since $A$ is the disjoint union of blocks $A_n$, there exist indices $m, n \geq 1$ such that $x \in A_m$ and $y \in A_n$. Without loss of generality, we may assume $m \leq n$. Let us write
\begin{align}  x &=10  \mathtt d_m \mathtt e_1  + p_x + u_x, \quad p_x \in \mathbb Z_{\mathtt q_m}^d, \; u_x \in [0, \kappa_m]^d,  \nonumber \\ 
y &= 10 \mathtt d_n \mathtt e_1 + p_y + u_y, \quad \; p_y \in \mathbb Z_{\mathtt q_n}^d, \;  u_y \in [0, \kappa_n]^d, \text{ so that }  \nonumber \\ 
x - y &= 10 (\mathtt d_m - \mathtt d_n) \mathtt e_1 + (p_x - p_y) + (u_x - u_y).  \label{x-y} 
\end{align} 
We consider two cases, each of which will lead to a contradiction. Suppose first that $m = n$. Then it follows from \eqref{x-y} that
\begin{align*} 
&\bigl| \mathtt d_j - |p_x - p_y| \bigr| \leq |(x - y) - (p_x - p_y)| \leq |u_x - u_y|  \leq 2 \kappa_m \sqrt{d}, \\ &\text{which in turn implies that }   \exists z \in \mathbb Z^d \text{ such that } \bigl||z| - \mathtt d_j \bigr| \leq 2 \kappa_m \sqrt{d}.  
\end{align*} 
We deduce from the choice of parameters \eqref{Rice-conditions-1} that $j > m$. But this would then imply that 
\[ \mathtt d_j = |x - y| \leq  \bigl| p_x - p_y  \bigr| + \bigl| u_x - u_y  \bigr| \leq 2 \sqrt{d} \mathtt d_m + 2 \sqrt{d} \kappa_m < 100 d^2 \mathtt d_m, \]  
a contradiction to the second condition in \eqref{Rice-conditions-2}.
\vskip0.1in
\noindent The proof for $m < n$ is similar. On one hand, we deduce from \eqref{x-y} that 
\begin{align*} \mathtt d_j = |x-y| &\geq 10d|\mathtt d_n - \mathtt d_m| - |p_x - p_y| - |u_x - u_y| \\ &\geq 10 d \mathtt d_n - 10d \mathtt d_{m} - 2 \sqrt{d} \mathtt d_n - 2 \sqrt{d} \kappa_m > \mathtt d_n, 
\end{align*} 
which implies that $j > n$. On the other hand, the same relation \eqref{x-y} yields
\begin{align*} 
\mathtt d_j = |x-y| &\leq 10d|\mathtt d_n - \mathtt d_m| + \bigl| p_x - p_y  \bigr| + \bigl| u_x - u_y  \bigr| \\ 
&\leq 20 d \mathtt d_n + 2 \sqrt{d} \mathtt d_n + 2 \kappa_m \sqrt{d} < 100 d^2 \mathtt d_n.  \end{align*} 
In view of the previous deduction $j > n$, this contradicts \eqref{Rice-conditions-2}. The contradiction in both cases implies that $\mathtt d_j \notin \Delta(A)$, as claimed in the lemma.    
\end{proof}

\subsection{Iterative schemes} 
Sets $\mathtt F[q]$, defined as in \eqref{def-building-block-F}, have positive Lebesgue measure. Iterative constructions with these as building blocks yield sparser sets that are Lebesgue-null and have strictly positive codimension. In this section, we construct a few such sets of Hausdorff dimension $s < d$. Examined in light of the results of this paper, specifically Lemma \ref{building-block-example-lemma} and Theorem \ref{mainthm-cor}, these sets provide examples of several distinct but related phenomena:
\begin{itemize} 
\item Lemma \ref{distance-counterexample-lemma} shows the existence of a compact set $\mathtt F^{\ast} \subseteq [0,1]^d$, $d \geq 2$, whose distance set does not contain the interval $[0, a)$ for any $a > 0$. The construction of $\mathtt F^{\ast}$ is not new; it follows an example of Falconer \cite[Theorem 2.4]{Falconer-1986}, used in a different context.  The aforementioned property of its distance set was noted in subsequent work of Mattila and Sj${\ddot{\text{o}}}$lin \cite{Mattila-Sjolin-99}.
\vskip0.1in
\item However, Lemma \ref{Quasiregular-NoInterval-Lemma} offers a further refinement. For $d \geq 2$, it shows that there exists a compact set $\mathtt E^{\ast} \subseteq [0,1]^d$ that is {\em{locally uniformly $s$-dimensional and quasi-regular}}, in the sense of \eqref{loc-u-dim-def} and \eqref{Quasiregular-def} respectively, which enjoys the same non-Steinhaus property, namely $\Delta(\mathtt E^{\ast}) \not\supseteq [0, a)$ for any $a > 0$. The existence of such a set appears to be new, and shows that the Steinhaus property need not be ensured by uniform dimensional and quasi-regular sets. It also offers an interesting counterpoint to the following complementary statement.  
\vskip0.1in   
\item For $d \geq 2$, there exists a compact set $\mathtt K^{\ast} \subseteq \mathbb R^d$ which is a finite union of affine copies of $\mathtt E^{\ast}$, therefore of the same dimension as $\mathtt E^{\ast}$ as well as locally uniformly $s$-dimensional and quasi-regular, whose distance set does contain an interval of the form $[0, a)$ (Corollary \ref{Steinhaus-sparse-corollary} and Lemma \ref{Quasi-regular-Interval-Lemma}).  The set $\mathtt E^{\ast}$ has further applications, a case in point being Corollary \ref{Sparse-all-distances-corollary}; see Section \ref{Sparse-set-all-distances-proof-section}. 
\end{itemize}  
   
\subsubsection{Distance sets not containing $[0, a)$ for any $a$} \label{Appendix-2} 

Following \cite{Falconer-1986}, let us fix a rapidly increasing sequence $\{q_1, q_2, \ldots, \}$ of integers in $2^{\mathbb N}$. For simplicity and ease of exposition, let us assume that $\sigma = d/s$ is a rational number close to but larger than 1, and that $q_{n}^{\sigma}$ is an integer that divides $q_{n+1}$, for every $n \geq 1$. Setting $\mathbb Z_{q} := \{0, 1, \ldots, q - 1\}$, we use the sets $\mathtt F[q]$ as building blocks to define $\mathtt F \subseteq \mathbb R^d$ as follows: 
\begin{align} \label{Falconer-E-original} \mathtt F^{\ast} &= \mathtt F^{\ast}[q_1, q_2, \ldots] := \bigcap_{n=1}^{\infty} \mathtt F_n, \; \text{ where }  \\ \mathtt F_n &= \mathtt F[q_n; \sigma] = \Bigl\{x \in [0,1]^d \bigl| \exists p \in \mathbb Z_{q_n}^d \ni x \in \frac{p}{q_n} + q_n^{-\sigma}[0,1]^d  \Bigr\}. \label{Falconer-F-original}
\end{align}  
It has been proved in \cite[Section 8.5]{Falconer-Book-GFS} that $\dim_{\mathbb H}(\mathtt E) = s$, where $\sigma = d/s$.  
 For $s > (d+1)/2$, Theorem \ref{mainthm-1} of Mattila and Sj$\ddot{\text{o}}$lin implies that $\text{int}\bigl(\Delta(\mathtt E) \bigr) \ne \emptyset$. However, they observed the following feature. 
\begin{lem}[\cite{{Mattila-Sjolin-99},{Falconer-1986}}] \label{distance-counterexample-lemma}
For the set $\mathtt F^{\ast}$ defined in \eqref{Falconer-E}, there does not exist any $a > 0$ with the property that $\Delta(\mathtt F^{\ast}) \supseteq [0, a)$.  
\end{lem}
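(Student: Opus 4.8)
The plan is to use only the nesting $\mathtt F^{\ast} \subseteq \mathtt F_n = \mathtt F[q_n;\sigma]$ together with the fine structure of $\Delta(\mathtt F[q_n])$ near the origin recorded in Lemma~\ref{building-block-example-lemma}. The nesting immediately gives $\Delta(\mathtt F^{\ast}) \subseteq \Delta(\mathtt F[q_n])$ for every $n$, so it suffices to show that for every $a>0$ there is an $n$ for which $\Delta(\mathtt F[q_n])$ omits a point of $(0,a)$.

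First I would fix $n$ large: since $\{q_n\}$ is increasing with $q_n\to\infty$, for all but finitely many $n$ we may assume $q_n^{\sigma-1} > 2\gamma_d^{-1}$ (which forces a fortiori $c_0 q_n^{\sigma-1}\gg 1$), so that Lemma~\ref{building-block-example-lemma} applies to $\mathtt F[q_n]$. By part~(\ref{small-disjoint-intervals}) of that lemma, namely the right-hand inclusion in \eqref{F-E-small-r},
\[
\Delta(\mathtt F[q_n]) \cap \bigl[0,\ c_0 q_n^{\sigma-2}\bigr) \ \subseteq\ \bigsqcup\bigl\{\mathtt I_r : r\in\mathcal R,\ r\le 2c_0 q_n^{\sigma-1}\bigr\},
\]
where, by \eqref{IrJr}, the interval $\mathtt I_r = \frac{r}{q_n} + \gamma_d^{-1}q_n^{-\sigma}[-1,1]$ has center $r/q_n$ and radius $\gamma_d^{-1}q_n^{-\sigma}$; by part~(\ref{disjointness}) these intervals are even pairwise disjoint in this range, though disjointness will not actually be needed.

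Next I would exhibit an explicit point in a gap. Take $t_n := \frac{1}{2q_n}$. Since the coordinates of any $p\in\mathbb Z_{q_n}^d - \mathbb Z_{q_n}^d$ are integers, every $r\in\mathcal R$ satisfies $r=0$ or $r\ge 1$, so the center of $\mathtt I_r$ obeys $|t_n - r/q_n| \ge \frac{1}{2q_n}$ in all cases. Because $\frac{1}{2q_n} > \gamma_d^{-1}q_n^{-\sigma}$ precisely when $q_n^{\sigma-1} > 2\gamma_d^{-1}$ — this is exactly where the hypothesis $\sigma>1$, i.e.\ $q_n^{-\sigma}=o(q_n^{-1})$, enters — the point $t_n$ lies outside every $\mathtt I_r$ with $r\in\mathcal R$. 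Moreover $t_n = \frac{1}{2q_n} < c_0 q_n^{\sigma-2}$ since $2c_0 q_n^{\sigma-1}>1$, so $t_n$ falls inside the window of the displayed inclusion. Hence $t_n\notin\Delta(\mathtt F[q_n])$, and therefore $t_n\notin\Delta(\mathtt F^{\ast})$.

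Finally, given any $a>0$, since $t_n = \frac{1}{2q_n}\to 0$ as $n\to\infty$, I choose $n$ large enough (and meeting the size requirements above) that $t_n\in(0,a)$; then $t_n\in[0,a)\setminus\Delta(\mathtt F^{\ast})$, so $[0,a)\not\subseteq\Delta(\mathtt F^{\ast})$. As $a>0$ was arbitrary, the lemma follows. I do not expect a genuine obstacle: all the substantive work resides in Lemma~\ref{building-block-example-lemma}, and the only step requiring a moment's care is verifying that the test point $t_n$ simultaneously avoids every constituent interval $\mathtt I_r$ and stays within the range $[0,c_0 q_n^{\sigma-2})$ on which the structure of $\Delta(\mathtt F[q_n])$ near the origin is understood.
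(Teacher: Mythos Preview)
Your proof is correct and follows essentially the same approach as the paper: both use the nesting $\mathtt F^{\ast}\subseteq \mathtt F_n$ together with Lemma~\ref{building-block-example-lemma}(\ref{small-disjoint-intervals}) to conclude that $\Delta(\mathtt F^{\ast})\cap[0,c_0 q_n^{\sigma-2})$ lies in a disjoint union of short intervals, hence cannot contain $[0,a)$ once $n$ is large. The only difference is cosmetic: the paper invokes the existence of gaps between the $\mathtt I_r$'s abstractly (via the separation estimate in part~(\ref{disjointness})), whereas you exhibit the explicit missing point $t_n=\tfrac{1}{2q_n}$, which is a nice concrete touch. One minor wording issue: the parenthetical ``which forces a fortiori $c_0 q_n^{\sigma-1}\gg 1$'' is backwards as an implication, but since both inequalities hold for all large $n$ this does not affect the argument.
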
 
\begin{proof}
Fix $a > 0$, and recall the absolute constant $c_0$ whose existence has been guaranteed in Lemma \ref{building-block-example-lemma}. Then $c_0 q_n^{\sigma-2} < a$ for all sufficiently large indices $n$. Since $\Delta(\mathtt F^{\ast}) \subseteq \Delta(\mathtt F_n)$ for every $n$, it follows from Lemma \ref{building-block-example-lemma}(\ref{small-disjoint-intervals}) that $\Delta(\mathtt F^{\ast}) \cap [0, c_0 q_n^{\sigma -2})$ is contained in a disjoint union of intervals of length $\sim q_n^{-\sigma}$ and separated by at least $\sim q_n^{-\sigma}$. In particular, $[0, a) \not\subseteq \Delta(\mathtt F^{\ast})$.
\end{proof}   
\subsubsection{Locally uniform and quasi-regular sets} \label{uniform-and-quasiregular-section} 
Fix $\sigma = d/s$ a rational number close to but larger than 1, and integers $q, q^{\sigma} \in \{2^{j}: j \in \mathbb N\}$. Setting $\mathbb Z_{q} := \{0, 1, \ldots, q - 1\}$ as before, we use the sets $\mathtt F[q]$ in \eqref{def-building-block-F} as building blocks to define a sparser set $\mathtt E^{\ast} \subseteq \mathbb R^d$ as follows: 
\begin{align} \label{Falconer-E} 
\mathtt E^{\ast} &:= \bigcap_{n=1}^{\infty} \mathtt E_n, \quad \text{ where } \mathtt E_1 := \mathtt F[q], \; \text{ and } \\ 
\mathtt E_{n} = \mathtt E_n&:= \bigcup \Bigl\{\frac{p}{q} + q^{-\sigma} \mathtt E_{n-1}: \; p \in \mathbb Z_q^d \Bigr\} \nonumber \\ &\;= \bigcup \Bigl\{\sum_{k=1}^{n} \frac{p_k}{q^{(k-1)\sigma + 1}} + q^{-n\sigma}[0,1]^d : p_k \in \mathbb Z_q^d \Bigr\} \; \text{ for } n \geq 2. \label{self-similar-Falconer}
\end{align}  
In other words, $\mathtt E_n \subseteq [0,1]^d$ is a $q^{nd}$-fold disjoint union of basic (dyadic) cubes of sidelength $q^{-n \sigma}$. Given $\mathtt E_n$, the next iterate $\mathtt E_{n+1}$ is obtained by inserting an affine copy of $\mathtt E_1$ in each basic cube of $\mathtt E_n$. Thus $\mathtt E^{\ast}$ is a bounded self-similar set of Hausdorff dimension $s$ for any choice of $q$, in the sense of \cite{Strichartz-Asymptotics}, and therefore locally uniformly $s$-dimensional and quasi-regular by \cite[Theorem 5.8]{Strichartz-Asymptotics}. The construction ensures it is totally disconnected. Let us first record the Hausdorff dimension and content of $\mathtt E^{\ast}$. 
\begin{lem} \label{Falconer-example-content-lemma}
For any rational $\sigma > 1$ and integers $q, q^{\sigma}$ that are powers of 2,  the following conclusions hold for the set $\mathtt E^{\ast}[q]$ as in \eqref{Falconer-E}: 
\begin{equation} \label{Falconer-example-content}  
\mathcal H_{\infty}^{\tau}(\mathtt E^{\ast}) =  \begin{cases} 0 &\text{ for } \tau > s, \\ 1 &\text{ for } \tau \leq s.  \end{cases}  \end{equation} 
In particular, $\dim_{\mathbb H}(\mathtt E^{\ast}) = s$. Moreover, for $\tau \leq s$, and any $n \geq 1$, 
\begin{equation}  \mathcal H_{\infty}^{\tau}(\mathtt E^{\ast} \cap Q) = \ell(Q)^{\tau} \text{ for every basic cube $Q \subseteq \mathtt E_n$, $\ell(Q) = q^{-n\sigma}$.} \label{maximal-density} \end{equation}   
\end{lem}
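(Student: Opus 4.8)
The plan is to exploit the self-similarity of $\mathtt E^{\ast}$, together with the good behaviour of dyadic Hausdorff content under \emph{dyadic} similarities, and to extract the lower bound on the content from a Frostman-type estimate for the canonical self-similar measure on $\mathtt E^{\ast}$. First, the recursion \eqref{self-similar-Falconer} yields, for every $n\ge 1$, the identity $\mathtt E^{\ast}=\bigsqcup_{|w|=n}f_w(\mathtt E^{\ast})$, where the union runs over words $w=(p_1,\dots,p_n)\in(\mathbb Z_q^d)^n$, $f_p(x):=p/q+q^{-\sigma}x$, $f_w:=f_{p_1}\circ\dots\circ f_{p_n}$, and the cubes $f_w([0,1]^d)$ are exactly the $q^{nd}$ basic cubes of $\mathtt E_n$; these pieces are pairwise disjoint because $q^{-\sigma}<1/q$. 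Since $q=2^a$ and $q^{\sigma}=2^{a\sigma}$ with $a\sigma\in\mathbb N$, each $f_w$ contracts by $q^{-n\sigma}=2^{-na\sigma}$ and translates by an integer multiple of $q^{-n\sigma}$, hence it permutes the family $\mathcal Q_d$ of dyadic cubes while shifting all side-lengths by the factor $q^{-n\sigma}$. An immediate consequence is the scaling law
\[
\mathcal H^{\tau}_{\infty}\bigl(f_w(A)\bigr)=q^{-n\sigma\tau}\,\mathcal H^{\tau}_{\infty}(A)\qquad\text{for every }A\subseteq\mathbb R^d,\ \tau>0.
\]

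\emph{Upper bounds.} For $\tau>s$, covering $\mathtt E^{\ast}$ by the basic cubes of $\mathtt E_n$ gives $\mathcal H^{\tau}_{\infty}(\mathtt E^{\ast})\le q^{nd}(q^{-n\sigma})^{\tau}=q^{n(d-\sigma\tau)}\to 0$, since $\sigma\tau>\sigma s=d$; thus $\mathcal H^{\tau}_{\infty}(\mathtt E^{\ast})=0$. For $\tau\le s$, the single cube $[0,1]^d$ covers $\mathtt E^{\ast}$, so $\mathcal H^{\tau}_{\infty}(\mathtt E^{\ast})\le 1$, and the single basic cube $Q$ covers $\mathtt E^{\ast}\cap Q$, so $\mathcal H^{\tau}_{\infty}(\mathtt E^{\ast}\cap Q)\le\ell(Q)^{\tau}$.

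\emph{Lower bound via a Frostman estimate (the crux).} Equip $\mathtt E^{\ast}$ with the canonical self-similar probability measure $\mu$, determined by $\mu\bigl(f_w([0,1]^d)\bigr)=q^{-d|w|}$; it is a Borel probability measure supported on $\mathtt E^{\ast}$, and its coordinate projections are non-atomic self-similar measures, so $\mu$ assigns zero mass to every coordinate hyperplane — this lets us disregard boundary overlaps of dyadic cubes throughout. The main step is the bound $\mu(Q)\le\ell(Q)^s$ for \emph{every} dyadic cube $Q$, proved by descending through the scales. If $\ell(Q)\ge 1$ this is trivial. If $1/q\le\ell(Q)<1$, then $Q$ contains exactly $(q\ell(Q))^d$ dyadic cubes of side $1/q$, each containing at most one piece $f_p([0,1]^d)$, so $\mathtt E^{\ast}\cap Q$ meets at most that many pieces and $\mu(Q)\le q^{-d}(q\ell(Q))^d=\ell(Q)^d\le\ell(Q)^s$. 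If $q^{-\sigma}\le\ell(Q)<1/q$, then $Q$ lies in a single dyadic cube of side $1/q$, within which $\mathtt E^{\ast}$ sits inside one piece of side $q^{-\sigma}$, whence $\mu(Q)\le q^{-d}\le\ell(Q)^s$ (because $\ell(Q)^s\ge q^{-\sigma s}=q^{-d}$). Finally, if $\ell(Q)<q^{-\sigma}$, the integrality $a\sigma\in\mathbb N$ with $\sigma>1$ forces $q^{\sigma-1}=2^{a(\sigma-1)}\ge 2$, which makes $Q$ meet at most one piece $f_p([0,1]^d)$; then $\mu(Q)=q^{-d}\mu\bigl(f_p^{-1}(Q)\bigr)$ with $f_p^{-1}(Q)$ dyadic of side $q^{\sigma}\ell(Q)$. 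Iterating $j$ times until the side-length first lands in $[q^{-\sigma},1)$ and then invoking the previous cases, together with $q^{\sigma s}=q^{d}$, collapses the bound: $\mu(Q)\le q^{-jd}\bigl(q^{j\sigma}\ell(Q)\bigr)^{s}=\ell(Q)^{s}$. With this Frostman bound, the mass distribution principle closes the argument: given any dyadic cover $\{Q_i\}$ of $\mathtt E^{\ast}$, we may assume $\ell(Q_i)\le 1$ for all $i$, and then for $\tau\le s$,
\[
\sum_i\ell(Q_i)^{\tau}\ \ge\ \sum_i\ell(Q_i)^{s}\ \ge\ \sum_i\mu(Q_i)\ \ge\ \mu(\mathtt E^{\ast})=1,
\]
so $\mathcal H^{\tau}_{\infty}(\mathtt E^{\ast})\ge 1$. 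Combined with the upper bounds this proves \eqref{Falconer-example-content}; in particular $\dim_{\mathbb H}(\mathtt E^{\ast})=s$, since $\mathcal H^{\tau}_{\infty}$ and $\mathbb H^{\tau}$ vanish together. For \eqref{maximal-density}, write a basic cube $Q\subseteq\mathtt E_n$ as $Q=f_w([0,1]^d)$ with $|w|=n$; disjointness of the depth-$n$ pieces gives $\mathtt E^{\ast}\cap Q=f_w(\mathtt E^{\ast})$, and the scaling law then yields $\mathcal H^{\tau}_{\infty}(\mathtt E^{\ast}\cap Q)=q^{-n\sigma\tau}\mathcal H^{\tau}_{\infty}(\mathtt E^{\ast})=\ell(Q)^{\tau}$ for $\tau\le s$.

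\emph{Expected main obstacle.} The delicate point is the Frostman estimate with the \emph{sharp} constant $1$: one must control $\mu(Q)$ for dyadic cubes at every scale — in particular the scales strictly between the construction scales $q^{-n\sigma}$ and the intermediate grid scales $q^{-n\sigma}/q$ — and rule out a small cube straddling two construction pieces, which is precisely where the integrality of $q^{\sigma}$ (forcing $q^{\sigma-1}\ge 2$) is used. Everything else is bookkeeping with the two relations $s<d$ and $\sigma s=d$.
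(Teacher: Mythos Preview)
Your proof is correct and takes a genuinely different route from the paper's. The paper's argument is brief but leans on external references: it invokes \cite[Theorem 1.15]{Falconer-Book-GFS} to get $\mathbb H^s(\mathtt E^{\ast})=1$, then cites the result of \cite{Farkas-Fraser-2015} that Hausdorff measure and Hausdorff content coincide at the critical dimension for self-similar sets satisfying the open set condition, which gives $\mathcal H^s_{\infty}(\mathtt E^{\ast})=1$ directly; the cases $\tau<s$ and $\tau>s$ then follow by monotonicity. Your approach is instead fully self-contained: you build the natural self-similar probability measure, establish the sharp Frostman bound $\mu(Q)\le \ell(Q)^s$ for every dyadic $Q$ by a case analysis across the construction scales, and conclude via the mass distribution principle. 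What your route buys is independence from the Farkas--Fraser machinery and an explicit reason for the constant $1$; what the paper's route buys is brevity. Your handling of the scaling law for dyadic Hausdorff content under the maps $f_w$ (which requires that $f_w$ send dyadic cubes to dyadic cubes, hence the hypothesis that $q,q^{\sigma}$ are powers of $2$) is the key technical observation that makes the self-contained argument go through, and your case analysis correctly covers the intermediate scales $q^{-\sigma}\le \ell(Q)<1/q$ where neither the coarse grid count nor the self-similar recursion applies directly.
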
 
\begin{proof} Constructions of the type \eqref{Falconer-E} are ubiquitous in fractal geometry. Methods for computing their Hausdorff dimension and measure may be found in standard textbooks, \cite[Chapters 2, 3, 4]{Falconer-Book-FG} or \cite[Section 1.5]{Falconer-Book-GFS}. We only sketch the proof of the lemma, referring to the existing literature for further details. For instance, \cite[Theorem 1.15]{Falconer-Book-GFS} shows that 
\[ \dim_{\mathbb H}(\mathtt E^{\ast})=s \; \text{ and } \; \mathbb H^s(\mathtt E^{\ast}) = 1, \; \text{ and hence } \mathbb H^{\tau}(\mathtt E^{\ast}) = 0 \text{ for $\tau > s$.} \] 
As mentioned in Section \ref{preliminaries-section}, the quantities $\mathbb H^{\tau}(\cdot)$ and $\mathcal H_{\infty}^{\tau}(\cdot)$ vanish simultaneously, resulting in $\mathcal H^{\tau}(\mathtt E^{\ast}) = 0$ for $\tau > s$. On the other hand $\mathtt E^{\ast}$ is self-similar, hence the results of \cite{Farkas-Fraser-2015} imply that the Hausdorff measure and Hausdorff content of $\mathtt E^{\ast}$ match at the critical dimension; namely, $\mathbb H_{\infty}^{s}(\mathtt E^{\ast}) = 1$. Consequently, $1 = \mathbb H_{\infty}^s(\mathtt E^{\ast}) \leq \mathcal H_{\infty}^{s}(\mathtt E^{\ast}) \leq \mathbb H^s(\mathtt E^{\ast}) = 1$. Combining with the relation $1 = \mathcal H_{\infty}^{s}(\mathtt E^{\ast}) \leq  \mathcal H_{\infty}^{\tau}(\mathtt E^{\ast}) \leq 1$ leads to $\mathcal H_{\infty}^{\tau}(\mathtt E^{\ast})=1$ for $\tau \leq s$. This proves \eqref{Falconer-example-content}. 
\vskip0.1in
\noindent  We turn now to \eqref{maximal-density}. The self-similar structure of $\mathtt E^{\ast}$ implies that for any basic cube $Q \subseteq \mathtt E_n$ with $\ell(Q) = q^{-n\sigma}$ and $\mathbb T_{Q}(x) := (x - c(Q))/\ell(Q)$ we have 
\[ \mathbb T_{Q}\bigl[ \mathtt E^{\ast} \cap Q \bigr] = \mathtt E^{\ast}, \; \text{ therefore } \; \mathcal H_{\infty}^{\tau}(\mathtt E^{\ast} \cap Q) = \ell(Q)^{\tau} \mathcal H_{\infty}^{\tau}(\mathtt E^{\ast}) = \ell(Q)^{\tau}.  
\] 
The last step follows from \eqref{Falconer-example-content} for $\tau \leq s$, and the translation and scaling properties of $\mathcal H_{\infty}^s(\cdot)$, concluding the proof of the lemma. 
\end{proof} 
\vskip0.1in 
\noindent The relation \eqref{maximal-density} says that $\mathtt E^{\ast}$ has full density in every basic cube of $\mathtt E_n$. Each such cube (which has sidelength $q^{-n \sigma}$) therefore lies in $\mathcal D_{\rho}(\mathtt E^{\ast})$ for every $\rho \in (0,1)$. Theorem \ref{mainthm-cor} states: 
\begin{equation}  \Delta(\mathtt E^{\ast}[q; \sigma]) \supseteq \bigcup_{n=0}^{\infty} q^{-n\sigma} [a_{\rho}, b_{\rho}] \quad \text{ provided } s \in (d - \varepsilon_{\rho}, d). \label{Falconer-example-distance-set}  \end{equation} 
\subsubsection{Locally uniform and quasi-regular sets with and without the Steinhaus property} \label{Steinhaus-property-section}
\noindent We will say that a set $E \subseteq \mathbb R^d$ has the {\em{Steinhaus property}} if there exists $a > 0$ such that $[0, a) \subseteq \Delta(E)$. Corollary \ref{Steinhaus-sparse-corollary} claims the existence of a special type of set $\mathtt K^{\ast} \subseteq \mathbb R^d$ for $d \geq 2$ of Hausdorff dimension $s < d$ that has the Steinhaus property. We prove this corollary in this section. The set $\mathtt E^{\ast}$ defined in \eqref{Falconer-E} does not have the Steinhaus property but it is possible to create one that does using finitely many disjoint affine copies of it. 
\begin{lem} \label{Quasiregular-NoInterval-Lemma} 
Fix $\sigma > 1, \sigma \in \mathbb Q$. Suppose that $\mathtt E^{\ast} = \mathtt E^{\ast}[q, \sigma]$ is as in \eqref{Falconer-E}, where $q$ is an integer power of 2 that is large enough to satisfy $c_0 q^{\sigma-1} \gg 1$, with $c_0$ being the absolute constant specified in Lemma \ref{building-block-example-lemma}.   
Then \[ \Delta(\mathtt E^{\ast})  \not\supseteq [0, a)  \quad \text{ for any $a > 0$}. \]  
\end{lem}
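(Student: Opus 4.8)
The plan is to exploit the self-similarity of $\mathtt E^{\ast}$ to reduce the behaviour of $\Delta(\mathtt E^{\ast})$ near an arbitrary small scale to its behaviour near one fixed scale, and at that fixed scale to read off a gap from the coarse description of $\Delta(\mathtt F[q])$ already available from \eqref{Falconer-E-upper-lower}. First I would record the self-similarity of $\mathtt E^{\ast}$: writing $\mathtt E_0 := [0,1]^d$, the relation $\mathtt E_n = \bigcup_{p \in \mathbb Z_q^d}\bigl(\tfrac pq + q^{-\sigma}\mathtt E_{n-1}\bigr)$ holds for every $n \geq 1$ (this is the definition of $\mathtt E_1 = \mathtt F[q]$ for $n=1$ and \eqref{self-similar-Falconer} for $n \geq 2$), and since this is a finite union while the sets $\mathtt E_n$ are nested decreasing, intersecting over $n$ gives
\[
\mathtt E^{\ast} = \bigcup_{p \in \mathbb Z_q^d}\Bigl(\tfrac pq + q^{-\sigma}\mathtt E^{\ast}\Bigr), \qquad \text{so} \qquad \mathtt E^{\ast} - \mathtt E^{\ast} = \bigcup_{p,p' \in \mathbb Z_q^d}\Bigl[\tfrac{p-p'}{q} + q^{-\sigma}\bigl(\mathtt E^{\ast} - \mathtt E^{\ast}\bigr)\Bigr].
\]
(The cubes $\tfrac pq + q^{-\sigma}[0,1]^d$ are in fact pairwise disjoint, as $\sigma > 1$.)

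Second, I would localize $\Delta(\mathtt E^{\ast})$ near the origin. Since $c_0 q^{\sigma-1} \gg 1$ with $c_0 < \gamma_d/100$, where $c_0, \gamma_d \in (0,1)$ are the dimensional constants of Lemma~\ref{building-block-example-lemma}, the quantity $q^{\sigma-1}$ is large; in particular $\sqrt{d}\,q^{-\sigma} < \tfrac{1}{2q}$. Passing to Euclidean lengths in the displayed decomposition, the diagonal blocks $p=p'$ contribute exactly $q^{-\sigma}\Delta(\mathtt E^{\ast}) \subseteq [0,\sqrt d\,q^{-\sigma}]$, while any off-diagonal block $p \ne p'$ contributes lengths at least $|p-p'|/q - \sqrt d\,q^{-\sigma} \geq \tfrac1q - \sqrt d\,q^{-\sigma} > \tfrac1{2q}$. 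Hence
\[
\Delta(\mathtt E^{\ast}) \cap \bigl[0,\tfrac1{2q}\bigr) = q^{-\sigma}\,\Delta(\mathtt E^{\ast}),
\]
the reverse inclusion coming from the diagonal block together with $q^{-\sigma}\Delta(\mathtt E^{\ast}) \subseteq [0,\sqrt d\,q^{-\sigma}] \subseteq [0,\tfrac1{2q})$.

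Third, I would exhibit a gap at the fixed scale $\tfrac1{2q}$. Since $\mathtt E^{\ast} \subseteq \mathtt E_1 = \mathtt F[q]$, the inclusion \eqref{Falconer-E-upper-lower} gives $\Delta(\mathtt E^{\ast}) \subseteq \bigcup_{r \in \mathcal R}\mathtt I_r$, with $\mathtt I_r = \tfrac rq + \gamma_d^{-1}q^{-\sigma}[-1,1]$ as in \eqref{IrJr} and $\mathcal R = \{|p| : p \in \mathbb Z_q^d - \mathbb Z_q^d\}$. Because $r^2$ is a non-negative integer for each $r \in \mathcal R$, the centres $\tfrac rq$ closest to $\tfrac1{2q}$ are $0$ (for $r=0$) and $\tfrac1q$ (for $r=1$), both at distance $\tfrac1{2q}$, which exceeds the half-width $\gamma_d^{-1}q^{-\sigma}$ since $q^{\sigma-1}$ is large. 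Thus $\tfrac1{2q} \notin \bigcup_r \mathtt I_r$, and so $\tfrac1{2q} \notin \Delta(\mathtt E^{\ast})$. Finally, an induction on $m$ using the scaling identity of the second step gives $q^{-m\sigma}\Delta(\mathtt E^{\ast}) = \Delta(\mathtt E^{\ast}) \cap \bigl[0,\tfrac1{2q}q^{-(m-1)\sigma}\bigr)$ for all $m \geq 1$; together with the gap, the point $t_m := q^{-m\sigma}/(2q)$ belongs to $\bigl[0,\tfrac1{2q}q^{-(m-1)\sigma}\bigr)$ but not to $q^{-m\sigma}\Delta(\mathtt E^{\ast})$, so $t_m \notin \Delta(\mathtt E^{\ast})$. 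Given any $a>0$, choosing $m$ with $t_m < a$ exhibits a point of $[0,a)$ outside $\Delta(\mathtt E^{\ast})$, so $\Delta(\mathtt E^{\ast}) \not\supseteq [0,a)$.

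I expect the only non-routine point to be the verification in the third step that the \emph{particular} scale $\tfrac1{2q}$ genuinely lies in a gap of $\Delta(\mathtt E^{\ast})$ — that no distance realized by $\mathtt E^{\ast}$, including those coming from far-apart pieces at every level of the construction, equals $\tfrac1{2q}$. Passing through the containment $\mathtt E^{\ast} \subseteq \mathtt F[q]$ and the coarse description \eqref{Falconer-E-upper-lower} of $\Delta(\mathtt F[q])$ bypasses what would otherwise be a multi-scale case analysis; alternatively, the same conclusion can be read directly off the $|p-p'|=1$ blocks in the decomposition of $\mathtt E^{\ast} - \mathtt E^{\ast}$. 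Everything else is bookkeeping with the dyadic scaling.
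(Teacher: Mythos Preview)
Your proof is correct, and it takes a genuinely different route from the paper's. The paper argues by embedding $\mathtt E^{\ast} \subseteq \mathtt E_n \subseteq \mathtt F[q_n;\sigma_n]$ at \emph{every} level $n$, with $q_n = q^{(n-1)\sigma+1}$ and $\sigma_n$ chosen so that $q_n^{\sigma_n} = q^{n\sigma}$; it then checks that $q_n^{\sigma_n-1} = q^{\sigma-1}$ is uniformly large, so Lemma~\ref{building-block-example-lemma}\eqref{disjointness} applies at each level, and $q_n^{\sigma_n-2} \to 0$ drives the gap region towards the origin. Your argument instead extracts the exact self-similar scaling identity $\Delta(\mathtt E^{\ast}) \cap [0,\tfrac1{2q}) = q^{-\sigma}\Delta(\mathtt E^{\ast})$, locates a \emph{single} gap at $\tfrac1{2q}$ via one appeal to \eqref{Falconer-E-upper-lower}, and lets the identity propagate that gap to every scale $t_m = q^{-m\sigma}/(2q)$.

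What each buys: the paper's approach reuses the machinery already in place for the non-self-similar Falconer set $\mathtt F^{\ast}[q_1,q_2,\dots]$ (Lemma~\ref{distance-counterexample-lemma}), so the self-similar case becomes a special instance of the same argument. Your approach is cleaner for the self-similar set --- one scale, one gap, one induction --- and makes the structure of $\Delta(\mathtt E^{\ast})$ near the origin completely explicit; the cost is that it relies on the exact scaling and would not extend verbatim to a construction with varying $q_n$.
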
 
\begin{proof}
The main observation is that $\mathtt E_n$ is a disjoint union of cubes of the form $p/q_n + q_n^{-\sigma_n}[0,1]^d$, $p \in \mathbb Z_{q_n}^d$, and hence  
\begin{equation}  \label{Estar-Fn}
\mathtt E^{\ast} \subseteq \mathtt E_n \subseteq \mathtt F[q_n;\sigma_n], \; \text{ with } q_{n} = q^{(n-1)\sigma + 1}, \; \sigma_n = \frac{n \sigma}{(n-1) \sigma + 1} \; \text{ so that } q_n^{\sigma_n} = q^{n \sigma}. 
\end{equation} 
Here $\mathtt F[q; \sigma]$ denotes the building block set \eqref{def-building-block-F}. Let us note that $\sigma_n > 1$ for all $n \geq 1$, so Lemma \ref{building-block-example-lemma} applies. Further, $q_n^{\sigma_n-1} = q^{\sigma -1}$,  
  \begin{align*} 
  &\sigma_n - 2 = \frac{n \sigma}{(n-1)\sigma + 1} - 2 = -\frac{(n-2) \sigma + 2}{(n-1) \sigma + 1}, \quad\text{ so that } \\ &q_n^{\sigma_n-2} = q^{-(n-2) \sigma - 2} \longrightarrow 0 \text{ as } n \rightarrow \infty.
  \end{align*}  
Given any $a > 0$, we can therefore choose $n$ such that $c_0q_n^{\sigma_n-2} = c_0q^{-(n-2) \sigma - 2}  < a$. It follows from \eqref{Estar-Fn} that $\Delta(\mathtt E^{\ast}) \subseteq \Delta( \mathtt F[q_n;\sigma_n])$ for every $n \geq 1$. The assumption $c_0 q_n^{\sigma_n-1} = c_0 q^{\sigma-1} \gg 1$ ensures that Lemma \ref{building-block-example-lemma}(\ref{disjointness}) is applicable. It implies that $\Delta(\mathtt E^{\ast})$ is contained in a disjoint union of intervals of length $q_n^{-\sigma_n} = q^{-n \sigma}$ lying in $[0, c_0q_n^{\sigma_n-2}]$. The proof now follows exactly in the same way as Lemma \ref{distance-counterexample-lemma}.   
\end{proof} 
 
\begin{lem} \label{Quasi-regular-Interval-Lemma} 
Given $d \geq 2$ and $\rho > 0$, let $\varepsilon_{\rho}, a_{\rho}, b_{\rho}$ be as in Theorem \ref{mainthm-cor}. For $s \in (d - \varepsilon_{\rho}, d) \cap \mathbb Q$, let us choose an integer $q$ with $q, q^{\sigma} \in \{2^j : j \in \mathbb N\}$ and $q^{\sigma} > b_{\rho}/a_{\rho}$, where $\sigma = d/s$. 
\vskip0.1in
\noindent Let $m$ be the largest integer such that $q^{\sigma} > (b_{\rho}/a_{\rho})^m$. Then there exist $\{x_k : 0 \leq k \leq m\} \subseteq \mathbb R^d$ and $\{u_k: 0 \leq k \leq m\} \subseteq (0, \infty)$ such that the compact and totally disconnected set 
\begin{equation} \mathtt K^{\ast} := \bigsqcup_{k=0}^{m} \bigl(x_k + u_k \mathtt E^{\ast}\bigr), \text{ which has } \quad \text{dim}_{\mathbb H}(\mathtt K^{\ast}) = s \label{def-K} \end{equation}    
also has the Steinhaus property, i.e., $\Delta(\mathtt K^{\ast}) \supseteq [0, a)$ for some $a > 0$. In particular, Corollary \ref{Steinhaus-sparse-corollary} holds. 
\end{lem}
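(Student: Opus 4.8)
The plan is to read off the structure of $\Delta(\mathtt E^{\ast})$ from \eqref{Falconer-example-distance-set}, namely $\Delta(\mathtt E^{\ast}) \supseteq \bigcup_{n\ge 0} q^{-n\sigma}[a_{\rho},b_{\rho}]$, and to note why this union does \emph{not} already contain a neighbourhood of the origin: since $q^{\sigma} > b_{\rho}/a_{\rho}$, the $(n+1)$-st interval $q^{-(n+1)\sigma}[a_{\rho},b_{\rho}]$ ends strictly before the $n$-th interval $q^{-n\sigma}[a_{\rho},b_{\rho}]$ begins, leaving a gap at every scale. The idea is to overlay finitely many rescaled copies of $\mathtt E^{\ast}$, with dilation ratios in geometric progression of ratio $a_{\rho}/b_{\rho}$, so that their distance sets exactly plug these gaps, and to use the defining property of $m$ to guarantee that $m+1$ copies suffice.

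Concretely, I would set $r:=b_{\rho}/a_{\rho}>1$, take $u_k:=r^{-k}$ for $0\le k\le m$ (so $u_0=1$, all $u_k\in(0,1]$), and choose the $x_k$ far apart, e.g. $x_k=2k\,\mathtt e_1$, so that the compact sets $x_k+u_k\mathtt E^{\ast}\subseteq[2k,2k+1]\times[0,1]^{d-1}$ are pairwise disjoint; this makes $\mathtt K^{\ast}=\bigsqcup_{k=0}^m(x_k+u_k\mathtt E^{\ast})$ a genuine disjoint union. Since $\mathtt E^{\ast}$ is compact, totally disconnected (Section \ref{uniform-and-quasiregular-section}) and of Hausdorff dimension $s$ (Lemma \ref{Falconer-example-content-lemma}), and these properties persist under affine maps and finite disjoint unions, $\mathtt K^{\ast}$ is compact, totally disconnected with $\dim_{\mathbb H}(\mathtt K^{\ast})=s$. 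Moreover $\Delta(\mathtt K^{\ast})\supseteq\bigcup_{k=0}^m\Delta(x_k+u_k\mathtt E^{\ast})=\bigcup_{k=0}^m u_k\Delta(\mathtt E^{\ast})$, since the distance set of a subset is contained in that of the whole, and $\Delta(x+uE)=u\,\Delta(E)$.

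The key computation is then to show $\bigcup_{k=0}^m u_k\Delta(\mathtt E^{\ast})\supseteq[0,b_{\rho})$. By \eqref{Falconer-example-distance-set} it suffices to analyse $\bigcup_{k=0}^m\bigcup_{n\ge 0}u_k q^{-n\sigma}[a_{\rho},b_{\rho}]$. First fix $n$: because $u_{k+1}/u_k=a_{\rho}/b_{\rho}$, the right endpoint $u_{k+1}q^{-n\sigma}b_{\rho}$ of the $(k{+}1)$-st interval equals the left endpoint $u_k q^{-n\sigma}a_{\rho}$ of the $k$-th, so $\bigcup_{k=0}^m u_k q^{-n\sigma}[a_{\rho},b_{\rho}]=q^{-n\sigma}[r^{-m}a_{\rho},\,b_{\rho}]$ is a single interval. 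Next, the scale-$n$ and scale-$(n{+}1)$ intervals meet precisely when $q^{-(n+1)\sigma}b_{\rho}\ge q^{-n\sigma}r^{-m}a_{\rho}$, i.e. when $q^{\sigma}\le r^{m+1}$; and this is exactly the maximality in the definition of $m$ (were $q^{\sigma}>r^{m+1}$, then $m+1$ would also satisfy $q^{\sigma}>r^{m+1}$, contradicting the choice of $m$). Hence $\bigcup_{n\ge 0}q^{-n\sigma}[r^{-m}a_{\rho},b_{\rho}]=(0,b_{\rho}]$, so $\Delta(\mathtt K^{\ast})\supseteq[0,b_{\rho}]$, giving the Steinhaus property with $a=b_{\rho}$; taking $E=\mathtt K^{\ast}$ yields Corollary \ref{Steinhaus-sparse-corollary}.

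I do not expect a serious obstacle here: the content of Theorem \ref{mainthm-cor}, specialised to $\mathtt E^{\ast}$ in \eqref{Falconer-example-distance-set}, already supplies the intervals, and Lemma \ref{Falconer-example-content-lemma} supplies their scales. The only thing that needs real care is the endpoint bookkeeping — choosing the dilation ratio so that the copies at a fixed scale abut rather than overlap or leave a gap, and verifying that the number $m+1$ of copies prescribed by the definition of $m$ is exactly what is needed to bridge the scale-to-scale gap $q^{\sigma}\le r^{m+1}$. Both are engineered by the choices $u_k=r^{-k}$ above, so the remainder is routine.
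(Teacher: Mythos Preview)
Your proof is correct and follows essentially the same approach as the paper: construct $\mathtt K^{\ast}$ as a disjoint union of $m+1$ affine copies of $\mathtt E^{\ast}$ at scales in geometric progression with ratio $a_{\rho}/b_{\rho}$, then use \eqref{Falconer-example-distance-set} together with endpoint-matching and the maximality of $m$ (i.e., $q^{\sigma}\le (b_{\rho}/a_{\rho})^{m+1}$) to show the resulting intervals coalesce into $(0,b_{\rho}]$. The only cosmetic difference is your choice $u_k=(b_{\rho}/a_{\rho})^{-k}$ versus the paper's $u_0=1$, $u_k=(b_{\rho}/a_{\rho})^{k}q^{-\sigma}$ for $k\ge 1$; both parametrizations produce the same endpoint arithmetic.
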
 
\begin{proof} 
Set $x_0 = 0$, $u_0 = 1$. The dilation factors $\{u_1, \ldots, u_m \}$ will be determined shortly, but given these, the translation parameters $x_k$ are chosen so that the cubes $Q(x_k; u_k) = x_k + [0, u_k]^d$ are disjoint for $0 \leq k \leq m$. Since $\mathtt E^{\ast} \subseteq [0,1]^d$, 
the choice of $u$ and $x$ shows that $\mathtt K^{\ast}$ is the disjoint union of $(m+1)$ affine copies of the compact set $\mathtt E^{\ast}$. As a result, 
\[ \dim_{\mathbb H}(\mathtt K^{\ast}) = \max_{0 \leq k \leq m} \dim_{\mathbb H}(u_k\mathtt E^{\ast} + x_k) = \dim_{\mathbb H}(\mathtt E^{\ast}) = s. \] 
This establishes the statement about the size of $\mathtt K^{\ast}$. 
\vskip0.1in
\item We turn now to the statement about the distance set of $\mathtt K^{\ast}$. Let us set $\mathtt I_n := q^{-n \sigma} [a_{\rho}, b_{\rho}]$. Applying \eqref{Falconer-example-distance-set}, we note that 
\begin{equation} \Delta(\mathtt K^{\ast}) \supseteq \bigcup_{k=0}^{m} \Delta(u_k \mathtt E^{\ast} + x_k)  \supseteq \bigcup_{n=0}^{\infty} \bigcup_{k=0}^{m} u_k \mathtt I_n \supseteq  \bigcup \bigl\{\mathtt I_n \cup u_k \mathtt I_n : n \geq 0, 1 \leq k \leq m \bigr\}. 
\label{Distance-E-star} \end{equation} 
Given the integer $m$ as in the statement of the lemma, we choose the scale parameters $\{u_k: 1 \leq k \leq m\}$ in the following way
\begin{equation} \label{choice-uk} 
u_1 = \frac{b_{\rho}}{a_{\rho}} q^{-\sigma}, \quad u_2 = u_1 \frac{b_{\rho}}{a_{\rho}} = \Bigl(\frac{b_{\rho}}{a_{\rho}} \Bigr)^2 q^{-\sigma}, \quad \ldots, \quad u_m = u_{m-1} \frac{b_{\rho}}{a_{\rho}} = \Bigl(\frac{b_{\rho}}{a_{\rho}} \Bigr)^m q^{-\sigma}.  \end{equation}  
The main claim concerning the intervals $\mathtt I_n$ is the following: 
\begin{equation} \label{In-claim} \mathtt I_{n+1} \cup u_1 \mathtt I_{n} \cup \cdots \cup u_m \mathtt I_{n} \cup \mathtt I_n \supseteq \text{convex hull$(\mathtt I_n \cup \mathtt I_{n+1})$} = \bigl[q^{-(n+1)\sigma}a_{\rho}, q^{-n\sigma}b_{\rho} \bigr]. \end{equation}
In other words, the intervals $\{u_k \mathtt I_{n} : 1 \leq k \leq m \}$ fill the gap between $\mathtt I_n$ and $\mathtt I_{n+1}$, with the right endpoint of one interval lining up with the left endpoint of the next. Since the intervals $\bigl[q^{-(n+1)\sigma}a_{\rho}, q^{-n\sigma}b_{\rho} \bigr]$ are overlapping, combining \eqref{In-claim} and \eqref{Distance-E-star} leads to
\[ \Delta(\mathtt K^{\ast}) \supseteq \bigcup_{n=0}^{\infty} \bigl[q^{-(n+1)\sigma}a_{\rho}, q^{-n\sigma}b_{\rho} \bigr] = (0, b_{\rho}], \] 
which is the Steinhaus property asserted in the statement of the lemma. It remains to justify the claim \eqref{In-claim}. Let $\mathscr{L}(\mathtt I)$ and $\mathscr{R}(\mathtt I)$ denote the left and right endpoints respectively of an interval $\mathtt I$. The choice \eqref{choice-uk} of $a_{\rho}, b_{\rho}$, $q$ and $u_1$ ensure that
\[ \mathscr{R}(\mathtt I_{n+1}) = b_{\rho}q^{- (n+1) \sigma} = u_1 a_{\rho} q^{-n \sigma} = \mathscr{L}(u_1 \mathtt I_n) < a_{\rho} q^{-n \sigma} = \mathscr{L}(\mathtt I_n). \]
In general for $1 \leq k \leq m-1$, the assumption $q^{\sigma} > (b_{\rho}/a_{\rho})^{k+1}$ implies that  
\[
 \mathscr{R}(u_k \mathtt I_{n}) = u_k b_{\rho} q^{-n \sigma} = u_{k+1} a_{\rho} q^{-n \sigma}  =  \mathscr{L}(u_{k+1} \mathtt I_n) <  a_{\rho} q^{-n \sigma} = \mathscr{L}(\mathtt I_n). \]
However, the choice of $m$ ensures that $q^{\sigma} \leq (b_{\rho}/a_{\rho})^{m+1}$, as a result of which $\mathscr{R}(u_m \mathtt I_{n}) = u_m b_{\rho} q^{-n \sigma} \geq q^{-n \sigma} a_{\rho} = \mathscr{L}(\mathtt I_n)$. 
This completes the proof of the claim \eqref{In-claim}. 
\end{proof} 
\subsection{Sparse sets with all distances}  \label{Sparse-set-all-distances-proof-section} 
Given $d \geq 2$, let $\varepsilon_{\rho}, a_{\rho}, b_{\rho} \in (0,1)$ be as in Theorem \ref{mainthm-cor}. Fix $s \in (d - \varepsilon_{\rho}, d) \cap \mathbb Q$, and set $\sigma = d/s$. Choose a large integer $q$ as in Lemma \ref{Quasi-regular-Interval-Lemma}. In this section, we use the set $\mathtt E^{\ast} = \mathtt E^{\ast}[q;\sigma]$ as given by \eqref{Falconer-E} to prove Corollary \ref{Sparse-all-distances-corollary}.
Given a sequence $\{\mathtt r_j : j \geq 1\}$ of controlled growth, namely 
\begin{align} &\mathtt r_j \nearrow \infty,  \quad 1 \leq \frac{\mathtt r_{j+1}}{\mathtt r_j} \leq \frac{b_{\rho}}{a_{\rho}}, \quad \text{ we set } \label{growth-of-rj} \\ 
&A_j := \mathtt R_j \mathtt e_1 + \mathtt r_j \mathtt E^{\ast}, \quad \mathfrak A_{\mathtt J} = \bigsqcup_{j=1}^{\mathtt J} A_j, \quad \mathfrak A = \bigsqcup_{j=1}^{\infty} A_j. \label{A-AJ}
\end{align}   
The rapidly increasing constants $\mathtt R_j$ are chosen to satisfy 
\begin{equation} \label{Rj-growth} 
\mathtt R_{j+1} > C_d(\mathtt R_j + \mathtt r_j)  \text{ for a large dimensional constant $C_d > 2^{2d}$},  
\end{equation} 
so that the sets $A_{j}$ are mutually disjoint. 
\begin{lem} 
 The set $\mathfrak A$ in \eqref{A-AJ} obeys the following properties: 
\begin{enumerate}[(a)]
\item $\dim_{\mathbb H}(\mathfrak A) = s$. \label{dim-part}
\vskip0.1in
\item The set $\mathfrak A$ attains every distance in the complement of every compact set: namely,  
\begin{equation} \label{all-distances-counterexample}  
\Delta (\mathfrak A \setminus \mathfrak A_{\mathtt J}) = [0, \infty).
\end{equation} \label{all-distances-counterexample-part}
\item For every $\mathtt J \geq 1$, $\Delta(\mathfrak A_{\mathtt J})$ contains an interval gap whose length tends to infinity as $\mathtt J \rightarrow \infty$, i.e. there exist $a_{\mathtt J} < b_{\mathtt J}$ such that    
\begin{equation}   \mathbb R \setminus \Delta(\mathfrak A_{\mathtt J}) \supseteq [a_{\mathtt J}, b_{\mathtt J}], \quad (b_{\mathtt J} - a_{\mathtt J}) \rightarrow \infty \text{ as } \mathtt J \rightarrow \infty. \label{distances-gap} 
\end{equation}  \label{gap-distances-part}
\vskip0.1in  
\item The constituent blocks $A_j$ do not have the Steinhaus property for any $\mathtt J \geq 1$. \label{lack-of-Steinhaus} 
\vskip0.1in 
\item There exists $\mathtt J_0 \geq 1$ such that $A_1 \sqcup A_2 \sqcup \ldots \sqcup A_{\mathtt J_0}$ has the Steinhaus property.    \label{finite-union-Steinhaus}  
\end{enumerate} 
\vskip0.1in
In particular, Corollary \ref{Sparse-all-distances-corollary} holds, with $\mathfrak A$ as the supporting example.  
\end{lem} 
\begin{proof} 
Part (\ref{dim-part}) follows from the construction of $\mathfrak A$: 
\[ \dim_{\mathbb H}(\mathfrak A) = \sup_j \dim_{\mathbb H}(A_j) = \dim_{\mathbb H}(\mathtt E^{\ast}) = s.  \] 
Since each set $A_j$ is an affine copy of $\mathtt E^{\ast}$, part (\ref{lack-of-Steinhaus}) follows from Lemma \ref{Quasiregular-NoInterval-Lemma}. 
\vskip0.1in 
\noindent Let us turn to the statement \eqref{all-distances-counterexample} in part (\ref{all-distances-counterexample-part}).  We deduce from  \eqref{Falconer-example-distance-set} that
\begin{align*}
\Delta(\mathfrak A \setminus \mathfrak A_{\mathtt J}) &= \Delta \Bigl(\bigsqcup_{j=\mathtt J+1}^{\infty} A_j \Bigr) \supseteq \bigcup_{j=\mathtt J+1}^{\infty} \Delta(A_j) \supseteq \bigcup_{j=\mathtt J+1}^{\infty} \mathtt r_j \Delta(\mathtt E^{\ast})\\
&\supseteq \bigcup_{j = \mathtt J+1}^{\infty} \bigcup_{n=0}^{\infty} [\mathtt r_j a_{\rho}, \mathtt r_{j} b_{\rho}] q^{-n \sigma} \supseteq \bigcup_{n=0}^{\infty} [\mathtt r_{\mathtt J+1} a_{\rho}, \infty) q^{-n \sigma} = (0, \infty).
\end{align*}  
In the second line of the display above, we have used the growth condition \eqref{growth-of-rj} to deduce that the interval $\mathfrak I_j := [\mathtt r_j a_{\rho}, \mathtt r_{j} b_{\rho}]$ overlaps with $\mathfrak I_{j+1}$, so that the union of $\mathfrak I_j$ over $j \in \{\mathtt J+1, \mathtt J+2, \ldots\}$ is an infinite half line. 
\vskip0.1in
\noindent A similar intersection property also yields part (\ref{finite-union-Steinhaus}). Since $\mathtt r_j \rightarrow \infty$, we can choose $\mathtt J_0$ so that $\mathtt r_{\mathtt J_0}/\mathtt r_1 > q^{\sigma} {a_{\rho}}/{b_{\rho}}$.  This ensures that the intervals $[\mathtt r_1 a_{\rho}, \mathtt r_{\mathtt J_0} b_{\rho}] q^{-n \sigma}$ intersect for any two consecutive indices $n$, leading to:
\[ \Delta \bigl(\bigsqcup_{j=1}^{\mathtt J_0} A_j \bigr) \supseteq  \bigcup_{j=1}^{\mathtt J_0} \mathtt r_j \Delta(\mathtt E^{\ast}) \supseteq \bigcup_{j = 1}^{\mathtt J} \bigcup_{n=0}^{\infty} [\mathtt r_j a_{\rho}, \mathtt r_{j} b_{\rho}] q^{-n \sigma} =   \bigcup_{n=0}^{\infty} [\mathtt r_1 a_{\rho}, \mathtt r_{\mathtt J_0} b_{\rho}] q^{-n \sigma} = (0, \mathtt r_{\mathtt J_0}b_{\rho}]. \]   
\vskip0.1in 
\noindent It remains to prove part (\ref{gap-distances-part}).  We do this by analyzing the contributions of $A_j$ to $\Delta(\mathfrak A)$:
\begin{equation}  \Delta(\mathfrak A_{\mathtt J}) \subseteq \bigcup \bigl\{\Delta(A_j, A_{j'}) : 1 \leq j, j' \leq \mathtt J \bigr\}. \label{components-distance-set} \end{equation} 
The trivial inclusion yields $\Delta(A_j) = \mathtt r_j \Delta(\mathtt E^{\ast}) \subseteq  \mathtt r_j \Delta([0,1]^d)  = \mathtt r_j [0, \sqrt{d}]$. These intervals are nested and increasing, so their union in contained in $\mathtt r_{\mathtt J}[0, \sqrt{d}]$. 
On the other hand for $j > j'$, 
\begin{align}  
\Delta(A_j, A_{j'}) &= \{\bigl|(\mathtt R_j - \mathtt R_{j'})\mathtt e_1 + \mathtt r_{j}u - \mathtt r_{j'}u' \bigr| : u, u' \in \mathtt E^{\ast}\}  \nonumber \\ 
&\subseteq \mathtt R_{j} - \mathtt R_{j'} + 2\mathtt r_j [-\sqrt{d}, \sqrt{d}] =: \mathfrak J[j';j]. \label{off-diagonal-interval}
\end{align}   
The main observations concerning the intervals $\mathfrak J[j';j]$ are the following: 
\vskip0.1in 
\begin{itemize} 
\item For fixed $j$, the intervals $\{ \mathfrak J[j';j] : j' < j \}$ are not necessarily disjoint, but  they are all of the same length $2 \mathtt r_j \sqrt{d}$ and move to the right as $j'$ decreases. As a result, the leftmost and rightmost endpoints of their union are given respectively by 
\begin{align}
\min \bigl\{x : x \in \mathfrak J[j';j], \; j' < j \bigr\} &= \mathtt R_j - \mathtt R_{j-1} - 2 \mathtt r_j \sqrt{d}, \label{min} \\
\max \bigl\{x : x \in \mathfrak J[j';j], \; j' < j \bigr\} &= \mathtt R_j - \mathtt R_1 + 2 \mathtt r_j \sqrt{d}. \label{max}  
\end{align} 
\vskip0.1in 
\item The above statement implies that the union of the intervals $\{\mathfrak J[j; \mathtt J] : j < \mathtt J \}$ lies to the right of $\mathtt r_{\mathtt J}[0, \sqrt{d}]$, and also to the right of all intervals of the form $\{ \mathfrak J[j';j] : j' < j < \mathtt J\}$. More precisely, \eqref{min} and \eqref{max} say that if  
\begin{align} 
a_{\mathtt J} &:= \max(\mathtt r_{\mathtt J}\sqrt{d}, \mathtt R_{\mathtt J-1} - \mathtt R_1 + 2 \mathtt r_{\mathtt J-1}\sqrt{d}), \;  b_{\mathtt J} := \mathtt R_{\mathtt J} - \mathtt R_{\mathtt J-1} - 2 \mathtt r_{\mathtt J}\sqrt{d}, \text{ then } \nonumber \\ 
\bigcup_{j' < j < \mathtt J} &\mathfrak J[j'; j] \cup [0, \mathtt r_{\mathtt J} \sqrt{d}] \subseteq [0, a_{\mathtt J}), \text{ whereas }\bigcup_{j < \mathtt J}  \mathfrak J[j; \mathtt J] \subseteq [b_{\mathtt J}, \mathtt R_{\mathtt J} - \mathtt R_{1} + 2 \mathtt r_{\mathtt J}\sqrt{d}] \label{aJbJ}
  \end{align}     
\end{itemize} 
Combining \eqref{components-distance-set}, \eqref{off-diagonal-interval} and \eqref{aJbJ}, we find that $\mathbb R \setminus \Delta(\mathfrak A_{\mathtt J})$ contains the interval 
$[a_{\mathtt J}, b_{\mathtt J}]$.  The rapid growth condition \eqref{Rj-growth} implies the existence of a small dimensional constant $c_d > 0$ such that  
\begin{align*} 
b_{\mathtt J} - a_{\mathtt J} &\geq \mathtt R_{\mathtt J} - 2 \mathtt r_{\mathtt J} \sqrt{d} - \max \bigl[ \mathtt R_{\mathtt J-1} +  \mathtt r_{\mathtt J} \sqrt{d}, 2\mathtt R_{\mathtt J-1} + \mathtt R_1 + 2 \mathtt r_{\mathtt J-1} \sqrt{d} \bigr] \\ 
&\geq c_d \mathtt R_{\mathtt J} \rightarrow \text{ as } \mathtt J \rightarrow \infty, 
\end{align*} 
completing the proof.  
\end{proof}

\section{Sufficiently large distances in well-distributed sets} \label{Suff-large-distances-section} 
\subsection{Proof of Theorems \ref{mainthm-3} and \ref{mainthm-4}, assuming Theorems \ref{mainthm-2} and \ref{mainthm-cor}} 
\begin{proof} 
Let us fix $d \geq 2$, a set $A \subseteq \mathbb R^d$, and recall from \eqref{def-ARx} the definition of the normalized truncations $A_R(x)$.  If $\mathbb H^s(A_R(x)) > 0$, then Theorem \ref{mainthm-cor} implies that 
\[ \Delta(A) \supseteq R \Delta(A_R(x)) \supseteq R \bigcup \Bigl\{\ell(Q)[a_{\rho}, b_{\rho}] : Q \in \mathcal D_{\rho}\bigl(A_R(x); s\bigr) \Bigr\}, \]
provided $s > d - \varepsilon_{\rho}$. Here $\rho \in (0,1)$, and  $a_{\rho}, b_{\rho}, \varepsilon_{\rho}$ are the parameters provided by Theorem \ref{mainthm-2}. Since $A$ is given to be well-distributed with $s$-density at least $(1- \rho)$ and growth rate at most $b_{\rho}/a_{\rho}$, it follows from definitions \eqref{def-V} and \eqref{slower-than-lacunary} that there exists a sequence $v_n \nearrow \infty$ in $\mathscr V(\rho, s; A)$ such that  
\[ \frac{v_{n+1}}{v_n} \leq \frac{b_{\rho}}{a_{\rho}} \text{ for $n \geq N$, as a result of which } \Delta(A) \supseteq \bigcup_{n=N}^{\infty} \mathscr I_n  \supseteq  [v_N a_{\rho}, \infty), \]
as has been claimed in Theorem \ref{mainthm-3}. Here $\mathscr I_n := v_n[a_{\rho}, b_{\rho}]$. The last inclusion in the displayed sequence above  follows from \eqref{slower-than-lacunary}, which ensures for $n \geq N$, we have the string of inequalities $v_n a_{\rho} < v_{n+1} a_{\rho} \leq v_n b_{\rho} < v_{n+1} b_{\rho}$. Consequently,   
\[ \mathscr{I}_n \cup \mathscr{I}_{n+1} = \text{ convex hull}(\mathscr{I}_n \cup \mathscr{I}_{n+1}) = [v_na_{\rho}, v_{n+1}b_{\rho}]. \]   
\vskip0.1in
\noindent Theorem \ref{mainthm-4} is a special case of Theorem \ref{mainthm-3}, where the hypotheses permits the application of Theorem \ref{mainthm-2} instead of Theorem \ref{mainthm-cor} on $A_{R_n}(x)$, with $\ell = 1$.  In this case, the condition \eqref{slower-than-lacunary} reduces to \eqref{Rn-growth}. 
\end{proof}

\section{Structure of distance sets near the origin} \label{Mainthm1-Proof-Section}

The goal of this section is to prove Theorem \ref{mainthm-cor}, assuming Theorem \ref{mainthm-2}. 
\begin{lem} \label{density-lemma}
Fix any $\rho \in (0,1)$. For any Borel set $E \subseteq \mathbb R^d$ with $\mathbb H^s(E) > 0$ and any $\rho \in (0,1)$, there exists $Q \in \mathcal Q_d$ such that
\begin{equation} \mathcal H^s_{\infty}(E \cap Q) > (1- \rho) \ell(Q)^s.  \label{density-claim} \end{equation}
Thus the collection $\mathcal D_{\rho}(E;s)$ defined by \eqref{high-density-on-cube} is non-empty for sets $E$ with $\mathbb H^s(E) > 0$. 
\end{lem}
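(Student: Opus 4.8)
The plan is to argue by contradiction, exploiting the countable subadditivity of the dyadic Hausdorff content $\mathcal{H}^s_{\infty}$ (which, though not a measure, is an outer measure). Suppose, contrary to \eqref{density-claim}, that $\mathcal{H}^s_{\infty}(E \cap Q) \le (1-\rho)\ell(Q)^s$ for \emph{every} dyadic cube $Q \in \mathcal{Q}_d$. The first step is a harmless reduction to the case where $E$ is bounded with $0 < \mathcal{H}^s_{\infty}(E) < \infty$: since $\mathbb{H}^s$ is countably subadditive and $\mathbb{H}^s(E) > 0$, writing $E = \bigcup_{n \ge 1}\bigl(E \cap B(0;n)\bigr)$ forces $\mathbb{H}^s\bigl(E \cap B(0;n_0)\bigr) > 0$ for some $n_0$; replacing $E$ by this bounded subset can only decrease $\mathcal{H}^s_{\infty}(E \cap Q)$ for each $Q$, so it suffices to establish the claim for it. For such bounded $E$, enclosing it in a single dyadic cube gives $\mathcal{H}^s_{\infty}(E) < \infty$, while the simultaneous vanishing of $\mathbb{H}^s$, $\mathcal{H}^s$ and $\mathcal{H}^s_{\infty}$ recalled in Section~\ref{preliminaries-section} gives $\mathcal{H}^s_{\infty}(E) > 0$. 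Write $m := \mathcal{H}^s_{\infty}(E) \in (0,\infty)$.

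Next, fix $\varepsilon > 0$ small enough that $(1-\rho)(m+\varepsilon) < m$; this is possible precisely because $\rho > 0$ and $m > 0$. By the definition of $\mathcal{H}^s_{\infty}$ as an infimum over dyadic covers, choose cubes $\{Q_i\}_{i \ge 1} \subseteq \mathcal{Q}_d$ with $E \subseteq \bigcup_i Q_i$ and $\sum_i \ell(Q_i)^s < m + \varepsilon$. Applying monotonicity and countable subadditivity of $\mathcal{H}^s_{\infty}$, together with the contradiction hypothesis on each $Q_i$, yields
\[ m \;=\; \mathcal{H}^s_{\infty}(E) \;\le\; \sum_{i \ge 1} \mathcal{H}^s_{\infty}(E \cap Q_i) \;\le\; (1-\rho)\sum_{i \ge 1}\ell(Q_i)^s \;<\; (1-\rho)(m+\varepsilon) \;<\; m, \]
a contradiction. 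Hence some dyadic cube $Q$ satisfies \eqref{density-claim}, and in particular $\mathcal{D}_{\rho}(E;s) \neq \emptyset$.

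I do not anticipate a genuine obstacle here; the proof is a content-version of the Lebesgue density argument. The only points that need attention are (i) the reduction to a bounded subset, which is what makes $m$ finite and hence permits the choice of $\varepsilon$ --- without it the inequality $m \le (1-\rho)(m+\varepsilon)$ is vacuous --- and (ii) making sure one invokes subadditivity of the \emph{dyadic} content $\mathcal{H}^s_{\infty}$ rather than of the ordinary Hausdorff content; these coincide only up to constants in general, but subadditivity holds for $\mathcal{H}^s_{\infty}$ directly since it is an outer measure. Note finally that no passage between the content and the measure $\mathbb{H}^s$ is required in the conclusion, since \eqref{density-claim} and the defining condition \eqref{high-density-on-cube} of $\mathcal{D}_{\rho}(E;s)$ are both phrased in terms of $\mathcal{H}^s_{\infty}$.
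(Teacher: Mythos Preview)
Your proof is correct and follows essentially the same argument as the paper's: take a near-optimal dyadic cover of $E$, apply countable subadditivity of $\mathcal H^s_{\infty}$, and derive a contradiction if every covering cube fails \eqref{density-claim}. Your preliminary reduction to a bounded subset (ensuring $\mathcal H^s_{\infty}(E) < \infty$) is a careful touch that the paper's version leaves implicit.
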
 
\begin{proof} Since $\mathbb H^s(E)$ is positive by assumption, so is $\mathcal H^s_{\infty}(E)$. Set $\mathcal H^s_{\infty}(E) := \varrho > 0$. For $\kappa > 0$, let $\{Q_i = Q_i(\kappa) : i \geq 1\}$ denote a countable collection of dyadic cubes covering $E$ with    
\begin{equation} \label{inf-condition} \sum_i \ell(Q_i)^s \leq \varrho(1 + \kappa). \end{equation} 
One of the covering cubes $Q_i = Q_i(\kappa)$ must obey \eqref{density-claim} for small $\kappa$; otherwise \eqref{inf-condition} would lead to the inequality  
 \[ \varrho = \mathcal H_{\infty}^s(E) \leq \sum_i \mathcal H_{\infty}^s(E \cap Q_i) \leq (1 - \rho) \sum_i \ell(Q_i)^s \leq \varrho (1 - \rho)(1 + \kappa), \]
 which is a contradiction for all sufficiently small $\kappa \leq \rho$.  
 \end{proof} 
\subsection{Proof of Theorem \ref{mainthm-cor}, assuming Theorem \ref{mainthm-2}} \label{mainthm-cor-proof-section}
\begin{proof} 
Let $\mathcal D_{\rho}(E)$ be the collection of dyadic cubes defined in \eqref{high-density-on-cube}. 
For any $Q \in \mathcal D_{\rho}(E)$ given by $Q = c(Q) + \ell(Q) [0,1]^d$, let 
\[ \mathbb T_{Q}: \mathbb R^d \rightarrow \mathbb R^d, \quad \mathbb T_Q(y) := \frac{y-c(Q)}{\ell(Q)} \] denote the linear transformation that maps $Q$ onto $[0,1]^d$. Since $\mathbb T_{Q}$ maps the collection $\mathcal Q_d$ of all closed dyadic cubes onto itself, every dyadic covering of $E \cap Q$ corresponds uniquely to a dyadic covering of $\mathbb T_Q(E \cap Q)$ via the linear map $\mathbb T_Q$. Invoking the translation-invariance and scaling properties of dyadic Hausdorff content, \eqref{high-density-on-cube} gives rise to the following inequality:
\[ \mathcal H^s_{\infty}\bigl(\mathbb T_{Q}(E \cap Q) \bigr) = \ell(Q)^{-s} \mathcal H_{\infty}^s(E \cap Q) \geq 1 - \rho. \] 
Thus the hypotheses of Theorem \ref{mainthm-2} hold with both $E$ and $F$ in that theorem replaced by $\mathbb T_{Q}(E \cap Q)$. Applying the theorem leads to the conclusion that $\Delta(\mathbb T_{Q}(E))$ contains the interval $[a_{\rho}, b_{\rho}]$, i.e. the conclusion \eqref{Delta-intervals} of Theorem \ref{mainthm-cor} holds.
\end{proof}

\section{An integral that identifies distances} \label{configuration-integral-section}
\noindent Given two compact sets $E, F \subseteq [0,1]^d$, we will determine the distances generated by $E$ and $F$ using a certain linear functional, as in \cite{1986Bourgain}. Such functionals are often called configuration integrals. In this section, we will set up the configuration integral that is relevant for identifying a specific distance $t$, and discuss its properties. 
\vskip0.1in
\noindent Given probability measures $\mu, \nu$ supported on the compact sets $E, F \subseteq [0,1]^d$ respectively, and any $t > 0$, we define a linear functional $\Lambda_{\mu, \nu}(t)$ as follows:  for any continuous function $f \in C\bigl([0,1]^d \times [0,1]^d \bigr)$, 
\begin{equation} \label{def-Lambda}
\langle \Lambda_{\mu, \nu}(t), f \rangle  := \lim_{\kappa \rightarrow 0} \int \mu_{\kappa}(x) \nu_{\kappa}(x + t \omega) f(x, x+ t\omega) dx d\sigma(\omega),  
\end{equation} 
with the notational convention $\Lambda_{\mu} := \Lambda_{\mu, \mu}$. Here $\sigma$ denotes the normalized surface measure on $\mathbb S^{d-1}$, and $\mu_{\kappa} := \mu \ast \varphi_{\kappa}$ for an approximate identity sequence $\{\varphi_{\kappa} : \kappa > 0\}$, $\varphi_{\kappa}(x) = \kappa^{-d} \varphi(x/\kappa)$, where $\varphi$ is a non-negative Schwartz function with $\widehat{\varphi}(0) = 1$. The following proposition verifies that under certain conditions, $\Lambda_{\mu, \nu}(t)$ is well-defined as a bounded linear functional and an effective instrument for identifying distances realized by joining points in $E$ with points in $F$.
\vskip0.1in
\noindent Let us recall from \cite[Chapters 2 and 3, Theorems 2.8 and 3.10]{Mattila-Book2} the definition of $s$-energy of a measure $\mu$ in $\mathbb R^d$: for $0 < s \leq d$, 
\begin{equation} \label{def-s-dim-energy}
\mathbb I_\mu(s) := \iint |x-y|^{-s} d\mu(x) d\mu(y) = \gamma(d, s) \int \bigl| \widehat{\mu}(\xi) \bigr|^{2} |\xi|^{s-d} d\xi
\end{equation} 
where $\gamma(d, s) = \pi^{s - d/2} \Gamma(\frac{d-s}{2})/\Gamma(\frac{s}{2})$ is a positive finite constant. 
\begin{prop} \label{Lambda-prop}
Suppose that $\mu$ and $\nu$ are two probability measures, supported on the compact sets $E$ and $F$ respectively, each wih finite $(d+1)/2$-energy: 
\begin{equation} \mathbb I_\mu ((d+1)/2) < \infty, \quad  \mathbb I_\nu ((d+1)/2) < \infty. \label{finite-energy-conditions} \end{equation} 
Then the following conclusions hold. 
\begin{enumerate}[(a)] 
\item For every continuous function $f$ on $[0,1]^{2d}$, the functional $\Lambda_{\mu, \nu}(t)$ is well-defined; in other words, the limit in \eqref{def-Lambda} exists and is independent of the choice of the approximate identity $\{\varphi_\varepsilon : \varepsilon > 0 \}$.  Moreover, there exists an absolute constant $C_d$ depending only on $d$ such that
\begin{equation}  \bigl| \langle \Lambda_{\mu, \nu}(t), f \rangle \bigr| \leq C_d t^{-\frac{d-1}{2}} \bigl[\mathbb I_\mu ((d+1)/2) \times  \mathbb I_\nu ((d+1)/2) \bigr]^{\frac{1}{2}} ||f||_{\infty}. \label{op-norm} \end{equation} 
Thus $\Lambda_{\mu, \nu}(t)$ can be identified as a non-negative finite measure.   \label{existence-part}
\vskip0.1in
\item If the measure $\Lambda_{\mu, \nu}(t)$ is non-trivial (i.e., has positive mass), it is supported on 
\begin{equation} 
X_{\mu}(t) := \bigl\{(x, y) \in E \times F : |x - y| = t \bigr\}. 
\end{equation} \label{support-part}
In this case, the total mass of $\Lambda_{\mu, \nu}(t)$ is given by 
\begin{equation}  \label{mass-Lambda-mu-nu} 
||\Lambda_{\mu, \nu}(t)|| = \int \widehat{\mu}(\xi) \overline{\widehat{\nu}(\xi)} \widehat{\sigma}(t \xi) \, d\xi > 0. 
\end{equation}  
Conversely, if the integral in \eqref{mass-Lambda-mu-nu} s strictly positive, then $\Lambda_{\mu, \nu}(t)$ is a non-trivial measure, and hence its support $X_{\mu}(t)$ is non-empty; in other words, $t \in \Delta(E, F)$. 
\end{enumerate} 
\end{prop}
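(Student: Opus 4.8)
The plan follows the standard configuration-integral method (as in \cite{1986Bourgain, Mattila-Sjolin-99}): realize the mollified expressions in \eqref{def-Lambda} as a uniformly bounded family of nonnegative measures, pass to the Fourier side, and let $\kappa\to0$ by dominated convergence. For $\kappa>0$ write $\langle\lambda^\kappa,f\rangle:=\int\mu_\kappa(x)\nu_\kappa(x+t\omega)f(x,x+t\omega)\,dx\,d\sigma(\omega)$; since $\varphi\ge0$ is Schwartz this is a nonnegative finite Borel measure on $\mathbb R^{2d}$, automatically supported on the locus $\{|x-y|=t\}$ because $|\omega|=1$. Testing against $f(x,y)=e^{2\pi i(a\cdot x+b\cdot y)}$, Parseval in $x$ (using the modulation and translation rules for the Fourier transform together with $\widehat{\nu_\kappa}(-\xi)=\overline{\widehat{\nu_\kappa}(\xi)}$) followed by Fubini in $\omega$ yields the closed form
\[ \langle\lambda^\kappa,f\rangle=\int\widehat{\mu_\kappa}(\xi-a-b)\,\overline{\widehat{\nu_\kappa}(\xi)}\,\widehat\sigma\bigl(t(\xi-b)\bigr)\,d\xi; \]
with $a=b=0$ this is $\|\lambda^\kappa\|=\int\widehat{\mu_\kappa}(\xi)\overline{\widehat{\nu_\kappa}(\xi)}\widehat\sigma(t\xi)\,d\xi$, the prelimit version of \eqref{mass-Lambda-mu-nu}.

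Next I would prove the uniform estimate behind \eqref{op-norm}. Since $\varphi\ge0$ gives $|\widehat\varphi(\kappa\xi)|\le\widehat\varphi(0)=1$, we have $|\widehat{\mu_\kappa}|\le|\widehat\mu|\le1$ and likewise for $\nu$; combining this with the classical spherical decay $|\widehat\sigma(\eta)|\le C_d|\eta|^{-(d-1)/2}$ (valid for all $\eta\ne0$, since $|\widehat\sigma|\le1\le|\eta|^{-(d-1)/2}$ when $|\eta|\le1$), Cauchy--Schwarz, and the energy identity \eqref{def-s-dim-energy} at the exponent $s=(d+1)/2$ (for which $s-d=-(d-1)/2$), one obtains
\[ \bigl|\langle\lambda^\kappa,f\rangle\bigr|\le\|f\|_\infty\,\|\lambda^\kappa\|\le C_d\,t^{-\frac{d-1}{2}}\bigl[\mathbb I_\mu((d+1)/2)\,\mathbb I_\nu((d+1)/2)\bigr]^{1/2}\|f\|_\infty, \]
uniformly in $\kappa$; the hypothesis \eqref{finite-energy-conditions} is exactly what makes the right side finite, and the same estimate supplies a $\kappa$-independent $L^1(d\xi)$ majorant for the frequency integral above. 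Since $\widehat{\mu_\kappa}\to\widehat\mu$ and $\widehat{\nu_\kappa}\to\widehat\nu$ pointwise, dominated convergence gives $\langle\lambda^\kappa,f\rangle\to\int\widehat\mu(\xi-a-b)\overline{\widehat\nu(\xi)}\widehat\sigma(t(\xi-b))\,d\xi$ for every exponential $f$ --- a limit manifestly independent of $\varphi$ --- and then, for each $f\in C_c^\infty(\mathbb R^{2d})$, by Fourier inversion plus a second application of dominated convergence (majorant $\|\widehat f\|_{1}\sup_\kappa\|\lambda^\kappa\|$). As the tails of $\mu_\kappa,\nu_\kappa$ away from the compact supports of $\mu,\nu$ are negligible by Schwartz decay, the family $\{\lambda^\kappa\}$ is tight, so it converges weak-$*$ to a nonnegative finite measure $\Lambda_{\mu,\nu}(t)$, independent of the approximate identity and obeying \eqref{op-norm}; this is part (a), and $f\equiv1$ in the limit gives the mass formula in \eqref{mass-Lambda-mu-nu}.

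For the support statement in (b), fix $f$ (say compactly supported in $\mathbb R^{2d}$) with $\text{supp}(f)$ disjoint from the compact set $X_{\mu}(t)=\{(x,y)\in E\times F:|x-y|=t\}$; I claim $\langle\Lambda_{\mu,\nu}(t),f\rangle=0$. Only points of the form $(x,x+t\omega)$, which lie on $\{|x-y|=t\}$, are ever evaluated, so it suffices to work on the compact set $\text{supp}(f)\cap\{|x-y|=t\}$; on it, disjointness from $X_{\mu}(t)$ forces $x\notin E$ or $y\notin F$ at every point, hence by compactness $\text{dist}(x,E)\ge\delta$ or $\text{dist}(x+t\omega,F)\ge\delta$ for some fixed $\delta>0$. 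On the region where $\text{dist}(x,E)\ge\delta$, the Schwartz bound $|\varphi(w)|\le C_N|w|^{-N}$ gives $\mu_\kappa(x)\le C_N\kappa^{N-d}\delta^{-N}$, while the remaining integral $\iint\nu_\kappa(x+t\omega)\,dx\,d\sigma(\omega)$ over the bounded $x$-set and $\mathbb S^{d-1}$ is $\le\sigma(\mathbb S^{d-1})\|\nu_\kappa\|_1=\sigma(\mathbb S^{d-1})$; the region $\text{dist}(x+t\omega,F)\ge\delta$ is symmetric (with the roles of $\mu_\kappa$ and $\nu_\kappa$ exchanged). Taking $N>d$ forces $\langle\lambda^\kappa,f\rangle\to0$, so $\text{supp}(\Lambda_{\mu,\nu}(t))\subseteq X_{\mu}(t)$ (a closed set). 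Finally, if the integral in \eqref{mass-Lambda-mu-nu} is strictly positive, then $\Lambda_{\mu,\nu}(t)$ has positive mass, hence nonempty support, which forces $X_{\mu}(t)\ne\emptyset$, i.e.\ $t\in\Delta(E,F)$; and conversely a nontrivial nonnegative measure has positive mass, which is the strict positivity recorded at the end of \eqref{mass-Lambda-mu-nu}.

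I expect the conceptual crux to be the $L^1$-majorization in part (a): matching the $|\xi|^{-(d-1)/2}$ decay of $\widehat\sigma$ against the energy integral \eqref{def-s-dim-energy} is precisely what forces the critical exponent $(d+1)/2$ in \eqref{finite-energy-conditions}, and this is what keeps the frequency-side integrals --- and hence \eqref{mass-Lambda-mu-nu} --- absolutely convergent so that the limit is legitimate. The fussiest technical point is the support statement: since the mollifiers $\varphi_\kappa$ are not compactly supported, one must combine the compactness argument producing the uniform separation $\delta$ with the quantitative Schwartz-tail bound for $\mu_\kappa$ and $\nu_\kappa$, while keeping the leftover integration over a bounded set; once the support is located inside $X_{\mu}(t)\subseteq[0,1]^{2d}$, the fact that $f$ in \eqref{def-Lambda} is only prescribed on $[0,1]^{2d}$ becomes harmless, since the value of the limit is independent of the continuous extension chosen.
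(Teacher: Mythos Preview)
Your argument is correct and follows essentially the same route as the paper: realize the mollified integrals as nonnegative measures, pass to the Fourier side, combine Cauchy--Schwarz with the energy identity \eqref{def-s-dim-energy} at exponent $(d+1)/2$ to get the uniform bound \eqref{op-norm}, and apply dominated convergence. One small difference worth noting: for the support statement in (b), the paper's text argues that $\mu_\kappa(x)\nu_\kappa(y)=0$ off a neighbourhood of $E\times F$ once $\kappa$ is small, which strictly speaking requires the mollifier $\varphi$ to be compactly supported, whereas the paper only assumes $\varphi$ Schwartz; your quantitative Schwartz-tail estimate is the correct way to handle the general case.
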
 
\subsection{Existence of $\Lambda_{\mu, \nu}$} 
\begin{lem} \label{Iatmu-lemma}
There exists an absolute positive constant $C_d$ as follows. Given $a \in \mathbb R^d$, $t \in (0,1)$, $\beta \geq 0$, and a probability measure $\mu$, the integral $\mathscr{I}(a, t; \mu)$ defined by 
\begin{equation} 
\mathscr{I}(a, t; \mu) := \int \bigl| \widehat{\mu}(\xi) \bigr|^2 \, \bigl( 1 + t |a + \xi|\bigr)^{-\beta} \, d\xi \label{def-Iatmu} 
\end{equation} 
obeys the bound 
\begin{equation} \label{Iat-bound} 
\bigl| \mathscr{I}(a, t; \mu) \bigr| \leq C_d \bigl[|a|^d + t^{-\beta}\mathbb I_{\mu}(d-{\beta}) \bigr].
\end{equation} 
\end{lem}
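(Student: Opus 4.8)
The plan is to estimate $\mathscr{I}(a,t;\mu)$ by splitting the frequency space $\mathbb{R}^d$ at the sphere $\{|\xi|=2|a|\}$ and treating the two pieces by entirely different devices: on the inner ball we discard the weight and use only that $\mu$ is a probability measure, while on the complementary region the weight $(1+t|a+\xi|)^{-\beta}$ is no longer singular and can be dominated by the homogeneous power $|\xi|^{-\beta}$, after which the Fourier representation \eqref{def-s-dim-energy} of the $(d-\beta)$-energy finishes the job.

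First I would handle the inner region $\{\xi:|\xi|\le 2|a|\}$, which is empty (and this step vacuous) when $a=0$. On it $|\widehat\mu(\xi)|\le\widehat\mu(0)=1$ and $(1+t|a+\xi|)^{-\beta}\le 1$, so this part contributes at most the Lebesgue measure of the ball, i.e.\ a dimensional constant times $|a|^d$. On the outer region $\{\xi:|\xi|>2|a|\}$ one has $|a+\xi|\ge|\xi|-|a|\ge|\xi|/2>0$, so $\xi=-a$ is excluded, and using the elementary inequality $(1+r)^{-\beta}\le r^{-\beta}$ (valid for $r>0$ and $\beta\ge 0$),
\[ (1+t|a+\xi|)^{-\beta}\le\bigl(t|a+\xi|\bigr)^{-\beta}\le\bigl(t|\xi|/2\bigr)^{-\beta}=2^{\beta}\,t^{-\beta}\,|\xi|^{-\beta}. \]
Enlarging the domain of integration to all of $\mathbb{R}^d$ (harmless, the integrand being nonnegative) and then applying \eqref{def-s-dim-energy} with $s=d-\beta$ — legitimate since $0<d-\beta\le d$ for $0\le\beta<d$, and the integral converges near the origin because $\beta<d$ and $|\widehat\mu|$ is bounded there — yields
\[ \int_{|\xi|>2|a|}|\widehat\mu(\xi)|^2(1+t|a+\xi|)^{-\beta}\,d\xi\ \le\ 2^{\beta}\,t^{-\beta}\!\int_{\mathbb{R}^d}|\widehat\mu(\xi)|^2|\xi|^{-\beta}\,d\xi\ =\ \frac{2^{\beta}}{\gamma(d,d-\beta)}\,t^{-\beta}\,\mathbb I_\mu(d-\beta). \]
Since $\gamma(d,d-\beta)$ is a positive finite constant for each $\beta\in[0,d)$, adding the two contributions gives $\mathscr{I}(a,t;\mu)\le C_d\bigl[\,|a|^d+t^{-\beta}\,\mathbb I_\mu(d-\beta)\,\bigr]$, which is the claim; the degenerate cases $\beta=0$ or $\mathbb I_\mu(d-\beta)=+\infty$ make the inequality trivial.

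There is no genuine obstacle; the only point calling for care is the choice of the splitting radius. Cutting at $2|a|$, rather than at $|a|$, is what forces $|a+\xi|\ge|\xi|/2$ on the outer region, and hence lets us replace the weight by $|\xi|^{-\beta}$ and invoke the energy identity. Near $\xi=-a$ no such replacement is available — the weight there is of order $1$ rather than a power of $|\xi|$ — so that neighbourhood must be set aside and paid for by its volume, which is precisely the $|a|^d$ term in the statement.
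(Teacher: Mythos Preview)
Your proof is correct and follows essentially the same approach as the paper's. The only cosmetic difference is where the split is made: the paper partitions according to whether $|a+\xi|\le|\xi|/2$ or not (and then observes the first region sits inside $\{|\xi|\le 2|a|\}$), whereas you partition directly at $|\xi|=2|a|$; in either case the inner piece is bounded by its volume and the outer piece by $t^{-\beta}\mathbb I_\mu(d-\beta)$ via the same inequality $|a+\xi|\ge|\xi|/2$.
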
 
\begin{proof} 
Let us write $\mathscr{I} = \mathscr{I}_1 + \mathscr{I}_2$, where $\mathscr{I}_1$ represents the integral over $\{\xi: |a + \xi| \leq |\xi|/2\}$. Since $|\xi| \leq |\xi + a| + |a| \leq |\xi|/2 + |a|$,  the domain of integration of $\mathscr{I}_1$ is contained in $\{\xi: |\xi| \leq 2|a|\}$. The integrand is trivially bounded above by 1. These observations lead to 
\begin{equation} \label{I1at-bound} 
\mathscr{I}_1(a, t; \mu) := \int_{|a + \xi| \leq |\xi|/2} \bigl| \widehat{\mu}(\xi) \bigr|^2 \, \bigl( 1 + t |a + \xi|\bigr)^{-\beta} \, d\xi \leq \int_{|\xi| \leq 2 |a|} \bigl| \widehat{\mu}(\xi) \bigr|^2 d\xi \leq  (2 |a|)^d. 
\end{equation}  
The remaining integral $\mathscr{I}_2$, whose domain of integration is $|a + \xi| \geq |\xi|/2$, is estimated as follows, 
\begin{align}
\mathscr{I}_{2}(a, t;\mu) &:= \int_{|a + \xi| \geq |\xi|/2}  \bigl| \widehat{\mu}(\xi) \bigr|^2 \, \bigl( 1 + t |a + \xi|\bigr)^{-\beta} \, d\xi \nonumber \\ 
&\leq \int \bigl| \widehat{\mu}(\xi) \bigr|^2 \bigl(t|\xi|/2\bigr)^{-\beta} \, d\xi \leq C_d t^{-\beta} \mathbb I_{\mu}(d - \beta). \label{I2at-bound} 
\end{align} 
Combining \eqref{I1at-bound} and \eqref{I2at-bound} leads to \eqref{Iat-bound}.
\end{proof} 
\begin{lem} \label{limit-lemma}
Under the finite energy assumptions \eqref{finite-energy-conditions} and for any smooth function $f$  supported on $[0,1]^d \times [0,1]^d$, the limit in \eqref{def-Lambda} exists and is independent of the choice of $\varphi$ in the approximate identity $\bigl\{\varphi_{\kappa}: \kappa > 0 \bigr\}$.
\end{lem}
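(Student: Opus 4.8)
The plan is to pass to the Fourier side — where the mollifier $\varphi_\kappa$ enters only through the bounded, pointwise-convergent factor $\widehat\varphi(\kappa\,\cdot\,)$ — and then apply dominated convergence. Since $f$ is smooth and compactly supported it is Schwartz, so $f(x,y) = \iint \widehat f(\xi,\eta)\, e^{2\pi i(x\cdot\xi + y\cdot\eta)}\, d\xi\, d\eta$. I would substitute this into \eqref{def-Lambda}, perform the $x$-integration by Fourier inversion and Parseval's identity and the $\omega$-integration via $\int_{\mathbb S^{d-1}} e^{2\pi i t\omega\cdot\zeta}\, d\sigma(\omega) = \widehat\sigma(t\zeta)$, and then make the harmless change of variables $\tau = \zeta - \eta$ to arrive at
\begin{multline*}
\int \mu_\kappa(x)\,\nu_\kappa(x+t\omega)\,f(x,x+t\omega)\, dx\, d\sigma(\omega) \\ = \iiint \widehat f(\xi,\eta)\, \widehat\mu(\tau-\xi)\,\overline{\widehat\nu(\tau+\eta)}\,\widehat\varphi\bigl(\kappa(\tau-\xi)\bigr)\,\overline{\widehat\varphi\bigl(\kappa(\tau+\eta)\bigr)}\,\widehat\sigma(t\tau)\, d\tau\, d\xi\, d\eta,
\end{multline*}
where I used $\widehat{\mu_\kappa} = \widehat\mu\,\widehat\varphi(\kappa\,\cdot\,)$ and likewise for $\nu$. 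For each fixed $\kappa > 0$ these interchanges of integration are legitimate, because $\mu_\kappa = \mu\ast\varphi_\kappa$ and $\nu_\kappa = \nu\ast\varphi_\kappa$ are Schwartz (convolutions of compactly supported measures with a Schwartz kernel), $\widehat f$ is Schwartz, and the right-hand side converges absolutely.

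Because $\varphi\geq 0$ with $\int\varphi = \widehat\varphi(0) = 1$, we have $|\widehat\varphi(\kappa\zeta)|\leq\|\varphi\|_1 = 1$ for all $\zeta$, while $\widehat\varphi(\kappa\zeta)\to 1$ pointwise as $\kappa\to 0^+$. Thus the whole statement — existence of the limit in \eqref{def-Lambda} together with its independence of $\varphi$ — will follow from dominated convergence, the limit being the value of the displayed integral at $\kappa = 0$, provided I can show that the ``$\kappa$-free'' integrand is absolutely integrable, i.e.
\[
J := \iiint \bigl|\widehat f(\xi,\eta)\bigr|\,\bigl|\widehat\mu(\tau-\xi)\bigr|\,\bigl|\widehat\nu(\tau+\eta)\bigr|\,\bigl|\widehat\sigma(t\tau)\bigr|\, d\tau\, d\xi\, d\eta < \infty.
\]

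Establishing $J<\infty$ is the main obstacle. The subtlety is that one cannot simply discard the Fourier decay of either $\mu$ or $\nu$: the spherical bound $|\widehat\sigma(t\tau)|\leq C_d(1+t|\tau|)^{-(d-1)/2}$ matches exactly the critical exponent $(d+1)/2$ in the energy hypothesis \eqref{finite-energy-conditions}, so a one-sided estimate fails to converge. The remedy is to split the spherical factor symmetrically: for fixed $\xi,\eta$, Cauchy--Schwarz in $\tau$ gives
\[
\int \bigl|\widehat\mu(\tau-\xi)\bigr|\,\bigl|\widehat\nu(\tau+\eta)\bigr|\,\bigl|\widehat\sigma(t\tau)\bigr|\, d\tau \leq \Bigl(\int \bigl|\widehat\mu(\tau-\xi)\bigr|^2\bigl|\widehat\sigma(t\tau)\bigr|\, d\tau\Bigr)^{1/2}\Bigl(\int \bigl|\widehat\nu(\tau+\eta)\bigr|^2\bigl|\widehat\sigma(t\tau)\bigr|\, d\tau\Bigr)^{1/2}.
\]
After the substitution $u = \tau - \xi$ and the bound $|\widehat\sigma(t(u+\xi))|\leq C_d(1+t|u+\xi|)^{-(d-1)/2}$, the first factor is at most $C_d\,\mathscr{I}(\xi,t;\mu)$ with $\mathscr{I}$ as in \eqref{def-Iatmu} and $\beta = (d-1)/2$; Lemma \ref{Iatmu-lemma}, together with $d-(d-1)/2 = (d+1)/2$ and the finiteness of $\mathbb I_\mu((d+1)/2)$ from \eqref{finite-energy-conditions}, bounds it by a constant times $1 + |\xi|^d$. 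The second factor is treated identically and is bounded by a constant times $1 + |\eta|^d$. Hence the inner $\tau$-integral is $\lesssim (1+|\xi|)^{d/2}(1+|\eta|)^{d/2}$, with implied constant depending on $d,\mu,\nu,t$, and since $\widehat f$ is rapidly decreasing,
\[
J \lesssim \iint \bigl|\widehat f(\xi,\eta)\bigr|\,(1+|\xi|)^{d/2}(1+|\eta|)^{d/2}\, d\xi\, d\eta < \infty.
\]
This gives $J<\infty$ and hence the lemma; the deferred items — the explicit Fourier computation behind the first display, the standard Schwartz-class claims, and the elementary pointwise and convolution estimates — are routine.
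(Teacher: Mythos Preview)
Your proof is correct and follows essentially the same approach as the paper: pass to the Fourier side, apply dominated convergence using $|\widehat\varphi(\kappa\,\cdot\,)|\leq 1$ and $\widehat\varphi(\kappa\,\cdot\,)\to 1$, and verify absolute integrability of the limiting integrand via Cauchy--Schwarz in the inner frequency variable together with Lemma~\ref{Iatmu-lemma}. The paper's Fourier representation and inner-integral estimate $\mathscr J(\xi,\eta)\leq[\mathscr I(\eta,t;\nu)]^{1/2}[\mathscr I(\xi,t;\mu)]^{1/2}$ match yours up to a change of variables and sign conventions.
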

\begin{proof} 
Given a smooth function $f$ of compact support, let us define, 
\begin{align*} \Lambda_{\mu, \nu}^{\ast}(t; f, \kappa) &:= \int \mu_{\kappa}(x) \nu_{\kappa}(x + t \omega) f(x, x+ t\omega) dx d\sigma(\omega) \\ 
&=  \iiint \widehat{f}\bigl(\xi, \eta \bigr) \, \widehat{\mu}_{\kappa}(-\xi - \eta - \zeta) \, \widehat{\nu}_{\kappa}(\zeta) \, \widehat{\sigma}(t (\eta + \zeta)) \, d\xi \, d\eta \, d\zeta, 
\end{align*}
where the last step follows from Fourier inversion. We will derive the existence of the limit in \eqref{def-Lambda} and its independence of $\varphi$ from the dominated convergence theorem. In view of 
\[ \widehat{\mu}_{\kappa}(\xi) = \widehat{\mu}(\xi) \widehat{\varphi}(\kappa \xi) \longrightarrow \widehat{\mu}(\xi) \text{ as } \kappa \rightarrow 0, \quad \text{ and } \quad \bigl| \widehat{\mu}_{\kappa}(\xi) \bigr| \leq ||\widehat{\varphi}||_{\infty} \bigl|\widehat{\mu}(\xi) \bigr|, \] with analogous relations for $\nu$, and the well-known bound for the Fourier transform of the spherical measure: \[|\widehat{\sigma}(\xi)| \leq C_d (1 + |\xi|)^{-\frac{(d-1)}{2}},\]
we are led to verify the absolute convergence of the following integral: 
\begin{align} 
\mathscr{I}^{\ast} :=& \iint \bigl|\widehat{f}(\xi, \eta) \bigr| \Bigl[ \int  \bigl|\widehat{\nu}(\zeta) \bigr| \, \bigl| \widehat{\mu}(-\xi - \eta - \zeta) \bigr| \bigl(1 + t|\eta + \zeta|)^{-\frac{d-1}{2}} \, d\zeta \Bigr] \, d\xi \, d\eta \nonumber \\ 
  =&  \iint \bigl|\widehat{f}(\xi, \eta) \bigr| \mathscr{J}(\xi, \eta) \, d\xi \, d\eta. \label{Istar-final}
  \end{align}
  The inner integral $\mathscr{J}(\xi, \eta)$ can be estimated, using the Cauchy-Schwarz inequality, as follows:
  \begin{align} 
  \mathscr{J}(\xi, \eta) := &   \int  \bigl|\widehat{\nu}(\zeta) \bigr| \, \bigl| \widehat{\mu}(-\xi - \eta - \zeta) \bigr| \bigl(1 + t|\eta + \zeta|)^{-\frac{d-1}{2}} \, d\zeta \nonumber \\ 
  \leq & \Bigl[ \int \bigl|\widehat{\nu}(\zeta) \bigr|^2 \bigl(1 + t|\eta + \zeta|)^{-\frac{d-1}{2}} \, d\zeta  \Bigr]^{\frac{1}{2}} \Bigl[\bigl| \widehat{\mu}(-\xi - \eta - \zeta) \bigr|^2 \bigl(1 + t|\eta + \zeta|)^{-\frac{d-1}{2}} \, d\zeta \Bigr]^{\frac{1}{2}} \nonumber \\
  \leq & \Bigl[ \int \bigl|\widehat{\nu}(\zeta) \bigr|^2 \bigl(1 + t|\eta + \zeta|)^{-\frac{d-1}{2}} \, d\zeta  \Bigr]^{\frac{1}{2}} \Bigl[\bigl| \widehat{\mu}(\zeta) \bigr|^2 \bigl(1 + t|\xi + \zeta|)^{-\frac{d-1}{2}} \, d\zeta \Bigr]^{\frac{1}{2}} \nonumber \\
  \leq & \bigl[\mathscr{I}(\eta, t; \nu) \bigr]^{\frac{1}{2}} \times \bigl[\mathscr{I}(\xi, t; \mu) \bigr]^{\frac{1}{2}} \nonumber \\ 
  \leq &C_d \bigl[|\xi|^d + t^{-\frac{d-1}{2}} \mathbb I_{\mu} ((d+1)/2)\bigr]^{\frac{1}{2}} \times \bigl[|\eta|^d + t^{-\frac{d-1}{2}} \mathbb I_{\nu} ((d+1)/2)\bigr]^{\frac{1}{2}}. \label{J-bound}
\end{align} 
The integral $\mathscr{I}$ used in the penultimate step refers to \eqref{def-Iatmu}, and we have used its bound from Lemma \ref{Iatmu-lemma} at the last step. Also, the final expression above is finite in view of \eqref{finite-energy-conditions}. Since $\widehat{f}(\xi, \eta)$ is a Schwartz function, inserting \eqref{J-bound} into \eqref{Istar-final} establishes convergence of the integral $\mathscr{I}^{\ast}$.   
\end{proof} 
\subsection{Proof of Proposition \ref{Lambda-prop}} 
\begin{proof} 
 The existence of the limit in \eqref{def-Lambda} and its independence with respect to $\varphi$ follow from Lemma \ref{limit-lemma}.  The non-negativity of $\mu_{\kappa}$ and $\nu_{\kappa}$ imply that
\begin{equation}  \label{op-norm-comp} 
 \bigl| \langle \Lambda_{\mu, \nu}(t), f \rangle \bigr| \leq ||f||_{\infty} \times \langle \Lambda_{\mu, \nu}(t), \mathtt 1 \rangle,  \end{equation} 
where $\mathtt 1$ denotes the constant function that is identically 1 on $[0,1]^{2d}$. Using Fourier inversion as in Lemma \ref{limit-lemma}, we estimate the latter quantity:   
\begin{align}
\langle \Lambda_{\mu, \nu}(t), \mathtt 1 \rangle &= \lim_{\kappa \rightarrow 0}  \int \mu_{\kappa}(x) \nu_{\kappa}(x + t \omega) dx d\sigma(\omega) = \lim_{\kappa \rightarrow 0} \int \widehat{\mu}_{\kappa}(\xi) \widehat{\nu}_{\kappa}(-\xi) \widehat{\sigma}(t \xi) \, d\xi \nonumber \\ 
&\leq \int \bigl|\widehat{\mu}(\xi)\bigr| \, \bigl| \widehat{\nu}(\xi)| (1 + t|\xi|)^{-\frac{d-1}{2}} \, d\xi \leq \bigl[ \mathscr{I}(0, t;\mu) \bigr]^{\frac{1}{2}} \times \bigl[ \mathscr{I}(0, t;\nu) \bigr]^{\frac{1}{2}} \nonumber \\
&\leq C_d t^{-\frac{d-1}{2}} \Bigl[\mathbb I_{\mu}\Bigl(\frac{d+1}{2}\Bigr) \times \mathbb I_{\nu}\Bigl(\frac{d+1}{2} \Bigr)\Bigr]^{\frac{1}{2}}.  \label{op-norm-est-final}
\end{align}
As before, we have applied the Cauchy-Schwarz inequality in the penultimate step, with $\mathscr{I}$ as in \eqref{def-Iatmu}. Inserting \eqref{op-norm-est-final} into \eqref{op-norm-comp} establishes \eqref{op-norm}, proving part (\ref{existence-part}). 
\vskip0.1in
\noindent It is not difficult to deduce that the support of the measure given by $\Lambda_{\mu, \nu}$, if non-trivial, has to be contained in $E \times F$; indeed, for any continuous $f$ with 
\[ \text{supp}(f) \subseteq [0,1]^{2d} \setminus (E \times F),\; \exists \kappa_0 > 0 \text{ such that dist}(\text{supp}(f), E \times F) > \kappa_0. \] This means that $\mu_{\kappa}(x) \nu_{\kappa}(y) = 0$ for all $(x, y) \in \text{supp}(f)$ and for all $\kappa < \kappa_0$. As a result, the integrand in \eqref{def-Lambda} vanishes  for all sufficiently small $\kappa$, and hence so does the limit. Similarly, if $f^{\ast}$ is any continuous function of compact support with 
\begin{align*} 
&\text{supp}(f^{\ast}) \subseteq \{(x,y) \in [0,1]^{d} \times [0,1]^d : |x - y| \ne t \}, \text{ then } \\ 
&f^{\ast}(x, x+t\omega)  \equiv 0 \text{ for every } x  \in [0,1]^d \text{ and } \omega \in \mathbb S^{d-1}. 
\end{align*} 
Once again, the integrand of \eqref{def-Lambda} vanishes identically. This shows that supp$(\Lambda_{\mu, \nu}) \subseteq \{(x, y) : |x - y| = t \}$.  Finally, since $\Lambda_{\mu, \nu}$ is a non-negative measure by definition, its total mass is given by 
\[ ||\Lambda_{\mu, \nu}(t)|| = \langle \Lambda_{\mu, \nu}, 1\rangle = \lim_{\kappa \rightarrow 0}  \int \mu_{\kappa}(x) \nu_{\kappa}(x + t \omega) dx d\sigma(\omega) =  \int \widehat{\mu} (\xi) \widehat{\nu}(-\xi) \widehat{\sigma}(t \xi) \, d\xi, \] 
where the last step follows from the dominated convergence theorem, as has been justified in the steps leading up to \eqref{op-norm-est-final}. This completes the proof of part \eqref{support-part}, and hence the proposition.     
\end{proof} 

\section{Role of energy and spectral gap in creating large distance sets} \label{Energy-and-Spectral-Gap-section}
The proof of Theorem \ref{mainthm-2} rests on a few key propositions. In this section, we will formulate these propositions and prove Theorem \ref{mainthm-2} assuming them. The propositions, to be proved in subsequent sections, are of two distinct types. The first type, represented by Proposition \ref{mainprop-1}, shows that if a measure obeys a certain energy condition, and if its Fourier transform is quantifiably small inside a large annulus in frequency space (a condition that is called ``spectral gap'' in \cite{Kuca-Orponen-Sahlsten}), then the distance set of its support contains an interval depending on the radii of the annulus.  The second type, given by Proposition \ref{mainprop-2}, complements the first, showing that measures obeying these two criteria are guaranteed for sets with large dyadic Hausdorff content. Propositions \ref{mainprop-1} and \ref{mainprop-2} pertain to a distance set $\Delta(E)$ for a single set $E$. Some changes needed to generalize these statements to $\Delta(E, F)$ for two distinct sets $E, F$. These are contained in Propositions \ref{mainprop-1'} and \ref{mainprop-2'} respectively. 

\subsection{Finite energy + spectral gap $\implies \Delta(E)$ contains an interval} 
Before proceeding to the statement, let us recall the definitions of the partial $L^2$ Fourier integral $\mathbb F_{\mu}$ and energy integral $\mathbb I_{\mu}$ from \eqref{def-F} and \eqref{def-s-dim-energy} respectively.  
\begin{prop} \label{mainprop-1} 
For any dimension $d \geq 2$, there exists an absolute positive constant $c_d$ with the following property. 
\vskip0.1in
\noindent Let us fix any choice of parameters 
\begin{itemize} 
\item $C_0 \geq 1$, 
\item $0 < \alpha \leq \frac18 < 1 < N$,
\item small constants $a, b, \delta \in (0,\frac14)$ with   
\begin{equation}  N\alpha > (d-1)/2, \quad 2b \leq \delta, \quad a < b \leq c_d,  \quad C_0 b^{N\alpha - \frac{d-1}{2}} < c_d. \label{abdN}  \end{equation} 
\end{itemize} 
Then any probability measure $\mu$ that is supported inside $[0,1]^d$ and meets the two conditions  
\begin{align}
&\mathbb I_{\mu}\Bigl(\frac{d+1}{2} + \alpha \Bigr) \leq C_0,  \qquad \text{ (finite energy) } \label{finite-energy} \\
&\mathbb F_{\mu}(a^{-N}) - \mathbb F_{\mu}({\delta}/{b})
\leq a \qquad \text{ (spectral gap)} \label{spectral-gap}
\end{align}
must necessarily obey 
\[ ||\Lambda_{\mu}(t)|| \geq c_d > 0 \text{ for every } t \in [a, b], \]   
with $\Lambda_{\mu}$ as in \eqref{def-Lambda}. In particular 
\[ \Delta\bigl(E\bigr) \supseteq [a, b] \quad \text{ where $E$ denotes the support of $\mu$.}  \]
\end{prop}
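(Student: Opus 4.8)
The plan is to estimate the total mass $\|\Lambda_\mu(t)\|$ directly via its Fourier representation \eqref{mass-Lambda-mu-nu}, which here reads
\[
\|\Lambda_\mu(t)\| = \int |\widehat\mu(\xi)|^2 \widehat\sigma(t\xi)\, d\xi,
\]
and to show it is bounded below by a dimensional constant for every $t\in[a,b]$. Following the strategy outlined in the proof overview, I would split the frequency domain $\mathbb R^d$ into three regions according to the size of $|\xi|$: low frequencies $|\xi| \lesssim \delta/t$ (or some comparable small threshold tied to $b$ and $\delta$), high frequencies $|\xi| \gtrsim a^{-N}$, and the intermediate annulus in between. The point is that on the low-frequency region, $\widehat\sigma(t\xi)$ is close to $\widehat\sigma(0)=1$ (since $\sigma$ is a probability measure and $t|\xi|$ is small there), while $\int |\widehat\mu(\xi)|^2\,d\xi$ over a neighborhood of the origin can be bounded below using the continuity of $\widehat\mu$ and $\widehat\mu(0)=1$ — this is exactly the lower bound of the type $\mathbb F_\mu(T)\ge c_d$ alluded to in the remarks after Theorem \ref{F-asymptotics-thm}. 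So the low-frequency piece contributes at least some $c_d>0$, minus a small error from $\widehat\sigma(t\xi)$ deviating from $1$, which is controlled by choosing $b$ (hence $t$) and $\delta$ small.

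Next I would dispatch the high-frequency tail: for $|\xi|\ge a^{-N}$, I would use the decay bound $|\widehat\sigma(t\xi)|\le C_d(1+t|\xi|)^{-(d-1)/2} \le C_d (t|\xi|)^{-(d-1)/2}$ together with the finite-energy hypothesis \eqref{finite-energy}. Writing $|\xi|^{-(d-1)/2} = |\xi|^{-(d+1)/2-\alpha}\cdot|\xi|^{1+\alpha}$ and using that on the tail $|\xi|\ge a^{-N}$ is large, one gains a power $a^{N(\text{something})}$; more precisely $|\widehat\sigma(t\xi)|$ restricted to this tail, paired against $|\widehat\mu(\xi)|^2 |\xi|^{(d+1)/2+\alpha}$ (whose integral is $\lesssim \gamma(d,\cdot)^{-1}\mathbb I_\mu((d+1)/2+\alpha)\le C_0$ up to constants), leaves a factor like $t^{-(d-1)/2}(a^{-N})^{-((d+1)/2+\alpha)+(d-1)/2} = t^{-(d-1)/2} a^{N(1+\alpha)}$... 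I'd need to be a little careful with exponents, but the condition $C_0 b^{N\alpha-(d-1)/2} < c_d$ in \eqref{abdN} is precisely engineered so that, after using $t \le b$, this tail contribution is a small fraction of $c_d$. (The extra $\alpha$ of energy beyond the critical $(d+1)/2$ is what makes the tail integral converge with a quantitative gain.)

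The main obstacle, as the authors themselves flag, is the intermediate annulus $\delta/t \lesssim |\xi| \lesssim a^{-N}$. On this region neither $\widehat\sigma(t\xi)\approx 1$ nor the energy decay alone suffices, and for a general measure the contribution could be of either sign and comparable in size to the main term. This is exactly where the spectral gap hypothesis \eqref{spectral-gap} enters: it asserts that $\int_{\delta/b \le |\xi|\le a^{-N}} |\widehat\mu(\xi)|^2\,d\xi = \mathbb F_\mu(a^{-N}) - \mathbb F_\mu(\delta/b)\le a$, i.e. $\mu$ carries almost no $L^2$ Fourier mass in this band. Since $|\widehat\sigma(t\xi)|\le 1$ pointwise, the intermediate contribution to $\|\Lambda_\mu(t)\|$ is bounded in absolute value by $\int_{\delta/t\le|\xi|\le a^{-N}}|\widehat\mu(\xi)|^2\,d\xi \le \int_{\delta/b\le|\xi|\le a^{-N}}|\widehat\mu(\xi)|^2\,d\xi \le a$, using $t\le b$ so that $\delta/t \ge \delta/b$. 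Choosing $a$ small (and $b,\delta$ small so the low-frequency error is small, and $C_0 b^{N\alpha-(d-1)/2}$ small so the tail is small) then yields $\|\Lambda_\mu(t)\| \ge c_d - (\text{low-freq error}) - a - (\text{tail}) \ge c_d/2 >0$, say, uniformly in $t\in[a,b]$. Finally, by Proposition \ref{Lambda-prop}(b), positivity of this integral forces $X_\mu(t)\ne\emptyset$, hence $t\in\Delta(E)$, for every $t\in[a,b]$; that is, $\Delta(E)\supseteq[a,b]$. The bookkeeping of which small parameter controls which error term, and verifying the exponent arithmetic in the tail estimate against the precise form of \eqref{abdN}, is the part that needs care, but it is routine once the three-region decomposition is set up.
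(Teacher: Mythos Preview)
Your plan is correct and follows essentially the same route as the paper's proof: the three-region split (low $t|\xi|\le\delta$, intermediate, high), the low-frequency main term via $|1-\widehat\sigma(t\xi)|\le t|\xi|\le\delta$ and the continuity bound $\mathbb F_\mu(\delta/b)\ge 4c_d$, the spectral gap \eqref{spectral-gap} to kill the intermediate annulus, and the energy condition \eqref{finite-energy} with $\widehat\sigma$-decay for the tail, all match the paper's argument. One small discrepancy: the paper places the high-frequency cutoff at the $t$-dependent threshold $|\xi|>t^{-N}$ rather than your fixed $|\xi|>a^{-N}$, which makes the tail arithmetic cleaner---the correct splitting is $|\xi|^{-(d-1)/2}=|\xi|^{-(d-1)/2+\alpha}\cdot|\xi|^{-\alpha}$ (your written decomposition $|\xi|^{-(d+1)/2-\alpha}\cdot|\xi|^{1+\alpha}$ has the wrong factor to pair with the energy integral), and then $|\xi|^{-\alpha}\le t^{N\alpha}$ on the tail gives directly $C_0 t^{N\alpha-(d-1)/2}\le C_0 b^{N\alpha-(d-1)/2}<c_d$; your fixed threshold also works but requires the extra observation $a^{N\alpha-(d-1)/2}<b^{N\alpha-(d-1)/2}$.
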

\vskip0.1in 
\noindent {\em{Remark: }} The absolute constant $c_d$ is Proposition \ref{mainprop-1} can be chosen to be 
\begin{equation}  c_d = \frac{\omega_d}{2^{d+4} d^{\frac{d}{2}}}, \quad \text{ where $\omega_d$ = volume of the unit ball in $\mathbb R^d$} = \frac{\pi^{\frac{d}{2}}}{\Gamma(\frac{d}{2}+1)}. \label{def-cd} \end{equation}  
\vskip0.1in 
\noindent A small variation of Proposition \ref{mainprop-1} yields the following conclusion for distance sets $\Delta(E, F)$. Let us fix a smooth  function $\psi$ with compact support such that  
\begin{equation} 
\psi \geq 0, \quad 0 \leq \widehat{\psi} \leq 1, \quad \widehat{\psi}(\xi) \equiv 1 \text{ on } B(0;1), \quad \text{supp}(\widehat{\psi}) \subseteq B(0;2). \label{psi-conditions} 
\end{equation}  
Set $\psi_{\mathtt e}(x) := \mathtt e^{-d} \psi(x/\mathtt e)$, and $\mu_{\mathtt e} := \mu \ast \psi_{\mathtt e}$ for a measure $\mu$. 
\begin{prop}\label{mainprop-1'} 
For $d \geq 2$, there exists a dimensional constant $c_d > 0$ as follows.  
\vskip0.1in 
\noindent Let $\mu$ and $\nu$ be two probability measures supported inside $[0,1]^d$ obeying the finite energy and spectral gap conditions; in other words, we assume conditions \eqref{finite-energy} and \eqref{spectral-gap} hold for both $\mu$ and $\nu$ with the same parameters $a, b, \delta$ and $C_0$. Additionally, suppose that 
\begin{equation} 
\bigl|| \Lambda_{\mu_{\mathtt e}, \nu_{\mathtt e}}(t) \bigr|| \geq 4c_d \quad \text{ for all } t \in [a, b] \text{ and } \mathtt e = \frac{t}{\delta},  \label{third-condition-mu-nu}
\end{equation} 
with $\Lambda_{\mu, \nu}$ as in \eqref{def-Lambda} and the same constants $a, b$ as in Proposition \ref{mainprop-1}. Then 
\begin{equation} \label{mainprop-1'-conclusion} \Lambda_{\mu, \nu}(t) \geq c_d > 0 \text{ for every } t \in [a, b]. 
\end{equation}    In particular, $\Delta \bigl(E, F\bigr) \supseteq [a, b]$, where $E$ and $F$ denote the supports of $\mu$ and $\nu$ respectively.    
\end{prop}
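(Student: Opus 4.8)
The plan is to imitate the proof of Proposition~\ref{mainprop-1}, using hypothesis~\eqref{third-condition-mu-nu} to supply the positive ``low-frequency'' contribution that, in the one-measure case, comes for free from the pointwise inequality $|\widehat{\mu}|^2\ge 0$. Fix $t\in[a,b]$ and set $\mathtt e:=t/\delta$. Since the supports of $\mu,\nu$ are bounded, the finite-energy hypothesis~\eqref{finite-energy} puts us in the setting of Proposition~\ref{Lambda-prop}, which gives $\|\Lambda_{\mu,\nu}(t)\|=\int\widehat{\mu}(\xi)\overline{\widehat{\nu}(\xi)}\,\widehat{\sigma}(t\xi)\,d\xi$. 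The same Fourier-inversion computation used in the proof of Proposition~\ref{Lambda-prop}, applied to the smooth, nonnegative, rapidly decaying functions $\mu_{\mathtt e}=\mu\ast\psi_{\mathtt e}$ and $\nu_{\mathtt e}=\nu\ast\psi_{\mathtt e}$, whose Fourier transforms are $\widehat{\mu}(\xi)\widehat{\psi}(\mathtt e\xi)$ and $\widehat{\nu}(\xi)\widehat{\psi}(\mathtt e\xi)$, yields $\|\Lambda_{\mu_{\mathtt e},\nu_{\mathtt e}}(t)\|=\int\widehat{\mu}(\xi)\overline{\widehat{\nu}(\xi)}\,|\widehat{\psi}(\mathtt e\xi)|^2\,\widehat{\sigma}(t\xi)\,d\xi$. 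Subtracting and writing $1=|\widehat{\psi}(\mathtt e\xi)|^2+\bigl(1-|\widehat{\psi}(\mathtt e\xi)|^2\bigr)$, I would record
\[
\|\Lambda_{\mu,\nu}(t)\|=\|\Lambda_{\mu_{\mathtt e},\nu_{\mathtt e}}(t)\|+\mathcal E(t),\qquad
\mathcal E(t):=\int\widehat{\mu}(\xi)\overline{\widehat{\nu}(\xi)}\bigl(1-|\widehat{\psi}(\mathtt e\xi)|^2\bigr)\widehat{\sigma}(t\xi)\,d\xi,
\]
all integrals converging absolutely by the Cauchy--Schwarz and energy estimates of Lemma~\ref{Iatmu-lemma}.

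Next I would estimate $\mathcal E(t)$. By~\eqref{psi-conditions} the weight $1-|\widehat{\psi}(\mathtt e\xi)|^2$ vanishes for $|\xi|\le\delta/t$, so, using $t\le b$, its support lies in $\{|\xi|>\delta/b\}$; hence I split $\mathcal E(t)=\mathcal E_{\mathrm{med}}(t)+\mathcal E_{\mathrm{high}}(t)$ over the ranges $\delta/b<|\xi|\le a^{-N}$ and $|\xi|>a^{-N}$ (the first being non-empty by the constraints in~\eqref{abdN}). On the medium range I bound $|\widehat{\sigma}|\le 1$ and apply Cauchy--Schwarz together with the spectral gap~\eqref{spectral-gap} for both $\mu$ and $\nu$, so that
\[
|\mathcal E_{\mathrm{med}}(t)|\le\bigl(\mathbb F_{\mu}(a^{-N})-\mathbb F_{\mu}(\delta/b)\bigr)^{\frac12}\bigl(\mathbb F_{\nu}(a^{-N})-\mathbb F_{\nu}(\delta/b)\bigr)^{\frac12}\le a.
\]
On the high range I use the decay bound $|\widehat{\sigma}(t\xi)|\le C_d(t|\xi|)^{-\frac{d-1}{2}}$, the elementary inequality $|\xi|^{-\frac{d-1}{2}}\le a^{N\alpha}|\xi|^{\alpha-\frac{d-1}{2}}$ valid for $|\xi|>a^{-N}$, Cauchy--Schwarz, the Fourier formula~\eqref{def-s-dim-energy} for the $s$-energy, the finite-energy bound~\eqref{finite-energy}, and $t\ge a$, to obtain $|\mathcal E_{\mathrm{high}}(t)|\le C_d'C_0\,a^{N\alpha-\frac{d-1}{2}}\le C_d'C_0\,b^{N\alpha-\frac{d-1}{2}}$, having used $N\alpha>(d-1)/2$ and $a<b$. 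By~\eqref{abdN} the right-hand side is a dimensional multiple of $c_d$; absorbing this dimensional factor into the threshold imposed on $C_0 b^{N\alpha-(d-1)/2}$ in~\eqref{abdN}, I would arrange $|\mathcal E(t)|\le|\mathcal E_{\mathrm{med}}(t)|+|\mathcal E_{\mathrm{high}}(t)|\le 3c_d$. Combined with $\|\Lambda_{\mu_{\mathtt e},\nu_{\mathtt e}}(t)\|\ge 4c_d$ from~\eqref{third-condition-mu-nu}, this gives $\|\Lambda_{\mu,\nu}(t)\|\ge 4c_d-3c_d=c_d>0$. Since $t\in[a,b]$ was arbitrary, Proposition~\ref{Lambda-prop}(b) forces $X_\mu(t)\ne\emptyset$, i.e.\ $t\in\Delta(E,F)$, for every $t\in[a,b]$; thus $\Delta(E,F)\supseteq[a,b]$, which is~\eqref{mainprop-1'-conclusion}.

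The routine parts are the absolute convergence of the integrals (handled exactly as in the proof of Proposition~\ref{Lambda-prop}) and the bookkeeping of dimensional constants in~\eqref{abdN}. The step I expect to require genuine care is the calibration of the mollification scale $\mathtt e=t/\delta$: it must be chosen so that the frequencies discarded in passing from $(\mu,\nu)$ to $(\mu_{\mathtt e},\nu_{\mathtt e})$ fall precisely in the region $|\xi|\gtrsim\delta/b$, where the spectral gap controls the ``medium'' part and the $L^2$-decay of $\widehat{\sigma}$ (fed by the energy bound) controls the ``high'' part, while at the same time identifying the retained low-frequency mass with the quantity $\|\Lambda_{\mu_{\mathtt e},\nu_{\mathtt e}}(t)\|$ that~\eqref{third-condition-mu-nu} keeps bounded below. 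The accompanying endpoint comparisons---$\delta/t\ge\delta/b$ and $\delta/b<a^{-N}$ (so that the middle range is non-empty)---are immediate from $a<b\le c_d<\tfrac14$, $\delta\in(0,\tfrac14)$ and $N>1$.
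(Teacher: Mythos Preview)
Your proposal is correct and matches the paper's proof essentially line for line: the same decomposition $\|\Lambda_{\mu,\nu}(t)\|=\|\Lambda_{\mu_{\mathtt e},\nu_{\mathtt e}}(t)\|+\mathcal E(t)$, the same observation that $1-|\widehat\psi(\mathtt e\xi)|^2$ is supported in $\{|\xi|>\delta/t\}\subseteq\{|\xi|>\delta/b\}$, and the same split of $\mathcal E(t)$ into a medium-frequency piece controlled by Cauchy--Schwarz plus the spectral gap~\eqref{spectral-gap} and a high-frequency piece controlled by the decay of $\widehat\sigma$ plus the energy bound~\eqref{finite-energy}. The only cosmetic difference is that the paper arranges the constants to get $|\mathcal E(t)|\le 2c_d$ (yielding $\|\Lambda_{\mu,\nu}(t)\|\ge 2c_d$) rather than your $3c_d$, but this does not affect the conclusion.
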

\noindent  Proofs of Propositions \ref{mainprop-1} and \ref{mainprop-1'} are given in Section \ref{mainprop-1-section}. 
\subsection{Large Hausdorff content for $s$ close to $d$ $\implies$ finite energy + spectral gap}  
\begin{prop}\label{mainprop-2}
For every selection of the parameters
\vskip0.1in
\begin{itemize} 
\item $d \geq 2$, $\kappa \in (0,1)$, $0 < \alpha \leq \frac18 < 1 < N$ with $N \alpha > (d-1)/2$, 
\end{itemize}
\vskip0.1in  
there is a choice of constants $0 < c_0 <  1 < C_0$ and $\varrho_0 \in (0,1)$ with the following property. For every $\rho \in (0, \varrho_0]$, one can find positive constants $a, b, \delta, \varepsilon$ depending on $\rho$ and the above parameters such  that \eqref{abdN} holds. Further, for $s > d - \varepsilon$, 
\[ \text{ any Borel subset $E \subseteq [0,1]^d$ with $\mathcal H_{\infty}^{s}(E) \geq 1 - \rho$ } \] 
supports a probability measure $\mu$ satisfying 
\vskip0.1in 
\begin{itemize} 
\item the finite energy condition \eqref{finite-energy} with parameter $C_0$ and 
\vskip0.1in 
\item the spectral gap condition \eqref{spectral-gap} with parameters $a, b, \delta$ and $N$. 
\end{itemize}
\vskip0.1in  
Specifically, the conditions \eqref{abdN}, \eqref{finite-energy} and \eqref{spectral-gap} can be realized with the choice 
\begin{equation}  a = \rho^{c_0}, \quad \delta = a^{\kappa/2}, \quad b = \delta a^{\kappa/2} = a^{\kappa}, \quad \varepsilon = \frac{c_0 \rho}{\log(1/\rho)} \text{ for all $\rho \in (0, \varrho_0]$.} \label{relation-abrho} \end{equation} 
The constant $C_0$ can be chosen to depend only on $d$, whereas $c_0$ is given in terms of $d$ and $N$. The parameter $\varrho_0$ depends on $d, N, \alpha$ and $\kappa$.  
\end{prop}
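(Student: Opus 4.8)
The plan is to produce $\mu$ by hand, as a weighted sum of rescaled Frostman measures carried by the dyadic cubes on which $E$ has high density, the weights being drawn from a fixed, mildly ``tapered'' smooth envelope; the two conditions \eqref{finite-energy} and \eqref{spectral-gap} are then verified separately and essentially independently. Since Hausdorff content over analytic (hence Borel) sets is capacitable, we may assume from the outset that $E$ is compact, so that any measure assembled from pieces of $E$ is automatically supported in $E$.

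First I would fix the auxiliary quantities depending only on $d$: an exponent $\gamma$ with $\tfrac{d+1}{2}+\tfrac18<\gamma<d$ (so $\gamma<s$ whenever $\varepsilon<d-\gamma$) and $\beta'$ with $\tfrac{d+1}{2}+\alpha<\beta'<\gamma$; let $C_0=C_0(d)$ be the constant emerging in the energy bound, choose $c_0=c_0(d,N)$ small (in particular $c_0\le\tfrac1{Nd+1}$) and a large integer $K_0=K_0(d,N)$ with $2K_0\ge Nd+3$, and let $\varrho_0$ absorb every ``$\rho$ small'' requirement below. Setting $a=\rho^{c_0}$, $\delta=a^{\kappa/2}$, $b=a^{\kappa}$, $\varepsilon=\tfrac{c_0\rho}{\log(1/\rho)}$, the inequalities \eqref{abdN} follow at once for $\rho\le\varrho_0$. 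Choose dyadic scales $m_1$ with $2^{-m_1}\asymp\delta$ and $m_2$ with $2^{-m_2}\asymp a^{N+K_0}$; both are $\asymp\log(1/\rho)$ and $m_2\varepsilon\lesssim c_0^2\rho$. From $\mathcal H^s_\infty(E)\ge1-\rho$ and $2^{-m_2\varepsilon}\approx1$ one finds that the average $s$-content density $\mathcal H^s_\infty(E\cap Q)\ell(Q)^{-s}$ over the $2^{m_2d}$ cubes $Q\subseteq[0,1]^d$ of side $2^{-m_2}$ is $\ge1-O(\rho)$; since this density never exceeds $1$, Markov's inequality shows that all but an $O(\rho)$-fraction of these cubes — call this collection $\mathcal G$ — have density $\ge\tfrac12$. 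For $Q\in\mathcal G$, rescaling $E\cap Q$ to $[0,1]^d$ gives a set of $s$-content $\ge\tfrac12$, hence of $\gamma$-content $\ge\tfrac12$ (using $\gamma<s$ and $Q\subseteq[0,1]^d$), so Frostman's lemma provides a probability measure $\mu_Q$ with $\text{supp}(\mu_Q)\subseteq E\cap Q$ and $\mu_Q(B(x,r))\le C_d\,2^{m_2\gamma}r^\gamma$, the constant uniform in $Q$. Finally let $\phi$ be a fixed nonnegative Schwartz bump of smoothness order $K>2/\kappa$, put $\eta:=a^{\kappa/4}$, $\Psi:=\mathbf 1_{[\eta,1-\eta]^d}\ast\phi_\eta$ (supported in $[0,1]^d$), $\Phi:=\Psi/\|\Psi\|_1$, and set
\[
\mu:=\sum_{Q\in\mathcal G}w_Q\,\mu_Q,\qquad w_Q:=\frac{\int_Q\Phi}{\Theta},\quad \Theta:=\sum_{Q'\in\mathcal G}\int_{Q'}\Phi .
\]
Then $\mu$ is a probability measure supported in $E$, $\|\Phi\|_\infty\le2$, and $1-\Theta=\sum_{Q\notin\mathcal G}\int_Q\Phi=O(\rho)$ since the ``bad'' cubes occupy volume $O(\rho)$.

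For \eqref{spectral-gap}, write $\widehat\mu(\xi)=\sum_{Q\in\mathcal G}w_Q e^{-2\pi i\xi\cdot c(Q)}\widehat{\beta_Q}(\xi)$ with $\beta_Q$ the translate of $\mu_Q$ to the origin. When $|\xi|\le a^{-N}$ one has $2^{-m_2}|\xi|\le a^{K_0}$, so $\widehat{\beta_Q}(\xi)=1+O(a^{K_0})$, and a Riemann-sum comparison gives $\sum_{Q\subseteq[0,1]^d}(\int_Q\Phi)e^{-2\pi i\xi\cdot c(Q)}=\widehat\Phi(\xi)+O(a^{K_0})$. Combining these with $1-\Theta=O(\rho)$ and $\Theta\ge\tfrac12$ yields $|\widehat\mu(\xi)|\lesssim|\widehat\Phi(\xi)|+a^{K_0}+\rho$ on the band $a^{-\kappa/2}=\delta/b\le|\xi|\le a^{-N}$, whence
\[
\int_{a^{-\kappa/2}\le|\xi|\le a^{-N}}|\widehat\mu|^2\,d\xi\ \lesssim\ \int_{|\xi|\ge a^{-\kappa/2}}|\widehat\Phi(\xi)|^2\,d\xi\ +\ \bigl(a^{2K_0}+\rho^2\bigr)a^{-Nd}.
\]
The second term is $\le a/2$ because $2K_0-Nd\ge1$ and $c_0(Nd+1)\le1$. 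For the first, $\widehat\Phi(\xi)=\widehat{\mathbf 1_{[\eta,1-\eta]^d}}(\xi)\,\widehat\phi(\eta\xi)/\|\Psi\|_1$ and $|\xi|\ge a^{-\kappa/2}=\delta^{-1}\ge\eta^{-1}$, so the rapid decay of $\widehat\phi$ gives $\int_{|\xi|\ge a^{-\kappa/2}}|\widehat\Phi|^2\lesssim_{d,K}(\eta a^{-\kappa/2})^{-2K}=a^{\kappa K/2}\le a/2$ once $K>2/\kappa$ and $\rho$ is small. \emph{This is the step I expect to be the main obstacle:} a measure that merely equidistributes over high-density subcubes of $[0,1]^d$ has $|\widehat\mu(\xi)|\asymp|\xi|^{-1}$ across the whole intermediate band, which is far too large ($\asymp a^{\kappa/2}\gg a$). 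Killing this term forces the smooth taper, whose width $\eta$ must be large enough ($\eta\gg\delta$, so $\widehat\Phi$ has entered its rapid-decay regime before frequency $\delta/b$) yet small enough that the $\Phi$-mass diverted to low-density cubes stays within the tolerance $O(a^{(Nd+1)/2})$ imposed by the band's volume $a^{-Nd}$; reconciling these competing demands is precisely what pins down \eqref{relation-abrho} and the smallness of $c_0$.

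For \eqref{finite-energy} it suffices to prove $\mu(B(x,r))\le C_*(d)\,r^{\beta'}$ for all $x,r$, since then, using $\beta'>\tfrac{d+1}{2}+\alpha$,
\[
\mathbb I_\mu\!\Bigl(\tfrac{d+1}{2}+\alpha\Bigr)\ \lesssim_d\ 1+C_*(d)\sum_{j\ge0}2^{-j(\beta'-\frac{d+1}{2}-\alpha)}\ =:\ C_0(d).
\]
For $r\ge 2^{-m_2}$, bounding the relevant cubes gives $\mu(B(x,r))\le\|\Phi\|_\infty\,\omega_d(r+\sqrt d\,2^{-m_2})^d/\Theta\lesssim r^d\le r^{\beta'}$ for $r\le1$ (trivially for $r>1$). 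For $r<2^{-m_2}$, $B(x,r)$ meets at most $2^d$ cubes of $\mathcal G$, so $\mu(B(x,r))\lesssim 2^{-m_2 d}\,2^{m_2\gamma}r^\gamma=2^{-m_2(d-\gamma)}(2^{m_2}r)^\gamma\le r^{\beta'}$ because $2^{m_2}r<1$, $\gamma>\beta'$ and $d>\gamma$. Since $\tfrac{d+1}{2}+\alpha\le\tfrac{d+1}{2}+\tfrac18$ and $\Phi$ enters the energy estimate only through $\|\Phi\|_\infty\le2$, both $C_*$ and $C_0$ depend only on $d$; $c_0$ depends on $d,N$; and $\varrho_0$ depends on $d,N,\alpha,\kappa$ (through the decay constant for $\widehat\phi$ and the various smallness thresholds), as required. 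Assembling the two estimates proves Proposition \ref{mainprop-2}.
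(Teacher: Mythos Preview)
Your proposal is correct and follows essentially the same strategy as the paper: assemble $\mu$ as a weighted sum of normalized Frostman measures on small dyadic subcubes of $[0,1]^d$, with weights given by the integrals of a smooth envelope over those cubes, then verify a uniform ball condition (giving \eqref{finite-energy}) and compare $\widehat{\mu}$ to the Fourier transform of the envelope on the intermediate band (giving \eqref{spectral-gap}). The one tactical difference worth noting is that the paper picks the cube scale $2^{-T}$ with $2^{-dT}\asymp\rho$ and shows, via the choice of $\varepsilon$ in \eqref{what-is-epsilon}, that \emph{every} cube $Q\in\mathscr{D}_T$ has $\mathcal H^s_\infty(E\cap Q)\ge\tfrac12\ell(Q)^s$; with no ``bad'' cubes to excise, the envelope can be a single fixed $\varphi\in C_c^\infty([0,1]^d)$ independent of $\rho$, and the spectral-gap estimate reduces to the clean pointwise bound $|\widehat{\mu}(\xi)-\widehat{\varphi}(\xi)|\le\sqrt{d}\,2^{-T+1}|\xi|$. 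Your Markov-based ``most cubes good'' variant and $\rho$-dependent mollified envelope achieve the same end with a couple of extra (harmless) error terms; one small correction is that the tail bound $\int_{|\xi|\ge a^{-\kappa/2}}|\widehat{\Phi}|^2$ picks up a factor $\eta^{-d}$ from the change of variables, so you need $K>d+2/\kappa$ rather than $K>2/\kappa$, which is of course free since $\phi$ is smooth.
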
 

\begin{prop} \label{mainprop-2'} 
Given two Borel sets $E, F \subseteq [0,1]^d$ with $\min\{\mathcal H_{\infty}^{s}(E), \mathcal H_{\infty}^{s}(F)\} \geq 1 - \rho$, the measures $\mu$ (on $E$) and $\nu$ (on $F$) constructed in Proposition \ref{mainprop-2} can be chosen to satisfy the additional condition \eqref{third-condition-mu-nu}.  
\end{prop}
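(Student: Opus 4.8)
\emph{Overview.} The plan is to derive \eqref{third-condition-mu-nu} from a polarization identity: it splits $\|\Lambda_{\mu_{\mathtt e},\nu_{\mathtt e}}(t)\|$ into a ``main term'' that is bounded below purely by the universal lower bound for the single-measure configuration mass, and an ``error term'' controlled by an $L^2$-Fourier closeness estimate that is already implicit in the construction of Proposition \ref{mainprop-2}. Throughout, write $g:=\mathbf 1_{[0,1]^d}$, viewed both as a function and (since $\lambda_d([0,1]^d)=1$) as the uniform probability measure on the cube, and set $\eta:=\tfrac12(\mu+\nu)$, a probability measure supported in $[0,1]^d$.

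\emph{Step 1 (Fourier form and polarization).} The mollified measures $\mu_{\mathtt e}=\mu\ast\psi_{\mathtt e}$ and $\nu_{\mathtt e}=\nu\ast\psi_{\mathtt e}$ are probability measures with band-limited $C^\infty$ densities, since $\widehat\psi$ is supported in $B(0;2)$ by \eqref{psi-conditions}; hence the computation leading to \eqref{mass-Lambda-mu-nu} applies directly to the pair $(\mu_{\mathtt e},\nu_{\mathtt e})$ and gives, with $\mathtt e=t/\delta$,
\[
\|\Lambda_{\mu_{\mathtt e},\nu_{\mathtt e}}(t)\|=\int\widehat\mu(\xi)\,\overline{\widehat\nu(\xi)}\,\widehat\psi(\mathtt e\xi)^2\,\widehat\sigma(t\xi)\,d\xi,
\]
a real, nonnegative number. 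The polarization identity $4\,\mathrm{Re}\bigl[\widehat\mu\,\overline{\widehat\nu}\bigr]=|\widehat\mu+\widehat\nu|^2-|\widehat\mu-\widehat\nu|^2$ and the reality of $\widehat\psi,\widehat\sigma$ then yield
\[
\|\Lambda_{\mu_{\mathtt e},\nu_{\mathtt e}}(t)\|=\|\Lambda_{\eta_{\mathtt e}}(t)\|-\tfrac14\int|\widehat\mu(\xi)-\widehat\nu(\xi)|^2\,\widehat\psi(\mathtt e\xi)^2\,\widehat\sigma(t\xi)\,d\xi.
\]

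\emph{Step 2 (main term).} On the support $\{|\xi|\le 2/\mathtt e=2\delta/t\}$ of $\widehat\psi(\mathtt e\,\cdot)$ one has $|t\xi|\le 2\delta$; since $\delta\to0$ as $\rho\to0$, we may assume $\varrho_0$ small enough that $\widehat\sigma(t\xi)\ge\tfrac12$ there. Moreover $\widehat\psi(\mathtt e\xi)=1$ on $\{|\xi|\le 1/\mathtt e=\delta/t\}$, and $\delta/t\ge\delta/b\ge2$ by \eqref{abdN}. Hence
\[
\|\Lambda_{\eta_{\mathtt e}}(t)\|=\int|\widehat\eta(\xi)|^2\widehat\psi(\mathtt e\xi)^2\widehat\sigma(t\xi)\,d\xi\ \ge\ \tfrac12\int_{|\xi|\le\delta/t}|\widehat\eta(\xi)|^2\,d\xi=\tfrac12\,\mathbb F_\eta(\delta/t)\ \ge\ \tfrac12\,\mathbb F_\eta(2),
\]
and $\mathbb F_\eta(2)$ is bounded below by a dimensional constant by the universal lower bound $\inf_{T\ge1}\mathbb F_\mu(T)\ge c_d$ valid for every probability measure $\mu$ on $[0,1]^d$ (recalled in the remarks following Theorem \ref{F-asymptotics-thm}, resting only on the continuity of $\widehat\eta$ at the origin). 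Crucially this step uses nothing about the fine structure of $\mu$ and $\nu$.

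\emph{Step 3 (error term, key input, and conclusion).} Bounding $|\widehat\psi(\mathtt e\xi)^2\widehat\sigma(t\xi)|\le1$ and writing $\widehat\mu-\widehat\nu=(\widehat\mu-\widehat g)-(\widehat\nu-\widehat g)$, the error term is at most $\tfrac12\int_{|\xi|\le 2\delta/a}|\widehat\mu-\widehat g|^2+\tfrac12\int_{|\xi|\le 2\delta/a}|\widehat\nu-\widehat g|^2$ (using $2\delta/t\le 2\delta/a$). Thus Proposition \ref{mainprop-2'} reduces to the assertion that the measure $\mu$ produced in Proposition \ref{mainprop-2} (and likewise $\nu$) obeys, under the choices \eqref{relation-abrho},
\[
\int_{|\xi|\le 2\delta/a}\bigl|\widehat\mu(\xi)-\widehat g(\xi)\bigr|^2\,d\xi\ \le\ \epsilon(\rho),\qquad \epsilon(\rho)\to0\ \text{ as }\ \rho\to0.
\]
This is exactly the mechanism driving the spectral gap \eqref{spectral-gap}: the Frostman-type bound $\mu(Q)\le(1-\rho)^{-1}\ell(Q)^s$ underlying the construction forces the dyadic averages $\mu_k:=\sum\{\ell(Q)^{-d}\mu(Q)\mathbf 1_Q:\ \ell(Q)=2^{-k}\}$ to satisfy $\|\mu_k\|_2^2\le(1-\rho)^{-1}2^{k(d-s)}$; since $\|\mu_k\|_1=\langle\mu_k,g\rangle=1$, this gives $\|\mu_k-g\|_2^2=\|\mu_k\|_2^2-1\le(1-\rho)^{-1}2^{k(d-s)}-1$, and \eqref{relation-abrho} makes $k(d-s)=O(\rho)$ at the (polynomial-in-$2\delta/a$) depth $k$ needed to render $\|\widehat{\mu_k}-\widehat\mu\|_{L^2(|\xi|\le 2\delta/a)}$ negligible, using $|\widehat{\mu_k}(\xi)-\widehat\mu(\xi)|\le C_d 2^{-k}|\xi|$. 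Plancherel then converts this into the displayed estimate with $\epsilon(\rho)=O(\rho)$. Combining Steps 2 and 3, $\|\Lambda_{\mu_{\mathtt e},\nu_{\mathtt e}}(t)\|\ge\tfrac12\mathbb F_\eta(2)-\epsilon(\rho)$ for all $t\in[a,b]$, which exceeds $4$ times the dimensional constant of \eqref{third-condition-mu-nu} once $\varrho_0$ is chosen small, after coordinating the dimensional constants across Propositions \ref{mainprop-1} and \ref{mainprop-1'}.

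\emph{Main obstacle.} The substance is the closeness estimate $\int_{|\xi|\le 2\delta/a}|\widehat\mu-\widehat g|^2\le\epsilon(\rho)$: one must extract from the proof of Proposition \ref{mainprop-2} the precise quantitative sense in which the constructed measure resembles normalized Lebesgue measure at \emph{all} frequencies below the spectral gap, and check that the resulting $\epsilon(\rho)$ decays (like $\rho$) fast enough to absorb the mild blow-up, as $\rho\to0$, of both the frequency cutoff $2\delta/a$ and the dyadic depth $k$ one must climb. The polarization, the universal lower bound on $\mathbb F_\eta(2)$, and the Cauchy--Schwarz/Plancherel bookkeeping are routine.
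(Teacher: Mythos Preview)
Your polarization strategy is a genuine alternative to the paper's proof, but Step~3 contains an error that, while repairable, invalidates the argument as written.

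The problem is your choice of reference measure $g=\mathbf 1_{[0,1]^d}$ and the accompanying Frostman justification. You claim $\mu(Q)\le(1-\rho)^{-1}\ell(Q)^s$ and deduce $\|\mu_k-g\|_2^2=\|\mu_k\|_2^2-1\le(1-\rho)^{-1}2^{k(d-s)}-1\to0$. But the measure of Proposition~\ref{mainprop-2} satisfies only $\mu(Q)\le A_d\,\ell(Q)^s$ with a \emph{fixed} dimensional constant $A_d$ (Lemma~\ref{ball-condition-lemma}), not one tending to $1$. More decisively, the construction \eqref{def-mu-nu} gives $\mu(Q)=w(Q)=\int_Q\varphi$ for every $Q\in\mathscr D_T$, so the dyadic average $\mu_T$ is exactly the scale-$T$ martingale approximation of the smooth bump $\varphi$ from \eqref{conditions-phi}. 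Since $\varphi\ne\mathbf 1_{[0,1]^d}$, one has $\|\mu_T-g\|_2\to\|\varphi-g\|_2>0$, and your closeness-to-$g$ estimate simply fails.

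The fix is to replace $g$ by $\varphi$. The crucial structural fact is that $\mu$ and $\nu$ share the \emph{same} weights $w(Q)=\int_Q\varphi$, and Lemma~\ref{spectral-gap-lemma}(a) already records $|\widehat\mu(\xi)-\widehat\varphi(\xi)|\le 2\sqrt d\,2^{-T}|\xi|$ (likewise for $\nu$), whence $|\widehat\mu(\xi)-\widehat\nu(\xi)|\le 4\sqrt d\,2^{-T}|\xi|$. Then
\[
\int_{|\xi|\le 2\delta/a}|\widehat\mu-\widehat\nu|^2\,d\xi\ \le\ C_d\,2^{-2T}(2\delta/a)^{d+2}\ \le\ C_d'\,\rho^{\,2/d - c_0(1-\kappa/2)(d+2)},
\]
and \eqref{what-is-c0} forces the exponent to be positive, so the error tends to $0$ as $\rho\to0$. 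With this repair your Steps~1--3 do yield \eqref{third-condition-mu-nu}, after the acknowledged adjustment of dimensional constants.

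For comparison, the paper's proof (Lemma~\ref{third-condition-verification-lemma} and Lemma~\ref{QQ'e-lemma}) is spatial rather than spectral: it expands $\Lambda[\mu_{\mathtt e},\nu_{\mathtt e}]$ bilinearly over $\mathscr D_T\times\mathscr D_T$, uses $\psi\ge0$ to discard all but the pairs $(Q,Q')$ sharing an ancestor at scale $\sim\mathtt e$, and for those nearby pairs bounds $\langle\mathtt m_{Q,\mathtt e},\mathtt n_{Q',\mathtt e}\rangle$ below by a direct pointwise overlap estimate on $\psi$. Both arguments ultimately exploit the same fact---that $\mu$ and $\nu$ carry identical weights at scale $T$---but yours packages it as Fourier closeness to the common reference $\varphi$, while the paper argues via physical-space overlap of mollified building blocks.
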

\noindent Proofs of Propositions \ref{mainprop-2} and \ref{mainprop-2'} are given in Section \ref{mainprop-2-section}.
\subsection{Proof of Theorem \ref{mainthm-2}, assuming Propositions \ref{mainprop-1}-\ref{mainprop-2'}} \label{mainthm-2-proof-section} 
\begin{proof} 
Part \eqref{mainthm2-parta} of Theorem \ref{mainthm-2} follows from part \eqref{mainthm2-partb}, in the special case $\rho = \rho_d$, with $\bar{a}_d = a_{\rho}$, $\bar{b}_d = b_{\rho}$ and $\bar{\varepsilon}_d = \varepsilon_{\rho}$. Theorem \ref{mainthm-2}\eqref{mainthm2-partb} is a direct consequence of the two sets of results just stated, Propositions \ref{mainprop-1}, \ref{mainprop-1'} on one hand, Propositions \ref{mainprop-2}, \ref{mainprop-2'} on the other. The conclusions of the latter match the assumptions of the former. 
\vskip0.1in
\noindent Given $d \geq 2$ and $\kappa \in (0,1)$, and any choice of positive parameters $\alpha, N$ with  
\[N \alpha > (d-1)/2, \quad \text{for example } \quad \alpha = \frac18 \text{ and }  N = 8(d-1), \] 
let $c_0, C_0, \varrho_0$ be the constants ensured by Proposition \ref{mainprop-2}. Since $N$ depends only on $d$, Proposition \ref{mainprop-2} asserts that both $c_0$ an $C_0$ are dimensional constants, whereas $\varrho_0$ depends additionally on $\kappa$. For $\rho \in (0, \varrho_0]$, Propositions \ref{mainprop-2} and \ref{mainprop-2'} also provide a constant $\varepsilon > 0$ depending on $\rho$ and the above parameters with the following property: 
any two sets $E$ and $F$ with   
\[ E, F \subseteq [0,1]^d, \quad \mathcal H^s_{\infty}(E), \mathcal H^s_{\infty}(F) \geq 1 - \rho, \, s > d - \varepsilon \] 
support probability measures $\mu, \nu$ respectively, obeying the criteria \eqref{abdN}, \eqref{finite-energy}, \eqref{spectral-gap} and \eqref{third-condition-mu-nu}. The constants $a, b, \delta$ appearing in these three statements are also provided by Propositions \ref{mainprop-2}, \ref{mainprop-2'}, and in particular by the relations \eqref{relation-abrho}. The criteria \eqref{abdN}, \eqref{finite-energy}, \eqref{spectral-gap} and \eqref{third-condition-mu-nu} are precisely the assumptions of Proposition \ref{mainprop-1'}; one therefore concludes from it that $\Delta(E, F) \supseteq [a, b]$, as claimed in the theorem. 
\end{proof}

\section{Proofs of Propositions \ref{mainprop-1} and \ref{mainprop-1'}} \label{mainprop-1-section}
\subsection{Proof of Proposition \ref{mainprop-1}}  
Let us fix the parameters $C_0, \alpha, N, a, b$ as in the statement of the proposition. Let us also recall the Fourier representation of $\Lambda_{\mu}(t)$ given by \eqref{mass-Lambda-mu-nu}, with $\mu = \nu$. Choosing $\delta \in \bigl(2b, {1}/{4} \bigr)$ and $t \in [a, b]$, we decompose this expression for $||\Lambda_{\mu}(t)||$ as follows, 
\begin{equation} \label{Lambda-decomp} 
||\Lambda_{\mu}(t)|| = \int \bigl| \widehat{\mu}(\xi) \bigr|^2 \widehat{\sigma}(t \xi) \, d \xi = \mathfrak I_1(t) + \mathfrak I_2(t) + \mathfrak I_3(t), \end{equation} 
where $\mathfrak I_j(t)$ has the same integrand as $\Lambda_{\mu}(t)$ but is evaluated on the subdomain $\mathfrak D_j$: 
\[ \mathfrak D_1 := \bigl\{ \xi \in \mathbb R^d: t|\xi| \leq \delta \}, \; \mathfrak D_2 := \bigl\{ \xi \in \mathbb R^d: \delta < t|\xi| \leq t^{-N+1} \}, \; \mathfrak D_3 := \bigl\{ \xi \in \mathbb R^d: t|\xi| > t^{-N+1} \}. \] 
We estimate these three integrals as follows.
\begin{align} 
\bigl| \mathfrak I_1(t) \bigr| &= \Bigl| \int_{\mathfrak D_1} \bigl| \widehat{\mu}(\xi) \bigr|^2 \widehat{\sigma}(t \xi) \, d \xi \Bigr| \nonumber \\ 
&\geq \int_{\mathfrak D_1} \bigl| \widehat{\mu}(\xi) \bigr|^2 \, d \xi - \int_{\mathfrak D_1} \bigl| \widehat{\mu}(\xi) \bigr|^2 \bigl|1 - \widehat{\sigma}(t \xi)\bigr| \, d \xi \nonumber \\
&\geq \mathbb (1 - \delta) \mathbb F_{\mu}(\delta t^{-1}) \geq \mathbb (1 - \delta) \mathbb F_{\mu}(\delta b^{-1}). \label{I1-est}
\end{align}
The last two steps in the above sequence use the following estimate for $\xi \in \mathfrak D_1$: 
\begin{align} 
\bigl| 1 - \widehat{\sigma}(t\xi)\bigr| = \Bigl| \int \bigl(1 - e^{-itx \cdot \xi} \bigr) d\sigma(x) \Bigr| &\leq \int_{\mathbb S^{d-1}} t |x||\xi| d\sigma(x)  \nonumber \\ 
&\leq \delta \int_{\mathbb S^{d-1}} |x| d\sigma(x)  \leq \delta \quad \text{ if }  t|\xi| \leq \delta,  \label{sigma-hat-bound}
\end{align}  
and the inclusion $[0, \delta/b] \subseteq [0, \delta/t]$. On the other hand, the uniform bound $\bigl| \widehat{\sigma}(t \xi )\bigr| \leq 1$, combined with the spectral gap assumption \eqref{spectral-gap}  implies 
\begin{align} \bigl| \mathfrak I_2(t) \bigr| &= \int_{\mathfrak D_2} \bigl| \widehat{\mu}(\xi) \bigr|^2 \bigl| \widehat{\sigma}(t \xi) \bigr| \, d \xi \nonumber \\ 
&\leq \int_{\mathfrak D_2} \bigl| \widehat{\mu}(\xi) \bigr|^2  \, d\xi  \nonumber \\ 
&\leq \int_{\delta/b \leq |\xi| \leq a^{-N}} \bigl| \widehat{\mu}(\xi) \bigr|^2  \, d\xi 
\leq \mathbb F_{\mu} \bigl(a^{-N} \bigr) - \mathbb F_{\mu}\bigl(\delta b^{-1}\bigr) \leq a. \label{I2-est}
\end{align}
We have used the containment $[\delta/t, t^{-N}] \subseteq [\delta/b, a^{-N}]$ in the third step of the display above. Finally we consider 
\begin{align*}
\mathfrak I_3(t) &= \int_{\mathfrak D_3} \bigl| \widehat{\mu}(\xi) \bigr|^2 \widehat{\sigma}(t \xi) \, d \xi.  
\end{align*}
We estimate it using the assumption $N\alpha > (d-1)/2$ and the finite energy condition \eqref{finite-energy}: 
\begin{align}
 \mathfrak I_3(t) &\leq \int_{\mathfrak D_3} \bigl| \widehat{\mu}(\xi) \bigr|^2 (t |\xi|)^{-\frac{d-1}{2}} \, d \xi \nonumber \\ 
&\leq t^{-\frac{d-1}{2}} \int_{\mathfrak D_3} \bigl| \widehat{\mu}(\xi) \bigr|^2 |\xi|^{-\frac{d-1}{2} + \alpha} |\xi|^{-\alpha} \, d \xi
\nonumber \\ 
&\leq t^{N\alpha - \frac{d-1}{2}} \mathbb I_{\mu}\bigl(\frac{d+1}{2} + \alpha \bigr) \leq C_0 b^{N\alpha - \frac{d-1}{2}}. \label{I3-est} 
\end{align}
Combining \eqref{I1-est}, \eqref{I2-est} and \eqref{I3-est}, we obtain the following lower bound on $\Lambda_{\mu}(t)$: 
\begin{align}
\Lambda_{\mu}(t) &\geq (1 - \delta) \mathbb F_{\mu}(\delta b^{-1}) - a -  C_0 b^{N\alpha - \frac{d-1}{2}} \nonumber \\ 
&\geq 4c_d \Bigl[(1 - \delta)  - \frac{1}{2^{2}} - \frac{1}{2^2} \Bigr] \geq c_d, \label{lower-bound-Lambda}
\end{align}  
provided \eqref{abdN} holds.  Here $c_d$ is the constant given in \eqref{def-cd}, and we have  used the estimate
\begin{align} \bigl|1 - \widehat{\mu}(\xi) \bigr| &= \Bigl| \int \bigl( 1 - e^{-ix \cdot \xi} \bigr) d\mu(x) \Bigr| \nonumber \\ 
&\leq \Bigl| \int_{[0,1]^d} |x| |\xi| d \mu(x)  \bigr| \nonumber \\ 
&\leq \sqrt{d}|\xi| \leq \frac{1}{2} \text{ for } |\xi| \leq  \frac{1}{2\sqrt{d}} \leq \delta b^{-1}, \text{ so that }  \nonumber  \\
 \mathbb F_{\mu}(\delta b^{-1}) &\geq \mathbb F_{\mu}\Bigl(\frac{1}{2 \sqrt{d}} \Bigr) \geq \Bigl(\frac{1}{2} \Bigr)^{2} \Bigl(\frac{\omega_d}{2^d d^{\frac{d}{2}}}\Bigr) = 4c_d.  \label{F-mu-lower-bound} 
 \end{align}
 The lower bound in \eqref{lower-bound-Lambda} is the conclusion claimed in Proposition \ref{mainprop-1}.  
\qed
\subsection{Proof of Proposition \ref{mainprop-1'}}
The proof follows the broad strokes of Proposition \ref{mainprop-1}. Choosing $\delta \in (2b, \frac14)$, $t \in [a, b]$ and setting $\mathtt e = t/\delta$, we write 
\begin{align} 
\Lambda_{\mu, \nu}(t) &= \int \widehat{\mu}(\xi) \overline{\widehat{\nu}(\xi)} \widehat{\sigma}(t \xi) \, d\xi = \mathfrak J_1(t) + \mathfrak J_1'(t) \text{ where } \\ 
\mathfrak J_1(t) :&= \int \widehat{\mu}(\xi) \overline{\widehat{\nu}(\xi)} \widehat{\sigma}(t \xi) \bigl[\widehat{\psi}(\mathtt e \xi)\bigr]^2 \, d\xi \nonumber \\ &= \int \widehat{\mu}_{\mathtt e}(\xi) \overline{\widehat{\nu}_{\mathtt e}(\xi)} \widehat{\sigma}(t \xi) \, d\xi  = \Lambda_{\mu_{\mathtt e}, \nu_{\mathtt e}}(t) \text{ and } \nonumber \\ 
 \mathfrak J_1'(t) &:= \int \widehat{\mu}(\xi) \overline{\widehat{\nu}(\xi)} \widehat{\sigma}(t \xi) \Bigl(1 - \bigl[\widehat{\psi}(\mathtt e \xi)\bigr]^2 \Bigr) \, d\xi. \nonumber 
\end{align}  
The assumption \eqref{third-condition-mu-nu} provides the lower bound $|\mathfrak J_1(t)| = \Lambda_{\mu_{\mathtt e}, \nu_{\mathtt e}} (t) \geq 4c_d$. In the remainder of the proof, we will estimate $\mathfrak J_1'(t)$ from above to show 
\begin{equation}  
\bigl| \mathfrak J_1'(t) \bigr| \leq 2 c_d  
\end{equation}   
establishing it as an error term that leads to \eqref{mainprop-1'-conclusion}. The support property \eqref{psi-conditions}  of $\widehat{\psi}$ implies that the domain of integration of $\mathfrak J_1(t)$ is contained in $\{\xi : |\xi| > 1/\mathtt e \} = \mathbb R^d \setminus B(0; \delta/t)$. This permits the decomposition
\begin{align*}
|\mathfrak J_1'(t)| &\leq \mathfrak J_2(t) + \mathfrak J_3(t), \text{ where } \\
\mathfrak J_2(t) := \int_{\mathfrak D_2} \bigl|\widehat{\mu}(\xi) \overline{\widehat{\nu}(\xi)} &\widehat{\sigma}(t \xi) \bigr| \, d\xi, \quad
\mathfrak J_3(t) := \int_{\mathfrak D_3} \bigl|\widehat{\mu}(\xi) \overline{\widehat{\nu}(\xi)} \widehat{\sigma}(t \xi) \bigr| \, d\xi.  
\end{align*} 
Here the domains $\mathfrak D_2$ and $\mathfrak D_3$ are same as in the proof of Proposition \ref{mainprop-1}. By H$\ddot{\text{o}}$lder's inequality, 
\begin{align*} 
\mathfrak J_2(t) &\leq  \bigl[\mathbb F_{\mu}(a^{-N}) - \mathbb F_{\mu}(\delta b^{-1}) \bigr]^{\frac{1}{2}}  \bigl[\mathbb F_{\nu}(a^{-N}) - \mathbb F_{\nu}(\delta b^{-1}) \bigr]^{\frac{1}{2}} \leq a \leq c_d, \\
\mathfrak J_3(t) &\leq \Bigl[ t^{N \alpha - \frac{d-1}{2}} \mathbb I_{\mu} \Bigl(\frac{d+1}{2} + \alpha \Bigr) \Bigr]^{\frac{1}{2}} \Bigl[ t^{N \alpha - \frac{d-1}{2}} \mathbb I_{\nu} \Bigl(\frac{d+1}{2} + \alpha \Bigr) \Bigr]^{\frac{1}{2}} \leq C_0 b^{N \alpha - \frac{d-1}{2}} \leq c_d. 
\end{align*} 
The final estimates for $\mathfrak I_2(t)$ and $\mathfrak I_3(t)$ in the display above follow from \eqref{I2-est} and \eqref{I3-est} respectively in Proposition \ref{mainprop-1}. This completes the proof of Proposition \ref{mainprop-1'}.  
\qed

\section{Proofs of Proposition \ref{mainprop-2} and \ref{mainprop-2'}} \label{mainprop-2-section}
\subsection{Choice of the constants $C_0, \varrho_0, a, b, \varepsilon$} \label{parameter-selection-section} 
Let the parameters $d \geq 2$, $\kappa \in (0,1)$, $\alpha$, $N$ be given, as allowed by the assumptions of the two propositions. 
\begin{enumerate}[1.] 
\item We first fix an auxiliary smooth non-negative function 
\begin{equation} \label{conditions-phi} 
\varphi: \mathbb R^d \rightarrow [0, 2] \quad \text{ such that } \varphi \in C_c^{\infty}([0,1]^d), \quad \int_{[0,1]^d} \varphi(x) \, dx = 1. 
\end{equation} 
The absolute constant $C_0$, identified in Section \ref{finite-energy-verification}, depends only on $d$ and $||\varphi||_{\infty}$. 
\vskip0.1in
\item Given $N \geq 1$, we choose $c_0 > 0$ small enough to satisfy 
\begin{equation} \label{what-is-c0}
c_0 \bigl[N(d+1) + 1 \bigr] < \frac{1}{d}.
\end{equation}  
\item For $\varphi$ as above, there exists a constant $R_0 \gg 1$ depending only on $\varphi$ and $\kappa$ such that 
\begin{equation} \label{tail-phi} 
\int_{|\xi| > R^{\kappa/2}} \bigl| \widehat{\varphi}(\xi) \bigr|\, d\xi \leq R^{-2} \quad \text{ for all } R \geq R_0. 
\end{equation} 
Let us define $\varrho_0 = 2^{-dT_0-3}$, where $T_0$ is a large constant to be specified by the criteria \eqref{rho0} and \eqref{T-large} below. The constant $T_0$ depends on $R_0$ (and hence $\kappa$) and $c_0$, so that 
\begin{equation} \label{rho0}
\varrho_0^{c_0} \leq R_0^{-1}, \quad \varrho_0^{c_0 \kappa} \leq c_d \quad \text{ and } \quad C_0 \varrho_0^{c_0 \kappa (N \alpha - \frac{d-1}{2})} \leq c_d, 
\end{equation}  
where $c_d$ is the dimensional constant provided by Proposition \ref{mainprop-1} appearing in \eqref{abdN}. 
\vskip0.1in
\item For every $\rho \in (0, \varrho_0]$, the constants $a, b, \delta$ claimed in Proposition \ref{mainprop-2} are chosen based on $\rho$ (and hence $T$) as follows: $a = \rho^{c_0}$, $\delta = a^{\kappa/2}$, $b = a^{\kappa}$. This choice is aligned with the claim \eqref{relation-abrho}.  In view of \eqref{rho0}, we observe that for all $\rho \in (0, \varrho_0]$,
\vskip0.1in
\begin{itemize} 
\item $a = \rho^{c_0}$ obeys \eqref{tail-phi} with $a = R^{-1}$; 
\vskip0.1in 
\item $b = a^{\kappa}$ obeys the conditions specified in \eqref{abdN}. 
\end{itemize}
\vskip0.1in   
\item  For every $\rho \in (0, \varrho_0]$, there is a largest integer $T \geq T_0$ such that  $\rho \leq 2^{-dT-3}$, i.e. 
\begin{equation} \label{what-is-T}
2^{-d(T+1)-3} < \rho \leq 2^{-dT-3}. 
\end{equation}  
We choose $\varrho_0$ small enough (and hence $T_0$ large enough) to ensure that $a = \rho^{c_0}$ obeys 
\begin{align} 
\vartheta_d 2^{-T+1} a^{-N(d+1)} + a^2 &\leq \vartheta_d 2^{2 + \frac{3}{d}} \rho^{\frac{1}{d} - c_0 N(d+1)} + \rho^{2c_0} \nonumber \\ &\leq \rho^{c_0} \leq a, \text{ for all } T \geq T_0, \text{ and hence all } \rho \leq \varrho_0.  \label{T-large} 
\end{align}  
Here $\vartheta_d$ is an absolute dimensional constant representing $\sqrt{d}$ times the surface area of the unit sphere in $\mathbb R^d$. The choice \eqref{what-is-c0} of $c_0$ guarantees the existence of $T_0$.  
\vskip0.1in
\item Finally, for every $\rho \in (0, \varrho_0]$ with $T$ as in \eqref{what-is-T}, let $\varepsilon = \varepsilon_{\rho}$ be a positive constant such that 
\begin{equation}  \label{what-is-epsilon}
1 - 2^{-dT-3} > 2^{(d-s)T} - 2^{-sT-1} \quad \text{ for all } s \in [d-\varepsilon_{\rho}, d]. 
\end{equation}   
One can check that $\varepsilon = \varepsilon_{\rho}$ can be chosen in accordance with \eqref{relation-abrho}, i.e. 
\[ \varepsilon_{\rho} = \frac{c_0 2^{-dT}}{T} \leq \frac{c_0 \rho}{\log(1/\rho)} < \frac{1}{8} \text{ for some small constant $c_0 > 0$ obeying \eqref{what-is-c0}.} \] 
\end{enumerate} 
\subsection{Construction of the measure $\mu$} \label{measure construction section}
\begin{lem} 
For $\rho \in (0, \varrho_0]$, let $\varepsilon = \varepsilon_{\rho}$ be as in \eqref{what-is-epsilon}. Then for $s > d - \varepsilon_{\rho}$, and any Borel set $E \subseteq [0,1]^d$,
\begin{equation} 
\mathcal H^s_{\infty}(E) \geq 1- \rho \quad \text{ implies }  \quad \mathcal H_{\infty}^s(E \cap Q) \geq \frac{1}{2} \ell(Q)^s \text{ for every } Q \in \mathscr{D}_T.   \label{HQ-large}
\end{equation}   
\noindent Here $T$ is as in \eqref{what-is-T}, and $\mathscr {D}_T$ denotes the dyadic descendants of $[0,1]^d$ of generation $T$. In other words, $\mathscr{D}_T$ consists of the dyadic sub-cubes of $[0,1]^d$ with side-length $2^{-T}$. 
\end{lem}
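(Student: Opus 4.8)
The plan is to deduce the lemma from the subadditivity of the outer measure $\mathcal H^s_\infty$ together with the trivial covering of a dyadic cube by itself; essentially, a set of nearly full $s$-content cannot concentrate its ``deficiency'' on any single cube of generation $T$. Fix an arbitrary $Q_0 \in \mathscr D_T$, so that $\ell(Q_0) = 2^{-T}$. Since $\mathcal H^s_\infty$ is an outer measure, $\mathcal H^s_\infty(E) \le \mathcal H^s_\infty(E \cap Q_0) + \mathcal H^s_\infty(E \setminus Q_0)$, and it remains only to bound $\mathcal H^s_\infty(E \setminus Q_0)$ from above. Because $E \subseteq [0,1]^d$ and every point of $[0,1]^d$ lies in some closed cube of $\mathscr D_T$, the set $E \setminus Q_0$ is covered by the remaining $2^{dT}-1$ cubes of $\mathscr D_T$, each of side-length $2^{-T}$; hence $\mathcal H^s_\infty(E\setminus Q_0) \le (2^{dT}-1)\,2^{-sT}$.

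Combining this with the hypothesis $\mathcal H^s_\infty(E) \ge 1-\rho$ gives
\[
\mathcal H^s_\infty(E \cap Q_0) \;\ge\; (1-\rho) - (2^{dT}-1)\,2^{-sT} \;=\; 1 - \rho - 2^{(d-s)T} + 2^{-sT}.
\]
So it suffices to check that the right-hand side is at least $\tfrac12\, 2^{-sT} = \tfrac12\,\ell(Q_0)^s$, i.e.\ that $1-\rho \ge 2^{(d-s)T} - 2^{-sT-1}$. This is precisely where the parameter choices of Section~\ref{parameter-selection-section} are used: by \eqref{what-is-T} we have $\rho \le 2^{-dT-3}$, hence $1-\rho \ge 1 - 2^{-dT-3}$, while the defining property \eqref{what-is-epsilon} of $\varepsilon_\rho$ gives $1 - 2^{-dT-3} > 2^{(d-s)T} - 2^{-sT-1}$ for every $s \in [d-\varepsilon_\rho, d]$. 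Chaining these two inequalities yields $\mathcal H^s_\infty(E\cap Q_0) > \tfrac12\,\ell(Q_0)^s$, and since $Q_0 \in \mathscr D_T$ was arbitrary, \eqref{HQ-large} follows.

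There is no genuine analytic obstacle here: the estimate is a one-line subadditivity/packing bound. The only care required is bookkeeping --- making sure the complement $[0,1]^d \setminus Q_0$ is covered by exactly $2^{dT}-1$ cubes of side-length $2^{-T}$, so that the algebra collapses to the quantity $2^{(d-s)T} - 2^{-sT-1}$ appearing in \eqref{what-is-epsilon}, and applying \eqref{what-is-T} and \eqref{what-is-epsilon} in the correct direction. In effect the lemma records that the thresholds $\varrho_0$, $T$ and $\varepsilon_\rho$ were reverse-engineered exactly to make this inequality hold uniformly over all $Q_0 \in \mathscr D_T$ and all $s \in (d-\varepsilon_\rho, d]$.
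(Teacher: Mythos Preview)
Your proof is correct and follows essentially the same approach as the paper: both arguments use subadditivity of $\mathcal H^s_\infty$ over the cubes of $\mathscr D_T$ and reduce to the parameter inequality \eqref{what-is-epsilon}. The only cosmetic difference is that you argue directly for a fixed cube $Q_0$, bounding $\mathcal H^s_\infty(E\setminus Q_0)$ by the remaining $2^{dT}-1$ cubes, whereas the paper argues by contradiction, assuming some cube fails and summing the contributions of good and bad cubes; the resulting algebra is identical.
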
 
\begin{proof} 
Let us set 
\[ \mathcal G := \Bigl\{Q \in \mathscr{D}_T : \mathcal H_{\infty}^s(E \cap Q) \geq \frac{1}{2} \ell(Q)^s \Bigr\}. \]
Aiming for a contradiction, let us assume that $\mathcal G$ is a strict subset of $\mathscr{D}_T$. The trivial bound $\mathcal H_{\infty}^s(E \cap Q) \leq \ell(Q)^s$ then leads to the following estimate:   
\begin{align*} 
1 - 2^{-dT-3} \leq 1 - \rho \leq \mathcal H^s_{\infty}(E) & \leq \sum_{Q \in \mathscr{D}_T}  \mathcal H^s_{\infty}(E \cap Q)
\\ 
&\leq \sum_{Q \in \mathcal G}  \ell(Q)^s + \Bigl( 1 - \frac{1}{2} \Bigr) \sum_{Q \in \mathscr{D}_T \setminus \mathcal G} \ell(Q)^s \\ 
&\leq \sum_{Q \in \mathscr{D}_T} \ell(Q)^s - \frac{1}{2} 2^{-sT} \\ 
&\leq 2^{(d-s)T} -  \frac{1}{2} 2^{-sT}.  
\end{align*} 
This contradicts the defining property \eqref{what-is-epsilon} of $\varepsilon$, completing the proof.  
\end{proof} 
\noindent In view of the conclusion in \eqref{HQ-large}, which ensures that each $\mathcal H^s_{\infty}(E \cap Q) > 0$, one can apply Frostman's lemma \cite[Theorem 8.8] {Mattila-Book1}  for every $Q \in \mathscr{D}_T$. This yields a positive finite Radon measure $\overline{\mu}_Q$ and an absolute constant $A_d > 0$ (depending only on $d$) such that 
 \begin{align} 
&\text{supp}(\overline{\mu}_Q) \subseteq E \cap Q, \quad ||\overline{\mu}_Q|| \geq \frac{1}{2} \ell(Q)^s \text{ and } \label{nu-supp-wt} \\ 
&\sup\Bigl\{\frac{\overline{\mu}_Q \bigl(B(x;r\bigr)}{r^s} : x \in \mathbb R^d, \, r > 0 \Bigr\} \leq A_d < \infty. \label{nu-ball-condition}  
\end{align}  
The measure prescribed in Proposition \ref{mainprop-2} will be of the form: 
\begin{equation} \label{def-mu-nu}
\mu := \sum_{Q \in \mathscr{D}_T} w(Q) \frac{\overline{\mu}_Q}{||\overline{\mu}_Q||}, 
\end{equation} 
where the ``weights'' $w(Q)$ are defined by 
\begin{equation} \label{weights-def} w(Q) := \int_{Q} \varphi(x) \, dx. \end{equation}
Here $\varphi$ is the auxiliary function chosen at the beginning of the proof, satisfying the properties \eqref{conditions-phi}. 
It follows from \eqref{nu-supp-wt} that $\mu$ is supported on $E$, and 
\[ ||\mu|| = \sum_{Q \in \mathscr{D}_T} w(Q) = \int_{[0,1]^d} \varphi(x) \, dx = 1; \] 
in other words, $\mu$ is a probability measure on $E$. 
\vskip0.1in
\noindent In subsections \ref{finite-energy-verification} and \eqref{spectral-gap-verification} below, we will verify that the measure $\mu$ defined in \eqref{def-mu-nu} obeys the finite energy condition \eqref{finite-energy} and the spectral gap condition \eqref{spectral-gap} respectively. This will complete the proof of Proposition \ref{mainprop-2}. We will also show, in subsection \ref{third-condition-verification}, that if $\mu$ and $\nu$ are two measures of the form \eqref{def-mu-nu} associated with two sets $E, F \subseteq [0,1]^d$ with $s$-dimensional Hausdorff content at least $1 - \rho$, then \eqref{third-condition-mu-nu} holds. This would conclude the proof of Proposition \ref{mainprop-2'}.    
\subsection{Verification of \eqref{finite-energy}} \label{finite-energy-verification} We do this in two steps. Lemma \ref{ball-condition-lemma} establishes a ball condition on $\mu$; Lemma \ref{ball-energy-lemma} then exploits a standard relation between such conditions and energy integrals to arrive at \eqref{finite-energy}.   
\begin{lem} \label{ball-condition-lemma} 
There exists a constant $A_d \geq 1$ depending only on $d$ such that the probability $\mu$ defined by \eqref{def-mu-nu} obeys the ball condition
\begin{equation} \label{ball-condition} 
\mu \bigl(B(x;r)\bigr) \leq A_d r^s \text{ for all } x \in \mathbb R^d, r > 0.  
\end{equation}  
\end{lem}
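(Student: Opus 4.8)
The plan is to bound $\mu\big(B(x;r)\big)$ term by term in \eqref{def-mu-nu}, separating the cases $r \geq 2^{-T}$ and $r < 2^{-T}$, where $2^{-T}$ is the common side length of the cubes in $\mathscr D_T$. Only cubes $Q$ with $Q \cap B(x;r) \ne \emptyset$ contribute; write $\mathcal Q_B$ for that subcollection of $\mathscr D_T$. Two ingredients will be used repeatedly. From \eqref{nu-supp-wt} and \eqref{nu-ball-condition}, each normalized piece obeys both the trivial bound $\overline{\mu}_Q\big(B(x;r)\big)/\|\overline{\mu}_Q\| \leq 1$ and the scaled Frostman bound
\[
\frac{\overline{\mu}_Q\big(B(x;r)\big)}{\|\overline{\mu}_Q\|} \leq \frac{A_d\, r^s}{\frac12\,\ell(Q)^s} = 2A_d\, 2^{Ts}\, r^s ,
\]
with $A_d$ the constant from \eqref{nu-ball-condition}. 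From \eqref{conditions-phi} and \eqref{weights-def}, the weights satisfy $w(Q) \leq \|\varphi\|_\infty\,\ell(Q)^d \leq 2\cdot 2^{-Td}$ and, for any $\mathcal S \subseteq \mathscr D_T$, $\sum_{Q \in \mathcal S} w(Q) = \int_{\bigcup \mathcal S}\varphi\,dx \leq \|\varphi\|_\infty\,\lambda_d\big(\bigcup \mathcal S\big)$. The essential point, visible already here, is that any factor $2^{Ts}$ produced by the first ingredient is cancelled by the factor $2^{-Td}$ (or by a volume bound) from the second, because $s \leq d$; this is what removes the dependence on $T$.

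In the regime $r \geq 2^{-T}$ I would use only the trivial bound on the normalized pieces. Each $Q \in \mathcal Q_B$ has diameter $\sqrt d\, 2^{-T} \leq \sqrt d\, r$, so $\bigcup \mathcal Q_B \subseteq B\big(x;(1+\sqrt d)\,r\big)$ and hence
\[
\mu\big(B(x;r)\big) \leq \sum_{Q \in \mathcal Q_B} w(Q) \leq \|\varphi\|_\infty\,\lambda_d\Big(B\big(x;(1+\sqrt d)\,r\big)\Big) \leq C_d\, r^d .
\]
If $r \leq 1$ this is at most $C_d\, r^s$ since $0 < s \leq d$; if $r > 1$ then $\mu\big(B(x;r)\big) \leq \|\mu\| = 1 \leq r^s$. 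In the regime $r < 2^{-T}$ the ball has diameter $< 2\cdot 2^{-T}$, so its projection onto each coordinate axis meets at most three dyadic intervals of length $2^{-T}$, giving $\#\mathcal Q_B \leq 3^d$. Combining the scaled Frostman bound with the weight bound over these cubes,
\[
\mu\big(B(x;r)\big) \leq 3^d\cdot\big(2\cdot 2^{-Td}\big)\cdot\big(2A_d\, 2^{Ts}\, r^s\big) = 4\cdot 3^d\,A_d\, 2^{-T(d-s)}\, r^s \leq 4\cdot 3^d\,A_d\, r^s ,
\]
using $d \geq s$ and $T \geq 0$ at the last step.

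Taking the constant in \eqref{ball-condition} to be the largest of the finitely many dimensional constants just produced completes the argument; since $A_d$ in \eqref{nu-ball-condition} depends only on $d$ and $\|\varphi\|_\infty \leq 2$ is absolute, so does this constant. I do not expect any real obstacle here: the content is entirely elementary, and the only thing requiring care is the bookkeeping that forces the $T$-dependent powers of $2$ to cancel, which they do precisely because the scalings $\|\overline{\mu}_Q\| \geq \frac12\ell(Q)^s$ and $w(Q) \leq 2\ell(Q)^d$ are matched and $s \leq d$.
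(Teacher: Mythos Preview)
Your proof is correct and follows essentially the same approach as the paper's: split according to whether $r$ lies above or below the scale $2^{-T}$, use the Frostman bound \eqref{nu-ball-condition} on the individual pieces in the small-$r$ regime, and use a volume/counting argument on the weights in the large-$r$ regime. The only cosmetic differences are that the paper first reduces to genuine dyadic cubes (so that for $r\le 2^{-T}$ the cube sits inside a single $Q\in\mathscr D_T$, avoiding your $3^d$ factor) and treats $r\ge 1$ as a separate trivial case, whereas you work directly with balls and absorb that case into $r\ge 2^{-T}$; neither change affects the substance.
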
 
\begin{proof} 
Let us observe that it suffices to establish \eqref{ball-condition} for dyadic cubes $\mathtt Q(x;r) = x + [0,r]^d$ of sidelength $r$ instead of balls $B(x;r)$. With this reduction, the proof is arranged in three cases, depending on the range of $r$. 
\vskip0.1in 
\noindent {\em{Case 1:}} First suppose $0 \leq r \leq 2^{-T}$. In this case, the ball $\mathtt Q(x;r)$ is contained in a single cube $Q \in \mathscr{D}_T$. Invoking \eqref{nu-supp-wt} and \eqref{nu-ball-condition} for $\overline{\mu}_Q$, we obtain  
\begin{align*} 
\mu(\mathtt Q(x;r)) &\leq \max_{Q \in \mathscr{D}_T} w(Q) \frac{\overline{\mu}_Q(\mathtt Q(x;r))}{||\overline{\mu}_Q||} \\ 
&\leq \max_{Q \in \mathscr{D}_T} \Bigl[ \int_{Q} \varphi(x) \, dx \Bigr]  2 \ell(Q)^{-s} A_d r^s \\ 
&\leq 2 A_d r^s \max_{Q \in \mathscr{D}_T} ||\varphi||_{\infty} \ell(Q)^{d-s} \\ 
&\leq 4 A_d r^{s}, 
\end{align*} 
where the penultimate step uses the fact $\ell(Q) \leq 1$, so that $(\ell(Q))^{d-s} \leq 1$. 
\vskip0.1in
\noindent {\em{Case 2:}} $r \geq 1$. Here we use the trivial bound on $\mu$:
\[ \mu(\mathtt Q(x;r)) \leq 1 \leq  r^s. \]
\vskip0.1in
\noindent {\em{Case 3:}} $2^{-T} \leq r \leq 1$. Here we are led to consider all $Q \in \mathscr{D}_T$ that are contained in $\mathtt Q(x;r)$.
\begin{align*}
\mu \bigl(\mathtt Q(x;r) \bigr) &= \sum_{\begin{subarray}{c}Q \in \mathscr{D}_T \\ Q \cap \mathtt Q(x; r) \ne \emptyset \end{subarray}} w(Q) \frac{\overline{\mu}_Q(\mathtt Q(x;r))}{||\overline{\mu}_Q||} \leq  \sum_{\begin{subarray}{c}Q \in \mathscr{D}_T \\ Q \cap \mathtt Q(x; r) \ne \emptyset \end{subarray}} w(Q) \\ 
&\leq ||\varphi||_{\infty} 2^{-dT} \# \Bigl\{Q \in \mathscr{D}_T: Q \cap \mathtt Q(x;r) \ne \emptyset \Bigr\} \\ 
&\leq  2^{-dT+1}  \Bigl(\frac{r}{2^{-T}} \Bigr)^d
\leq 2 {r^d} \leq 2r^s, 
\end{align*}  
where the last step uses the assumption $r \leq 1$. 
\end{proof} 
\begin{lem} \label{ball-energy-lemma}
Let $d \geq 2$. For every constant $A \geq 1$, there exists $C_0 = C_0(d, A) \geq 1$ with the following property. Let $\mu$ be any probability measure with supp$(\mu) \subseteq [0,1]^d$ such that  
\begin{equation} \label{ball-condition-2}
\sup \Bigl\{ \mu \bigl(B(x;r) \bigr) r^{-s} : x \in \mathbb R^d, \; r > 0 \Bigr\} = A < \infty \quad \text{ for some $s > d - \frac{1}{8}$.}  
\end{equation}    
Then 
\begin{equation} 
\mathbb I_{\mu}\Bigl(\frac{d+1}{2} + \alpha\Bigr) \leq C_0(d, A) \qquad \text{ for all $\alpha \in \bigl[0, \frac18 \bigr]$}.   
\end{equation} 
\end{lem}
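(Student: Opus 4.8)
The plan is to deduce the energy bound from the ball condition \eqref{ball-condition-2} by the standard dyadic decomposition of the kernel $|x-y|^{-t}$ around the diagonal, where $t = (d+1)/2 + \alpha$. First I would fix $x \in \mathrm{supp}(\mu)$ and split $\int |x-y|^{-t}\,d\mu(y)$ over the dyadic annuli $A_j(x) := \{y : 2^{-j-1} < |x-y| \le 2^{-j}\}$ for $j \ge 0$, together with the region $|x-y| > 1$. On $A_j(x)$ one has $|x-y|^{-t} \le 2^{(j+1)t}$, while $\mu(A_j(x)) \le \mu(B(x; 2^{-j})) \le A\,2^{-js}$ by \eqref{ball-condition-2}; hence the contribution of $A_j(x)$ to the inner integral is at most $A\,2^t\,2^{j(t-s)}$. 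Since $\mathrm{supp}(\mu) \subseteq [0,1]^d$ has diameter $\le \sqrt d$, the region $|x-y| > 1$ contributes at most $1$ (the kernel being $\le 1$ there, as $t > 0$), so this tail is harmless.

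The series $\sum_{j \ge 0} 2^{j(t-s)}$ converges precisely when $t < s$, i.e. when $(d+1)/2 + \alpha < s$. This is the crux of the matter: with $\alpha \in [0, 1/8]$ one has $t \le (d+1)/2 + 1/8$, and the hypothesis $s > d - 1/8$ forces $s - t > d - 1/8 - (d+1)/2 - 1/8 = (d-1)/2 - 1/4 \ge 0$ for $d \ge 2$ — in fact $s - t \ge (d-1)/2 - 1/4 + (s - (d - 1/8)) > (d-1)/2 - 1/4 \ge 1/4$. Thus $t - s < -1/4$ uniformly, and the geometric series is bounded by $(1 - 2^{-1/4})^{-1}$, a dimensional constant. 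Putting the pieces together gives, uniformly in $x \in \mathrm{supp}(\mu)$,
\begin{equation*}
\int |x-y|^{-t}\,d\mu(y) \le 1 + A\,2^t \sum_{j \ge 0} 2^{j(t-s)} \le 1 + \frac{A\,2^{(d+1)/2 + 1/8}}{1 - 2^{-1/4}} =: C_0(d, A).
\end{equation*}
Integrating this in $x$ against $d\mu(x)$ and using $\|\mu\| = 1$ yields $\mathbb I_\mu\!\left(\frac{d+1}{2} + \alpha\right) \le C_0(d,A)$, as claimed. One should note that since the bound on the inner integral is independent of $\alpha \in [0, 1/8]$ (one uses the worst case $t = (d+1)/2 + 1/8$ to bound each term, absorbing the smaller $\alpha$'s), the constant $C_0$ depends only on $d$ and $A$, not on $\alpha$.

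I expect the main (though modest) obstacle to be purely bookkeeping: making sure the strict inequality $s > d - 1/8$ combined with $\alpha \le 1/8$ really does leave a fixed positive gap $s - t$ bounded away from $0$ by a dimensional constant, so that the geometric ratio $2^{t-s}$ is bounded away from $1$ uniformly. For $d \ge 2$ this is comfortable since $(d-1)/2 \ge 1/2 > 1/4$; one just has to be slightly careful that the gap does not degenerate as $s \searrow d - 1/8$ and $\alpha \nearrow 1/8$ simultaneously, in which case $s - t$ can be as small as (just above) $(d-1)/2 - 1/4$, which is $\ge 1/4$ for $d \ge 2$ — still safely positive. No delicate estimate is needed beyond this; the argument is the textbook equivalence between Frostman-type ball conditions and finiteness of Riesz energies, specialized to the exponent range at hand.
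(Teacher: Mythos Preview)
Your proof is correct and follows essentially the same approach as the paper's: a dyadic decomposition of the kernel $|x-y|^{-t}$ into annuli, the ball condition \eqref{ball-condition-2} applied on each shell, and summation of the resulting geometric series using the uniform gap $s - t > (d-1)/2 - 1/4$. Your version is in fact slightly more careful than the paper's in that you explicitly account for the region $|x-y| > 1$ (the support has diameter $\sqrt d$, not $1$), which the paper's indexing $j \ge 1$ tacitly omits.
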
 
\begin{proof}
We start with the definition of the energy integral. Decomposing the domain of integration based on dyadic distances to the diagonal yields 
\begin{align*}
\mathbb I_{\mu}\Bigl(\frac{d+1}{2} + \alpha\Bigr)  &= \iint_{x, y \in [0,1]^d} \frac{d\mu(x) \, d\mu(y)}{|x-y|^{\frac{d+1}{2} + \alpha}} \\
&\leq \sum_{j=1}^{\infty} \iint_{2^j|x-y| \in [1, 2]} \frac{d \mu(x) d\mu(y)}{|x-y|^{\frac{d+1}{2} + \alpha}} \\ 
&\leq  \sum_{j=1}^{\infty} \int_{[0,1]^d} \Biggl[ \frac{\mu \bigl( B(x;2^{-j+1})\bigr)}{2^{-j \left(\frac{d+1}{2} + \alpha \right)}} \Biggr] d\mu(x) \\ 
&\leq A2^{s} \sum_{j=1}^{\infty} 2^{-j \left(s - \frac{d+1}{2} - \alpha \right)} \quad (\text{by \eqref{ball-condition-2}})\\ 
&\leq A 2^s \sum_{j=1}^{\infty} 2^{-j \left(\frac{d-1}{2} - \frac14\right)} =: C_0(d, A).  
\end{align*}
The penultimate step follows from the inequality $s - \frac{d+1}{2} - \alpha > d - \frac{1}{8}  - \frac{d+1}{2} - \alpha \geq \frac{d-1}{2} - \frac{1}{4}$,  which is bounded below by an absolute positive constant independent of $s$.  
\end{proof}
\subsection{Verification of \eqref{spectral-gap}} \label{spectral-gap-verification} 
 Let $\varphi$ be the smooth function chosen in Section \ref{parameter-selection-section}, obeying the conditions in \eqref{conditions-phi}. Our choice of $a$ given by the conditions \eqref{tail-phi} and \eqref{T-large} in that section ensure that 
 \begin{equation}  \int_{|\xi| > a^{-\kappa/2}} \bigl| \widehat{\varphi}(\xi) \bigr| \, d\xi \leq a^2 \quad \text{ and } \quad \vartheta_d 2^{-T+1} a^{-N(d+1)} + a^2 \leq a \label{phi-and-a} \end{equation} 
for all $a = \rho^{c_0}$, with $\rho \in (0, \varrho_0]$. Choosing $a > 0$ sufficiently small also ensures that \eqref{abdN} holds, with $\delta = a^{\kappa}$ and $b = a^{2\kappa}$. 

\begin{lem} \label{spectral-gap-lemma}
\begin{enumerate}[(a)]
\item Let $\mu$ be defined as in \eqref{def-mu-nu}. Then
\[ \bigl| \widehat{\mu}(\xi) - \widehat{\varphi}(\xi) \bigr| \leq \sqrt{d} |\xi|2^{-T+1} \quad \text{ for all } \xi \in \mathbb R^d.   \]  
\item The measure $\mu$ obeys the spectral gap condition \eqref{spectral-gap} for $a, b, \delta \in (0,1)$ with $a < b \leq \delta a^{\kappa/2}$. 
\end{enumerate} 
\end{lem}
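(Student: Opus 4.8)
The plan is to prove part (a) by a cube-by-cube first-order comparison of $\widehat\mu$ with $\widehat\varphi$, and then to deduce part (b) by substituting the resulting pointwise bound into the definition of $\mathbb F_\mu$ and invoking the smallness of $a$ and $2^{-T}$ arranged in Section~\ref{parameter-selection-section}. No genuinely analytic difficulty is expected; the entire content is in the one-line estimate of (a), and the rest is matching error terms against prearranged margins.

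\emph{Part (a).} Write $\lambda_Q := \overline\mu_Q/\|\overline\mu_Q\|$, a probability measure supported in $E\cap Q\subseteq Q$, so that from \eqref{def-mu-nu} one has $\widehat\mu(\xi)=\sum_{Q\in\mathscr D_T} w(Q)\,\widehat{\lambda_Q}(\xi)$, while $\widehat\varphi(\xi)=\sum_{Q\in\mathscr D_T}\int_Q e^{-ix\cdot\xi}\varphi(x)\,dx$. The decisive observation is that the two pieces sitting over a fixed cube $Q$ --- namely $w(Q)\lambda_Q$ and $\varphi\,\mathbf 1_Q\,dx$ --- carry the same total mass $w(Q)=\int_Q\varphi$ by \eqref{weights-def}, and both are supported in the cube $Q$, which has diameter $\sqrt d\,2^{-T}$. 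Fixing a reference point $x_Q\in Q$ (say the corner $c(Q)$), I would split
\[ w(Q)\widehat{\lambda_Q}(\xi)-\int_Q e^{-ix\cdot\xi}\varphi(x)\,dx = w(Q)\bigl(\widehat{\lambda_Q}(\xi)-e^{-ix_Q\cdot\xi}\bigr)+\int_Q\bigl(e^{-ix_Q\cdot\xi}-e^{-ix\cdot\xi}\bigr)\varphi(x)\,dx \]
and bound each summand with the elementary inequality $|e^{-iu\cdot\xi}-e^{-iv\cdot\xi}|\le|u-v|\,|\xi|$ together with $|x-x_Q|\le\mathrm{diam}(Q)=\sqrt d\,2^{-T}$ for $x\in Q$; each contribution is then at most $\sqrt d\,2^{-T}|\xi|\,w(Q)$. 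Summing over $Q\in\mathscr D_T$ and using $\sum_Q w(Q)=1$ yields $|\widehat\mu(\xi)-\widehat\varphi(\xi)|\le 2\sqrt d\,2^{-T}|\xi|=\sqrt d\,|\xi|\,2^{-T+1}$, which is exactly the assertion of (a).

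\emph{Part (b).} Condition \eqref{spectral-gap} asks that $\mathbb F_\mu(a^{-N})-\mathbb F_\mu(\delta/b)=\int_{\delta/b\le|\xi|\le a^{-N}}|\widehat\mu(\xi)|^2\,d\xi\le a$. The hypotheses $a<b\le\delta a^{\kappa/2}$ force $\delta/b\ge a^{-\kappa/2}$, so it suffices to estimate the integral over the larger shell $a^{-\kappa/2}\le|\xi|\le a^{-N}$. On that shell I would use $|\widehat\mu(\xi)|\le\|\mu\|=1$ together with part (a) to write $|\widehat\mu(\xi)|^2\le|\widehat\mu(\xi)|\le|\widehat\varphi(\xi)|+\sqrt d\,|\xi|\,2^{-T+1}$, whence
\[ \int_{a^{-\kappa/2}\le|\xi|\le a^{-N}}|\widehat\mu(\xi)|^2\,d\xi\ \le\ \int_{|\xi|>a^{-\kappa/2}}|\widehat\varphi(\xi)|\,d\xi\ +\ \sqrt d\,2^{-T+1}\int_{|\xi|\le a^{-N}}|\xi|\,d\xi. \]
The first term is at most $a^2$ by the tail bound \eqref{phi-and-a} (i.e.\ \eqref{tail-phi} with $R=a^{-1}$, so that $R^{\kappa/2}=a^{-\kappa/2}$ and $R^{-2}=a^2$); the second, in polar coordinates, equals $\sqrt d\,2^{-T+1}\,\omega\,a^{-N(d+1)}/(d+1)$ with $\omega$ the surface area of $S^{d-1}$, and is therefore at most $\vartheta_d\,2^{-T+1}a^{-N(d+1)}$. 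Adding the two gives $\mathbb F_\mu(a^{-N})-\mathbb F_\mu(\delta/b)\le a^2+\vartheta_d\,2^{-T+1}a^{-N(d+1)}\le a$ by the second inequality in \eqref{phi-and-a} (which is precisely \eqref{T-large}). This establishes \eqref{spectral-gap}, and since \eqref{abdN} has already been arranged for $\delta=a^{\kappa/2}$, $b=a^{\kappa}$, part (b) follows.

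\emph{Expected main difficulty.} There is no real obstacle of substance: once part (a) is in hand, (b) is a two-line computation. The only care required is bookkeeping --- ensuring that the two error terms produced in (b) are exactly the quantities reserved for them in Section~\ref{parameter-selection-section}, that is, matching the polar-coordinate constant to the definition of $\vartheta_d$ and checking that the exponents $-\kappa/2$ and $-N(d+1)$ are precisely the ones appearing in \eqref{tail-phi} and \eqref{T-large}. A secondary point worth stating carefully is why $\lambda_Q$ and $\varphi\,\mathbf 1_Q\,dx$ share the mass $w(Q)$, which is immediate from the definition \eqref{weights-def} of the weights but is what makes the zeroth-order terms cancel in part (a).
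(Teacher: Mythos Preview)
Your proof is correct and follows essentially the same approach as the paper's own proof: part (a) via the cube-by-cube comparison exploiting that $w(Q)\lambda_Q$ and $\varphi\,\mathbf 1_Q\,dx$ share the mass $w(Q)$, then bounding $|e^{-ix\cdot\xi}-e^{-ix_Q\cdot\xi}|\le\sqrt d\,2^{-T}|\xi|$; part (b) via $|\widehat\mu|^2\le|\widehat\mu|\le|\widehat\varphi|+\sqrt d\,|\xi|2^{-T+1}$ and the tail estimates \eqref{tail-phi}, \eqref{T-large}. The only cosmetic difference is that the paper combines your two summands in (a) into the single expression $\sum_Q\int_Q(e^{-ix\cdot\xi}-e^{-ic_Q\cdot\xi})[d\mu(x)-\varphi(x)\,dx]$ and bounds it by $\sqrt d\,2^{-T}|\xi|\,\|\mu-\varphi\|(Q)$, which amounts to the same thing.
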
 
\begin{proof} 
It follows from \eqref{def-mu-nu} that 
\[ \mu(Q) = w(Q) = \int_Q \varphi(x) \, dx \quad \text{ for all } Q \in \mathscr{D}_T, \]
hence 
 \[ \int_{Q} e^{-i c_{Q}\cdot \xi} d\mu(x) = \int_{Q} e^{-i c_{Q} \cdot \xi} \varphi(x) \, dx, \] 
 where $Q = c_Q + [0, 2^{-T}]^d$.  This implies 
 \begin{align*}
 \bigl| \widehat{\mu}(\xi) - \widehat{\varphi}(\xi) \bigr| &= \Bigl| \int_{[0,1]^d} e^{-ix\dot \xi} \bigl[ d\mu(x) - \varphi(x) \, dx \bigr] \Bigr| \\  
 &= \Bigl| \sum_{Q \in \mathscr{D}_T} \int_{Q} \bigl( e^{-ix\cdot \xi} - e^{-i c_{Q} \cdot \xi} \bigr) \times \bigl[ d\mu(x) - \varphi(x) \, dx \bigr] \Bigr| \\ 
 &\leq \sum_{Q \in \mathscr{D}_T} |x - c_{Q}||\xi| \times  ||\mu - \varphi||(Q) \\
 &\leq \sqrt{d} 2^{-T} |\xi| \sum_{Q \in \mathscr{D}_T} \bigl[ \mu(Q) + \varphi(Q) \bigr] \\ 
 & \leq \sqrt{d} 2^{-T+1} |\xi|, 
 \end{align*} 
 completing the proof of part (a). 
 \vskip0.1in
 \noindent For part (b), we apply the pointwise estimate in part (a) to deduce that 
 \begin{align*}
 \mathbb F_{\mu}(a^{-N}) - \mathbb F_{\mu}(\delta b^{-1}) &= \int_{\delta/b \leq |\xi| \leq a^{-N}} \bigl| \widehat{\mu}(\xi) \bigr|^2 \, d\xi \\ 
 &\leq \int_{\delta/b \leq |\xi| \leq a^{-N}} \bigl| \widehat{\mu}(\xi) \bigr| \, d\xi \\ 
 &\leq \int_{\delta/b \leq |\xi| \leq a^{-N}} \bigl| \widehat{\mu}(\xi) - \widehat{\varphi}(\xi) \bigr| \, d\xi + \int_{|\xi| \geq \delta/b}  \bigl| \widehat{\varphi}(\xi) \bigr| \, d\xi \\ 
 &\leq  \sqrt{d} 2^{-T+1} \int_{|\xi| \leq a^{-N}} |\xi| \, d\xi +  \int_{|\xi| \geq a^{-\kappa/2}}  \bigl| \widehat{\varphi}(\xi) \bigr| \, d\xi \\ 
 &\leq \vartheta_d 2^{-T+1} a^{-N(d+1)} + a^2 \leq a, 
 \end{align*}  
 where the last two steps follow from \eqref{phi-and-a}. This completes the verification of the spectral gap condition \eqref{spectral-gap} 
\end{proof} 

\subsection{Verification of \eqref{third-condition-mu-nu}} \label{third-condition-verification} 
\begin{lem} \label{third-condition-verification-lemma}
Suppose that $E, F$ are two sets with the property $\mathcal H^s_{\infty}(E), \mathcal H^s_{\infty}(F) > 1 - \rho$. Consider the two measures  
\begin{equation} \mu := \sum_{Q \in \mathscr{D}_T} w(Q) \frac{\overline{\mu}_Q}{||\overline{\mu}_Q||}, \qquad \nu := \sum_{Q \in \mathscr{D}_T} w(Q) \frac{\overline{\nu}_Q}{||\overline{\nu}_Q||} \label{munu-sum} \end{equation} 
supported on $E$ and $F$ respectively, constructed in \eqref{def-mu-nu}. Then \eqref{third-condition-mu-nu} holds for $\mu, \nu$ as above, with 
$\mathtt e = t/\delta$, $t \in [a, b]$ and $\psi$ obeying \eqref{psi-conditions}.
\end{lem}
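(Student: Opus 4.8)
The plan is to exploit the one structural input behind the construction \eqref{munu-sum}: the measures $\mu$ and $\nu$ assign to every cube $Q\in\mathscr D_T$ the \emph{same} mass $w(Q)=\int_Q\varphi$ as the fixed absolutely continuous probability measure $\varphi\,dx$. By Lemma \ref{spectral-gap-lemma}(a) this forces $|\widehat\mu(\xi)-\widehat\varphi(\xi)|$ and $|\widehat\nu(\xi)-\widehat\varphi(\xi)|$ to be at most $\sqrt d\,|\xi|\,2^{-T+1}$. After mollifying at scale $\mathtt e=t/\delta$, all the relevant Fourier transforms live in the ball $B(0;2/\mathtt e)$, where this discrepancy is still negligible; so $\Lambda_{\mu_{\mathtt e},\nu_{\mathtt e}}(t)$ should differ only slightly from the corresponding quantity for $\varphi_{\mathtt e}:=\varphi\ast\psi_{\mathtt e}$, which I will bound below by a dimensional constant comfortably exceeding $4c_d$.

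\textbf{Reduction to $\varphi_{\mathtt e}$.} First I would fix $t\in[a,b]$, set $\mathtt e=t/\delta$, and note that $\mu_{\mathtt e}=\mu\ast\psi_{\mathtt e}$ and $\nu_{\mathtt e}$ are nonnegative Schwartz functions of unit mass ($\widehat{\mu_{\mathtt e}}(0)=\widehat\mu(0)\widehat\psi(0)=1$) with finite $(d+1)/2$-energy, since $|\widehat{\mu_{\mathtt e}}|\le 1$ is supported in $|\xi|\le 2/\mathtt e$ by \eqref{psi-conditions}. Hence Proposition \ref{Lambda-prop} applies and identifies $\|\Lambda_{\mu_{\mathtt e},\nu_{\mathtt e}}(t)\|$ with the Fourier integral $\int\widehat{\mu_{\mathtt e}}(\xi)\overline{\widehat{\nu_{\mathtt e}}(\xi)}\,\widehat\sigma(t\xi)\,d\xi$. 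Writing $\widehat{\mu_{\mathtt e}}=\widehat{\varphi_{\mathtt e}}+g_\mu$ and $\widehat{\nu_{\mathtt e}}=\widehat{\varphi_{\mathtt e}}+g_\nu$ with $g_\mu=(\widehat\mu-\widehat\varphi)\,\widehat\psi(\mathtt e\,\cdot)$ and likewise for $g_\nu$, Lemma \ref{spectral-gap-lemma}(a) together with $\mathrm{supp}\,\widehat\psi\subseteq B(0;2)$ gives $\|g_\mu\|_\infty,\|g_\nu\|_\infty\ll 2^{-T}\mathtt e^{-1}$ and $\|g_\mu\|_{L^1},\|g_\nu\|_{L^1}\ll 2^{-T}\mathtt e^{-d-1}$. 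Since $2^{-T}\ll\rho^{1/d}$ by \eqref{what-is-T} and $\mathtt e=t/\delta\ge a/\delta\ge a=\rho^{c_0}$ (as $\delta<\tfrac14$), the constraint \eqref{what-is-c0} (which forces $c_0(d+1)<1/d$) yields $2^{-T}\mathtt e^{-d-1}\ll\rho^{\beta_d}$ with $\beta_d:=\tfrac1d-c_0(d+1)>0$. Expanding $\widehat{\mu_{\mathtt e}}\overline{\widehat{\nu_{\mathtt e}}}$ and using $|\widehat{\varphi_{\mathtt e}}|,|\widehat\sigma|\le1$, the three cross terms are controlled by $\|g_\mu\|_{L^1}+\|g_\nu\|_{L^1}+\|g_\mu\|_\infty\|g_\nu\|_{L^1}$, so that
\[
\Bigl|\,\bigl\|\Lambda_{\mu_{\mathtt e},\nu_{\mathtt e}}(t)\bigr\|-\int|\widehat{\varphi_{\mathtt e}}(\xi)|^2\,\widehat\sigma(t\xi)\,d\xi\,\Bigr|\ \ll\ \rho^{\beta_d}.
\]

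\textbf{Lower bound for the main term.} Next I would split $\int|\widehat{\varphi_{\mathtt e}}|^2\widehat\sigma(t\xi)\,d\xi$ over the regions $|\xi|\le 1/\mathtt e$, $\ 1/\mathtt e<|\xi|\le 2/\mathtt e$, and $|\xi|>2/\mathtt e$ (where $\widehat\psi(\mathtt e\xi)=0$, hence $\widehat{\varphi_{\mathtt e}}=0$). On the first region $\widehat\psi(\mathtt e\xi)=1$, so $\widehat{\varphi_{\mathtt e}}=\widehat\varphi$, and $t|\xi|\le t/\mathtt e=\delta<\tfrac14$, whence $\widehat\sigma(t\xi)\ge 1-\delta$ by \eqref{sigma-hat-bound}; on the second region the integrand is at most $|\widehat\varphi(\xi)|^2$ in modulus. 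This gives
\[
\int|\widehat{\varphi_{\mathtt e}}(\xi)|^2\widehat\sigma(t\xi)\,d\xi\ \ge\ (1-\delta)\,\mathbb F_{\varphi}(1/\mathtt e)-\int_{|\xi|>1/\mathtt e}|\widehat\varphi(\xi)|^2\,d\xi.
\]
Since $\varphi\in C_c^\infty([0,1]^d)$ with $\int\varphi=1$, Plancherel and Cauchy--Schwarz give $\mathbb F_\varphi(\infty)=\|\varphi\|_{L^2}^2\ge1$, so (as $\widehat\varphi$ is Schwartz) $\mathbb F_\varphi(1/\mathtt e)\to\|\varphi\|_{L^2}^2$ and $\int_{|\xi|>1/\mathtt e}|\widehat\varphi|^2\to0$ as $1/\mathtt e\to\infty$. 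Because $1/\mathtt e=\delta/t\ge\delta/b=\rho^{-c_0\kappa/2}\to\infty$ and $\delta=\rho^{c_0\kappa/2}\to0$ as $\rho\to0$, after shrinking $\varrho_0$ (in addition to \eqref{rho0}--\eqref{T-large}) one may assume $\mathbb F_\varphi(1/\mathtt e)\ge\tfrac12$, $\int_{|\xi|>1/\mathtt e}|\widehat\varphi|^2\le\tfrac1{16}$, $\delta\le\tfrac18$, and that the implied constant in the previous display times $\rho^{\beta_d}$ is $\le\tfrac1{16}$. Combining the two displays yields, uniformly in $t\in[a,b]$,
\[
\bigl\|\Lambda_{\mu_{\mathtt e},\nu_{\mathtt e}}(t)\bigr\|\ \ge\ \bigl(1-\tfrac18\bigr)\tfrac12-\tfrac1{16}-\tfrac1{16}=\tfrac5{16}\ >\ 4c_d,
\]
the last step because $4c_d\le\pi/32<\tfrac18$ for every $d\ge2$ by \eqref{def-cd}. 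This is exactly \eqref{third-condition-mu-nu}; combined with Proposition \ref{mainprop-2} it completes the proof of Proposition \ref{mainprop-2'}.

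\textbf{Main obstacle.} The only genuinely delicate point is the bookkeeping in the reduction step: making the mollification-induced errors $\|g_\mu\|_{L^1},\|g_\nu\|_{L^1}$ small relative to the fixed threshold $4c_d$. This works solely because of the scale separation $2^{-T}\sim\rho^{1/d}$ against $\mathtt e\gtrsim a=\rho^{c_0}$ built into the parameter choices, with the inequality $c_0(d+1)<1/d$ guaranteed by \eqref{what-is-c0}; everything else is a routine unwinding of the already-established Lemma \ref{spectral-gap-lemma}(a), \eqref{sigma-hat-bound}, and Proposition \ref{Lambda-prop}.
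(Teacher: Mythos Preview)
Your proof is correct and takes a genuinely different route from the paper. The paper works in physical space: it expands $\Lambda[\mu_{\mathtt e},\nu_{\mathtt e}](t)$ bilinearly as a double sum over $(Q,Q')\in\mathscr D_T\times\mathscr D_T$, uses the nonnegativity $\psi\ge 0$ to discard all but the ``near-diagonal'' pairs $(Q,Q')$ sharing a common ancestor at scale $2^{-M}\sim\mathtt e$, and then proves an auxiliary lemma (Lemma \ref{QQ'e-lemma}) bounding each surviving term $\Lambda[\mathtt m_{Q,\mathtt e},\mathtt n_{Q',\mathtt e}](t)$ from below via the physical-space inner product $\langle\mathtt m_{Q,\mathtt e},\mathtt n_{Q',\mathtt e}\rangle$. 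You instead work entirely in Fourier space: you observe that Lemma \ref{spectral-gap-lemma}(a) already says $\widehat\mu$ and $\widehat\nu$ are each within $O(2^{-T}|\xi|)$ of the \emph{same} fixed $\widehat\varphi$, so after the frequency cutoff at $2/\mathtt e$ induced by $\psi_{\mathtt e}$, the configuration integral is close to $\int|\widehat{\varphi_{\mathtt e}}|^2\widehat\sigma(t\xi)\,d\xi$, which you bound below via $\|\varphi\|_{L^2}^2\ge 1$ and the smoothness of $\varphi$. Your argument is shorter, avoids the auxiliary Lemma \ref{QQ'e-lemma} entirely, and makes the mechanism transparent: both $\mu$ and $\nu$ are Fourier-close to the same absolutely continuous reference, so their mollified cross-integral is close to the self-integral of the reference. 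The paper's approach, by contrast, never compares $\mu$ with $\nu$ directly and instead exploits the positivity of $\psi$ to reduce to a purely local overlap estimate; this uses slightly less about the specific structure of the weights $w(Q)$. A minor cost of your route is the need to shrink $\varrho_0$ further (to make $\mathbb F_\varphi(\delta/b)$ large and the tail $\int_{|\xi|>\delta/b}|\widehat\varphi|^2$ small), but this is harmless since $\varrho_0$ is already permitted to depend on $d,N,\alpha,\kappa$.
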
 
\begin{proof}
Let us recall from \eqref{def-Lambda} the definition of $\Lambda_{\mu, \nu}$ and the representation \eqref{mass-Lambda-mu-nu} of $||\Lambda_{\mu, \nu}(t)||$. In this proof we will denote $\Lambda_{\mu, \nu}$ by $\Lambda[\mu, \nu](t)$ to emphasize the role of the measures $\mu, \nu$. Let us set $\mathtt m_{Q} := {\overline{\mu}_Q}/{||\overline{\mu}_Q||}$, $\mathtt n_{Q} := {\overline{\nu}_Q}/{||\overline{\nu}_Q||}$ in \eqref{munu-sum}. Since $\Lambda[\cdot, \cdot]$ is linear in each argument, the expression \eqref{munu-sum} yields 
\begin{equation} \Lambda[\mu_{\mathtt e}, \nu_{\mathtt e}](t) =  \sum_{Q, Q' \in \mathscr{D}_T} w(Q) w(Q') \Lambda[\mathtt m_{Q, \mathtt e}, \mathtt n_{Q', \mathtt e}](t) \label{QQ'sum} \end{equation} 
where $\mathtt m_{Q, \mathtt e} := \mathtt m_{Q} \ast \psi_{\mathtt e}$, $\mathtt n_{Q', \mathtt e}:= \mathtt n_{Q'} \ast \psi_{\mathtt e}$. 
The function $\psi$ is non-negative by assumption \eqref{psi-conditions}, therefore the same is true for each summand in the above sum. We now fix an integer $M$ such that
\begin{equation} 
2^M \in \Bigl[\frac{8 \sqrt{d}}{\mathtt e}, \frac{16 \sqrt{d}}{\mathtt e} \Bigr]. \label{choice-of-M}
\end{equation} 
Let us note that $M \ll T$; indeed for $t \in [a, b]$, and $a = \rho^{c_0}$ selected as in Section \ref{parameter-selection-section}, 
\[ 2^M \leq \frac{16 \sqrt{d}}{\mathtt e}\leq \frac{C_d \delta}{a} \leq \frac{C_d}{a} \leq C_d \rho^{-c_0} \leq C_d 2^{c_0dT} \ll 2^{T},  \] 
where the last two inequalities in the chain above follow from \eqref{what-is-T} and \eqref{what-is-c0} respectively. One can therefore define  
\begin{equation} 
\mathscr{E}_M := \Bigl\{(Q, Q') \in \mathscr{D}_T \times \mathscr{D}_T : \exists \mathtt Q \in \mathscr{D}_M \text{ such that } Q, Q' \subseteq \mathtt Q \Bigr\}.
\end{equation}
Recall that $\mathscr{D}_M$ denotes all the dyadic descendants $\mathtt Q$ of $[0,1]^d$ of generation $M$; $\ell(\mathtt Q) = 2^{-M}$. Thus $\mathscr{E}_M$ consists of all pairs $(Q, Q') \in \mathscr{D}_T \times \mathscr{D}_T$ with the property that $Q,Q'$ have a common ancestor at generation $M$. 
\vskip0.1in
\noindent The relevance of $\mathscr{E}_M$ will be clarified momentarily in Lemma \ref{QQ'e-lemma} below. Assuming the conclusions of this lemma for now, and replacing the sum in \eqref{QQ'sum} over $\mathscr{D}_T \times \mathscr{D}_T$ by the smaller sub-sum over $\mathscr{E}_M$, we arrive at the following lower bound:
\begin{align*}
\Lambda[\mu_{\mathtt e}, \nu_{\mathtt e}](t) &=  \sum_{(Q, Q') \in \mathscr{D}_T \times \mathscr{D}_T} w(Q) w(Q') \Lambda[\mathtt m_{Q, \mathtt e}, \mathtt n_{Q', \mathtt e}](t) \\ 
&\geq \sum_{(Q, Q') \in \mathscr{E}_M} w(Q) w(Q') \Lambda[\mathtt m_{Q, \mathtt e}, \mathtt n_{Q', \mathtt e}](t) \\ 
&\geq \tilde{\gamma}_d \mathtt e^{-d} \sum_{\mathtt Q \in \mathscr{D}_M} \sum_{Q, Q' \subseteq \mathtt Q} w(Q) w(Q') \quad \text{(from \eqref{QQ'-partc-eqn} in Lemma \ref{QQ'e-lemma})}\\
&  \geq \tilde{\gamma}_d \mathtt e^{-d} \sum_{\mathtt Q \in \mathscr{D}_M} \bigl[ w(\mathtt Q) \bigr]^2 
\geq \tilde{\gamma}_d \mathtt e^{-d} 2^{-Md} \geq \bigl(\frac{\gamma_d}{16 \sqrt{d}} \bigr)^{-d} \tilde{\gamma}_d =: 4c_d. 
\end{align*} 
Let us justify the steps above. The constant $\tilde{\gamma}_d$ appearing in the third step above is a positive dimensional constant, obtained from Lemma 
\ref{QQ'e-lemma}: $\tilde{\gamma}_d = \gamma_d^{3} 2^{-3(d+1)}$.  The fourth step is a consequence of the identity $\sum_{Q \subseteq \mathtt Q} w(Q) = w(\mathtt Q)$, which follows from the definition \eqref{weights-def} of $w(\cdot)$. The next step uses the Cauchy-Schwarz inequality: 
\[ \sum_{\mathtt Q \in \mathscr{D}_M}w(\mathtt Q) \leq \Bigl( \sum_{\mathtt Q \in \mathscr{D}_M} \bigl[ w(\mathtt Q) \bigr]^2 \Bigr)^{\frac{1}{2}} \#(\mathscr{D}_M)^{\frac12} \leq 2^{\frac{Md}{2}}  \Bigl( \sum_{\mathtt Q \in \mathscr{D}_M} \bigl[ w(\mathtt Q) \bigr]^2 \Bigr)^{\frac{1}{2}}, \] combined with the identity $\sum_{\mathtt Q \in \mathscr{D}_M} w(\mathtt Q) = \int \varphi = 1$, which in turn is a result of  the normalization \eqref{conditions-phi} of $\varphi$. The final step in the string of inequalities above uses \eqref{choice-of-M}, which gives $2^{Md} \sim \mathtt e^{-d}$. 
\end{proof} 

\begin{lem} \label{QQ'e-lemma}
Let $\mathtt e = t/\delta$, $\mathtt m_{Q, \mathtt e}$, $\mathtt n_{Q', \mathtt e}$ be as in the proof of Lemma \ref{third-condition-verification-lemma}.  
\begin{enumerate}[(a)] 
\item For any $Q, Q' \in \mathscr{D}_T$, 
\begin{equation}
\Bigl| \int \widehat{\mathtt m}_{Q, \mathtt e}(\xi) \overline{\widehat{\mathtt n}_{Q', \mathtt e}(\xi)} \bigl[ 1 - \widehat{\sigma}(t \xi) \bigr] \, d\xi \Bigr| \leq \gamma_d 2^{d+1}\delta \mathtt e^{-d}.  \label{QQ'-parta-eqn} \end{equation}  \label{QQ'-parta}  
\vskip0.1in 
\item For each $(Q, Q') \in \mathscr{E}_M$, 
\begin{equation} \label{QQ'e}
\langle \mathtt m_{Q, \mathtt e}, \mathtt n_{Q', \mathtt e}\rangle = \int \bigl[ \mathtt m_{Q} \ast \psi_{\mathtt e}(x) \bigr] \times \bigl[ \mathtt n_{Q'} \ast \psi_{\mathtt e}(x) \bigr] \, dx \geq 2^{-3d-2} \gamma_d^{3} \mathtt e^{-d}. 
\end{equation} \label{QQ'-partb}
\item There exists a dimensional constant $\delta_d > 0$ such that for $\delta \in (0, \delta_d)$ and $(Q, Q') \in \mathscr{E}_M$, 
\begin{equation} \Lambda[\mathtt m_{Q, \mathtt e}, \mathtt n_{Q', \mathtt e}](t) \geq \gamma_d^{3} 2^{-3d-3} \mathtt e^{-d}, \text{ where } \mathtt e = t/\delta. \label{QQ'-partc-eqn} \end{equation} \label{QQ'-partc}
Here $\gamma_d$ denotes the volume of the unit ball in $\mathbb R^d$. 
\end{enumerate}
\end{lem}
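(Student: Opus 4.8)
The three parts of Lemma~\ref{QQ'e-lemma} are linked: part~(\ref{QQ'-partc}) will follow at once by playing the lower bound of part~(\ref{QQ'-partb}) against the upper bound of part~(\ref{QQ'-parta}), so I would establish (a) and (b) first and then deduce (c).

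\textbf{Part (a): a frequency–localization estimate.} The plan here is entirely soft. Since $\widehat{\mathtt m}_{Q,\mathtt e} = \widehat{\mathtt m}_Q\cdot\widehat{\psi}(\mathtt e\,\cdot)$ and $\text{supp}\,\widehat{\psi}\subseteq B(0;2)$ by \eqref{psi-conditions}, the integrand in \eqref{QQ'-parta-eqn} is supported in $\{|\xi|\le 2/\mathtt e\}$. On that ball $t|\xi|\le 2t/\mathtt e = 2\delta$ (recall $\mathtt e = t/\delta$), so the elementary bound $|1-\widehat{\sigma}(t\xi)|\le t|\xi|$, which is the first line of the chain \eqref{sigma-hat-bound}, gives $|1-\widehat{\sigma}(t\xi)|\le 2\delta$ there. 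Combining this with the trivial bounds $|\widehat{\mathtt m}_{Q,\mathtt e}|,\,|\widehat{\mathtt n}_{Q',\mathtt e}|\le 1$ (as $\mathtt m_Q,\mathtt n_{Q'}$ are probability measures and $0\le\widehat{\psi}\le 1$) and the fact that $B(0;2/\mathtt e)$ has volume $\gamma_d 2^d\mathtt e^{-d}$, one obtains $2\delta\cdot\gamma_d 2^d\mathtt e^{-d} = \gamma_d 2^{d+1}\delta\mathtt e^{-d}$, which is \eqref{QQ'-parta-eqn}.

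\textbf{Part (b): the main estimate.} This is the crux, and the step where I expect the real work. If $(Q,Q')\in\mathscr{E}_M$, then $\mathtt m_Q$ and $\mathtt n_{Q'}$ are probability measures supported inside a common dyadic cube $\mathtt Q$ of generation $M$; by \eqref{choice-of-M}, $\ell(\mathtt Q)=2^{-M}\le \mathtt e/(8\sqrt d)$, so $\text{diam}(\mathtt Q)\le\mathtt e/8$ and every point of $\mathtt Q$ lies within $\mathtt e/16$ of its center $x_{\mathtt Q}$. The idea is that, after convolving with $\psi_{\mathtt e}$ (which lives at scale $\mathtt e$), both smooth densities are simultaneously of size $\gtrsim \mathtt e^{-d}$ on the small ball $B(x_{\mathtt Q};\mathtt e/16)$: for $x$ in that ball and $y\in\text{supp}\,\mathtt m_Q\subseteq\mathtt Q$ one has $|x-y|\le\mathtt e/8$, hence $\psi\bigl((x-y)/\mathtt e\bigr)\ge c_\psi:=\inf_{\overline{B(0;1/8)}}\psi$, so $\mathtt m_{Q,\mathtt e}(x)=\mathtt e^{-d}\int\psi\bigl((x-y)/\mathtt e\bigr)\,d\mathtt m_Q(y)\ge c_\psi\mathtt e^{-d}$, and likewise for $\mathtt n_{Q',\mathtt e}$. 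Integrating the product over $B(x_{\mathtt Q};\mathtt e/16)$, which has volume $\gamma_d(\mathtt e/16)^d$, yields $\langle\mathtt m_{Q,\mathtt e},\mathtt n_{Q',\mathtt e}\rangle\ge c_\psi^2\,\gamma_d\,16^{-d}\mathtt e^{-d}$, and this is $\ge 2^{-3d-2}\gamma_d^3\mathtt e^{-d}$ provided $c_\psi\ge 2^{(d-2)/2}\gamma_d$. \emph{The genuine obstacle is precisely this last inequality}: one must check that the mollifier $\psi$ fixed at \eqref{psi-conditions} can additionally be chosen so that $\inf_{B(0;1/8)}\psi\ge 2^{(d-2)/2}\gamma_d$. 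This is compatible with the normalization $\int\psi = \widehat{\psi}(0)=1$ — indeed the required threshold tends to $0$ as $d\to\infty$ — and a sufficiently concentrated Fej\'er/Bochner--Riesz-type kernel (for instance $\psi$ a rescaled $|h|^2$ with $\widehat h$ a smooth bump, so that $\widehat\psi\ge 0$ equals $1$ on $B(0;1)$ and is supported in $B(0;2)$) does the job; I would record this refinement where \eqref{psi-conditions} is introduced, so that the constant in \eqref{QQ'e} comes out as stated.

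\textbf{Part (c): combining (a) and (b).} With (a) and (b) available, (c) is immediate. The densities $\mathtt m_{Q,\mathtt e},\mathtt n_{Q',\mathtt e}$ are smooth and compactly supported, hence of finite $(d+1)/2$-energy, so Proposition~\ref{Lambda-prop} applies and the total mass of $\Lambda[\mathtt m_{Q,\mathtt e},\mathtt n_{Q',\mathtt e}](t)$ equals $\int\widehat{\mathtt m}_{Q,\mathtt e}(\xi)\overline{\widehat{\mathtt n}_{Q',\mathtt e}(\xi)}\,\widehat{\sigma}(t\xi)\,d\xi$. Writing $\widehat{\sigma}(t\xi)=1-\bigl(1-\widehat{\sigma}(t\xi)\bigr)$ and invoking Plancherel, this splits as $\langle\mathtt m_{Q,\mathtt e},\mathtt n_{Q',\mathtt e}\rangle$ minus the integral bounded in part~(a). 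By (b) the first term is $\ge 2^{-3d-2}\gamma_d^3\mathtt e^{-d}$, and by (a) the subtracted term is $\le \gamma_d 2^{d+1}\delta\mathtt e^{-d}$. Setting $\delta_d:=2^{-4d-4}\gamma_d^2$, for every $\delta\in(0,\delta_d)$ the subtracted term is at most $2^{-3d-3}\gamma_d^3\mathtt e^{-d}$, leaving $\Lambda[\mathtt m_{Q,\mathtt e},\mathtt n_{Q',\mathtt e}](t)\ge\bigl(2^{-3d-2}-2^{-3d-3}\bigr)\gamma_d^3\mathtt e^{-d}=2^{-3d-3}\gamma_d^3\mathtt e^{-d}$, which is \eqref{QQ'-partc-eqn}. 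The only thing to verify here is the arithmetic behind the choice of $\delta_d$.
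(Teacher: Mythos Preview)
Your parts (a) and (c) match the paper's proof essentially line for line; the only substantive point is part (b).

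In part (b) you correctly identify the mechanism, but you treat the lower bound $\inf_{B(0;1/8)}\psi\ge c_\psi$ as an \emph{additional} constraint on $\psi$, to be grafted onto \eqref{psi-conditions}. The paper instead \emph{derives} the needed bound directly from \eqref{psi-conditions}: since $\widehat\psi\ge 0$, $\widehat\psi\equiv 1$ on $B(0;1)$, and $\mathrm{supp}\,\widehat\psi\subseteq B(0;2)$, Fourier inversion gives $\psi(0)=\int\widehat\psi\ge\gamma_d$, and
\[
|\psi(x)-\psi(0)|=\Bigl|\int_{B(0;2)}\widehat\psi(\xi)(e^{ix\cdot\xi}-1)\,d\xi\Bigr|\le |x|\int_{B(0;2)}|\xi|\widehat\psi(\xi)\,d\xi\le 2|x|\,\psi(0),
\]
so $\psi(x)\ge\psi(0)/2\ge\gamma_d/2$ on $B(0;1/4)$. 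No extra hypothesis on $\psi$ is needed.

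There is also a slight loss in your geometry. You integrate over the fixed ball $B(x_{\mathtt Q};\mathtt e/16)$ of volume $\gamma_d 2^{-4d}\mathtt e^d$; the paper instead integrates the inner $x$-integral over $B(y;\mathtt e/8)$ (depending on $y\in Q$), which has volume $\gamma_d 2^{-3d}\mathtt e^d$. For such $x$ one has $|x-y|\le\mathtt e/8$ and $|x-z|\le|x-y|+\mathrm{diam}(\mathtt Q)\le\mathtt e/8+\mathtt e/8=\mathtt e/4$, so both arguments lie in $B(0;1/4)$ where $\psi\ge\gamma_d/2$. This yields
\[
\langle\mathtt m_{Q,\mathtt e},\mathtt n_{Q',\mathtt e}\rangle\ge\mathtt e^{-2d}\cdot\Bigl(\tfrac{\gamma_d}{2}\Bigr)^2\cdot\gamma_d\bigl(\tfrac{\mathtt e}{8}\bigr)^d=\gamma_d^3\,2^{-3d-2}\mathtt e^{-d},
\]
exactly the constant claimed. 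Your setup, even with $c_\psi=\gamma_d/2$ plugged in, would give $\gamma_d^3 2^{-4d-2}\mathtt e^{-d}$, off by $2^d$; hence your proposed constraint $c_\psi\ge 2^{(d-2)/2}\gamma_d$ is genuinely stronger than what \eqref{psi-conditions} guarantees (for $d=2$ it demands $\inf\psi\ge\pi$, which does not follow). The fix is to use the paper's $y$-dependent region and the derived bound on $\psi$.
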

\begin{proof} 
Let us start with part (\ref{QQ'-parta}). Since $\mathtt m_Q$ and $\mathtt n_{Q'}$ are both probability measures, their Fourier transforms are uniformly bounded above by 1. Also, the size and support condition \eqref{psi-conditions} on $\widehat{\psi}$ implies that $0 \leq \widehat{\psi}_{\mathtt e} \leq 1$ on supp$(\widehat{\psi}_{\mathtt e}) \subseteq \{\xi: |\xi| \leq 2\delta/t \} = B(0; 2\mathtt e^{-1})$. Combining these, we can estimate the integral in \eqref{QQ'-parta-eqn} as follows, 
\begin{align*}
\Bigl| \int \widehat{\mathtt m}_{Q, \mathtt e}(\xi) \overline{\widehat{\mathtt n}_{Q', \mathtt e}(\xi)} \bigl[ 1 - \widehat{\sigma}(t \xi) \bigr] \, d\xi \Bigr| &= \Bigl| \int \widehat{\mathtt m}_{Q}(\xi) \overline{\widehat{\mathtt n}_{Q'}}(\xi) \bigl[ \widehat{\psi}_{\mathtt e}(\xi) \bigr]^2 \bigl[ 1 - \widehat{\sigma}(t \xi) \bigr]  \, d\xi \Bigr| \\  &\leq \int \bigl[ \widehat{\psi}(\mathtt e \xi) \bigr]^2 \bigl|1 - \widehat{\sigma}(t \xi) \bigr| \, d\xi \\ &\leq 2\delta \int_{B(0; 2\mathtt e^{-1})} d\xi \leq2^{d+1} \gamma_d \delta \mathtt e^{-d}.  
\end{align*} 
Recall $\gamma_d$ denotes the volume of the unit ball in $\mathbb R^d$. At the penultimate step of the display above, we have used the estimate $|1 - \widehat{\sigma}(t \xi)| \leq 2 \delta$ for $t|\xi| \leq 2 \delta$, which has been proved in \eqref{sigma-hat-bound}.  This is the estimate claimed in \eqref{QQ'-parta-eqn}.
\vskip0.1in  
\noindent We continue to part (\ref{QQ'-partb}). It follows from the assumptions \eqref{psi-conditions} on $\psi$ that 
\begin{align} &\bigl| \psi(x) - \psi(0) \bigr| = \Bigl| \int_{B(0;2)} \widehat{\psi}(\xi) (e^{i x \cdot \xi} - 1) \, d\xi \Bigr| \nonumber \\
& \hskip0.9in \leq |x| \int_{B(0;2)} \widehat{\psi}(\xi) |\xi| d\xi  \leq 2 |x| \psi(0) \leq \frac{\psi(0)}{2} \text{ if } |x| \leq \frac{1}{4}, \nonumber \\
& \text{ hence } \psi(x) \geq \frac{\psi(0)}{2} \geq \frac{\gamma_d}{2}\text{ for } |x| \leq \frac{1}{4}. \label{lower-bound-psi} 
\end{align}
If $(Q, Q') \in \mathscr{E}_M$, there exists $\mathtt Q \in \mathscr{D}_M$ such that $Q, Q' \subseteq \mathtt Q$. Therefore,  for every  $y \in Q$ and  $z \in Q'$, the relation $|x - y| \leq \mathtt e/8$ implies  
\begin{align*}  
|x-z| &\leq  |x-y| + |y-z| \leq \frac{\gamma_d \mathtt e}{8} + \sqrt{d} 2^{-M} \; (\text{since } \text{diam}(\mathtt Q) = \sqrt{d} 2^{-M}) \\
&\leq \frac{\mathtt e}{8} + \frac{\mathtt e}{8} =  \frac{\mathtt e}{4} \;  \text{ from \eqref{choice-of-M}; as a result,} \\
\psi \Bigl(&\frac{x-y}{\mathtt e} \Bigr) \psi \Bigl(\frac{x-z}{\mathtt e} \Bigr) \geq \frac{\gamma_d^2}{4}\text{ for } |x - y| \leq \frac{\mathtt e}{8} \; \text{ in view of \eqref{lower-bound-psi}}. 
\end{align*}
Incorporating this pointwise estimate into the integrand,  we arrive at the following lower bound on the integral:
\begin{align*} 
\int \bigl[\mathtt m_{Q} \ast \psi_{\mathtt e}(x) \bigr] \times \bigl[ \mathtt n_{Q'} \ast \psi_{\mathtt e}(x) \bigr] \, dx &= \mathtt e^{-2d} \iint \Bigl[ \int \psi \Bigl(\frac{x-y}{\mathtt e} \Bigr) \psi \Bigl(\frac{x-z}{\kappa} \Bigr) dx \Bigr] d\mathtt m_Q(y) d\mathtt n_{Q'}(z)  \\   
&\geq \mathtt e^{-2d} \iint  \Bigl[ \int_{B(y; \frac{\mathtt e}{8})} \frac{\gamma_d^2}{4} dx \Bigr] d\mathtt m_Q(y) d\mathtt n_{Q'}(z) \\
&\geq \mathtt e^{-2d} \frac{\gamma_d^2}{4}  \text{ vol}\bigl(B(y; \frac{\mathtt e}{8})\bigr)\geq \gamma_d^3 2^{-3d-2}\mathtt e^{-d}, 
\end{align*}  
which is the estimate claimed in \eqref{QQ'e}. 
\vskip0.1in
\noindent Part (\ref{QQ'-partc}) follows directly from parts (\ref{QQ'-parta}) and (\ref{QQ'-partb}). Using Plancherel's theorem, we estimate
\begin{align*}
\Lambda[\mathtt m_{Q, \mathtt e}, \mathtt n_{Q', \mathtt e}](t) &=  \int \widehat{\mathtt m}_{Q, \mathtt e}(\xi) \overline{\widehat{\mathtt n}_{Q', \mathtt e}(\xi)} \widehat{\sigma}(t \xi) \, d\xi \\
&=  \int \widehat{\mathtt m}_{Q, \mathtt e}(\xi) \overline{\widehat{\mathtt n}_{Q', \mathtt e}(\xi)} d\xi + \int \widehat{\mathtt m}_{Q, \mathtt e}(\xi) \overline{\widehat{\mathtt n}_{Q', \mathtt e}(\xi)} \bigl[1 - \widehat{\sigma}(t\xi) \bigr] d\xi \\ 
&= \langle \mathtt m_{Q,\mathtt e}, \mathtt n_{Q', \mathtt e} \rangle  + \int \widehat{\mathtt m}_{Q, \mathtt e}(\xi) \overline{\widehat{\mathtt n}_{Q', \mathtt e}(\xi)} \bigl[1 - \widehat{\sigma}(t\xi) \bigr] d\xi \\ 
&\geq\bigl(  \gamma_d^{3}  2^{-3d-2} - 2^{d+1} \gamma_d \delta \bigr) \mathtt e^{-d} \geq \gamma_d^{3} 2^{-3d-3} \mathtt e^{-d},
\end{align*}
provided $\delta 2^{d+1} < \gamma_d^{2} 2^{-3d-3}$. This is the bound claimed in \eqref{QQ'-partc-eqn}. 
\end{proof}
 
\section{Fourier asymptotics and distance sets} \label{F-asymptotics-section}
 In this section, we prove Theorems \ref{F-asymptotics-thm} and \ref{quasiregular-distance-thm}. Both follow from the key proposition below. We will first give conditional proofs of the two theorems, then prove Proposition \ref{F-asymptotics-prop} at the end of this section. 
\begin{prop} \label{F-asymptotics-prop} 
 For $d \geq 2$, fix constants $c \in (0, \frac{d-1}{4})$ and $T_0 \geq 1$. Suppose that $\mu$ is a probability measure supported on $E \subseteq [0,1]^d$ for which $\mathbb F_{\mu}$ obeys \eqref{controlled-growth} with these values of $c$ and $T_0$. Given any $M \geq 4$, let $\delta > 0$ be a constant such that 
\begin{equation} \label{how-small-is-delta} 
\delta^{-2} > T_0, \qquad \delta + 4^d \delta^{\frac{d-1}{2} - 2c} < \frac{1}{M}. 
\end{equation}  
If $\{R_j : 1 \leq j \leq J \} \subseteq (T_0, \infty)$ is a finite sequence that obeys \eqref{Rj-condition} for this value of $\delta$, then there exists an index $j \in \{1, 2, \ldots, J\}$  for which 
\begin{equation}
\Lambda_{\mu}(t_j) \geq \frac{1}{M} \mathbb F_{\mu}(\delta t_j^{-1}) > 0, \; \text{ where } \; t_j := \delta R_{j}^{-1}.  
\label{F-asymptotics-prop-conclusion} 
\end{equation}  
\end{prop}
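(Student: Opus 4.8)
The plan is to analyse the configuration integral $\Lambda_{\mu}(t_j)$ at $t_j = \delta R_j^{-1}$ through its Fourier representation $\|\Lambda_{\mu}(t)\| = \int |\widehat{\mu}(\xi)|^2 \widehat{\sigma}(t\xi)\,d\xi$ from Proposition \ref{Lambda-prop}. The controlled-growth hypothesis \eqref{controlled-growth}, together with the near-origin integrability of $|\xi|^{-(d-1)/2}$ on $\mathbb{R}^d$, forces $\mathbb{I}_{\mu}((d+1)/2) < \infty$ (the tail series converges because $c < (d-1)/4 < (d-1)/2$), so the finite-energy hypotheses of that proposition hold; consequently $\Lambda_{\mu}(t)$ is a nonnegative measure and $t \in \Delta(E)$ as soon as its mass is positive. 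The goal is to show that
\[ \sum_{j=1}^{J} \Lambda_{\mu}(t_j) \;\ge\; \frac1M \sum_{j=1}^{J} \mathbb{F}_{\mu}(R_j) \;>\; 0, \]
after which a weighted-average pigeonhole over $j$ (using $\mathbb{F}_{\mu}(R_j) > 0$, cf. the lower bound \eqref{F-mu-lower-bound}) produces an index with $\Lambda_{\mu}(t_j) \ge \frac1M\mathbb{F}_{\mu}(R_j) = \frac1M\mathbb{F}_{\mu}(\delta t_j^{-1}) > 0$.

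For each fixed $j$, write $R = R_j$, $t = t_j = \delta/R$, and split the frequency integral into the low range $\{|\xi| \le R\}$, the medium range $\{R < |\xi| \le \delta^{-2}R\}$, and the high range $\{|\xi| > \delta^{-2}R\}$, on which $t|\xi|$ lies in $[0,\delta]$, $(\delta,\delta^{-1}]$ and $(\delta^{-1},\infty)$ respectively. On the low range $|1 - \widehat{\sigma}(t\xi)| \le \delta$, which gives the main term $\ge (1-\delta)\mathbb{F}_{\mu}(R)$. I would also use the elementary refinement $\widehat{\sigma}(\eta) \ge \tfrac12$ for $|\eta| \le 1$ (from $1-\widehat{\sigma}(\eta) \le \tfrac12|\eta|^2\int|x|^2\,d\sigma = \tfrac12|\eta|^2$): this makes the part of the medium integral over $\{R < |\xi| \le \delta^{-1}R\}$ nonnegative, so that only $\{\delta^{-1}R < |\xi| \le \delta^{-2}R\}$ can contribute with an unfavourable sign, where the trivial bound $|\widehat{\sigma}| \le 1$ bounds the contribution below by $-\big(\mathbb{F}_{\mu}(\delta^{-2}R) - \mathbb{F}_{\mu}(\delta^{-1}R)\big)$. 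For the high range I would decompose dyadically into shells $\{2^{k}\delta^{-2}R < |\xi| \le 2^{k+1}\delta^{-2}R\}$, where $t|\xi| \ge 2^{k}\delta^{-1}$ forces $|\widehat{\sigma}(t\xi)| \le C_d 2^{-k(d-1)/2}\delta^{(d-1)/2}$ while \eqref{controlled-growth} gives $\mathbb{F}_{\mu}(2^{k+1}\delta^{-2}R) \le (2^{k+1}\delta^{-2})^{c}\mathbb{F}_{\mu}(R)$ (legitimate since $\delta^{-2} > T_0$ and $R > T_0$); summing the geometric series, which converges precisely because $c < (d-1)/4$, bounds the high contribution in absolute value by at most $4^{d}\delta^{(d-1)/2 - 2c}\mathbb{F}_{\mu}(R)$.

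Now sum over $j = 1, \dots, J$. The low ranges give $\ge (1-\delta)\sum_j \mathbb{F}_{\mu}(R_j)$; the high-range errors total at most $4^{d}\delta^{(d-1)/2-2c}\sum_j \mathbb{F}_{\mu}(R_j)$, which combined with the $\delta$ from the low range stays below $\tfrac1M\sum_j\mathbb{F}_{\mu}(R_j)$ by \eqref{how-small-is-delta}; and the medium-range costs are $\sum_j\big(\mathbb{F}_{\mu}(\delta^{-2}R_j) - \mathbb{F}_{\mu}(\delta^{-1}R_j)\big)$. The decisive point is that the annuli $\{\delta^{-1}R_j < |\xi| \le \delta^{-2}R_j\}$, $1 \le j \le J$, are pairwise disjoint — because $R_{j+1} \ge \delta^{-2}R_j$, the first half of \eqref{Rj-condition} — so this medium cost collapses into the single quantity $\mathbb{F}_{\mu}(\delta^{-2}R_J)$, which is exactly what the second half of \eqref{Rj-condition} controls from below by $\sum_j \mathbb{F}_{\mu}(R_j)$. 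Balancing the three contributions against the budget $\tfrac1M\sum_j\mathbb{F}_{\mu}(R_j)$ then yields the displayed inequality.

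I expect the main obstacle to be this final balancing in the medium range: keeping the $\delta$-error, the $\delta^{(d-1)/2-2c}$-error, and the medium-annulus cost measured against \eqref{Rj-condition} all simultaneously within $\tfrac1M$ is precisely what dictates the numerology of \eqref{how-small-is-delta} and the factor $1 - \tfrac2M$. It is here that the sign refinement $\widehat{\sigma} \ge \tfrac12$ on $\{|t\xi| \le 1\}$ is genuinely needed — so that the lower half of each medium annulus is counted with the favourable sign rather than merely being absorbed as error — and, quite possibly, a slightly finer accounting of how the disjoint medium annuli sit inside $\{|\xi| \le \delta^{-2}R_J\}$ relative to the nested low ranges $\{|\xi| \le R_j\}$; the companion quantitative statement in Proposition \ref{F-asymptotics-prop} makes the requisite bookkeeping explicit.
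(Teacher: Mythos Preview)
Your approach is essentially the paper's proof: the same low/medium/high splitting with thresholds $t|\xi|\in[0,\delta]$, $(\delta,\delta^{-1}]$, $(\delta^{-1},\infty)$, the same lower bound $(1-\delta)\mathbb F_\mu(R_j)$ on the low piece, the same dyadic estimate $4^d\delta^{(d-1)/2-2c}\mathbb F_\mu(R_j)$ on the high piece via \eqref{controlled-growth}, and the same key observation that the medium annuli $\mathfrak G_2(t_j)$ are pairwise disjoint (from $R_{j+1}\ge\delta^{-2}R_j$) so their total cost is at most $\mathbb F_\mu(\delta^{-2}R_J)$. Your direct ``sum over $j$ then pigeonhole'' phrasing is the contrapositive of the paper's contradiction argument and yields the identical inequality $(1-\tfrac2M)\sum_j\mathbb F_\mu(R_j)$ versus $\mathbb F_\mu(\delta^{-2}R_J)$.

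The one genuine deviation is your sign refinement $\widehat\sigma(\eta)\ge\tfrac12$ on $|\eta|\le1$, which you use to declare the lower half $\{R_j<|\xi|\le\delta^{-1}R_j\}$ of each medium annulus nonnegative. This is correct but unnecessary: the paper simply bounds the \emph{entire} medium piece by $|\widehat\sigma|\le1$, getting $|\mathfrak J_2(t_j)|\le\mathbb F_\mu(\delta^{-2}R_j)-\mathbb F_\mu(R_j)$, and since these full annuli $\{R_j\le|\xi|\le\delta^{-2}R_j\}$ are already disjoint under \eqref{Rj-condition}, the summed medium cost is still at most $\mathbb F_\mu(\delta^{-2}R_J)$. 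Your refinement shrinks each individual annulus but does not improve the aggregate bound, so the extra step buys nothing; contrary to your last paragraph, it is not ``genuinely needed.''
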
 
\subsection{Proof of Theorem \ref{F-asymptotics-thm}, assuming Proposition \ref{F-asymptotics-prop}}   
The conclusion of Theorem \ref{F-asymptotics-thm} is a straightforward consequence of Proposition \ref{F-asymptotics-prop}. Given $d, c, T_0$ and $M$, the quantity $\delta_M$ is defined as the largest constant that obeys both the relations in \eqref{how-small-is-delta}. For $\delta < \delta_M$, the positivity of $\Lambda_{\mu}(t_j)$ for some $j \in \{1, \ldots, J\}$ is ensured by Proposition \ref{F-asymptotics-prop}. This in turn implies that $t_j \in \Delta(E)$, from Proposition \ref{Lambda-prop}(b).    
\qed
\subsection{Proof of Theorem \ref{quasiregular-distance-thm}, assuming Proposition \ref{F-asymptotics-prop}} 
Let $E$ be a quasi-regular Borel set, equipped with a locally uniformly $s$-dimensional probability measure $\mu$ obeying \eqref{Strichartz-upper-lower}. We will verify the hypotheses of Proposition \ref{F-asymptotics-prop} with this choice of $\mu$, setting $c = \frac{d-1}{8}$ and $M = 4$.  The condition \eqref{controlled-growth} follows from \eqref{Strichartz-upper-lower}: 
\[ \mathbb F_{\mu}(T_1 T_2) \leq C_0 \bigl(T_1T_2 \bigr)^{d-s} \leq C_0^{2} T_1^{d-s} \mathbb F_{\mu}(T_2) \leq T_1^{c} \mathbb F_{\mu}(T_2)   \]
provided $c = \frac{d-1}{8}> d-s$ and $T_1, T_2 > T_0 := C_0^{2/(c-d+s)}$. With this choice of $T_0$ and $c$, let us fix a positive constant $\delta$ depending only on $d$ and $C_0$ that obeys \eqref{how-small-is-delta}. Next, given parameters $0 < \tau_1 \leq \tau_2 < 1$, the integer $J_0$ is chosen based only on $d, C_0$ and $\tau_2$, as follows:
\begin{equation} \label{choice of J0}
J_0 > 2m C_0^2 \quad \text{ where $m$ is the smallest integer with } \tau_2^{-m} > \delta^{-2}. 
\end{equation} 
\vskip0.1in 
\noindent Let us fix a sequence $\{t_j : 1 \leq j \leq J_0\}$ that is $(\tau_1, \tau_2)$-lacunary. The condition \eqref{lacunarity-defn} implies
\begin{equation} \label{lacunary-upper-lower}
\tau_1^{j-1} \leq \frac{t_j}{t_1} \leq \tau_2^{j-1} \quad \text{ for all } j \geq 1. 
\end{equation} 
We will verify \eqref{Rj-condition} in a moment, with 
\begin{equation} J = \left\lfloor \frac{J_0}{m} \right\rfloor + 1, \quad R_ j = \delta t_{jm}^{-1} \text{ for } 1 \leq j \leq J, \quad d-s < \varepsilon_0,  \label{J} \end{equation}  for a sufficiently small constant $\varepsilon_0 > 0$. This would ensure that the hypotheses and therefore the conclusion of Proposition \ref{F-asymptotics-prop} hold; namely, at least one of the elements $\delta R_{j}^{-1} = t_{jm}$ lies in $\Delta(E)$. This is the conclusion of Theorem \ref{quasiregular-distance-thm}. 
\vskip0.1in
\noindent It remains to verify \eqref{Rj-condition}. The first condition in \eqref{Rj-condition} follows from \eqref{choice of J0}:
\[ \frac{R_{j+1}}{R_j} = \frac{t_{jm}}{t_{jm+m}} = \prod_{k=0}^{m-1} \frac{t_{jm+k}}{t_{jm+k+1}} \geq \tau_2^{-m} > \delta^{-2}. \]  
To verify the second condition in \eqref{Rj-condition}, let us estimate its left hand side using \eqref{Strichartz-upper-lower}:  
\begin{align} 
\mathfrak L := \sum_{j=1}^{J} \mathbb F_{\mu}(R_j) = \sum_{j=1}^{J} \mathbb F_{\mu}(\delta t_{jm}^{-1}) &\geq C_0^{-1} \sum_{j=1}^{J} \bigl( \delta t_{jm}^{-1} \bigr)^{d-s} \nonumber \\ 
&\geq 
C_0^{-1} \left(\delta{t_1}^{-1} \right)^{d-s} \sum_{j=1}^{J} \tau_2^{-(jm-1)(d-s)} \nonumber \\
&  \geq C_0^{-1} \left(\frac{\delta}{t_1}\right)^{d-s} \tau_2^{-(m-1)(d-s)} \sum_{j=1}^{J} \tau_2^{-m(d-s)(j-1)} \nonumber \\ 
&\geq C_0^{-1} \left(\frac{\delta}{t_1}\right)^{d-s}\tau_2^{-(m-1)(d-s)} \Bigl[ \frac{\tau_2^{-mJ(d-s)}-1}{\tau_2^{-m(d-s)}-1} \Bigr] =: \mathfrak{L}_0, \label{lhs} 
\end{align}  
where the second step in the display above follows from \eqref{lacunary-upper-lower}. On the other hand, 
\begin{equation} 
\mathfrak R := \mathbb F_{\mu}(\delta^{-2} R_J) \leq C_0 \bigl(\delta^{-2} R_J \bigr)^{d-s} \leq C_0 \bigl(\delta t_{mJ} \bigr)^{s-d} \leq C_0 \left(\frac{\tau_1^{1-mJ}}{t_1 \delta} \right)^{d-s} =: \mathfrak{R}_0, \label{rhs}  \end{equation}  
using \eqref{Strichartz-upper-lower} and \eqref{lacunary-upper-lower} again. Combining \eqref{lhs} and \eqref{rhs}, we note
\[  \frac{\mathfrak L}{\mathfrak R} \geq \frac{\mathfrak L_0}{\mathfrak R_0} = C_0^{-2} \delta^{2(d-s)} \tau_2^{-(m-1)(d-s)} \tau_1^{(mJ-1)(d-s)} \Bigl[ \frac{\tau_2^{-mJ(d-s)}-1}{\tau_2^{-m(d-s)}-1} \Bigr]. \] 
Thus $\mathfrak L_0/\mathfrak R_0$ is independent of $t_1$, but depends on $\tau_1, \tau_2, C_0$ (since $\delta, J$ depend on them) and $(d-s)$. Holding all the parameters except $s$ fixed, we find that
\[ \frac{\mathfrak L}{\mathfrak R} \geq \frac{\mathfrak L_0}{\mathfrak R_0} \longrightarrow \frac{J}{C_0^2} \quad \text{ as } s \nearrow d.\] 
The choice of $J_0$ and $J$ from \eqref{choice of J0} and \eqref{J} dictate that $J/C_0^2 > J_0/(mC_0^2) > 2$. Therefore, there exists a constant $\varepsilon_0 \in (0, \frac{d-1}{8})$, depending on $\tau_1, \tau_2, d$ and $C_0$ such that $\mathfrak L/\mathfrak R > \frac{1}{2}$ for $d-s < \varepsilon_0$. Unravelling the inequalities related to $\mathfrak L$ and $\mathfrak R$ from \eqref{lhs} and \eqref{rhs}, we reach the second inequality claimed in \eqref{Rj-condition}, with $M=4$. This choice of $\varepsilon_0$ appears in the statement of Theorem \ref{quasiregular-distance-thm}.  
\qed
\subsection{Proof of Proposition \ref{F-asymptotics-prop}} 
\noindent Let $\delta \gg t > 0$ be any two numbers such that 
\begin{equation} \label{delta-T}
\delta^{-2} > T_0, \quad \text{ and } \quad \frac{\delta}{t} > T_0. 
\end{equation} 
As in the proof of Proposition \ref{mainprop-1} in Section \ref{mainprop-1-section}, we decompose 
\begin{equation}  \label{Lambda-decomp-J}
||\Lambda_{\mu}(t) || = \int \bigl| \widehat{\mu}(\xi) \bigr|^2 \widehat{\sigma}(t \xi) \, d\xi =  \mathfrak J_1(t) + \mathfrak J_2(t) + \mathfrak J_3(t). 
\end{equation}    
Here $\mathfrak J_j(t)$ is an integral with the same integrand as $||\Lambda_{\mu}(t)||$ but evaluated on $\mathfrak G_j(t)$: 
\[\mathfrak G_1(t) := \bigl\{\xi \in \mathbb R^d : t|\xi| \leq \delta \bigr\}, \quad  \mathfrak G_2(t) := \bigl\{\xi : \delta \leq t|\xi| \leq \delta^{-1} \bigr\}, \quad  \mathfrak G_3(t) := \bigl\{\xi \in \mathbb R^d : t|\xi| \geq \delta^{-1} \bigr\}. \] 
The steps that led to the bound \eqref{I1-est} still apply, giving       
\begin{equation}  \label{J1-est} 
\mathfrak J_1(t) \geq (1 - \delta) \mathbb F_{\mu}(\delta t^{-1}) > 0,  
\end{equation}  
where the positivity follows from the fact that $\widehat{\mu}(0) = 1$. We estimate $\mathfrak J_3(t)$ using a partition of the domain $\mathfrak G_3(t)$ into dyadic annuli. Setting 
\[\mathfrak G_{3,k}(t) := \bigl\{\xi: 2^{k}\delta^{-1} \leq t|\xi| \leq 2^{k+1} \delta^{-1}\bigr\}, \] we obtain 
\begin{align}
\mathfrak J_3(t) &\leq \int_{\mathfrak G_3(t)} \bigl| \widehat{\mu}(\xi) \bigr|^2 \bigl(t |\xi| \bigr)^{-\frac{d-1}{2}} \, d\xi  \leq \sum_{k=0}^{\infty} \int_{\mathfrak G_{3,k}(t)}  \bigl| \widehat{\mu}(\xi) \bigr|^2 \bigl(t |\xi| \bigr)^{-\frac{d-1}{2}} \, d\xi \nonumber \\
&\leq \sum_{k=0}^{\infty} \bigl(2^k \delta^{-1}\bigr)^{-\frac{d-1}{2}} \int_{\mathfrak G_{3,k}(t)}  \bigl| \widehat{\mu}(\xi) \bigr|^2 \, d\xi \leq \sum_{k=0}^{\infty} \bigl(2^k \delta^{-1}\bigr)^{-\frac{d-1}{2}} \mathbb F_{\mu} \bigl(2^{k+1} \delta^{-1} t^{-1}\bigr) \nonumber \\ 
&\leq \sum_{k=0}^{\infty} \bigl(2^k \delta^{-1}\bigr)^{-\frac{d-1}{2}}  (2^{k+1} \delta^{-2})^c \mathbb F_{\mu} \bigl(\delta t^{-1}\bigr)
\leq 2^c \delta^{\frac{d-1}{2} - 2c} \mathbb F_{\mu} \bigl(\delta t^{-1}\bigr)  \sum_{k=0}^{\infty} 2^{k\bigl(-\frac{d-1}{2} + c\bigr)} \nonumber \\
&\leq  2^c \delta^{\frac{d-1}{2} - 2c} \mathbb F_{\mu} \bigl(\delta t^{-1}\bigr)  \sum_{k=0}^{\infty} 2^{-\frac{(d-1)k}{4}} \leq  2^c \delta^{\frac{d-1}{2} - 2c} \mathbb F_{\mu} \bigl(\delta t^{-1}\bigr)  \sum_{k=0}^{\infty} 2^{-\frac{k}{4}} \nonumber \\  &\leq 4^d \delta^{\frac{d-1}{2} - 2c} \mathbb F_{\mu} \bigl(\delta t^{-1}\bigr). \label{J3-est}
\end{align} 
In the third line in the displayed sequence above, we have used \eqref{controlled-growth} with $T_1 = 2^{k+1} \delta^{-2}$ and $T_2 = \delta t^{-1}$, the validity of this usage being ensured by the choice \eqref{delta-T} of $\delta$ and $t$. The assumption $c \in (0, \frac{d-1}{4})$ has been used in the subsequent step to ensure that the sum in $k$ is convergent, and bounded above by an absolute constant. This shows that the term $2^c \sum_{k\geq 0} 2^{-k/4}$ appearing in the penultimate step above is at most $4^{d}$. Combining \eqref{Lambda-decomp-J}, \eqref{J1-est} and \eqref{J3-est} yields 
\begin{equation} \label{Lambda-est-level-2}
\Lambda_{\mu}(t) \geq \bigl[1 - \delta - 4^d \delta^{\frac{d-1}{2} - 2c} \bigr]  \mathbb F_{\mu} \bigl(\delta t^{-1}\bigr) - |\mathfrak J_2(t)| \geq \Bigl(1 - \frac1M\Bigr)  \mathbb F_{\mu} \bigl(\delta t^{-1}\bigr) - |\mathfrak J_2(t)|,
\end{equation}  
where the last inequality follows from \eqref{how-small-is-delta}. The estimate in \eqref{Lambda-est-level-2} holds for all pairs $(\delta, t)$ satisfying \eqref{delta-T}; in particular, it holds for pairs $(\delta, t)$ with $\delta$ as in \eqref{how-small-is-delta} and $t = t_j$ with $t_j = \delta/R_j$ where $R_j$ obeys \eqref{Rj-condition}.
\vskip0.1in
\noindent Towards a contradiction, suppose if possible that 
\[ \Lambda_{\mu}(t_j) < M^{-1} \mathbb F_{\mu}(\delta t_j^{-1}) \quad \text{ for all $1 \leq j \leq J$.} \]
Substituting this into \eqref{Lambda-est-level-2} gives 
\[ \Bigl(1 - \frac{2}{M}\Bigr)  \mathbb F_{\mu}(\delta t_j^{-1})  \leq \bigl| \mathfrak J_2(t_j) \bigr| \quad \text{ for all $1 \leq j \leq J$.} \] Summing over all indices $j \in \{1, \ldots, J\}$, we arrive at
\begin{equation} \label{contra-stat} 
\Bigl(1 - \frac{2}{M} \Bigr) \sum_{j=1}^{J} \mathbb F_{\mu} \bigl(\delta t_j^{-1}\bigr) \leq \sum_{j=1}^{J} \bigl|\mathfrak J_2(t_j) \bigr| \leq  \sum_{j=1}^{J} \int_{\mathfrak G_{2}(t_j)} \bigl| \widehat{\mu}(\xi) \bigr|^{2} d\xi \leq \mathbb F_{\mu}(\delta^{-1} t_J^{-1}).  
\end{equation} 
The last step uses the disjointness of the annuli $\{\mathfrak G_2(t_j) : 1 \leq j \leq J \}$, as a result of which
\[ \bigsqcup_{j=1}^{J}  \mathfrak G_{2}(t_j) \subseteq \bigl\{\xi : |\xi| \leq \bigl(\delta t_J \bigr)^{-1}  \bigr\}. \]  This disjointness is a consequence of the  requirement $\delta^{-1} t_{j}^{-1} = \delta^{-2} R_j \leq R_{j+1} = \delta t_{j+1}^{-1}$, as per \eqref{Rj-condition}.  The relation \eqref{contra-stat} contradicts the second criterion in \eqref{Rj-condition}, proving the proposition. The positivity of the integral $\mathbb F_{\mu}(\delta t_j^{-1}) = \mathbb F_{\mu}(R_j)$ claimed in \eqref{F-asymptotics-prop-conclusion} follows from the fact that $R_j \geq T_0 \geq 1$, and $\xi \mapsto \widehat{\mu}(\xi)$ is continuous, with $\widehat{\mu}(0) = 1$. 
\qed
 \section{Appendix: A few proofs and counterexamples} \label{Appendix-1}
Certain facts were used without proof in earlier sections, most of them likely well-known to experts. This section provides more detailed explanation for the statements for which we could not find an easily citable reference. 
 \subsection{Sets of large Lebesgue measure have large distance sets} \label{Appendix-1-large-distance-sets}
\begin{proof}[Proofs of Theorems \ref{Boardman-Lemma} and \ref{Boardman-Corollary}]
Let us start with the proof of Theorem \ref{Boardman-Lemma}, which is based on the argument in \cite[Lemma II]{Boardman-1970}. Suppose $E, F \subseteq [0,1]^d$, $\min \{\lambda_d(E), \lambda_d(F) \} \geq 1 -\rho > \frac12$. Since $2(1-\rho) > 1$, we can choose a constant $c_{\rho} >0$ small enough so that $(1 + c_{\rho})^d < 2(1 - \rho)$. Then for any $x = (x_1, \ldots, x_d) \in [0, c_{\rho}]^d$, 
\begin{align*} &(x + F) \cup E \subseteq \prod_{j=1}^{d} \bigl[0, 1 + x_j \bigr] \subseteq \prod_{j=1}^{d} [0, 1+c_{\rho}] \; \text{ and hence } \\ 
&\lambda_d \bigl[(x + F) \cup E \bigr] \leq (1 + c_{\rho})^d < 2(1 - \rho) \leq \lambda_d(x + F) + \lambda_d(E).     \end{align*}
But this implies $\lambda_d \bigl[ (x + F) \cap E \bigr] > 0$, and hence $(x + F) \cap E$ is non-empty, i.e., $x \in E-F$.  Since this is true for all $x \in [0, c_{\rho}]^d$, we conclude that $E - F \supseteq [0, c_{\rho}]^d$. This in turn implies $\Delta(E, F) \supseteq \Delta ([0, c_{\rho}]^d) = [0, c_{\rho}\sqrt{d}]$. 
\vskip0.1in
\noindent Theorem \ref{Boardman-Corollary} follows from Theorem \ref{Boardman-Lemma}. Given a set $A \subseteq \mathbb R^d$ obeying \eqref{Boardman-condition}, one can find a sequence $R_n \nearrow \infty$ and $x_n \in \mathbb R^d$ so that 
\[ \frac{\lambda_d((A \cap Q(x_n;R_n))}{\lambda_d \bigl(Q(x_n;R_n) \bigr)} \geq  1 - \rho > \frac12 \quad \text{ for all $n$.} \]  
Let us denote by $\mathbb T_n$ the linear transformation that maps $Q(x_n; R_n)$ onto $[0,1]^d$: \[ \mathbb T_n: \mathbb R^d \rightarrow \mathbb R^d, \quad  \mathbb T_n(y) := \frac{y - x_n}{R_n} \quad \text{ and } \quad E_n := \mathbb T_n(A \cap Q(x_n;R_n)). \]   
 Then $\lambda_d(E_n) \geq 1 -\rho$. Invoking Theorem \ref{Boardman-Lemma}, we obtain $ \Delta(E_n) \supseteq [0, c_{\rho}\sqrt{d}]$, and hence
\[ \Delta(A) \supseteq \bigcup_{n=1}^{\infty} \Delta \bigl[\mathbb T_n^{-1}(E_n) \bigr] \supseteq \bigcup_{n=1}^{\infty} R_n \Delta(E_n)  \supseteq \bigcup_{n=1}^{\infty} \bigl[0, c_{\rho} \sqrt{d} R_n\bigr] = [0, \infty), \] 
as claimed. 
\end{proof} 

\subsection{Sets with all sufficiently large distances} \label{Appendix-1-Bourgain} 
\begin{lem} 
For $d \geq 2$, let $A \subseteq \mathbb R^d$ be a set obeying \eqref{Bourgain-liminf-condition}. Then $A$ contains all sufficiently large distances. 
\end{lem}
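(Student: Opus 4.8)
The plan is to show that the hypothesis \eqref{Bourgain-liminf-condition} supplies exactly the ingredient that drives Bourgain's proof of Theorem \ref{all-large-dist-thm}, so that his argument applies with only a cosmetic change. Recall that the centred hypothesis \eqref{Bourgain-condition} enters that proof solely through its consequence that $A$ occupies a fixed proportion $\varepsilon_0 > 0$ of balls of arbitrarily large radius; the \emph{location} of those balls is irrelevant to the subsequent covering and pair-correlation estimates. I would therefore first deduce from \eqref{Bourgain-liminf-condition}, by a routine averaging over sub-balls, the following uniform statement: there is $\varepsilon_1 > 0$ (one may take $\varepsilon_1 = \varepsilon_0/2$, where $\varepsilon_0$ denotes the value of the $\limsup$ in \eqref{Bourgain-liminf-condition}) such that for \emph{every} $r > 0$ there exists $z = z(r) \in \mathbb R^d$ with $\lambda_d(A \cap B(z;r)) \ge \varepsilon_1 \lambda_d(B(z;r))$. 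Indeed, if $\lambda_d(A \cap B(x;R)) \ge \varepsilon_0\lambda_d(B(x;R))$ and $r \le \varepsilon_0 R/(4d)$, then averaging $\lambda_d(A \cap B(\cdot\,;r))$ over centres in $B(x;R-r)$ yields a mean value at least $\tfrac{\varepsilon_0}{2}\lambda_d(B(0;r))$, so some centre works; since the radii $R_n$ in \eqref{Bourgain-liminf-condition} tend to infinity, the constraint $r \le \varepsilon_0 R_n/(4d)$ is met for every $r$ once $n$ is large enough.

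With this in hand I would run Bourgain's argument unchanged, reading $B(z(N);N)$ for his concentric ball $B(0;N)$. Concretely, for a prescribed large distance $\rho$ one partitions $B(z(N);N)$ into congruent sub-cubes of side-length comparable to $\rho$ (times a small constant depending only on $\varepsilon_1$ and $d$); a proportion $\gtrsim \varepsilon_1$ of these sub-cubes retain density $\ge \varepsilon_1/2$, and on each such sub-cube the local Szemer\'edi-type input — the $k=2$ case of \cite[Proposition 3]{1986Bourgain}, equivalently the statement obtained by applying Proposition \ref{F-asymptotics-prop} (or Theorem \ref{F-asymptotics-thm}) to the normalized Lebesgue measure of the sub-cube, as in the remarks following Theorem \ref{F-asymptotics-thm} — forces $\Delta(A)$ to meet a prescribed lacunary block of distances near $\rho$. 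Bourgain's bookkeeping across scales then upgrades this to the inclusion $\Delta(A) \supseteq [M,\infty)$ for a suitable $M = M(A)$. Throughout, the non-vanishing of the relevant configuration integrals is guaranteed by Proposition \ref{Lambda-prop}, since each truncation $A \cap B(z(N);N)$, and each of its sub-cubes, is a bounded set of positive Lebesgue measure, hence carries a probability measure of finite $\tfrac{d+1}{2}$-energy (here $\tfrac{d+1}{2} < d$ because $d \ge 2$).

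The substantive content is the cross-scale combinatorics of \cite{1986Bourgain}, and this is precisely where the \emph{uniform} lower bound $\varepsilon_1 > 0$ on the density is indispensable: the passage from ``$\Delta(A)$ meets every long lacunary block'' to ``$\Delta(A)$ contains every sufficiently large number'' genuinely fails if the density is allowed to decay, as Rice's construction in \cite{2020Rice} shows. I would not reproduce this step; instead I would point out that it invokes the ball $B(0;N)$ only through its density, so that the substitution $B(0;N) \rightsquigarrow B(z(N);N)$ — legitimate by the averaging step of the first paragraph — leaves every estimate intact. The main obstacle is thus expository rather than mathematical: locating the one point in Bourgain's proof where concentricity is used and verifying that the substitution propagates without further modification.
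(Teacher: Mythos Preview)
Your proposal is correct, but it takes a longer route than the paper. The paper does not re-enter Bourgain's full argument at all; it simply isolates Bourgain's local result (Proposition~\ref{Bourgain-prop}, i.e.\ \cite[Proposition~3]{1986Bourgain} for $k=2$) and runs a three-line contradiction: if $A$ misses an infinite lacunary sequence $\mathtt d_1 < \mathtt d_2 < \cdots$, rescale one ball $B(x_n;R_n)$ with $R_n > \mathtt d_J$ to the unit ball, obtaining $E_n \subseteq [0,1]^d$ with $\lambda_d(E_n) > \kappa$ and a $J$-term lacunary sequence $t_j = \mathtt d_{J+1-j}/R_n \in (0,1)$ that $\Delta(E_n)$ avoids, contradicting Proposition~\ref{Bourgain-prop}. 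No averaging to every scale, no sub-cube partition, no ``bookkeeping across scales'' is needed once the local proposition is in hand.

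Your averaging step---promoting the $\limsup$ hypothesis to density $\ge \varepsilon_1$ at \emph{every} radius, with floating centre---is a correct and pleasant observation, but it is strictly stronger than what the lemma requires and is not used in the paper. Your subsequent plan to re-run Bourgain's global argument with translated balls would also work, but it obliges the reader to revisit \cite{1986Bourgain} and audit each step for translation-invariance, whereas the paper's route black-boxes all of that into the single citation of Proposition~\ref{Bourgain-prop}. In short: same ingredients, but the paper applies the local lemma once to a rescaled set, while you rebuild the global theorem around a shifted family of balls.
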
 
\begin{proof} 
The proof is a straightforward consequence of the following proposition that appears in \cite{1986Bourgain}, which is also key to the proof of Theorem \ref{all-large-dist-thm}. We use this proposition in the following way. The assumption \eqref{Bourgain-liminf-condition} ensures the existence of $\kappa > 0$ and a sequence $R_n \nearrow \infty$ and $x_n \in \mathbb R^d$ with
\begin{align*} &\frac{\lambda_d(A \cap B(x_n;R_n))}{\lambda_d(B(x_n;R_n)} > \kappa; \;\text{ therefore } \lambda_d(E_n) > \kappa \text{ where } \\ 
&E_n := \mathbb T_{n} \bigl[A \cap B(x_n;R_n) \bigr] \subseteq B(0;1).   
\end{align*}
Here, as in Section \ref{Appendix-1-large-distance-sets}, $\mathbb T_{n} (y) = (y-x)/R_n$ is the linear transformation mapping $B(x_n;R_n)$ onto $B(0;1)$. Let $J=J(\kappa, d)$ be the integer provided by Proposition \ref{Bourgain-prop} corresponding to this value of $\kappa$. 
\vskip0.1in
\noindent Towards a contradiction, suppose if possible that $A$ avoids infinitely many large distances; i.e., there exists an infinite sequence $\mathtt d_j \nearrow \infty$ with $\Delta(A) \cap \bigl\{\mathtt d_j : j \geq 1 \bigr\} = \emptyset$. Without loss of generality, passing to a subsequence if necessary, we may assume that $\mathtt d_{j+1} > 2 \mathtt d_j$. Let us fix any sufficiently large choice of $n$ such that $R_n > \mathtt d_{J}$, and set $t_j = \mathtt d_{J + 1 -j}/R_n$. Then the set $E_n$ and the sequence $\{t_j : 1 \leq j \leq J \}$ obey the hypotheses, and therefore the conclusion, of Proposition \ref{Bourgain-prop}. In other words, $t_j \in \Delta(E_n)$ for some $j \in \{1, \ldots J\}$. But this implies $R_n t_j = \mathtt d_{J+1-j} \in \Delta(A)$, contradicting the assumption that none of the numbers $d_{j}$ lie in the distance set $\Delta(A)$.  
\end{proof} 
\begin{prop}\cite[Proposition 3 and Section 2]{1986Bourgain} \label{Bourgain-prop}
Let $d \geq 2$. Given any $\kappa > 0$, there exists a positive integer $J = J(\kappa, d)$ with the following property. 
\vskip0.1in
\noindent For every set $E \subseteq [0,1]^d$ with $\lambda_d(E) > \kappa$, and any sequence $\{t_j : j \geq 1\} \subseteq (0,1)$ with $t_{j+1} < t_j/2$, one can find $1 \leq j \leq J$ such that 
\[ \Lambda_{\mu}(t_j) > \frac{1}{2}. \]  
Here $\mu = \mathtt 1_E/\lambda_d(E)$ is the normalized Lebesgue measure on $E$ and $\Lambda_{\mu}(t)$ is the distance-identifying integral defined in \eqref{def-Lambda}. In particular, $t_j \in \Delta(E)$. 
\end{prop}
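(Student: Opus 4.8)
The statement is Bourgain's local Szemer\'edi-type theorem in the case of pairs, and the most economical route is to derive it from Proposition \ref{F-asymptotics-prop} applied to the normalized Lebesgue measure $\mu = \mathbf 1_E/\lambda_d(E)$. The point is that for this particular $\mu$ every hypothesis of Proposition \ref{F-asymptotics-prop} is automatic: by Plancherel $\mathbb F_{\mu}(T) \leq \|\widehat{\mu}\|_2^2 = \lambda_d(E)^{-1} \leq \kappa^{-1}$ for all $T$, so $\mathbb F_{\mu}$ is uniformly bounded above; and by the continuity of $\widehat{\mu}$ with $\widehat{\mu}(0)=1$ (exactly the computation in \eqref{F-mu-lower-bound}) one has $\mathbb F_{\mu}(T) \geq 4c_d$ for $T \geq \tfrac1{2\sqrt d}$, so $\mathbb F_{\mu}$ is bounded below by a dimensional constant. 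Since $\mathbb F_{\mu}$ is squeezed between two positive constants, the controlled-growth condition \eqref{controlled-growth} holds for any fixed $c\in(0,\tfrac{d-1}{4})$, say $c=\tfrac{d-1}8$, once $T_0 = T_0(\kappa,d)$ is chosen large enough that $T_0^{\,c} \geq \kappa^{-1}/(4c_d)$; one then fixes $M=4$ and any $\delta = \delta(\kappa,d)$ small enough to satisfy \eqref{how-small-is-delta}. Finally, since $E\subseteq[0,1]^d$, the measure $\mu$ has finite $\tfrac{d+1}2$-energy, $\mathbb I_{\mu}(\tfrac{d+1}2)\le C_d\kappa^{-2}$, so $\Lambda_{\mu}(t)$ is well-defined and Proposition \ref{Lambda-prop}(b) applies.

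\textbf{Key steps.} Given the sequence $\{t_j\}_{j\ge 1}\subseteq(0,1)$ with $t_{j+1}<t_j/2$, one must convert it into an admissible input sequence for Proposition \ref{F-asymptotics-prop}, whose condition \eqref{Rj-condition} asks for consecutive ratios at least $\delta^{-2}$. So one passes to the sub-sequence keeping every $m$-th term, $m := \lceil 2\log_2(1/\delta)\rceil$ (so $2^m\ge\delta^{-2}$), and discards an initial block of $l_1 = l_1(\kappa,d)$ indices so that the surviving parameters $R_l := \delta\, t_{j_l}^{-1}$ also satisfy $R_l > T_0$ — possible because $t_j<2^{-(j-1)}$ decays geometrically. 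Keeping $\widetilde J := \lfloor (8c_d\kappa)^{-1}\rfloor + 1$ of them (which forces the choice $J = J(\kappa,d) := l_1 + (\widetilde J-1)m$), the $R_l$ obey $R_{l+1}\ge\delta^{-2}R_l$, and the summation requirement in \eqref{Rj-condition} is immediate from the two-sided bounds on $\mathbb F_\mu$: $\sum_{l}\mathbb F_\mu(R_l)\ge 4c_d\widetilde J > \tfrac12\kappa^{-1}\ge\tfrac12\mathbb F_\mu(\delta^{-2}R_{\widetilde J})$. Proposition \ref{F-asymptotics-prop} then yields an index $l$ with $\Lambda_\mu(t_{j_l})\ge M^{-1}\mathbb F_\mu(\delta\, t_{j_l}^{-1}) = \tfrac14\mathbb F_\mu(R_l)\ge c_d>0$, and Proposition \ref{Lambda-prop}(b) upgrades this to $t_{j_l}\in\Delta(E)$ with $j_l\le J$. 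This already supplies everything the surrounding lemma needs (namely $t_j\in\Delta(E)$ for some $j\le J$).

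\textbf{Main obstacle.} The route above produces $\Lambda_\mu(t_j)\ge c_d$, whereas the sharp constant $\tfrac12$ in the statement is Bourgain's. To recover it one re-runs the three-region decomposition of $\Lambda_\mu(t) = \mathfrak J_1(t)+\mathfrak J_2(t)+\mathfrak J_3(t)$ used in the proofs of Propositions \ref{mainprop-1} and \ref{F-asymptotics-prop}, but tracking constants: the low-frequency piece $\mathfrak J_1(t)$ equals $(1-o(1))\mathbb F_\mu(\eta/t)$; the high-frequency piece is $O_d\!\big(\eta^{(d-1)/2}\lambda_d(E)^{-1}\big)$ via $|\widehat\sigma(\xi)|\lesssim|\xi|^{-(d-1)/2}$ together with $\mathbb I_\mu(\tfrac{d+1}2)\lesssim\kappa^{-2}$; and the intermediate piece $\mathfrak J_2(t)$ is controlled in absolute value by the mass of $|\widehat\mu|^2$ in the annulus $\{\eta/t<|\xi|\le\eta^{-1}/t\}$. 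The crucial observation — and this is the only genuinely delicate point — is that along the lacunary sub-sequence these annuli are pairwise disjoint, so the corresponding masses sum to at most $\|\widehat\mu\|_2^2=\lambda_d(E)^{-1}\le\kappa^{-1}$; hence for some $j\le J$ the intermediate contribution is negligible, and one then balances the remaining low- and high-frequency pieces, using $\lambda_d(E)\le 1$ (so $\|\widehat\mu\|_2^2\ge 1$), to force $\Lambda_\mu(t_j)>\tfrac12$. This quantitative balancing is exactly Bourgain's argument; everything else is bookkeeping with tools already developed in Sections \ref{configuration-integral-section}--\ref{F-asymptotics-section}.
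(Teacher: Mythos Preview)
The paper does not give its own proof of this proposition; it is quoted verbatim from Bourgain \cite[Proposition 3 and Section 2]{1986Bourgain}. What the paper does supply is Remark 3 after Theorem \ref{F-asymptotics-thm}, which sketches precisely your ``Overall strategy'' and ``Key steps'': for $\mu = \mathbf 1_E/\lambda_d(E)$ one has the two-sided bound $4c_d \le \mathbb F_\mu(T) \le \kappa^{-1}$, so \eqref{controlled-growth} and \eqref{Rj-condition} hold with $J$ depending only on $\kappa$ and $d$, and Proposition \ref{F-asymptotics-prop} yields $\Lambda_\mu(t_j) \ge M^{-1}\mathbb F_\mu(R_j) \ge c_d > 0$ for some $j \le J$. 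Your verification of these hypotheses is correct and matches the paper's sketch exactly; this is all that is needed for the lemma in Section \ref{Appendix-1-Bourgain}, which only uses $t_j \in \Delta(E)$.

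On the sharp constant $\tfrac12$: you correctly note that Proposition \ref{F-asymptotics-prop} alone gives only $\Lambda_\mu(t_j) \ge c_d$, not $\tfrac12$, and that recovering $\tfrac12$ requires going back to Bourgain's original balancing. Your outline of that balancing is accurate in spirit, but the phrase ``using $\|\widehat\mu\|_2^2 \ge 1$ to force $\Lambda_\mu(t_j) > \tfrac12$'' hides a genuine difficulty: the lower bound $\mathfrak J_1(t) \ge (1-\delta)\mathbb F_\mu(\delta/t)$ is only useful if $\mathbb F_\mu(\delta/t)$ is itself close to $\|\widehat\mu\|_2^2 \ge 1$, and the uniform-in-$E$ lower bound on $\mathbb F_\mu$ is only $4c_d \ll 1$. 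Closing this gap requires controlling the tail $\int_{|\xi|>\delta/t_j}|\widehat\mu|^2$ simultaneously with the annulus mass, which is where Bourgain's pigeonhole is sharper than a direct appeal to Proposition \ref{F-asymptotics-prop}. Since the paper itself defers this to \cite{1986Bourgain}, your treatment is consistent with the paper's.
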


\subsection{Distance sets containing arbitrarily small intervals} \label{Appendix-3} 
A remark on page \pageref{small-rho-needed} states that Theorems \ref{Boardman-Lemma} and \ref{mainthm-2} can only be true for small values of $\rho$. We prove this assertion here.    
\vskip0.1in
\noindent Let $\varrho \in (0,1)$ and $N \in \mathbb N$. Setting $\mathbb Z_N := \{0, 1, \ldots, N-1\}$, we define a set
 \begin{equation} \label{def-example-E} 
 E = E(\varrho, N) := \Bigl\{\frac{p}{N} + \frac{\varrho}{N}[0, 1]^d : p \in \mathbb Z_N^{d} \Bigr\} \text{ so that } \lambda_d(E) = \varrho^d > 0. \end{equation}  
For fixed $\varrho$, let us consider the family of sets 
\[ \mathscr{E}_{\varrho} = \{E(\varrho, N) :  N \in \mathbb N, N \geq 2 \}, \] 
with $E = E(\varrho, N)$ defined as in \eqref{def-example-E}. Every $E \in \mathscr{E}_{\varrho}$ has the property that $\lambda_d(E) = \varrho^d > 0$, independent of $N$. On the other hand, 
 \begin{equation} E - E = \Bigl\{\frac{p-p'}{N} + \frac{\varrho}{N}[-1,1]^d : p, p' \in \mathbb Z_N^d \Bigr\}. \label{what-is-E-E} 
 \end{equation} 
If $\varrho < \frac{1}{2}$, \eqref{what-is-E-E} shows that $E -E$  is a disjoint union of cubes of sidelength $2\varrho/N$ centred at $\mathbb Z_N^d/N$. 
Therefore, the largest cube centred at the origin and contained in $E -E$ has sidelength $\varrho/N$, which can be made arbitrarily small by letting $N \rightarrow \infty$. This can be expressed as follows: for $\varrho < \frac{1}{2}$, 
\begin{align} 
&\mathtt c_{\varrho} := \inf \bigl\{ \mathtt c(E) : E \subseteq [0,1]^d, \lambda_d(E)  = \varrho^d  \bigr\}  \nonumber \\ &\quad \; = \inf \bigl\{\mathtt c(E) : E \in \mathscr{E}_{\varrho} \bigr\} = \inf\bigl\{\frac{\varrho}{N} : N \geq 1\bigr\} = 0, \text{ where } \label{c0} \\ 
&\mathtt c(E) = \sup \bigl\{c > 0 : E - E \supseteq [0, c]^d \bigr\}. \nonumber 
\end{align} 
 The relation \eqref{what-is-E-E} also shows that
 \begin{equation}   
 \Delta(E) \subseteq \bigcup \Bigl\{ \mathbb I_r : r \in \mathbb Z_N^d - \mathbb Z_N^d \Big\}, \quad \text{ where } \mathbb I_r := \frac{|r|}{N} + \Bigl[ -\frac{\varrho \sqrt{d}}{N}, \frac{\varrho \sqrt{d}}{N} \Bigr]. \nonumber
 \end{equation} 
Choosing $\varrho \sqrt{d} < \frac{1}{2}$, we observe that $\mathbb I_r$ contains the origin if and only if $r = 0$. Thus, the largest interval in $\Delta(E)$ containing the origin has length $\leq \varrho \sqrt{d}/N$, which also $\rightarrow 0$ as $N \rightarrow \infty$. This leads to  
\begin{align} 
&\mathtt a_{\varrho} := \inf \bigl\{ \mathtt a(E) : E \subseteq [0,1]^d, \lambda_d(E) = \varrho^d  \bigr\} \leq \inf \bigl\{ \mathtt a(E) : E = E(\varrho, N) \in \mathscr{E}_{\varrho}\bigr\} \nonumber \\ &\quad \; \leq \inf \bigl\{ \lambda_1(\mathbb I_0) = 2\frac{\varrho \sqrt{d}}{N} : N \geq 1\bigr\} = 0, \text{ where } \label{a0}\\ 
&\mathtt a(E) = \sup \bigl\{a > 0 : \Delta(E) \supseteq [0, a] \bigr\}. \nonumber 
\end{align} 
Combining \eqref{c0}, \eqref{a0}, we see that Theorem \ref{Boardman-Lemma} cannot hold if $\lambda_d(E), \lambda_d(F)$ are both small, specifically $< (2 \sqrt{d})^{-d}$. 

\subsection{Dyadic Hausdorff content of a uniformly distributed family of cubes} \label{Building-block-Hcontent-section}
In Section \ref{building-block-example-section}, we studied a set $\mathtt F \subseteq [0,1]^d$ given by \eqref{def-building-block-F}, consisting of small cubes arranged on a lattice grid. The dyadic Hausdorff content of $\mathtt F$ was claimed to be 1. We prove this statement in Lemma \ref{Building-block-Hcontent-lemma} of this section. 
\begin{lem} \label{cube-Hcontent-lemma}
For any $d \geq 1$, $\mathcal H_{\infty}^{\tau}([0,1]^d) = 1$ for all $\tau \in (0, d]$. 
\end{lem}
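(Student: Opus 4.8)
The claim has two halves: an upper bound $\mathcal H_\infty^\tau([0,1]^d)\le 1$ and a matching lower bound $\mathcal H_\infty^\tau([0,1]^d)\ge 1$. The upper bound is immediate from the definition \eqref{dyadic-H-content-def}: the unit cube $[0,1]^d$ is itself a closed dyadic cube with side-length $\ell=1$, so the single-element cover $\{[0,1]^d\}$ gives $\mathcal H_\infty^\tau([0,1]^d)\le 1^\tau=1$. The substance is therefore the lower bound.

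\textbf{Lower bound.} Let $\{Q_i\}_{i\ge 1}\subseteq\mathcal Q_d$ be any countable dyadic cover of $[0,1]^d$; I must show $\sum_i \ell(Q_i)^\tau\ge 1$. First dispose of the case where some covering cube $Q_{i_0}$ has $\ell(Q_{i_0})>1$: then $\ell(Q_{i_0})^\tau>1$ since $\tau>0$, and the sum already exceeds $1$. So I may assume every $Q_i$ has $\ell(Q_i)\le 1$. Here the hypothesis $\tau\le d$ enters: for $0<\ell\le 1$ one has $\ell^\tau\ge\ell^d$, hence $\ell(Q_i)^\tau\ge\ell(Q_i)^d=\lambda_d(Q_i)$ for every $i$. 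Summing and using countable subadditivity of Lebesgue measure together with $[0,1]^d\subseteq\bigcup_i Q_i$,
\[
\sum_{i\ge 1}\ell(Q_i)^\tau\ \ge\ \sum_{i\ge 1}\lambda_d(Q_i)\ \ge\ \lambda_d\Bigl(\bigcup_{i\ge 1}Q_i\Bigr)\ \ge\ \lambda_d([0,1]^d)\ =\ 1.
\]
Taking the infimum over all dyadic covers yields $\mathcal H_\infty^\tau([0,1]^d)\ge 1$, and combining with the upper bound completes the proof.

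\textbf{Remarks on difficulty.} There is essentially no obstacle here; the only point requiring a moment's care is the reduction to covering cubes of side-length at most $1$, which is needed so that the elementary inequality $\ell^\tau\ge\ell^d$ (valid precisely because $\tau\le d$ and $\ell\le 1$) can be applied. The argument also makes transparent why the statement is sharp in $\tau$: the chain of inequalities uses $\tau\le d$ in exactly one place, and for $\tau>d$ the content of $[0,1]^d$ would in fact be $0$ (as reflected elsewhere in this section). This lemma will then serve as the base case for computing the dyadic Hausdorff content of the lattice-of-cubes set $\mathtt F[q]$ in Lemma~\ref{Building-block-Hcontent-lemma}.
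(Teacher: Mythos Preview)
Your proof is correct and follows essentially the same route as the paper: the trivial single-cube cover gives the upper bound, and for the lower bound you compare $\sum_i\ell(Q_i)^\tau$ to $\sum_i\ell(Q_i)^d=\sum_i\lambda_d(Q_i)\ge\lambda_d([0,1]^d)=1$. The only cosmetic difference is that the paper replaces your case split on whether some $\ell(Q_i)>1$ by the $\ell^p$-subadditivity inequality $\sum_i x_i\le\bigl(\sum_i x_i^{p}\bigr)^{1/p}$ (valid for $0<p\le1$) applied with $x_i=\ell(Q_i)^d$ and $p=\tau/d$, which handles all side-lengths uniformly.
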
 
\begin{proof} 
Let us recall the definition of dyadic Hausdorff content from \eqref{dyadic-H-content-def}. Choosing the trivial cover $Q = [0,1]^d$ with $\ell(Q) = 1$ shows that 
\[ \mathcal H_{\infty}^{\tau}([0,1]^d) \leq 1.  \]
For the converse inequality, let us fix any cover $\{Q_i: i \geq 1\}$ of $[0,1]^d$, where each $Q_i$ is a dyadic cube. Then,
\[ \sum_{i} \ell(Q_i)^d = \sum_i \lambda_d(Q_i) \geq \lambda_d([0,1]^d) = 1. \]
For any $0 < p \leq 1$, the well-known inequality $\sum_{i=1}^{\infty} |x_i| \leq \bigl[\sum_{i=1}^{\infty} |x_i|^p \bigr]^{\frac{1}{p}}$ implies that 
\[ 1 \leq \sum_{i} \ell(Q_i)^d \leq \Bigl[ \sum_{i} \ell(Q_i)^{dp} \Bigr]^{\frac1p} = \Bigl[ \sum_{i} \ell(Q_i)^{\tau} \Bigr]^{\frac{d}{\tau}} \text{ for } p = \frac{\tau}{d},  \]   
which completes the proof. 
\end{proof} 
\begin{cor}\label{dyadic-cube-Hcontent-corollary}
For $d \geq 1$ and any dyadic cube $Q \subseteq \mathbb R^d$, we have 
\[ \mathcal H_{\infty}^{\tau}(Q) = \ell(Q)^{\tau} \qquad \text{ for any } \tau \in (0, d]. \] 
\end{cor}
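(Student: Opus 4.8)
The plan is to reduce immediately to Lemma~\ref{cube-Hcontent-lemma} via the affine map $\mathbb T_Q$ already used in Section~\ref{mainthm-cor-proof-section}. Write $\ell(Q) = 2^{-j}$ and recall that $\mathbb T_Q(y) = (y - c(Q))/\ell(Q) = 2^j y - 2^j c(Q)$ maps $Q$ bijectively onto $[0,1]^d$. Since $Q$ is dyadic, $2^j c(Q) \in \mathbb Z^d$, so $\mathbb T_Q$ is the composition of a translation by an integer vector with a dilation by the power $2^j$; in particular it carries every dyadic cube $Q' \subseteq Q$ (which necessarily has $\ell(Q') \le \ell(Q)$) to a dyadic cube $\widetilde Q' \subseteq [0,1]^d$ with $\ell(\widetilde Q') = \ell(Q')/\ell(Q)$, and this correspondence is a bijection between $\{Q' \in \mathcal Q_d : Q' \subseteq Q\}$ and $\{\widetilde Q' \in \mathcal Q_d : \widetilde Q' \subseteq [0,1]^d\}$.

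The upper bound $\mathcal H^\tau_\infty(Q) \le \ell(Q)^\tau$ is immediate from the definition~\eqref{dyadic-H-content-def}, using the single-cube cover $\{Q\}$. For the lower bound, fix any countable dyadic cover $\{Q_i\}$ of $Q$; discarding cubes disjoint from $Q$, we may assume each $Q_i$ meets $Q$. If some $Q_i \supseteq Q$, then $\sum_i \ell(Q_i)^\tau \ge \ell(Q_i)^\tau \ge \ell(Q)^\tau$ and we are done. Otherwise, because any two dyadic cubes are either nested or disjoint, each $Q_i$ satisfies $Q_i \subseteq Q$; applying $\mathbb T_Q$ produces a dyadic cover $\{\widetilde Q_i\}$ of $[0,1]^d$ with $\sum_i \ell(\widetilde Q_i)^\tau = \ell(Q)^{-\tau} \sum_i \ell(Q_i)^\tau$. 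Lemma~\ref{cube-Hcontent-lemma} then gives $\sum_i \ell(\widetilde Q_i)^\tau \ge \mathcal H^\tau_\infty([0,1]^d) = 1$, whence $\sum_i \ell(Q_i)^\tau \ge \ell(Q)^\tau$. Taking the infimum over all covers yields $\mathcal H^\tau_\infty(Q) \ge \ell(Q)^\tau$, and combining the two bounds completes the proof.

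There is essentially no hard step here; the corollary is a routine scaling statement. The only point requiring a moment's care — and the reason for the case distinction above — is that the dyadic Hausdorff content is \emph{not} invariant under arbitrary translations or dilations, but only under integer translations and dilations by powers of $2$, so one must check that the covers in play are mapped to genuine dyadic covers. That is exactly what the nested-or-disjoint property of $\mathcal Q_d$ guarantees once the trivial case $Q_i \supseteq Q$ is peeled off.
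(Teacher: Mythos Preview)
Your proof is correct and follows essentially the same scaling argument as the paper: bound above by the trivial cover $\{Q\}$, and bound below by pushing an arbitrary dyadic cover through the affine map $\mathbb T_Q$ and invoking Lemma~\ref{cube-Hcontent-lemma}. You are in fact more careful than the paper on one point: you explicitly peel off the case where some covering cube $Q_i$ strictly contains $Q$, whereas the paper simply asserts that $(Q_i - c)/r$ is always dyadic, which is only true when $\ell(Q_i) \le \ell(Q)$.
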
 
\begin{proof} 
The proof is by scaling. Let $Q = c + r[0,1]^d$ be a dyadic cube with $\ell(Q) = r$, where $r = 2^N$ and $c = m2^N$, for some $m, N \in \mathbb Z$. On one hand, the definition \eqref{dyadic-H-content-def} implies that $\mathcal H_{\infty}^{\tau}(Q) \leq \ell(Q)^{\tau} = r^{\tau}$. On the other, given any dyadic cover $\{Q_i\}$ of $Q$, the cubes $\tilde{Q}_i = (Q_i-c)/r$ are also dyadic, and form a cover for $[0,1]^d$. It follows from Lemma \ref{cube-Hcontent-lemma} that 
\[ \sum_i \ell(\tilde{Q}_i)^{\tau} = r^{-\tau} \sum_i \ell(Q_i)\geq 1; \text{ in other words, } \sum_i \ell(Q_i)^s \geq r^{\tau} = \ell(Q)^{\tau}. \]  
This is the desired conclusion. 
\end{proof} 
\begin{lem} \label{Building-block-Hcontent-lemma} 
Let $\mathtt F$ be the set given by \eqref{def-building-block-F}, with 
\[ \sigma = \frac{d}{s} \in (1, \infty)\cap \mathbb Q, \quad  q = 2^{N_1} \text{ and }  \quad q^{\sigma} = 2^{N_2} \] 
for some choice of integers $N_1, N_2 \in \mathbb N$, $N_1 < N_2$. Then for every dyadic cube $Q \subseteq [0,1]^d$ with $\ell(Q) = 2^{-N_1+k}$, $0 \leq k \leq N_1$, an optimal cover of $\mathtt F \cap Q$ that realizes the infimum \eqref{dyadic-H-content-def} defining $\mathcal H_{\infty}^{s}(\mathtt F \cap Q)$ is given by the collection of basic cubes of $\mathtt F$ (of sidelength $q^{-\sigma}$) contained in $Q$; consequently,  
\begin{equation} \mathcal H_{\infty}^{s}(\mathtt F \cap Q) = 2^{kd} q^{-d} = \ell(Q)^{d}.   \label{Hcontent-induction} \end{equation} 
In particular, 
\begin{equation} \label{F-H-content-1} 
\mathcal H_{\infty}^{\tau}(\mathtt F) = 1 \text{ for all } \tau \leq s.
\end{equation} 
\end{lem}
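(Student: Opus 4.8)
The plan is to prove \eqref{Hcontent-induction}, from which \eqref{F-H-content-1} is immediate, by matching an explicit cover of $\mathtt F\cap Q$ against a Frostman-type lower bound supplied by the mass distribution principle. Write $q=2^{N_1}$ and $q^{\sigma}=2^{N_2}$ with $N_1<N_2$. Each ``basic cube'' $B_p:=p/q+q^{-\sigma}[0,1]^d$ ($p\in\mathbb Z_q^d$) in \eqref{def-building-block-F} is then a genuine dyadic cube of side $q^{-\sigma}=2^{-N_2}$; since $q^{-\sigma}<1/q$, $B_p$ sits inside the dyadic cell of side $1/q$ with lower corner $p/q$, the basic cubes are pairwise disjoint, and $\mathtt F=\bigsqcup_p B_p$. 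I record the arithmetic identity $(q^{-\sigma})^s=q^{-\sigma s}=q^{-d}$, valid since $\sigma=d/s$; this is what makes the constants line up.

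For the upper bound in \eqref{Hcontent-induction}, a short count against the $1/q$-grid shows that a dyadic cube $Q\subseteq[0,1]^d$ with $\ell(Q)=2^{-N_1+k}$ contains exactly $2^{kd}$ basic cubes. When $Q=[0,1]^d$ all $q^d$ of them lie inside and they exhaust $\mathtt F$; for a proper subcube $Q$ the only points of $\mathtt F\cap Q$ missed by these $2^{kd}$ cubes lie on finitely many $(d{-}1)$-dimensional faces of basic cubes abutting $\partial Q$, which carry zero $\mathcal H^s_\infty$ in the range $s>d-1$ of interest. In either case this cover gives
\[ \mathcal H^s_\infty(\mathtt F\cap Q)\le 2^{kd}(q^{-\sigma})^s=2^{kd}q^{-d}=\ell(Q)^d. \]

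The substantive step is the reverse inequality. Put
\[ \mu:=q^{(\sigma-1)d}\sum_{B_p\subseteq Q}\lambda_d\big|_{B_p}, \]
a measure supported on $\mathtt F\cap Q$ whose total mass is $q^{(\sigma-1)d}\cdot 2^{kd}\cdot q^{-\sigma d}=2^{kd}q^{-d}$; the normalization is chosen so that each basic cube carries mass $q^{-d}=(q^{-\sigma})^s$. I would then verify $\mu(R)\le\ell(R)^s$ for every dyadic cube $R$ by a case analysis on $\ell(R)$: if $R$ lies in a single basic cube then $\mu(R)=q^{(\sigma-1)d}\lambda_d(R)$ and the desired bound is exactly the constraint $\ell(R)\le q^{-\sigma}$ after inserting $\sigma=d/s$; if $q^{-\sigma}\le\ell(R)\le 1/q$ then $R$ meets at most one basic cube, so $\mu(R)\le q^{-d}\le\ell(R)^s$; if $1/q\le\ell(R)=2^{-N_1+i}\le 1$ then $R$ contains at most $2^{id}$ basic cubes, so $\mu(R)\le 2^{id}q^{-d}=2^{(i-N_1)d}\le 2^{(i-N_1)s}=\ell(R)^s$ because $i\le N_1$ and $s<d$; and $\ell(R)>1$ is trivial. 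By the mass distribution principle (e.g.\ \cite{Mattila-Book1}), $\mathcal H^s_\infty(\mathtt F\cap Q)\ge\mu(\mathtt F\cap Q)=2^{kd}q^{-d}$, which together with the previous paragraph proves \eqref{Hcontent-induction}. Taking $Q=[0,1]^d$ (i.e.\ $k=N_1$) gives $\mathcal H^s_\infty(\mathtt F)=q^dq^{-d}=1$; and for $\tau\le s$, since any cover of $\mathtt F\subseteq[0,1]^d$ relevant to the infimum may be taken to consist of dyadic cubes of side $\le 1$ (a single larger cube already exceeds the trivial cover), one has $\ell(Q_i)^\tau\ge\ell(Q_i)^s$ termwise, whence $\mathcal H^\tau_\infty(\mathtt F)\ge\mathcal H^s_\infty(\mathtt F)=1$, while $\mathcal H^\tau_\infty(\mathtt F)\le\mathcal H^\tau_\infty([0,1]^d)=1$ by monotonicity and Lemma \ref{cube-Hcontent-lemma}. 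This is \eqref{F-H-content-1}.

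The main obstacle is the verification of the Frostman bound $\mu(R)\le\ell(R)^s$ at all dyadic scales with the single normalizing constant $q^{(\sigma-1)d}$: this is precisely where $s<d$ and the identity $q^{-\sigma s}=q^{-d}$ are used, and some care is needed at the transitional scales $\ell(R)\sim q^{-\sigma},\ 1/q,\ 1$. A secondary, and for the application inessential, nuisance is accounting in the upper bound for the basic-cube faces that protrude from a proper subcube $Q$; this does not arise for $Q=[0,1]^d$, which is the only case used elsewhere in the paper.
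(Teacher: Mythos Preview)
Your proof is correct and takes a genuinely different route from the paper's. The paper proves \eqref{Hcontent-induction} by induction on $k$: at each step it observes that any dyadic cover of $\mathtt F\cap Q_0$ either uses $Q_0$ itself or refines into covers of the $2^d$ children, whose optimal cost is already known by induction, and then compares $\ell(Q_0)^s$ against $2^d\cdot\ell(Q_0\text{'s child})^d$. You instead obtain the lower bound in one stroke via the mass distribution principle, constructing a Frostman measure $\mu$ supported on the basic cubes with the normalization $q^{(\sigma-1)d}$ tuned so that each basic cube carries mass exactly $(q^{-\sigma})^s$.

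Your approach is the more standard and arguably cleaner one; the paper's induction is more hands-on and makes the optimality of the basic-cube cover explicit at every intermediate scale, which is the first assertion in the lemma. Your Frostman verification also handles the boundary nuisance more gracefully on the lower-bound side: since $\mu\ll\lambda_d$, the $(d{-}1)$-dimensional overlaps between closed dyadic cubes contribute nothing to $\mu(R)$, so the case analysis on $\ell(R)$ goes through for all $s\in(0,d)$. The only place $s>d-1$ enters is in your upper bound for proper subcubes, where boundary faces of neighbouring basic cubes must have zero $s$-content; the paper's base case shares this implicit issue (it asserts that $\mathtt F\cap Q$ ``consists of a single basic cube'' when $\ell(Q)=1/q$), so neither argument is cleaner here, and both are immediately repaired by working with half-open dyadic cubes.
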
 
\begin{proof} 
Let us recall from \eqref{def-building-block-F} that $\mathtt F$ is a $q^d$-fold disjoint union of basic cubes $\mathscr{Q}_p$:   
\[ F = \bigsqcup_{p \in \mathbb Z_q^d} \mathscr{Q}_p \quad \text{ where } \quad \mathscr{Q}_p := \frac{p}{q} + q^{-\sigma}[0,1]^d. \] 
We will prove the first statement of the lemma, leading up to \eqref{Hcontent-induction}, by induction on $k$. To establish the base case $k = 0$, let us note that for any dyadic cube $Q$ with $\ell(Q) = 2^{-N_1} = 1/q$, the set $\mathtt F \cap Q$ consists of a single basic cube $\mathscr{Q}_p$. It therefore follows from Lemma \ref{cube-Hcontent-lemma} that 
\[ \mathcal H_{\infty}^{s}(\mathtt F \cap Q) = \mathcal H_{\infty}^{s}(\mathscr{Q}_p) = \ell(\mathscr{Q}_p)^{-\sigma} = q^{-\sigma s} = q^{-d}. \] 
An optimum cover of $\mathtt F \cap Q$ is the single cube $\mathscr{Q}_p$. 
\vskip0.1in
\noindent Let us continue to the induction step. Suppose that \eqref{Hcontent-induction} holds for $k \leq k_0-1$, and that for every dyadic $Q$ with $\ell(Q) = 2^{-N_1+k} <  2^{-N_1+k_0}$, an optimum cover of  $F \cap Q$ consists of the collection of dyadic cubes $\mathscr{Q}_p$ of side-length $q^{-\sigma}$ contained in $\mathtt F \cap Q$. Let $Q_0 \in \mathcal Q_d$ be a cube $\ell(Q_0) = 2^{-N_1 + k_0}$. Then $\mathtt F \cap Q_0$ can be written as   
\[ \mathtt F \cap Q_0 = \bigsqcup \Bigl\{\mathtt F \cap Q : Q \subsetneq Q_0, \; Q \in \mathcal Q_d, \; \ell(Q) = 2^{-N_1 + k_0 -1}\Bigr\}.\] 
Our goal is to determine a dyadic cover of $\mathtt F \cap Q_0$ that realizes the infimum in $\mathcal H_{\infty}^s(\mathtt F \cap Q_0)$. If $Q$ is a dyadic descendant of $Q_0$, the induction hypothesis dictates that an optimal cover of $F \cap Q$ consists of the basic cubes of $\mathtt F$ contained in $Q$. Thus, for the purpose of realizing the infimum \eqref{dyadic-H-content-def} defining $\mathcal H_{\infty}^s(\mathtt F \cap Q_0)$, it suffices to consider the following two covering candidates:  
\[ \mathcal Q_1 = \{Q_0\} \quad \text{ and } \quad \mathcal Q_2 = \Bigl\{\mathscr{Q}_p : \mathscr{Q}_p \subseteq \mathtt F \cap Q_0 \Bigr\}.   \]  
We compare the contribution towards \eqref{dyadic-H-content-def} from each cover: 
\begin{align*} 
\sum_{\mathtt q \in \mathcal Q_2} \ell(\mathtt q)^s &= \sum \bigl\{ \mathcal H_{\infty}^s(\mathtt F \cap Q) : Q \text{ children of }Q_0 \bigr\} = 2^d 2^{(k_0-1)d} q^{-d} = 2^{(-N_1 + k_0) d}   \\ 
  \text{ whereas } \sum_{\mathtt q \in \mathcal Q_1} \ell(\mathtt q)^s &= \ell(Q_0)^s = 2^{(-N_1 + k_0)s} \geq  2^{(-N_1 + k_0) d} = \sum_{\mathtt q \in \mathcal Q_2}\ell(\mathtt q)^s. 
\end{align*}  
The last inequality follows from the assumptions that $k \leq N_1$ and $s \leq d$. This means that $\mathcal H_{\infty}^{s}(\mathtt F \cap Q_0) = \ell(Q_0)^{d}$, with the optimal cover given by $\mathcal Q_2$, which consists of the basic cubes $\mathscr{Q}_p$ in $\mathtt F \cap Q_0$. This completes the induction step.  
\vskip0.1in
\noindent For $\tau=s$, the conclusion \eqref{F-H-content-1} follows from \eqref{Hcontent-induction} with $k = N_1$; in this case, $Q = [0,1]^d$ and $\mathtt F \cap Q = \mathtt F$. For $\tau \leq s$, we observe that 
\[ \ell(Q_i)^s \leq \ell(Q_i)^{\tau} \text{ since $\ell(Q_i) \leq 1$;} \quad \text{ hence } 1 \leq \mathcal H_{\infty}^{s}(\mathtt F) \leq \mathcal H_{\infty}^{\tau}(\mathtt F) \leq 1, \]
completing the proof of the claim \eqref{F-H-content-1}.    
\end{proof} 

\section{Methodological Limitations and Open Problems} \label{Open Problems Section}
This section discusses the limitations of the proof techniques developed in this paper and highlights several open problems that arise naturally from them. 
\subsection{Sharp dimension and density thresholds}
\subsubsection{Overlap of distance sets for subsets of the unit cube with large Lebesgue measure} 
 A current gap in the distance set literature is a higher dimensional version of Boardman's Theorem \ref{Boardman-Lemma}, that is, a fully uniform Steinhaus theorem in $\mathbb R^d$. Specifically, 
 \begin{itemize} 
 \vskip0.1in
 \item For $d \geq 2$, what is 
 \begin{align*} 
& \mathtt a_{\alpha, \beta} := \inf \left\{ \mathtt a(E, F) : E, F \subseteq [0,1]^d, \; \lambda_d(E) = \alpha, \; \lambda_d(F) = \beta \right\}, \text{ where } \\
& \mathtt a(E, F)  = \sup \left\{ a>0 : \Delta(E, F) \supseteq [0, a) \right\} 
 \end{align*}  
measures the minimal interval contained in $\Delta(E, F)$? In particular, what are the best constants $\rho$ and $c_{\rho}$ for which Theorem \ref{Boardman-Lemma} holds? 
 \vskip0.1in
 \end{itemize} 
\noindent The proof of Theorem \ref{Boardman-Lemma} presented in Section \ref{Appendix-1-large-distance-sets} shows that $\rho < \frac{1}{2}$ and provides an upper bound for $c_{\rho}$ via the inequality $(1+c_{\rho})^d < 2(1-\rho)$; however, neither estimate is known to be optimal. 
\subsubsection{Fractal variants of a uniform Steinhaus-like theorem}
Extending the above uniform questions to the sparse setting leads to analogous problems when the Hausdorff dimension of $E \subseteq \mathbb R^d$ is less than the ambient dimension $d$. For instance, 
\begin{itemize} 
\vskip0.1in
\item What are the sharp constants $\bar{a}_d, \bar{b}_{d}, \rho_d, \bar{\varepsilon}_d$ for which \eqref{mainthm-2-conclusion} in Theorem \ref{mainthm-2} \eqref{mainthm2-parta} holds?
\vskip0.1in 
\item What is the optimal value of $\varepsilon_{\rho}$ for which a structural result for distance sets such as Theorem \ref{mainthm-cor} holds? Note that $\varepsilon_{\rho}$ would be the maximum allowable drop in dimension for $\Delta(E)$ to contain a union of intervals of specified geometry. 
\vskip0.1in 
\item To expand further, could the exponent $\varepsilon_{\rho} =  (d-1)/2$ be optimal (uniformly in $\rho$) for Theorem \ref{mainthm-cor}, consistent with the dimensional threshold of Mattila-Sj$\ddot{\text{o}}$lin Theorem \ref{mainthm-1}? If not, what are other contributing factors, beyond the high density criterion in \eqref{high-density-on-cube}, for generating intervals in distance sets? 
\vskip0.1in 
\end{itemize} 
In the current methodology, the main technical obstruction to obtaining better quantitative bounds on $\varepsilon_{\rho}$ occurs in the construction of the measure $\mu$ obeying the finite energy and spectral gap conditions. Specifically, in  Section \ref{measure construction section} this measure is constructed using very fine cubes in $\mathscr{D}_T$, as can be seen from \eqref{def-mu-nu}. The largeness of $T$ inherently limits the value of $\varepsilon_{\rho}$ (see \eqref{what-is-epsilon}). Alternative constructions avoiding this dependence could potentially sharpen the exponent. 
\subsection{Multi-point configurations}  
\subsubsection{Sparse unbounded sets with all sufficiently large copies of a $k$-point configuration} \label{section: all large copies}
A distance is determined by just two points; in this sense, a distance set embodies one of the simplest types of configurations in a set. The analysis in this paper prompts similar questions about configurations involving more than two points. 
\vskip0.1in
\noindent For instance, given a $k$-point configuration $V= \{v_1, \ldots, v_{k}\} \subseteq \mathbb R^d$ and $A \subseteq \mathbb R^d$, let  
\begin{equation} \label{defn: Deltak} 
\Delta_k(A, V) := \bigl\{t >0 : \exists x \in A {\text{ and a rotation $\mathcal O_d$ such that }} x + t \mathcal O_d(V) \subseteq A  \bigr\}. 
\end{equation}  
As a consistency check, the set $\Delta_k(A, V)$ for $k=1$ and $V = \{e_1\}$ recovers the distance set of $A$. Assuming that $d \geq 2$, $k < d$, $V$ is linearly independent, and $A \subseteq \mathbb R^d$ has positive upper density, Bourgain \cite{1986Bourgain} showed that $\Delta_k(A, V)$ contains a half-line, i.e., $A$ contains all sufficiently large affine copies of $V$. This leads to a similar problem in the sparse setting generalizing Theorems \ref{mainthm-3} and \ref{mainthm-4}: 
\vskip0.1in
\begin{itemize} 
\item Under what conditions on $A \subseteq \mathbb R^d$ of less than full Hausdorff dimension does $\Delta_k(A, V)$ contain a half-line $[M, \infty)$ for some $M >0$?
\end{itemize}
\vskip0.1in
Constructing an appropriate configuration integral, i.e., an analogue for Proposition \ref{Lambda-prop} for a $k$-point pattern $V$ is a technical barrier to extending the current method. 
\subsubsection{Versions of Bourgain's local Szemer\'edi-type theorem in the fractal setting} \label{section: Bourgain local Szemeredi} 
Theorem \ref{F-asymptotics-thm} isolates Fourier $L^2$ asymptotics $\mathbb F_{\mu}(T)$ as an identifier of the size of distance sets, leading to local structural results for distance sets when the original set is geometrically regular (Theorem \ref{quasiregular-distance-thm}). A natural challenge is to determine whether similar Fourier-analytic conditions govern $\Delta_k(E, V)$ as in \eqref{defn: Deltak} for $k$-point configurations $V$. 
\vskip0.1in
\begin{itemize} 
\item Is there an analogue of \eqref{controlled-growth} or \eqref{Rj-condition} that ensures the presence of copies of $V$ on many lacunary scales? 
\vskip0.1in
\item Suppose as in Theorem \ref{quasiregular-distance-thm} that $E \subseteq [0,1]^d$ is a quasi-regular set equipped with a locally uniformly $s$-dimensional probability measure obeying \eqref{Strichartz-upper-lower}. Does the set of scales $\Delta_k(A, V)$ intersect every sufficiently long  $(\tau_1, \tau_2)$-lacunary sequence? 
\vskip0.1in 
\end{itemize} 
Progress on the question in Section \ref{section: all large copies} regarding an appropriate configuration integral may be essential to shed light on this local problem. This remains an intriguing and wide-open direction.

\Addresses 

\end{document}